\documentclass{amsart}

\usepackage{amsmath, amssymb, amsthm, amsfonts, graphicx}
\usepackage[usenames,dvipsnames]{xcolor}

\input xy
\xyoption{all}

\usepackage{hyperref}

\swapnumbers
\numberwithin{equation}{section}

\theoremstyle{plain}
\newtheorem{theorem}[subsubsection]{Theorem}
\newtheorem{lemma}[subsubsection]{Lemma}
\newtheorem{prop}[subsubsection]{Proposition}
\newtheorem{cor}[subsubsection]{Corollary}
\newtheorem{conj}[subsubsection]{Conjecture}

\theoremstyle{definition}
\newtheorem{defn}[subsubsection]{Definition}
\newtheorem{cons}[subsubsection]{Construction}
\newtheorem{remark}[subsubsection]{Remark}
\newtheorem{exam}[subsubsection]{Example}



\setlength{\textwidth}{460pt}
\setlength{\oddsidemargin}{0pt}
\setlength{\evensidemargin}{0pt}
\setlength{\topmargin}{0pt}
\setlength{\textheight}{620pt}


\def\AA{\mathbb{A}}

\def\CC{\mathbb{C}}

\def\GG{\mathbb{G}}

\def\NN{\mathbb{N}}

\def\PP{\mathbb{P}}
\def\QQ{\mathbb{Q}}
\def\RR{\mathbb{R}}

\def\ZZ{\mathbb{Z}}


\newcommand\cA{\mathcal{A}}
\newcommand\cB{\mathcal{B}}
\newcommand\cC{\mathcal{C}}

\newcommand\cE{\mathcal{E}}
\newcommand\cF{\mathcal{F}}
\newcommand\cG{\mathcal{G}}
\newcommand\cH{\mathcal{H}}

\newcommand\cJ{\mathcal{J}}
\newcommand\cK{\mathcal{K}}
\newcommand\cL{\mathcal{L}}
\newcommand\cM{\mathcal{M}}
\newcommand\cN{\mathcal{N}}
\newcommand\cO{\mathcal{O}}
\newcommand\cP{\mathcal{P}}
\newcommand\cQ{\mathcal{Q}}

\newcommand\cS{\mathcal{S}}

\newcommand\cU{\mathcal{U}}

\newcommand\cX{\mathcal{X}}


\def\bC{\mathbf{C}}

\def\bI{\mathbf{I}}
\def\bJ{\mathbf{J}}
\def\bK{\mathbf{K}}

\def\bP{\mathbf{P}}
\def\bQ{\mathbf{Q}}
\def\bR{\mathbf{R}}

\def\bT{\mathbf{T}}

\def\bW{\mathbf{W}}


\newcommand\frX{\mathfrak{X}}
\newcommand\frY{\mathfrak{Y}}

\newcommand\fra{\mathfrak{a}}
\newcommand\frb{\mathfrak{b}}
\newcommand\frc{\mathfrak{c}}

\newcommand\frg{\mathfrak{g}}
\newcommand\frh{\mathfrak{h}}

\newcommand\frj{\mathfrak{j}}
\newcommand\frk{\mathfrak{k}}
\newcommand\frl{\mathfrak{l}}
\newcommand\fm{\mathfrak{m}}
\newcommand\frn{\mathfrak{n}}
\newcommand\frp{\mathfrak{p}}
\newcommand\frq{\mathfrak{q}}

\newcommand\frt{\mathfrak{t}}

\newcommand\frz{\mathfrak{z}}


\newcommand\tilW{\widetilde{W}}




\def\dG{G^{\vee}}


\newcommand{\Bun}{\textup{Bun}}
\newcommand{\can}{\textup{can}}

\newcommand{\Coh}{\textup{Coh}}
\newcommand{\coker}{\textup{coker}}

\newcommand{\Fl}{\textup{Fl}}

\newcommand{\Gr}{\textup{Gr}}
\newcommand{\gr}{\textup{gr}}

\newcommand\id{\textup{id}}
\renewcommand{\Im}{\textup{Im}}
\newcommand{\Ind}{\textup{Ind}}

\newcommand\Irr{\textup{Irr}}

\newcommand\Lie{\textup{Lie}\ }

\newcommand{\Nm}{\textup{Nm}}

\newcommand{\Pic}{\textup{Pic}}
\newcommand\pr{\textup{pr}}

\newcommand{\red}{\textup{red}}
\newcommand{\reg}{\textup{reg}}
\newcommand\Rep{\textup{Rep}}
\newcommand{\Res}{\textup{Res}}
\newcommand\res{\textup{res}}

\newcommand\rs{\textup{rs}}

\newcommand\Spec{\textup{Spec}\ }

\newcommand{\tors}{\textup{tors}}

\newcommand{\univ}{\textup{univ}}

\newcommand\Aut{\textup{Aut}}
\newcommand\Hom{\textup{Hom}}

\newcommand\Map{\textup{Map}}

\newcommand\uHom{\underline{\Hom}}

\newcommand\GL{\textup{GL}}

\newcommand\SL{\textup{SL}}
\renewcommand\sl{\mathfrak{sl}}

\newcommand{\Gm}{\GG_m}
\def\Ga{\GG_a}

\newcommand{\ad}{\textup{ad}}
\newcommand{\Ad}{\textup{Ad}}

\newcommand\xcoch{\mathbb{X}_*}


\newcommand{\incl}{\hookrightarrow}
\newcommand{\isom}{\stackrel{\sim}{\to}}

\newcommand{\surj}{\twoheadrightarrow}

\newcommand{\Qlbar}{\overline{\QQ}_\ell}

\newcommand{\twtimes}[1]{\stackrel{#1}{\times}}

\newcommand{\htimes}{\widehat{\times}}
\renewcommand{\j}[1]{\langle{#1}\rangle}
\newcommand{\wt}[1]{\widetilde{#1}}
\newcommand{\wh}[1]{\widehat{#1}}
\newcommand\quash[1]{}
\newcommand\un{\underline}
\newcommand{\bu}{\bullet}
\newcommand{\ov}{\overline}
\newcommand{\bs}{\backslash}
\newcommand{\tl}[1]{[\![#1]\!]}
\newcommand{\lr}[1]{(\!(#1)\!)}
\newcommand\sss{\subsubsection}
\newcommand\xr{\xrightarrow}
\newcommand\op{\oplus}
\newcommand\ot{\otimes}

\newcommand{\sslash}{\mathbin{/\mkern-6mu/}}
\renewcommand\c\circ

\newcommand{\cohog}[2]{\textup{H}^{#1}({#2})}     
\newcommand{\cohoc}[2]{\textup{H}_{c}^{#1}({#2})}     

\newcommand\upH{\textup{H}}


\renewcommand\a\alpha
\renewcommand\b\beta
\newcommand\g\gamma
\newcommand\G\Gamma
\renewcommand\d\delta
\newcommand\D\Delta
\newcommand{\e}{\epsilon}
\renewcommand{\th}{\theta}

\newcommand{\io}{\iota}
\newcommand{\ph}{\varphi}
\renewcommand\r\rho
\newcommand{\s}{\sigma}
\renewcommand{\t}{\tau}

\newcommand{\y}{\eta}
\newcommand{\z}{\zeta}

\renewcommand{\l}{\lambda}
\renewcommand{\L}{\Lambda}
\renewcommand{\k}{\kappa}
\newcommand{\om}{\omega}
\newcommand{\Om}{\Omega}


\newcommand\na{\natural}
\newcommand\sh{\sharp}
\newcommand\da{\dagger}
\newcommand\dda{\ddagger}



\newcommand\Kir{\textup{Kir}}

\newcommand\muSh{\mu\textup{Sh}}

\newcommand\rot{\textup{rot}}
\newcommand\Grot{\GG_{m}^{\textup{rot}}}
\newcommand\Gdil{\GG_{m}^{\textup{dil}}}

\newcommand\Hod{\textup{Hod}}
\newcommand{\Bet}{\textup{Bet}}
\newcommand\dR{\textup{dR}}
\newcommand\nb{\nabla}

\newcommand{\RH}{\operatorname{RH}}
\newcommand{\realcircle}{\mathbf{S}}

\newcommand{\anglevar}{\theta}

\newcommand{\Connec}{\operatorname{Conn}}
\newcommand{\Stokes}{\operatorname{Stokes}}

\newcommand{\exponenta}{\phi}
\newcommand{\exponentb}{\chi}
\newcommand{\startingangle}{\anglevar_0}

\newcommand{\exponentset}{\Psi}
\newcommand{\Config}{\operatorname{Config}}
\newcommand{\lisom}{\stackrel{\sim}{\leftarrow}}
\newcommand{\vn}{\varnothing}

\newcommand{\Stdir}{\operatorname{St}}
\newcommand{\ord}{\operatorname{ord}}


\title{Non-abelian Hodge moduli spaces and homogeneous affine Springer fibers}

\dedicatory{To George Lusztig with admiration}
\author{Roman Bezrukavnikov}
\address{Department of Mathematics, Massachusetts Institute of Technology, 77 Massachusetts Ave, Cambridge, MA 02139}
\email{bezrukav@math.mit.edu}

\author{Pablo Boixeda Alvarez}
\thanks{P.B.A was partially supported by the NSF under grant DMS-1926686}
\address{Department of Mathematics, Yale University, 10 Hillhouse Ave, New Haven, CT 06511}
\email{pablo.boixedaalvarez@yale.edu}

\author{Michael McBreen}
\thanks{M.M. is supported by an RGC Early Career Scheme grant, project number 24307121.}
\address{Department of Mathematics, Chinese University of Hong Kong, New Territories, Hong Kong SAR}
\email{mcb@math.cuhk.edu.hk}

\author{Zhiwei Yun}
\thanks{Z.Y. is supported by the Simon Foundation and the Packard Foundation.}
\address{Department of Mathematics, Massachusetts Institute of Technology, 77 Massachusetts Ave, Cambridge, MA 02139}
\email{zyun@mit.edu}

\date{}
\subjclass[2010]{Primary 14D20; Secondary 14M15, 14D24}
\keywords{}


\begin{document}
	
	\begin{abstract}
		Starting from a homogeneous affine Springer fiber $\Fl_{\psi}$, we construct three moduli spaces that correspond to the Dolbeault, de Rham and Betti aspects of a hypothetical Simpson correspondence with wild ramifications. We show that $\Fl_{\psi}$ is homeomorphic to the central Lagrangian fiber in the Dolbeault space, prove that the Dolbeaut and de Rham spaces both have the same cohomology as $\Fl_{\psi}$, and construct a map from the de Rham space to the Betti space which we conjecture to be an analytic isomorphism.
	\end{abstract}
	
	\maketitle
	\tableofcontents
	\section{Introduction}
	
	\subsection{Springer fiber and Slodowy slice}
	Let $G$ be a reductive group over an algebraically closed characteristic zero field $k$ with Lie algebra $\frg$. For a nilpotent element $e\in \cN\subset \frg$, the usual Springer fiber $\cB_{e}=\{gB\in G/B|\Ad(g^{-1})e\in \Lie B\}$ is the fiber over $e$ of the Springer resolution $\pi: T^{*}(G/B)=\wt\cN\to \cN$. Moreover, $\cB_{e}$ can be realized as a Lagrangian subscheme in a symplectic smooth variety, via the construction of the {\em Slodowy slice} through $e$. More precisely, let $\{e,h,f\}$ be an $\sl_{2}$-triple containing $e$.  Let $\cS_{e}=(e+\frg^{f})\cap \cN$ be the nilpotent part of the Slodowy slice through $e$. Let $\wt \cS_{e}=\cS_{e}\times_{\cN}\wt\cN$ be the preimage of $\cS_{e}$ under the Springer resolution. The following well known properties of these varieties play an important role in geometric representation theory.
	\begin{enumerate}
		\item $\wt\cS_{e}$ is a smooth variety over $k$ with a canonical symplectic form.
		\item The map $\pi_{e}: \wt\cS_{e}\to \cS_{e}$ is a symplectic resolution.
		
		\item There are compatible $\Gm$-actions on $\wt\cS_{e}$ and $\cS_{e}$ such  that the symplectic form on $\wt \cS_{e}$ has weight $2$, and $\cS_{e}$ contracts to the point $e$ under the $\Gm$-action. In particular, if $k=\CC$ the embedding $\cB_{e}\incl \wt\cS_{e}$ induces a homotopy equivalence between the corresponding complex varieties.
The action is the product of the dilation action on  $T^{*}(G/B)$ and conjugation by a cocharacter coming from the homomorphism $SL(2)\to G$ provided by the Jacobson-Morozov
Theorem. 
		
		\item The subvariety $\cB_{e}\incl \wt\cS_{e}$ (the fiber over $e$) is  Lagrangian in $\wt\cS_{e}$. 
		\item The symplectic variety $\wt\cS_{e}$ can be obtained from $T^{*}(G/B)$ by Hamiltonian reduction for a  unipotent subgroup $U_e\subset G$ equipped with
		an additive character.
			\end{enumerate}
	
	The main goal of this paper is to construct  and study an analogue of the resolved Slodowy slice  $\wt\cS_{e}$ when the Springer fiber $\cB_{e}$ is replaced by an affine Springer fiber $\Fl_{\psi}$ for a homogeneous element $\psi$.  Roughly speaking,  a homogeneous element $\psi$ is a regular semisimple element in the loop Lie algebra $\frg\lr{t}$ for which there exists an analogue of the Jacobson-Morozov cocharacter, i.e. a cocharacter of the Kac-Moody group for which $\psi$ is an eigenvector. 
	
	\subsection{Summary of main results}
	Starting from a homogeneous element $\psi$ in $\frg\lr{t}$, we construct three moduli spaces that serve as the Dolbeault, de Rham and Betti aspects in the terminology of the 
	non-abelian Hodge theory. 
	\begin{enumerate}
		\item The Dolbeault space $\cM_{\psi}$ is a moduli space of $G$-Higgs bundles on $\PP^{1}$ with level structures at $0$ and $\infty$, and prescribed irregular part at $\infty$. 
		This is the space we propose as a loop group analogue
		of the resolved Slodowy slice.
		It shares some of its properties although there are also some important differences.
		 In particular, it is a symplectic algebraic space with a $\Gm$ action contracting the space to a Lagrangian subspace homeomorphic to $\Fl_\psi$ (see Theorem \ref{th:geom M});  it can be obtained
		 from $T^{*}\Fl$ by Hamiltonian reduction for a subgroup $\bJ_{\infty}$  
		 in the loop group equipped with an additive character (see \S\ref{ss:red}). 
		 However,  the analogue of the Springer map is the  Hitchin map $f:\cM_{\psi}\to\cA_{\psi}$ which is a completely integrable system rather than a symplectic
		 resolution.
		  
		 Also, the spaces $\Fl_\psi$ and $\cM_\psi$ are finite dimensional but in general have infinite type: in the simplest example $\Fl_\psi$ is 
		 an infinite chain of projective lines. They are both of finite type if $\psi$ is elliptic.

		\item The de Rham space $\cM_{\dR,\psi}$ is a moduli space of $G$-connections on $\PP^{1}$ with level structures and prescribed irregular part at $\infty$. 	
		
			\item The Betti space $\cM_{\Bet, \psi}$ is a complex analytic stack that depends only on the positive braid determined by $\psi$. The essential part of it is, up to a quotient by $G$,  defined as an explicit subvariety in a product $G\times (G/B)^n$. The key property justifying its name is the interpretation of $\cM_{\Bet, \psi}$ as the moduli space of Stokes data (See \S\ref{RHm}).

		\end{enumerate}
	
	The three spaces $\cM_{\psi}$, $\cM_{\dR,\psi}$, $\cM_{\Bet, \psi}$ are related as follows.
	
	There is a one-parameter deformation $\cM_{\Hod, \psi}$ of $\cM_{\psi}$ with general fiber isomorphic to $\cM_{\dR, \psi}$. 
	This family carries a $\Gm$-action contracting it to the Lagrangian subspace $\Fl_\psi$ in the special fiber, which allows one to show that the restriction maps induce isomorphisms on cohomology (see Theorem \ref{th:coho M Fl} and Corollary \ref{c:comp dR}):
	\begin{equation}\label{same_coh}
	\cohog{*}{\Fl_{\psi}} \lisom\cohog{*}{\cM_{\psi}} \lisom \cohog{*}{\cM_{\Hod, \psi}} \isom\cohog{*} {\cM_{\dR,\psi}}.
	\end{equation}
	
	Recall that for other types of connections (for example, for non-singular connections on a projective curve over $\CC$) the non-abelian Hodge theory of Corlette, Donaldson, Hitchin and Simpson defines a hyper-K\"ahler structure on the Hitchin moduli space and an isomorphism between the Dolbeaut and the De Rham moduli spaces as real manifolds, and realizes the Hodge moduli space as
	an open subvariety (preimage of $\CC$ under the projection to $\CC{\mathbb{P}}^1$) in the twistor moduli space. We do not know 
	if that theory can be extended to our setting. 
	
	Another, more elementary comparison available for nonsingular connections over a complete complex curve is provided by the Riemann-Hilbert correspondence: it yields an isomorphism between the de Rham and the Betti moduli spaces as complex analytic (but not as algebraic) varieties. In \S\ref{ss:enh RH}, using Stokes theory for $G$-connections (see \S\ref{RHm}), we defined an enhanced Riemann-Hilbert map
	\begin{equation}\label{intro RH}
\wt\RH: \cM_{\dR, \psi}\to \cM_{\Bet,\psi}.
\end{equation}
We conjecture that this map is an analytic isomorphism.
	
	In the main body of the paper, we also include a variant of the above spaces $\cM_{\psi}, \cM_{\dR, \psi}$ and $\cM_{\Bet, \psi}$ with an arbitrary semisimple part of the residue/monodromy at $0$.
	
\subsection{Relation to earlier results}
 Many of the constructions presented here are related to ones found in the literature.
 
   	Precursors of the Dolbeault moduli space of this type, besides of course the original construction of Hitchin \cite{Hi}, appeared in the work of Beauville \cite{Beauville}, Bottacin \cite{Bottacin}, Markman \cite{Markman} and Oblomkov-Yun \cite{OY}. The paper \cite{FredNeitz} by Fredrickson and Neitzke studies a moduli space of Higgs bundles closely related to the case $G=\GL_{n}, \nu=d/n$ in this paper.

	Biquard and Boalch \cite{BB} have developed non-abelian Hodge theory for irregular connections whose polar part (excepting the residue) is semisimple. When the underlying curve is $\PP^1$ and the polar part is homogeneous, these are a special case of connections considered in the present work. However, the moduli spaces we consider are different in that we endow the connection with a framing at infinity: the moduli space of Biquard and Boalch can be obtained from a special case of our moduli space by Hamiltonian reduction with respect to a torus action. Boalch and Yamakawa \cite{BY} construct moduli spaces of Stokes data for $G$-local systems, and endow them with Poisson structures. Our Betti spaces, although presented differently, should be special cases of their construction. The work of Bremer-Sage \cite{BS} studies moduli of flat connections with level structure on the bundles, although for them, the underlying vector bundle is always trivial.

	Mochizuki \cite{Mo} has extended the irregular non-abelian Hodge correspondence to (unframed) wild Higgs bundles whose polar parts are not semisimple. To our knowledge it is not known if this induces an isomorphism of real manifolds in the general case. 
	
	The Betti space is a variant of a special case of the moduli spaces defined by Minh-Tam Trinh \cite{MT}. Its definition is purely Lie-theoretic  and it uses only the braid group element determined by $\psi$ (or rather its slope). A version of this space already appeared in Lusztig's definition of character sheaves \cite{Lu}. In the work of Shende-Treumann-Williams-Zaslow \cite{STWZ}, when $G=\GL(n)$, similar moduli spaces were interpreted as a moduli space of local systems on $\PP^{1}\bs\{0,\infty\}$ with Stokes data at $\infty$. 		

	In a research proposal, Vivek Shende speculated an irregular non-abelian Hodge correspondence for curves and a P=W conjecture in that setting. In his thesis, Minh-Tam Trinh \cite{MTThesis} formulated a precise P=W conjecture connecting  cohomology of Hitchin fibers and cohomology of Steinberg-like varieties attached to braids (which are closely related to $\cM_{\Bet,\psi}$). Our work gives supporting evidence for these conjectures in the case of homogeneous Hitchin fibers: assuming \eqref{intro RH} is an analytic isomorphism and combing it with the isomorphisms \eqref{same_coh}, one would conclude that $\cM_{\Bet,\psi}$ has the same cohomology as the affine Springer fiber $\Fl_{\psi}$.

	\subsection{Microlocal sheaves on $\Fl_{\psi}$ and wildly ramified geometric Langlands}\label{ss:muintro}
	While we believe the above results to be of independent interest, our motivation for considering those spaces came from our study of the categories of sheaves on the affine flag manifold and related categories of microlocal sheaves. We now briefly explain the setting and the motivating  conjectures, our results in this direction will be presented elsewhere.
	
	Denote by $\muSh_\Lambda(X)$ the category of microlocal sheaves on a polarised conical $X$ supported on a conical Lagrangian $\Lambda$ as constructed from work of \cite{KS}, \cite{Shende} and \cite{NadlerShende}.  
	
	Consider $\cM^{\dG}_{0,\Bet, \psi}$ the Betti space for $\dG$, the Langlands dual group, using the positive braid defined by $\psi$ and the definition in \S\ref{def:MBet}.
	
	\begin{conj}
		There is a fully faithful functor 
		\begin{equation*}
			\muSh_{\Fl_{\psi}}(\cM_{\psi})\incl\Ind\Coh(\cM^{0,\dG}_{\Bet, \psi}).
		\end{equation*}
	\end{conj}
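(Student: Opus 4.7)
The plan is to construct the functor as a composition of three equivalences, following the template of Bezrukavnikov's tamely ramified geometric Langlands and its wild or Betti extensions.

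First, I would exploit the Hamiltonian reduction description of $\cM_\psi$ recalled in \S\ref{ss:red}: since $\cM_\psi = T^*\Fl \sslash (\bJ_\infty, \chi)$, a Nadler--Shende style microlocalisation combined with a descent theorem for microlocal sheaves along Hamiltonian reduction should produce an equivalence
\[
\muSh_{\Fl_\psi}(\cM_\psi) \simeq \textup{Sh}_{(\bJ_\infty, \chi)}(\Fl)_{\Fl_\psi},
\]
between microlocal sheaves on $\cM_\psi$ supported on $\Fl_\psi$ and $(\bJ_\infty, \chi)$-Whittaker-equivariant constructible sheaves on the affine flag variety whose singular support is controlled by $\Fl_\psi$. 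Morally, the additive character $\chi$ on $\bJ_\infty$ plays the role of the Kostant/Whittaker reduction in the finite-dimensional Slodowy picture.

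Second, I would interpret this Whittaker-equivariant category as a module category for a Hecke-type convolution category indexed by the positive braid determined by $\psi$, using the averaging formalism of Arkhipov--Bezrukavnikov. The slope of $\psi$ selects the appropriate sub-braid in the affine Weyl group, and the resulting category has an explicit generators-and-relations description from the positive-braid combinatorics. Third, a wildly ramified Betti analogue of Bezrukavnikov's equivalence should embed this module category fully faithfully into $\Ind\Coh(\cM^{0,\dG}_{\Bet,\psi})$, once the latter is identified with a ``coherent Steinberg-like'' variety built from the same braid combinatorics via the construction of \S\ref{def:MBet}: the explicit presentation of $\cM^{0,\dG}_{\Bet,\psi}$ as a subvariety of $\dG \times (\dG/\dB)^n$ modulo conjugation should match the braid presentation of the Hecke-type category on the sheaf-theoretic side.

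The main obstacle will be Step 1. Nadler--Shende microlocal sheaves are currently developed for finite-dimensional Weinstein manifolds, whereas $\cM_\psi$ has infinite type when $\psi$ is not elliptic and is realised as a reduction for the infinite-dimensional prounipotent group $\bJ_\infty$ equipped with a nontrivial additive character. Setting up a twisted, pro-object version of microlocal sheaves compatible with such infinite-dimensional reductions, together with the corresponding descent theorem, is the most delicate technical ingredient; the elliptic case, where all the spaces involved are of finite type, should provide a reasonable first test before attacking the general situation.
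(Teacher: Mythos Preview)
The statement you are addressing is a \emph{Conjecture} in the paper, not a theorem; the paper does not prove it. The authors explicitly say that their results in this direction ``will be presented elsewhere,'' and that a version of the conjecture for $\psi$ of slope~$1$ is established in the forthcoming \cite{BBAMY}. There is therefore no proof in the paper to compare your proposal against.

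Your proposal is itself a strategy, not a proof, and you candidly flag the main obstacle as open. As a research plan it is broadly in the spirit of the paper's own framing in \S5, but two points are worth noting. First, the Hamiltonian reduction you invoke is stated somewhat loosely: Proposition~\ref{p:red} realizes $\cM_\psi$ as $T^*\Bun_G(\bJ^1_\infty;\bI_0)\sslash_{1}\Ga$, and in \S5 the microlocal category is set up via the $\Gm\ltimes\Ga$ and Kirillov-category formalism on $\Bun_G(\bJ^1_\infty;\bI_0)$, not by attempting Nadler--Shende directly for an infinite-dimensional reduction of $T^*\Fl$. That concrete setup is likely a better starting point for your Step~1 than building twisted pro-object microlocal sheaves from scratch. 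Second, your Step~3 essentially assumes the existence of a ``wildly ramified Betti analogue of Bezrukavnikov's equivalence,'' but that is precisely the content of the conjecture rather than an available input; the paper offers no further mechanism here beyond the braid-variety presentation of $\cM^{0,\dG}_{\Bet,\psi}$.

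In short, there is no gap to diagnose because neither you nor the paper purports to have a proof; your outline is a plausible plan, compatible with what the paper sketches, but the substantive work lies entirely ahead.
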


	This conjecture can be viewed as a geometric Langlands correspondence for deeper level structures/wild ramifications. At the same time, it can be viewed as an instance of homological mirror symmetry between $\cM_{\psi}$ and $\cM^{\dG}_{\Bet, \psi}$. A version of this conjecture in the case $\psi$ is homogeneous of slope $1$ is proved in \cite{BBAMY}.

	\subsection*{Acknowledgement} 
	It is an honor to dedicate this paper to Prof. George Lusztig, whose contribution to representation theory is both universal but also particularly relevant to this paper: the study of affine Springer fibers was initiated in his paper with Kazhdan \cite{KL}, and the first examples of homogeneous affine Springer fibers were studied in his paper with Smelt \cite{LS}.
	
	The authors would like to thank Dima Arinkin,  Konstantin Jakob, Takuro Mochizuki and Minh-Tam Trinh for very helpful discussions.

	\section{The Dolbeault moduli space}\label{s:Dol}
		
	Let $G$ be a reductive group over an algebraically closed field $k$ such that the adjoint $G_{\ad}$ is simple. Let $\frg$ be the Lie algebra of $G$. Fix a $G$-invariant non-degenerate symmetric bilinear form $\j{\cdot, \cdot}$ on $\frg$ to identify $\frg$ with $\frg^{*}$. Let $T_{0}\subset G$ be a maximal torus and $W_{0}=W(G,T_{0})$ be the corresponding Weyl group of $G$. We assume $|W_{0}|$ is invertible in $k$.
	
	In this section, we will construct analogues of the resolved Slodowy slice $\wt\cS_{e}$ when the Springer fiber $\cB_{e}$ is replaced with affine Springer fibers $\Fl_{\psi}$ attached to certain elements $\psi\in\frg\lr{t}$ called {\em homogeneous} (see \S\ref{ss:homog}). We will construct a moduli space $\cM_{\psi}$ of Higgs bundles with the following features:
	\begin{enumerate}
		\item $\cM_{\psi}$ is a smooth algebraic space over $k$ with a canonical symplectic form.
		\item There is a Hitchin map $f: \cM_{\psi}\to \cA_{\psi}$ that is a completely integrable system. It is proper when $\psi$ is elliptic.
		\item There are compatible $\Gm$-actions on $\cM_{\psi}$ and $\cA_{\psi}$, (compute the weight of the symplectic form), contracting $\cA_{\psi}$ to the point $a_{\psi}$. The central fiber $f^{-1}(a_{\psi})=\cM_{a_{\psi}}$ is homeomorphic to the affine Springer fiber $\Fl_{\psi}$. 
	\end{enumerate}
	More precise statements will be given in Theorem \ref{th:geom M}. We will give three constructions of $\cM_{\psi}$ each having its own advantages in proving certain geometric properties.

	\subsection{Homogeneous affine Springer fibers}\label{ss:homog} Let $\frg\lr{t}=\frg\ot k\lr{t}$ be the loop Lie algebra. Let $G\lr{t}$ be the loop group of $G$ and $G\tl{t}$ be the positive loop group so that $(G\lr{t})(k)=G(k\lr{t})$ and $(G\tl{t})(k)=G(\tl{t})$. Then $G\lr{t}$ acts on $\frg\lr{t}$ by the adjoint action. Let $\fra=\frg\sslash G$ be the adjoint quotient, and $\fra\lr{t}=\fra(k\lr{t})$. 
	
	Let $\Grot$ be the one-dimensional torus acting on $\frg\lr{t}$ by loop rotation $\Grot\ni s: A(t)\mapsto A(st)$. Let $\Gdil$ be the one-dimensional torus acting on $\frg\lr{t}$ by dilation $\Gdil\ni s: A(t)\mapsto sA(t)$. The action of $\Grot\times \Gdil$ on $\frg\lr{t}$ induces an action on $\fra\lr{t}$. 
	
	Recall from \cite[Definition 3.1.2]{OY} that a regular semisimple element $a\in \fra\lr{t}$ is called {\em homogeneous of slope $\nu=d/m\in \QQ$} (where $d\in \ZZ$, $m\in \NN$ in lowest terms) if it is fixed by the action of the subtorus $\Gm(\nu)\subset \Grot\times\Gdil$ given by $\{(s^{-m}, s^{d})|s\in \Gm\}$. A regular semisimple element $\psi\in \frg\lr{t}$ is called {\em homogeneous of slope $\nu=d/m$} if its image in $\fra\lr{t}$ is. In other words, $\psi(t)\in\frg\lr{t}$ is homogeneous of slope $\nu=d/m$ if and only if it is regular semisimple, and that $s^{-d}\psi(s^{m}t)$ is in the same $G\lr{t}$-orbit of $\psi(t)$ for all $s\in k^{\times}$.
	
	Fix a Borel subgroup $B_{0}\subset G$ containing $T_{0}$, and let $\bI_{0}\subset G\tl{t}$ be the corresponding Iwahori subgroup. Let $\Fl=G\lr{t}/\bI_{0}$ be the affine flag variety of $G$. For $\psi\in\frg\lr{t}$  homogeneous of slope $\nu=d/m\ge0$, following \cite{KL}, we may consider its affine Springer fiber $\Fl_{\psi}$
	\begin{equation*}
		\Fl_{\psi}=\{g\bI_{0}\in\Fl|\Ad(g^{-1})\psi\in \Lie \bI_{0}\}.
	\end{equation*}
	This is an ind-scheme that is a union of projective schemes over $k$. It is an affine analogue of Springer fibers.

	\begin{exam} In the case $G=\SL_{n}$, consider the slope $\nu=d/n$, where $d\ge1$ is coprime to $n$. The element
		\begin{equation*}
			\psi=\left(\begin{array}{ccccc}  & 1  \\ && 1 \\ && &\ddots \\ &&&& 1 \\ t \end{array}\right)^{d}
		\end{equation*}
		is homogeneous of slope $\nu=d/n$. The affine Springer fiber $\Fl_{\psi}$ has been studied by Lusztig and Smelt \cite{LS}. It classifies chains of lattices in $k\lr{t}^{n}$ stable under $\psi$. We may reinterpret $\Fl_{\psi}$ as classifying chains of fractional ideals of the ring $k\tl{t,y}/(y^{n}-t^{d})$.
	\end{exam}

	\subsection{Moy-Prasad filtration}\label{ss:MP}
	
	By \cite[Theorem 3.2.5]{OY}, a homogeneous element $\psi\in \frg\lr{t}$ of slope $\nu=d/m$ exists if and only if $m$ is a regular number for the Weyl group $W_{0}$,  i.e., $m$ is the order of a regular element in $W_{0}$ in the sense of Springer \cite{Springer} (we will describe the conjugacy class of this regular element in Remark \ref{r:w}). Moreover, by \cite[3.3]{OY}, any such $\psi$  can be conjugated under $LG$ to an element of the following form. Let $x_{m}\in \xcoch(T_{0})_{\RR}$ be the point in the standard alcove (corresponding to $\bI_{0}$) that is in the same affine Weyl group orbit as $\r^{\vee}/m$. Let $\bP_{0}\subset G\lr{t}$ be the standard parahoric subgroup defined by  $x_{m}$ (as a point in the apartment  of $T_{0}$ in the building of $G\lr{t}$). 
	
	The point $x_{m}$ defines a Moy-Prasad grading on $\frg[t,t^{-1}]$
	\begin{equation}\label{Zgr}
		\frg[t,t^{-1}]=\bigoplus_{i\in\ZZ}\frg[t,t^{-1}]_{i/m}.
	\end{equation}
	Here $\frg[t,t^{-1}]_{i/m}$ is spanned by the affine root spaces of $\frg\lr{t}$ for the affine roots $\a+n$ such that $\a(x_{m})+n=i/m$. The Lie algebra of $\bP_{0}$ is the $t$-adic completion of $\bigoplus_{i\ge0}\frg\lr{t}_{i/m}$.
	
	Then any homogeneous element $\psi$ of slope $\nu=d/m$ can be $G^{\ad}\lr{t}$-conjugated into $\frg[t,t^{-1}]_{d/m}$. Below we assume $m$ is a regular number for $W_{0}$, $d>0$ is prime to $m$ and $\psi\in \frg[t,t^{-1}]_{d/m}$.  
	
	\subsection{The curve and the parahoric subgroup}
	Let $X=\PP^{1}_{k}$ with affine coordinate $t$. Then $t$ is a uniformizer at $0\in \PP^{1}$,  and $\t=t^{-1}$ is a uniformizer at $\infty\in \PP^{1}$. We identify the loop group $G\lr{t}$ with the loop group of $G$ at $0\in X$.

	We also have the loop group $G\lr{\t}$ at $\infty\in X$. For $i\in \ZZ$, let
	\begin{equation*}
		\frg\lr{\t}_{\le i/m}=\wh{\bigoplus}_{j\le i}\frg[t,t^{-1}]_{j/m}
	\end{equation*}
	where $\wh{\op}$ denotes the $\t$-adic completion of the direct sum. Then $\frg\lr{\t}_{\le i/m}$ is a $k\lr{\t}$-lattice in $\frg\lr{\t}$.

	Let $\bP_{\infty}\subset G\lr{\t}$ be the parahoric subgroup whose Lie algebra is $\frg\lr{\t}_{\le0}$. Let $\bP_{\infty}(\frac{i}{m})\subset \bP_{\infty}$ be the Moy-Prasad subgroups of $\bP_{\infty}$: its Lie algebra is $\frg\lr{\t}_{\le -i/m}$. Let $\bP^{+}_{\infty}=\bP_{\infty}(\frac{1}{m})$ be the pro-unipotent radical of $\bP_{\infty}$.
	
	In particular, $\psi\in \frg[t,t^{-1}]_{d/m}$ is viewed as a linear character of $\frg\lr{\t}_{\le -d/m}$.

	\subsection{The centralizer of $\psi$}\label{ss:C}
	Let $C$ be the torus over $X\bs\{0,\infty\}$ that is the centralizer of $\psi$ (which is a regular semisimple section of $\frg$ over $X\bs\{0,\infty\}$) under $G$. Note that $C$ is not necessarily split; it becomes split over the $\mu_{m}$-cover of $X\bs\{0,\infty\}=\Gm$, with monodromy given by a regular element in $W$ of order $m$. Let $\bC_{\infty}\subset C\lr{\t}$ be the unique parahoric subgroup, and $\bC^{+}_{\infty}$ be the pro-unipotent radical of $\bC_{\infty}$. 
	
	The grading \eqref{Zgr} on $\frg[t,t^{-1}]$ restricts to a grading on $\frc[t,t^{-1}]$, the global sections of the sheaf of Lie algebras $\Lie C$ over $X\bs \{0,\infty\}$
	\begin{equation*}
		\frc[t,t^{-1}]=\bigoplus_{i\in\ZZ}\frc[t,t^{-1}]_{i/m}
	\end{equation*}
	where $\frc[t,t^{-1}]_{i/m}\subset \frg[t,t^{-1}]_{i/m}$ are elements commuting with $\psi$.  
	
	Let $\frc\lr{\t}$ be the $\t=t^{-1}$-adic completion of $\frc[t,t^{-1}]$. Let $\frc\lr{\t}_{\le i/m}=\frg\lr{\t}_{\le i/m}\cap \frc\lr{\t}$.  Then $\Lie\bC_{\infty}=\frc\lr{\t}_{\le 0}$, and $\Lie \bC_{\infty}^{+}=\frc\lr{\t}_{\le -1/m}$.

	\begin{remark}\label{r:w}
		Specializing both sides of \eqref{Zgr} at $t=1$, each $\frg[t,t^{-1}]_{i/m}$ can be identified with a subspace of $\frg$. Thus we get a $\ZZ/m\ZZ$ grading of $\frg$
		\begin{equation}\label{Zmgr}
			\frg=\bigoplus_{i\in\ZZ/m\ZZ}\frg_{i/m}.
		\end{equation}
		Recall $x_{m}$ is the barycenter of $\bP_{0}$. Write $x_{m}=\xi/m$ where $\xi\in\xcoch(T^{\ad}_{0})$. View $\xi$ as a homomorphism $\xi: \Gm\to T^{\ad}_{0}$. 
		Then \eqref{Zmgr} is the grading obtained by the adjoint action of $\mu_{m}$ via $\xi|_{\mu_{m}}$. Note that $\xi_{\mu_{m}}$ is $W_{0}$-conjugate to $\r^{\vee}|_{\mu_{m}}$. Evaluating at $t=1$, $\frc[t,t^{-1}]_{i/m}$ is identified with a subspace $\frc_{i/m}\subset \frg_{i/m}$ depending only on $i/m\mod\ZZ$. Their sum
		\begin{equation*}
			\frt:=\op_{i\in \ZZ/m\ZZ}\frc_{i/m}\subset \frg
		\end{equation*}
		is a Cartan subalgebra of $\frg$ stable under the $\ZZ/m\ZZ$-grading \eqref{Zmgr}. Indeed, let $\ov\psi\in \frg_{d/m}$ be the image of $\psi$, then $\frt$ is the Lie algebra of the maximal torus $T:=C_{G}(\ov\psi)\subset G$, the fiber of $C$ over $1\in X$. Let $\z_{m}\in \mu_{m}$ be a primitive $m$-th root of unity, then $\Ad(\xi(\z_{m}))$ (whose eigenvalues on $\frg$ gives the $\ZZ/m\ZZ$-grading) normalizes $T$, hence determines an element $w=w(\z_{m})$ in the Weyl group $W=W(G,T)$ that is regular of order $m$. Different choices of $\z_{m}$ yields conjugate $w(\z_{m})$. This defines a regular conjugacy class in $W$ of order $m$.
	\end{remark}

	\subsection{Construction of $\cM_{\psi}$}\label{ss:cons1}
	
	Let $D_{0}=\Spec k\tl{t}, D_{0}^{\times}=\Spec k\lr{t}$ and $D_{\infty}=\Spec k\tl{\t}, D_{\infty}^{\times}=\Spec k\lr{\t}$. 
	
	Let $\cM_{\psi}$ be the moduli stack parametrizing pairs $(\cE, \ph)$ where
	\begin{itemize}
		\item $\cE$ is a $G$-bundle over $X$ with $\bK_{\infty}:=\bP_{\infty}(\frac{d}{m})\bC^{+}_{\infty}$-level structure at $\infty$ and $\bI_{0}$-level structure at $0$. We denote by $\Ad(\cE)$ the vector bundle over $X\bs \{0,\infty\}$ associated to the adjoint representation $\frg$ of $G$.
		\item $\ph$ is a section of $\Ad(\cE)\ot\om_{X\bs \{0,\infty\}}$ satisfying the following conditions: 
		\begin{enumerate}
			\item[(i)] After choosing a trivialization of $\cE|_{D_{\infty}}$ together with its $\bK_{\infty}$-level structure, we require
			\begin{equation*}
				\ph|_{D_{\infty}^{\times}}\in (\psi+\frg\lr{\t}_{\le0})d\t/\t.
			\end{equation*}
			Note that the right side is invariant under the adjoint action by $\bK_{\infty}$, therefore this condition is independent of the trivialization of $\cE|_{D_{\infty}}$.
			\item[(ii)] After  choosing a trivialization of $\cE|_{D_{0}}$ together with its $\bI_{0}$-level structure, we require
			\begin{equation*}
				\ph|_{D^{\times}_{0}}\in \Lie(\bI^{+}_{0})dt/t.
			\end{equation*}
		\end{enumerate}
	\end{itemize}

	\subsection{Hitchin base}
	The Hitchin base $\cA_{\psi}$ is defined as follows. Fix homogeneous generators $f_{i}\in k[\frg]^{G}$ of degree $d_{i}$, $1\le i\le r$, which give an isomorphism
	\begin{equation*}
		(f_{1},\cdots, f_{r}): \fra=\frg\sslash G\isom \AA^{r}.
	\end{equation*}
	We identify $\om_{X}(0+\infty)$ with $\cO_{X}$ using $\frac{dt}{t}$. In particular, $f_{i}(\psi)$ is a global section of $\cO([\frac{dd_{i}}{m}]\cdot\infty )$ (the twisting means pole order at $\infty$). Let $\cA_{\psi}\subset \prod_{i=1}^{r}\G(\PP^{1}, \cO([\frac{dd_{i}}{m}]\cdot\infty))$ be the subspace of sections $a=(a_{i})_{1\le i\le r}$ such that for each $i=1,\cdots, r$
	\begin{itemize}
		\item $a_{i}(0)=0$;
		\item $a_{i}\equiv f_{i}(\psi)\mod \t^{-\frac{d(d_{i}-1)}{m}}$ near $\infty$.
	\end{itemize}
	In other words, if we identify $\G(\PP^{1}, \cO([\frac{dd_{i}}{m}]\cdot\infty))$ with polynomials in $t$, then $f_{i}(\psi)$ is a monomial of degree $dd_{i}/m$ if $dd_{i}/m\in\ZZ$ and zero otherwise, and $a_{i}$ is a polynomial of the form
	\begin{equation*}
		a_{i}(t)=f_{i}(\psi)+\sum_{j=1}^{[d(d_{i}-1)/m]}a_{i,j}t^{j}.
	\end{equation*}

	\begin{lemma} Taking a Higgs bundle $(\cE,\ph)\in \cM_{\psi} $ to $(f_{i}(\ph))_{1\le i\le r}$ defines a map
		\begin{equation*}
			f: \cM_{\psi}\to \cA_{\psi}. 
		\end{equation*}
	\end{lemma}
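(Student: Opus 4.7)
The plan is to define $a_i := f_i(\phi')$ where $\phi' := \ph/(dt/t) \in \Gamma(\Gm, \Ad(\cE))$ using the trivialization $\omega_X(0+\infty) \cong \cO_X$ by $dt/t$. By $G$-invariance of $f_i$, this is independent of the local trivializations of $\cE$ chosen at $0$ and $\infty$, so $a_i$ descends to a well-defined regular function on $\Gm$. It then suffices to verify the three conditions defining $\cA_\psi$: regularity at $0$ with $a_i(0)=0$, regularity at $\infty$ with pole order at most $\lfloor dd_i/m\rfloor$, and the congruence $a_i \equiv f_i(\psi) \pmod{\t^{-\lfloor d(d_i-1)/m\rfloor}}$ near $\infty$.

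For the behavior at $0$, condition (ii) gives $\phi'|_{D_0^\times} \in \Lie\bI_0^+$, a pro-nilpotent Lie algebra whose reduction mod $t$ lands in the nilpotent radical $\frn_0 \subset \Lie B_0$. Hence $\phi'(0)$ is nilpotent, and since each $f_i \in k[\frg]^G$ is homogeneous of positive degree and thus vanishes on the nilpotent cone, $a_i$ extends regularly to $t=0$ with $a_i(0)=0$.

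The heart of the argument is a filtration estimate for $G$-invariants with respect to the Moy--Prasad grading \eqref{Zgr}. I would establish: for integers $j_1, \dots, j_{d_i}$ and elements $X^{(l)} \in \frg[t,t^{-1}]_{j_l/m}$, the polarization $F_i(X^{(1)}, \dots, X^{(d_i)}) \in k[t,t^{-1}]$ of $f_i$ is either zero or a monomial in $t$ of degree $(\sum_l j_l)/m$. Expanding each $X^{(l)} = \sum_\a X_{\a,l}\, t^{n_{\a,l}}$ with $\a(\xi) + m n_{\a,l} = j_l$, the $\Ad(\xi(s))$-invariance of $f_i$ forces only tuples satisfying $\sum_l \a_l(\xi) = 0$ to contribute, on which the total $t$-degree equals $(\sum_l j_l)/m$. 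Passing to $\t$-adic completions yields
\begin{equation*}
F_i(F_{j_1/m}, \dots, F_{j_{d_i}/m}) \subset \t^{-\lfloor(\sum_l j_l)/m\rfloor} k\tl{\t}, \qquad F_{j/m} := \frg\lr{\t}_{\le j/m}.
\end{equation*}

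With this estimate, condition (i) at $\infty$ writes $\phi'|_{D_\infty^\times} \in \psi + F_0$ (up to a sign from $d\t/\t = -dt/t$ that can be absorbed into the choice of invariants without changing $\cA_\psi$). Applying the estimate with all $d_i$ arguments in $F_{d/m}$ gives $a_i \in \t^{-\lfloor dd_i/m\rfloor} k\tl{\t}$, so $a_i \in \Gamma(\PP^1, \cO(\lfloor dd_i/m\rfloor \cdot \infty))$. Writing $\phi' = \psi + \eta$ with $\eta \in F_0$ and polarizing,
\begin{equation*}
a_i - f_i(\psi) = \sum_{k=1}^{d_i} \binom{d_i}{k}\, F_i(\underbrace{\psi, \dots, \psi}_{d_i - k},\, \underbrace{\eta, \dots, \eta}_{k}),
\end{equation*}
each summand has $k \ge 1$ arguments in $F_0$ and $d_i - k$ in $F_{d/m}$, so by the estimate it lies in $\t^{-\lfloor(d_i-k)d/m\rfloor} k\tl{\t} \subseteq \t^{-\lfloor(d_i-1)d/m\rfloor} k\tl{\t}$, giving the required congruence. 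The main obstacle is the filtration estimate above, a compatibility lemma for Chevalley invariants with the Moy--Prasad grading; once it is in hand, the remaining verifications are formal.
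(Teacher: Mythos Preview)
Your proof is correct. The difference from the paper is in how the pole estimate at $\infty$ is obtained. The paper passes to the ramified cover $\frg\lr{\t^{1/m}}$ and conjugates by an element of $G\lr{\t^{1/m}}$ so that the Moy--Prasad grading becomes the standard grading by powers of $\t^{1/m}$; once $\phi'$ takes the form $x\t^{-d/m}+\sum_{j\ge0}\phi_j\t^{j/m}$, the estimate is immediate from the homogeneity of $f_i$. You instead stay on $\frg\lr{\t}$ and prove the filtration estimate directly from $T_0^{\ad}$-invariance of the polarization $F_i$: only tuples of root-space factors whose weights sum to zero contribute, which forces $\sum_l \a_l(\xi)=0$ and hence pins down the $t$-degree as $(\sum_l j_l)/m$. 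The two arguments are equivalent in content---the conjugation in $G\lr{\t^{1/m}}$ is exactly what converts the Moy--Prasad grading into the standard one---but yours has the minor advantage of avoiding the ramified cover and of making the compatibility of the Chevalley invariants with the Moy--Prasad filtration explicit as a standalone lemma; the paper's route is a shade slicker once the conjugation is granted. Both treatments handle the sign coming from $d\t/\t=-dt/t$ informally, as you note.
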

	\begin{proof}
		Since the residue of $\ph$ at $0$ is nilpotent, $f_{i}(\ph)$ vanishes at $0$. To compute the pole order of $f_{i}(\ph)- f_{i}(\psi)$, we choose a trivialization of $\cE|_{D_{\infty}}$ so that $\ph\in (\psi+\frg\lr{\t}_{\le0})d\t/\t$. Write $\ph=(\psi+\th)d\t/\t$ for some $\th\in\frg\lr{\t}_{\le0}$. Inside $\frg\lr{\t^{1/m}}$ we can $G\lr{\t^{1/m}}$-conjugate the grading $\frg\lr{\t}_{i/m}$ (extended $k\lr{\t^{1/m}}$-linearly to $\frg\lr{\t^{1/m}}$) to the standard one given by powers of $\t^{1/m}$, i.e., $\frg\ot \t^{i/m}$. After this conjugation, $\ph$ becomes $\ph'=(x\t^{-d/m}+\sum_{j\ge0}\ph_{j}\t^{j/m})d\t/\t$, for some regular semisimple $x\in \frg$ such that $x\t^{-d/m}$ is in the same $G\lr{\t^{1/m}}$-orbit of $\psi$, and $\ph_{j}\in\frg$ for $j\ge0$. Then it is clear that $f_{i}(\ph)=f_{i}(\ph')$ has leading term $f_{i}(x)\t^{-dd_{i}/m}=f_{i}(\psi)$, with other terms starting with $\t^{-(d_{i}-1)d/m}$.
				\end{proof}

		\subsection{The $\Gm(\nu)$-actions}

		\sss{Action on $\Fl_{\psi}$}\label{sss:Gm action Fl} The one-dimensional torus $\Grot$ acts on $k\lr{t}$ by scaling the parameter $t$. We denote the action of $s\in \Grot$ by $\rot(s)$.
		
		Recall the barycenter $x_{m}=\xi/m$ that gives the parahoric subgroup $\bP_{\infty}$, where $\xi\in \xcoch(T^{\ad}_{0})$.

		We have a $\Gm(\nu)$-action on $\Fl_{\psi}$, where $(s^{-m}, s^{d})\in \Gm(\nu)$ acts by
		\begin{equation*}
		s: g\bI_{0}\mapsto \rot(s^{-m})\Ad(\xi(s^{-1}))g\bI_{0}.
		\end{equation*}

		\sss{Action on $\cM_{\psi}$} Let $\Grot$ act on $X=\PP^{1}$ by scaling the coordinate $t$. We denote the action of $s\in \Grot$ on $X$ by $\rot(s)$.
		Note that for any $s\in \Gm$,
		\begin{equation*}
			s^{d}\cdot \rot(s^{-m})(\Ad(\xi(s^{-1}))\psi)=\psi.
		\end{equation*}
		Since $\rot(s^{-m})\Ad(\xi(s^{-1}))$ fixes $\psi$, it normalizes $\bC_{\infty}$ and $\bC^{+}_{\infty}$. Clearly $\Grot\times T^{\ad}$ normalizes $\bP_{\infty}(\frac{i}{m})$ for all $i$, therefore $\rot(s^{-m})\Ad(\xi(s^{-1}))$ normalizes $\bK_{\infty}=\bP_{\infty}(\frac{d}{m})\bC^{+}_{\infty}$. Similarly, the action of $s^{d}\cdot \rot(s^{-m})\Ad(\xi(s^{-1}))$ stabilizes  $\psi+\frg\lr{\t}_{\le0}\subset \frg\lr{\t}$. Therefore we get a $\Gm(\nu)$-action on $\cM_{\psi}$ with $(s^{-m}, s^{d})\in \Gm(\nu)$ sending $(\cE,\ph)\in \cM_{\psi}$ to $(\cE', \ph')$ defined as follows.  First let  $\cE''$ be the $G$-bundle $\rot(s^{-m})^{*}\cE$ with $\bK''_{\infty}=\rot(s^{-m})\bK_{\infty}$-level at $\infty$ and $\bI_{0}$-level at $0$. Since $\Ad(\xi(s^{-1}))\bK''_{\infty}=\bK_{\infty}$, the action of $\Ad(\xi(s^{-1}))$ on $G\lr{\t}$ induces an equivalence between the groupoids of $G$-bundles with $\bK''_{\infty}$-level and with $\bK_{\infty}$-level at $\infty$. This turns $\cE''$ into a $G$-bundle $\cE'$ with $\bK_{\infty}$-level at $\infty$ and still $\bI_{0}$-level at $0$. Finally $\ph'=s^{d}\textup{rot}(s^{-m})^{*}\ph$.

		\sss{Action on $\cA_{\psi}$}
		The torus $\Gm(\nu)$ also acts on $\cA_{\psi}$ so that $(s^{-m},s^{d})$ sends $(a_{i}(t))_{i}$ to $(s^{dd_{i}}a_{i}(s^{-m}t))_{i}$. This action contracts $\cA_{\psi}$ to the unique fixed point $a_{\psi}=(f_{i}(\psi))_{1\le i\le r}\in \cA_{\psi}$. 
		
		\subsection{Main results on $\cM_{\psi}$}		
		The main geometric results on $\cM_{\psi}$ are:
		\begin{theorem}\label{th:geom M} For a homogeneous element $\psi\in \frg\lr{t}$ of slope $\nu=d/m$, the following hold.
			\begin{enumerate}
				\item\label{th part:M sm} The stack $\cM_{\psi}$ is a smooth algebraic space over $k$ of dimension $\frac{d}{m}|\Phi|-r+\dim \frt^{w}$ (where $w$ is a regular element in $W$ of order $m$ defined in \S\ref{ss:MP}).  
				\item\label{th part:M symp} $\cM_{\psi}$ carries a canonical symplectic structure of weight $d$ under the $\Gm(\nu)$-action.
				\item \label{th part:int sys}The map $f: \cM_{\psi}\to \cA_{\psi}$ is a $\Gm(\nu)$-equivariant completely integrable system (i.e., fibers of $f$ are Lagrangians).
				\item\label{th part:ASF vs HF} There is a natural map $\Fl_{\psi}\to \cM_{a_{\psi}}=f^{-1}(a_{\psi})$ which is a universal homeomorphism.
				\item \label{th part:f proper} When $\psi$ is elliptic (equivalently, $w$ is elliptic), $f$ is proper. 
			\end{enumerate}
		\end{theorem}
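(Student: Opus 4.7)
The plan is to prove the five parts in sequence, leveraging the three constructions of $\cM_\psi$ mentioned in \S\ref{ss:cons1} (in particular, the Hamiltonian reduction description alluded to in \S\ref{ss:red}).

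For part \eqref{th part:M sm} (smoothness and dimension), I would analyze the deformation complex at a Higgs pair $(\cE,\ph)$. With the prescribed level structures at $0$ and $\infty$, infinitesimal automorphisms and deformations are governed by the hypercohomology of a two-term complex $[\Ad(\cE)_{\text{level}} \xrightarrow{\ad(\ph)} \Ad(\cE)_{\text{level}}\otimes\omega_X(0+\infty)]$, where the sheaves incorporate the level conditions (Iwahori at $0$; $\bK_\infty$-condition at $\infty$) and the twist encodes the allowed pole of $\ph$. Smoothness will follow from the vanishing of $\upH^0$, which in turn follows from the regular semisimplicity of $\psi$ at $\infty$ combined with the nilpotency condition at $0$ (any infinitesimal automorphism must commute with $\psi$ over the formal disk at $\infty$, hence lies in $\Lie\bC_\infty^+$, and together with the Iwahori condition at $0$ forces it to vanish). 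The dimension count then comes from a Riemann–Roch computation: each affine root space of $\frg\lr{t}$ of slope in $[0,d/m)$ contributes, giving $\frac{d}{m}|\Phi|$, corrected by $-r+\dim\frt^w$ from the centralizer of $\psi$ at $\infty$.

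For part \eqref{th part:M symp}, I would realize $\cM_\psi$ as a Hamiltonian reduction of $T^*\Fl$ (or rather a twisted version) by the group $\bJ_\infty$ with additive character $\psi$, as indicated in \S\ref{ss:red}. The symplectic form is inherited from the canonical symplectic form on the cotangent bundle, and the moment map condition recovers exactly the boundary behaviour defining $\cM_\psi$. The $\Gm(\nu)$-weight of the symplectic form is $d$ because the action scales the cotangent fiber with weight $d$ (coming from $\ph \mapsto s^d \rot(s^{-m})^*\ph$) while preserving the base up to isomorphism.

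For part \eqref{th part:int sys}, I would check that the coordinates $f_i(\ph)$ Poisson-commute; this is a formal consequence of the fact that for each geometric point $x\in X\bs\{0,\infty\}$, the functions $f_i(\ph(x))$ are $\Ad$-invariant and the Poisson bracket on $\Ad(\cE)$-valued sections is the fiberwise Lie bracket. $\Gm(\nu)$-equivariance is immediate from the definition of the actions on $\cM_\psi$ and $\cA_\psi$. The Lagrangian property of the fibers then follows from the dimension count $\dim\cA_\psi = \frac{1}{2}\dim\cM_\psi$, which I would verify by counting coefficients $a_{i,j}$ and comparing with the formula in part \eqref{th part:M sm}, using that $\sum_i(d_i-1)=|\Phi|/2$ and that the number of $i$ with $m\mid dd_i$ equals $\dim\frt^w$ by Springer's theory of regular elements.

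For part \eqref{th part:ASF vs HF}, the map sends $g\bI_0\in\Fl_\psi$ to the Higgs bundle obtained by gluing the trivial bundles on $X\bs\{0\}$ and on $D_0$ via $g$, with Higgs field $\psi\,\frac{dt}{t}$; the conditions at $0$ and $\infty$ translate respectively into $\Ad(g^{-1})\psi\in\Lie\bI_0$ and the irregular boundary condition at $\infty$. To check this is a universal homeomorphism onto the central fiber $f^{-1}(a_\psi)$, I would use the $\Gm(\nu)$-contraction of $\cA_\psi$ onto $a_\psi$ together with the contraction of $\cM_\psi$ onto the central fiber, and then establish the bijection on $k$-points via the Kazhdan–Lusztig / Goresky–Kottwitz–MacPherson type description of Hitchin fibers in terms of lattices (realizing any Higgs bundle in the central fiber by trivializing it along $X\bs\{0\}$ and at $\infty$ simultaneously).

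For part \eqref{th part:f proper}, I would use the valuative criterion. Given a punctured disk family of points in $\cM_\psi$ whose Hitchin image extends, one must extend the Higgs bundle itself. By the rigidity provided by the level structure at $\infty$ (which fixes $\ph$ modulo the bounded part) and the Iwahori structure at $0$, such extensions exist and are unique provided the spectral data stays in a compact locus. Ellipticity of $\psi$ (equivalently of $w$) ensures $\cA_\psi$ is finite-type and the spectral curves stay within the compact support.

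The main obstacle will be part \eqref{th part:ASF vs HF}: producing an inverse to the map $\Fl_\psi\to\cM_{a_\psi}$ requires trivializing an arbitrary Higgs pair in the central fiber in a way compatible with both the Iwahori level at $0$ and the $\bK_\infty$-level at $\infty$, and controlling the non-reduced structure so that the map is not only a bijection on points but a universal homeomorphism. This is where the interplay between the global moduli problem and the loop-group description of affine Springer fibers is most delicate.
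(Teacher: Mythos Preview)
Your outline has the right shape but misses key technical mechanisms in three places.

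\textbf{Part \eqref{th part:M sm}.} You plan to deduce smoothness from vanishing of infinitesimal automorphisms, but that only gives $\cohog{-1}{X,\cK}=0$; you still need the obstruction space $\cohog{1}{X,\cK}$ to vanish. The paper's argument does not do this directly: it exhibits a quasi-isomorphism $\iota:\cK\hookrightarrow\cK^{\vee}$ to the termwise Serre dual (checking that $\coker(\iota)$, supported at $\infty$, is acyclic because $\ad(\psi)$ is an isomorphism on each Moy--Prasad graded piece). This self-duality pairs $\cohog{1}{X,\cK}$ perfectly with $\cohog{-1}{X,\cK}$, so the vanishing of the latter forces the vanishing of the former. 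Your proposed reasoning ``commutes with $\psi$ at $\infty$, lies in $\Lie\bI_0$ at $0$, hence zero'' is not by itself a proof that $\cohog{-1}{}=0$ either; the paper instead observes $\Aut(\cE,\ph)$ is both diagonalizable (subgroup of a maximal torus, by Ng\^o) and pro-unipotent (subgroup of $\bK_\infty$), hence trivial.

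\textbf{Part \eqref{th part:ASF vs HF}.} You correctly flag this as the hardest step, but the approach you sketch (bijection on $k$-points plus $\Gm$-contraction) does not establish universal homeomorphism. The paper instead uses Ng\^o's product formula: one builds a map
\[
\Pic_{C}(\wh 0;\wh\infty)\twtimes{C\lr{t}\times C\lr{\t}}(\Fl_\psi\times\cX_{\psi,\infty})\to \cM_{a_\psi}
\]
and invokes the Bouthier--\v{C}esnavi\v{c}ius extension of Ng\^o to see this is a universal homeomorphism (working over seminormal strictly Henselian local $k$-algebras). One then shows $C\lr{\t}$ acts transitively on $\cX_{\psi,\infty}$ (an inductive Moy--Prasad argument, Lemma~\ref{l:C trans}) and that $\Pic_C(\wh 0;\bC^+_\infty)$ is a transitive $C\lr{t}$-space, collapsing the product to $\Fl_\psi$. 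None of this is visible from the ``trivialize on $X\setminus\{0\}$ and glue'' picture alone.

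\textbf{Part \eqref{th part:f proper}.} A bare valuative-criterion argument is not what the paper does and would be hard to make work. The actual route is: reduce via the fiber-product description $\cM_\psi\cong\cM^\dagger_\psi\times_{\cA^\dagger_\psi}\cA_\psi$ (Proposition~\ref{p:Cart Mda}) to properness of $f^\dagger$, then compare $\cM^\dagger_\psi$ with a parahoric Hitchin space $\cM^{\ddagger}$ whose properness over the elliptic locus is known from \cite{OY}; ellipticity of $\psi$ ensures the map $\cM^\dagger_\psi\to\cM^{\ddagger}\times_{\cA^{\ddagger}}\cA^\dagger_\psi$ is finite and that every point of $\cA^{\ddagger}$ is elliptic (via the $\Gm(\nu)$-contraction).

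For \eqref{th part:M symp} note the reduction is of $T^*\Bun_G(\bJ^1_\infty;\bI_0)$ by $\Ga=\bJ_\infty/\bJ^1_\infty$, not of $T^*\Fl$. For \eqref{th part:int sys} your dimension comparison is close to the paper's Lemma~\ref{l:dimA}, but the Lagrangian property is proved there not by a fiberwise Poisson-bracket computation but by reusing the self-duality $\cK\simeq\cK^\vee$ to show the cotangent map $f^*$ has isotropic image.
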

		
		\begin{remark}Constructions similar to $\cM_{\psi}$ have appeared before. 
			\begin{enumerate}
				\item When $G=\SL_{n}$, Markman \cite{Markman} has constructed a Poisson moduli of meromorphic Higgs bundles for arbitrary curve $X$ and showed that the Hitchin fibration is a completely integrable system (namely generic fibers are Lagrangian in symplectic leaves of maximal rank). If we take $\psi=t^{d}A$ where $d\ge1$ and $A\in\frg$ is regular semisimple, our $\cM_{\psi}$ is a symplectic leaf in Markman's Poisson moduli space. 
				\item For any $G$ and homogeneous $\psi$, Oblomkov and one of the authors \cite{OY} constructed a Poisson moduli of Higgs bundles (on a weighted projective line) with a contracting $\Gm$-action whose central fiber is closely related to $\Fl_{\psi}$.
			\end{enumerate}
			Both of the constructions above are more closely related to the Poisson moduli space $\cM^{\da}_{\psi}$ in \S\ref{ss:cons3}. 
		\end{remark}

		\begin{remark} Consider the case $k=\CC$. With the realization of $\Fl_{\psi}$ as a conical Lagrangian in the symplectic ambient space $\cM_{\psi}$, it makes sense to consider the category $\muSh_{\Fl_{\psi}}(\cM_{\psi})$ of microlocal sheaves on $\cM_{\psi}$ supported on $\Fl_{\psi}$. More precisely, using the realization of $\cM_{\psi}$ as a Hamiltonian reduction of the cotangent bundle of $\Bun_{G}(\bJ^{1}_{\infty}, \bI_{0})$ in \S\ref{ss:red}, one may define $\muSh_{\Fl_{\psi}}(\cM_{\psi})$ to be a full subcategory of sheaves on $\Bun_{G}(\bJ^{1}_{\infty}, \bI_{0})/\Gm(\nu)$ with singular support in $\Fl_{\psi}$. This can be thought of a quantization of $\cM_{\psi}$, or as an affine analogue of modules over $W$-algebras. In \cite{BBAMY} we study this category in the special case where $\psi$ is homogeneous of slope $1$.\end{remark}
		
		We also prove the following cohomological result.
		
		\begin{theorem}\label{th:coho M Fl} The canonical map $\g: \Fl_{\psi}\to  \cM_{a_{\psi}}\incl\cM_{\psi}$ induces an isomorphism on cohomology $\g^{*}: \cohog{*}{\cM_{\psi}}\isom \cohog{*}{\Fl_{\psi}}$. 		
		\end{theorem}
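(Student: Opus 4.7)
The strategy is to exploit the $\Gm(\nu)$-action on $\cM_{\psi}$ established in Theorem \ref{th:geom M} to show that the inclusion $i: \cM_{a_{\psi}}\incl \cM_{\psi}$ is a cohomology isomorphism, and then invoke the universal homeomorphism $\Fl_{\psi}\to \cM_{a_{\psi}}$ of part (4), which automatically induces an isomorphism on cohomology. The problem thus reduces to proving that $i^{*}:\cohog{*}{\cM_{\psi}}\isom \cohog{*}{\cM_{a_{\psi}}}$ is an isomorphism.

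The key step is to extend the $\Gm(\nu)$-action on $\cM_{\psi}$ to a monoid action of $\AA^{1}$, so that the $s\to 0$ limit yields a retraction $r:\cM_{\psi}\to \cM_{a_{\psi}}$. On $\cA_{\psi}$ the extension is immediate from the explicit formula $(a_{i}(t))\mapsto (s^{dd_{i}}a_{i}(s^{-m}t))$. Since $f$ is $\Gm(\nu)$-equivariant and $\cA_{\psi}$ contracts to $a_{\psi}$, any limit in $\cM_{\psi}$, if it exists, necessarily lands in $\cM_{a_{\psi}}$. I would prove existence of the limit directly from the moduli description in \S\ref{ss:cons1}: the action rescales $t$ by $s^{-m}$ and conjugates by $\xi(s^{-1})$, and one checks that both conditions defining $\cM_{\psi}$ --- the requirement $\ph|_{D_{\infty}^{\times}}\in (\psi+\frg\lr{\t}_{\le 0})d\t/\t$ and $\ph|_{D_{0}^{\times}}\in \Lie(\bI_{0}^{+})dt/t$ --- degenerate well as $s\to 0$, using that the relevant Moy--Prasad graded pieces carry nonnegative weights. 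Granted the $\AA^{1}$-extension, a standard contraction/hyperbolic-localization argument (in the complex-analytic case, simply the deformation retract provided by $r$) yields the desired cohomology isomorphism.

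The main obstacle is the non-elliptic case, where $\cM_{\psi}$ is of infinite type (in the simplest examples $\Fl_{\psi}$ is an infinite chain of projective lines, and $\cM_{\psi}$ inherits a similarly unbounded structure) and the Hitchin map $f$ is not proper. The usual finite-type formulation of the contraction principle does not apply directly. To circumvent this, I would exhaust $\cM_{\psi}$ by a $\Gm(\nu)$-stable increasing family of finite-type closed subspaces, apply the contraction principle to each, and pass to the limit. Concretely, one can combine $\Gm(\nu)$-stable finite-type neighborhoods in $\cA_{\psi}$ with bounds on the underlying $G$-bundle (e.g. via the Harder--Narasimhan stratification of $\Bun_{G}$) to produce such an exhaustion. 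Verifying compatibility of the $\AA^{1}$-extension with this exhaustion, and that cohomology commutes with the resulting colimit, constitutes the technically delicate heart of the argument.
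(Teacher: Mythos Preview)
Your plan has a genuine gap at its core. The ``monoid $\AA^1$-action'' you propose does not exist as a morphism $\AA^1\times\cM_{\psi}\to\cM_{\psi}$: although every point of $\cM_{\psi}$ has a limit as $s\to 0$ (this is Lemma~\ref{l:attract} in the paper), the attractor $\cM_{\psi}^{+}$ is the \emph{disjoint union} of the Bia{\l}ynicki--Birula strata, and the map $\cM_{\psi}^{+}\to\cM_{\psi}$ is only a bijection on points, not an isomorphism. Equivalently, the ``limit map'' $r$ is not a morphism. Even set-theoretically, $r$ lands in the fixed locus $\cM_{\psi}^{\Gm(\nu)}$, which is strictly smaller than $\cM_{a_{\psi}}$, and $r$ does not fix $\cM_{a_{\psi}}$ pointwise, so it is not a retraction onto $\cM_{a_{\psi}}$. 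And restriction to the fixed locus is not a cohomology isomorphism once there are multiple strata (think of $\PP^1$ with the standard $\Gm$-action: both fixed points are attractors, but $\cohog{*}{\PP^1}\not\cong\cohog{*}{\{0,\infty\}}$).

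In the elliptic case there is a correct short argument, but it is not the one you wrote: since $f$ is proper and $\cA_{\psi}$ contracts to $a_{\psi}$, the contraction lemma on $\cA_{\psi}$ applied to the $\Gm(\nu)$-equivariant complex $Rf_{*}\Qlbar$ together with proper base change gives $\cohog{*}{\cM_{\psi}}\cong (Rf_{*}\Qlbar)_{a_{\psi}}\cong\cohog{*}{\cM_{a_{\psi}}}$. This is what the paper means when it says the elliptic case is immediate.

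For the general (non-elliptic) case the paper does something substantially more refined, and your sketch misses its main geometric input. The paper sets up a general hyperbolic-localization framework (Proposition~\ref{p:res coho}) and then verifies its hypotheses for $\cM_{\psi}$. The crucial points are: (a) the \emph{attractors} $\frX^{+}_{w}$ are the preimages of the Harder--Narasimhan strata of $\Bun_{G}(\bP_{\infty},\bI_{0})$ (Lemma~\ref{l:attract}), so the unions $\frX^{+}_{\le w}$ give the finite-type \emph{open} exhaustion needed to define $\cohog{*}{\cM_{\psi}}$; and (b) the \emph{repellers} $\frX^{-}_{w}$ are identified with the strata $\Fl_{w,\psi}$ of the affine Springer fiber (Lemma~\ref{l:repel}), so the proper closed unions $X^{-}_{\le w}$ exhaust $\Fl_{\psi}$. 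Braden's theorem then compares the two exhaustions stratum by stratum. The identification of the repellers with $\Fl_{\psi}$ is exactly the mechanism by which $\Fl_{\psi}$ (rather than $\cM_{a_{\psi}}$ or the fixed locus) enters the proof, and it is absent from your proposal. Your suggestion to exhaust by $\Gm(\nu)$-stable finite-type \emph{closed} subspaces goes in the wrong direction: cohomology of $\cM_{\psi}$ is defined as a projective limit over open subspaces, and the attractor-side exhaustion is what produces these naturally.
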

		Here, $\cohog{*}{\cM_{\psi}}$ is defined to be the projective limit of cohomology of finite-type open subspaces; and $\cohog{*}{\Fl_{\psi}}$ is defined to be the projective limit of cohomology of finite-type closed subschemes. When $\psi$ is elliptic, one can use the fact that $f$ is proper and the contraction principle to deduce Theorem \ref{th:coho M Fl} immediately. In general, the proof is more involved and uses hyperbolic localization.

		\subsection{Proof of Theorem \ref{th:geom M}\eqref{th part:M sm}}\label{ss:pf M sm}
		From the well-known properties of moduli stack of $G$-bundles we know that $\cM_{\psi}$ is an algebraic stack locally of finite type over $k$. 
		
		\sss{}\label{triv aut} We show that $\Aut(\cE,\ph)$ is trivial as an algebraic group for any $k$-point $(\cE,\ph)\in \cM_{\psi}$. By \cite[Cor 4.11.3]{Ngo}, $\Aut(\cE,\ph)$ is isomorphic to a subgroup of a maximal torus of $G$, hence diagonalizable (this is proved in the case without level structure but the argument works with level structure). On the other hand, restricting to $D_{\infty}$,  $\Aut(\cE,\ph)$ is a subgroup of the pro-unipotent group $\bK_{\infty}$, hence itself unipotent. Therefore, $\Aut(\cE,\ph)$ is the trivial algebraic group over $k$. This implies that $\cM_{\psi}$ is an algebraic space locally of finite type over $k$.

		\sss{}\label{tang M} We show that $\cM_{\psi}$ is a smooth Deligne-Mumford stack over $k$. 
		
		Let $\cE$ be a $G$-bundle over $X$ with $\bI_{0}$ and $\bK_{\infty}=\bP_{\infty}(\frac{d}{m})\cdot\bC^{+}_{\infty}$ level structures at $0$ and $\infty$ respectively. For a $\bI_{0}$-invariant lattice $\L_{0}\subset \frg\lr{t}$ and a $\bK_{\infty}$-invariant lattice $\L_{\infty}\subset \frg\lr{\t}$, we define $\Ad(\cE; \L_{0}, \L_{\infty})$ to be the subsheaf of $j_{*}\Ad(\cE)$ (where $j:X\bs \{0,\infty\}\incl X$) that is equal to $\Ad(\cE)$ over $\PP^{1}\bs \{0,\infty\}$ and its local sections near $0$ (resp. $\infty$) lies in $\L_{0}$ (resp. $\L_{\infty}$) after trivializing $\cE|_{D_{0}}$ (resp. $\cE|_{D_{\infty}}$).
		
		The tangent complex of $\cM_{\psi}$ at $(\cE,\ph)\in \cM_{\psi}$ is $\cohog{*}{X, \cK}$ where $\cK$ is the two step complex of vector bundles on $X$ placed in degrees $-1$ and $0$:
		\begin{equation*}
			\cK=\cK_{(\cE,\ph)}:=[\Ad(\cE; \Lie\bI_{0}, \frk_{\infty}) \xr{[-,\ph]} \Ad(\cE; \Lie \bI^{+}_{0}, \frg\lr{\t}_{\le0})].
		\end{equation*}
		Here $\frk_{\infty}=\Lie\bK_{\infty}$. The obstruction to the infinitesimal deformations of $(\cE,\ph)$ lies in $\cohog{1}{X,\cK}$ while the Lie algebra of $\Aut(\cE,\ph)$ is $\cohog{-1}{X,\cK}$. To show $\cM_{\psi}$ is smooth Deligne-Mumford, we need to show that $\cohog{1}{X,\cK}$ and $\cohog{-1}{X,\cK}$ vanish.
		
		We consider the complex $\cK^{\vee}=\uHom(\cK,\om_{X}[1])$, obtained by taking the Serre dual of $\cK$ termwise but still placed in degrees $-1$ and $0$: 
		\begin{equation*}
			\cK^{\vee}=[\Ad(\cE; \Lie\bI_{0}, \frg\lr{\t}_{\le -1/m}) \xr{[-,\ph]} \Ad(\cE; \Lie \bI^{+}_{0}, \frk_{\infty}^{\vee})].
		\end{equation*}
		Here we are using the pairing on $\Ad(\cE)$ induced from $\j{\cdot,\cdot}$ on $\frg$; $\frk_{\infty}^{\vee}\subset \frg\lr{\t}$ is the dual lattice of $\frk_{\infty}=\frg\lr{\t}_{\le -d/m}+\frc\lr{\t}_{\le-1/m}$.  Let $\frc\lr{\t}^{\bot}\subset \frg\lr{\t}$ be the orthogonal complement of $\frc\lr{\t}\subset \frg\lr{\t}$ under $\j{\cdot,\cdot}$. Let $\frc\lr{\t}^{\bot}_{\le i/m}=\frc\lr{\t}^{\bot}\cap\frg\lr{\t}_{\le i/m}$. Then
		\begin{equation*}
			\frk_{\infty}^{\vee}=\frc\lr{\t}^{\bot}_{\le (d-1)/m}\op\frc\lr{\t}_{\le 0}.
		\end{equation*}
		In particular, $\frg\lr{\t}_{\le0}\subset \frk_{\infty}^{\vee}$. Hence there is a natural  inclusion $\io: \cK\incl\cK^{\vee}$.  We claim that $\io$ induces a quasi-isomorphism in $D^{b}\Coh(X)$. Indeed, after trivializing $\cE|_{D_{\infty}}$, $\coker (\io)$ is the two-step complex
		\begin{equation*}
			\frg\lr{\t}_{\le -1/m}/\frk_{\infty}\xr{[-,\ph]}\frk_{\infty}^{\vee}/\frg\lr{\t}_{\le0}.
		\end{equation*}
		Moy-Prasad filtration induces filtrations $(\frk_{\infty}^{\vee}/\frg\lr{\t}_{\le0})_{\le j/m}$ and $(\frg\lr{\t}_{\le -1/m}/\frk_{\infty})_{\le j/m}$ on both sides, with associated graded
		\begin{eqnarray*}
			(\frg\lr{\t}_{\le -1/m}/\frk_{\infty})_{j/m}\cong \begin{cases} \frc\lr{\t}^{\bot}_{j/m}\cong \frg_{j/m}/\frc_{j/m},  & -d+1\le j\le -1 \\
				0 & \mbox{otherwise.}
			\end{cases}\\
			(\frk_{\infty}^{\vee}/\frg\lr{\t}_{\le0})_{j/m}\cong \begin{cases} \frc\lr{\t}^{\bot}_{j/m}\cong \frg_{j/m}/\frc_{j/m},  & 1\le j\le d-1 \\
				0 & \mbox{otherwise.}
			\end{cases} 
		\end{eqnarray*}
		The map $[-,\ph]$ sends $(\frg\lr{\t}_{\le -1/m}/\frk_{\infty})_{\le j/m}$ to $(\frk_{\infty}^{\vee}/\frg\lr{\t}_{\le0})_{\le (j+d)/m}$, and the induced map on the associated graded is $\ad(\psi)_{j/m}: \frg_{j/m}/\frc_{j/m}\to \frg_{(j+d)/m}/\frc_{(j+d)/m}$ for $-d+1\le j\le -1$. Since $\psi$ is regular semisimple, $\ad(\psi)_{j/m}$ is an isomorphism. Therefore $\coker(\io)$ is acyclic, hence $\io: \cK\incl\cK^{\vee}$ is a quasi-isomorphism. Therefore $\io$ induces an isomorphism $\cohog{*}{X,\cK}\isom \cohog{*}{X,\cK^{\vee}}$. On the other hand, the complexes $\cohog{*}{X,\cK}$ and $\cohog{*}{X,\cK^{\vee}}$ are linearly dual to each other. Therefore we conclude that there is a perfect pairing between $\cohog{1}{X,\cK}$ and $\cohog{-1}{X,\cK}$, and a perfect pairing on $\cohog{0}{X,\cK}$ which is easily seen to be symplectic. 
		
		By \S\ref{triv aut},  $\cohog{-1}{X,\cK}=\Lie\Aut(\cE,\ph)=0$. Therefore $\cohog{1}{X,\cK}=0$ as well, hence  $\cM_{\psi}$ is a smooth algebraic space. Moreover,  the tangent space $\cohog{0}{X,\cK}$ at every point $(\cE,\ph)$ carries a canonical symplectic form, hence a globally defined non-degenerate 2-form $\Om$ on $\cM_{\psi}$. The fact that $d\Om=0$ will be shown in \S\ref{ss:red} where we identify $\cM_{\psi}$ as a Hamiltonian reduction from a cotangent bundle, which then carries a canonical symplectic form, and it is easy to check that the form coincides with $\Om$.

		\sss{} We compute $\dim\cM_{\psi}$, or $\dim \cohog{0}{X,\cK}$. By the vanishing of $\cohog{\ne0}{X,\cK}$, we have
		\begin{eqnarray}
			\notag \dim \cohog{0}{X,\cK}&=&\deg\Ad(\cE;  \Lie \bI^{+}_{0}, \frg\lr{\t}_{\le0})-\deg\Ad(\cE;  \Lie \bI_{0}, \frk_{\infty})\\
			\label{H0K} &=&\dim_{k}\frg\lr{\t}_{\le 0}/\frk_{\infty}-\dim_{k}\Lie\bI_{0}/\Lie\bI_{0}^{+}=\dim_{k}\frg\lr{\t}_{\le 0}/\frk_{\infty}-r.
		\end{eqnarray}
		
		By construction,  we have
		\begin{equation*}
			\frg\lr{\t}_{\le 0}/\frk_{\infty}\cong \frg_{0}\op\bigoplus_{i=1}^{d-1}\frg_{i/m}/\frc_{i/m}.
		\end{equation*}
		Therefore
		\begin{equation}\label{defect k}
			\dim\frg\lr{\t}_{\le 0}/\frk_{\infty}=\dim \frc_{0}+\sum_{i=0}^{d-1}\frg_{i/m}/\frc_{i/m}.
		\end{equation}
		We consider the roots $\Phi(\frg,\frc)$ for the Cartan $\frc$. The $\ZZ/m\ZZ$-grading on $\frg$ is induced by $w\in W(\frg,\frc)$ regular of order $m$. Now $w$ permutes $\Phi(\frg,\frc)$ freely with $|\Phi|/m$ orbits. Each orbit contributes $1$-dimension to each $\frg_{i/m}/\frc_{i/m}$ for all $i\in\ZZ/m\ZZ$. Therefore $\dim\frg_{i/m}/\frc_{i/m}=|\Phi|/m$ for all $i\in\ZZ$. Using \eqref{defect k}, we see that
		\begin{equation*}
			\dim\frg\lr{\t}_{\le 0}/\frk_{\infty}=\dim \frc_{0}+\frac{d}{m}|\Phi|=\dim\frt^{w}+\frac{d}{m}|\Phi|.
		\end{equation*}
		Combined with \eqref{H0K} we get
		\begin{equation*}
			\dim_{(\cE,\ph)} \cM_{\psi}=\dim \cohog{0}{X,\cK_{(\cE,\ph)}}=\dim \frt^{w}+\frac{d}{m}|\Phi|-r.
		\end{equation*}
		\qed

		\subsection{Proof of Thereom \ref{th:geom M}\eqref{th part:ASF vs HF}}
		Ng\^o's product formula \cite[Prop. 4.15.1]{Ngo} and its extension by Bouthier and Cesnavicius \cite[Theorem 4.3.8]{BC} has an analogue for our level structures which we spell out. 
		
		Define a reduced sub-ind-scheme of $G\lr{\t}/\bK_{\infty}$:
		\begin{equation*}
			\cX_{\psi, \infty}=\{g\bK_{\infty}\in G\lr{\t}/\bK_{\infty}|\Ad(g^{-1})\psi\in \psi+\frg\lr{\t}_{\le0}\}.
		\end{equation*}
		This is an analog of an affine Springer fiber. 
		
		Recall the torus $C$ over $\PP^{1}\bs\{0,\infty\}$ defined in \S\ref{ss:C} as the centralizer of $\psi$.  Extend $C$ to a group scheme $\cC$ on $\PP^{1}$ with parahoric level structures at $0$ and $\infty$. Let $\Pic_{C}(\wh 0; \wh \infty)$ be the moduli space of $\cC$ torsors over $\PP^{1}$ with trivializations on $D_{0}$ and $D_{\infty}$. Then $C\tl{t}$ and $C\tl{\t}$ act on $\Pic_{C}(\wh 0; \wh \infty)$ by changing the trivializations, and these actions extend to actions of  the loop tori $C\lr{t}$ and $C\lr{\t}$. On the other hand, $C\lr{t}$ and $C\lr{\t}$ act on $\Fl_{\psi}$ and $\cX_{\psi, \infty}$ respectively by left translations.
		
		There is a canonical morphism
		\begin{equation}\label{prod Ma}
			\a: \Pic_{C}(\wh 0; \wh \infty)\twtimes{C\lr{t}\times C\lr{\t}}(\Fl_{\psi}\times \cX_{\psi,\infty})\to \cM_{a_{\psi}}
		\end{equation}
		defined as follows. Given a $\cC$-torsor $\cQ$ over $\PP^{1}$ with trivializations on $D_{0}$ and $D_{\infty}$, we get a $G$-torsor $\cE^{\c}=\cQ\twtimes{C}G$ over $\PP^{1}\bs\{0,\infty\}$ with a Higgs field given by $\psi$ and trivializations on $D_{0}$ and $D_{\infty}$. A point $g_{0}\bI_{0}\in \Fl_{\psi}$ gives a $G$-torsor $\cE_{0}$ over $D_{0}$ with $\bI_{0}$-level structure together with a trivialization over $D_{0}^{\times}$. We can glue $\cE_{0}$ with $\cE^{\c}$ along $D^{\times}_{0}$ using the trivializations. Similarly, a point $g_{\infty}\bK_{\infty}\in \cX_{\psi,\infty}$ gives a $G$-torsor $\cE_{\infty}$ over $D_{\infty}$ with $\bK_{\infty}$-level structure together with a trivialization over $D_{\infty}^{\times}$, which we can glue with $\cE^{\c}$ along $D^{\times}_{0}$. This way we have extended $\cE^{\c}$ to a $G$-torsor $\cE$ on $\PP^{1}$ with $\bI_{0}$ and $\bK_{\infty}$-level structures. The Higgs field $\psi$ on $\cE^{\c}$ extends to $\cE$ because of the conditions defining $\Fl_{\psi}$ and $\cX_{\psi, \infty}$. This gives the map $\a$.  The same argument of \cite[Theorem 4.3.8]{BC} shows that $\a$ is a universal homeomorphism: the reason is that for any $(\cE,\ph)\in \cM_{a_{\psi}}(R)$ where $R$ is a seminormal strictly Henselian local $k$-algebra, $(\cE,\ph)|_{\PP^{1}_{R}\bs \{0,\infty\}}$ reduces to a $C$-torsor, and the restriction of any $C$-torsor over $\Spec R\lr{t}$ and $\Spec R\lr{\t}$ must be trivial, as shown in \cite[Theorem 3.2.4]{BC} (using that $m$ is invertible in $k$, hence $C$ splits over a tamely ramified cover of $\Gm$).
		
		By Lemma \ref{l:C trans} below, the action of $C\lr{\t}$ on $\cX_{\psi, \infty}$ is transitive.   The stabilizer of $C\lr{\t}$ at the base point $1\in \cX_{\psi,\infty}$ is $C\lr{\t}\cap \bK_{\infty}=\bC^{+}_{\infty}$. Therefore the action map $C\lr{\t}/\bC^{+}_{\infty}\to \cX_{\psi, \infty}$ is an isomorphism on the reduced structures. This allows us to simplify the left side of \eqref{prod Ma} to
		\begin{equation}\label{prod Ma0}
			\Pic_{C}(\wh 0; \bC_{\infty}^{+})\twtimes{C\lr{t}}\Fl_{\psi}\to \cM_{a_{\psi}}.
		\end{equation}
		Here $\Pic_{C}(\wh 0; \bC_{\infty}^{+})$ is the moduli space of $\cC$-torsors over $\PP^{1}$ with a trivialization on $D_{0}$ and a $\bC_{\infty}^{+}$-level structure at $\infty$. Let $\bC_{0}\subset C\lr{t}$ be the parahoric subgroup. Then $\Pic_{C}(\bC_{0}; \bC_{\infty}^{+})$ is the discrete space $\xcoch(T)_{w}$. Indeed, a computation of tangent space shows that $\Pic_{C}(\bC_{0}; \bC_{\infty}^{+})$ is discrete; the automorphism group of the identity point is trivial hence the automorphism group of all points are trivial since $\Pic_{C}(\bC_{0}; \bC_{\infty}^{+})$ is a Picard groupoid. By \cite[Lemma 16]{Hein} the connected components of $\Pic_{C}(\bC_{0}; \bC_{\infty})$ are canonically indexed by $\xcoch(T)_{w}$, hence the same is true for $\Pic_{C}(\bC_{0}; \bC_{\infty}^{+})$. On the other hand, the Kottwitz map gives an isomorphism $(C\lr{t}/\bC_{0})^{\red}\isom \xcoch(T)_{w}$ (see \cite[Theorem 5.1, step A]{PR}), and $\Pic_{C}(\bC_{0}; \bC_{\infty}^{+})$ is a trivial torsor under $(C\lr{t}/\bC_{0})^{\red}$. Therefore the action of $C\lr{t}$ on $\Pic_{C}(\wh 0; \bC_{\infty}^{+})$ is transitive, and the reduced stabilizer is trivial. Hence the natural map $\Fl_{\psi}\to \Pic_{C}(\wh 0; \bC_{\infty}^{+})\twtimes{C\lr{t}}\Fl_{\psi}$ is an isomorphism on the reduced structure. Since \eqref{prod Ma0} is a universal homeomorphism, we conclude that the composition
		\begin{equation*}
			\Fl_{\psi}\to \Pic_{C}(\wh 0; \bC_{\infty}^{+})\twtimes{C\lr{t}}\Fl_{\psi}\to \cM_{a_{\psi}}
		\end{equation*}
		is a universal homeomorphism. \qed
		
		\begin{lemma}\label{l:C trans} Let $\psi'\in \psi+\frg\lr{\t}_{\ge0}$ be in the same $G\lr{\t}$-orbit of $\psi$, then there exists $g\in \bP_{\infty}(\frac{d}{m})$ such that $\psi'=\Ad(g)\psi$.
		\end{lemma}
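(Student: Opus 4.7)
\textbf{The plan} is to construct $g$ as the $\t$-adic limit of a sequence in the pro-unipotent group $\bP_\infty(d/m)$, built by inductively cancelling the leading Moy--Prasad component of the discrepancy $\psi'-\psi$. Write $\psi'=\psi+\th^{(0)}$ with $\th^{(0)}\in\frg\lr{\t}_{\le 0}$. The inductive hypothesis is: we have $g^{(n)}\in\bP_\infty(d/m)$ such that $\psi^{(n)}:=\Ad(g^{(n)})\psi'\in\psi+\frg\lr{\t}_{\le -n/m}$; let $\th^{(n)}_{-n/m}\in\frg[t,t^{-1}]_{-n/m}$ denote its leading Moy--Prasad piece. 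Setting $g^{(n+1)}=\exp(x_{n+1})g^{(n)}$ with $x_{n+1}\in\frg[t,t^{-1}]_{-(d+n)/m}$ and expanding $\Ad(\exp(x_{n+1}))$, the step reduces (modulo strictly deeper MP layers, using $d\ge 1$) to the linear equation
\[[\psi,x_{n+1}]=\th^{(n)}_{-n/m}\quad\text{in}\quad\frg[t,t^{-1}]_{-n/m}.\]

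Since $\psi$ is regular semisimple on $\Gm$, the $k[t,t^{-1}]$-linear operator $\ad(\psi)$ on $\frg[t,t^{-1}]$ has kernel $\frc[t,t^{-1}]$ and image the $\j{\cdot,\cdot}$-orthogonal $\frc^\bot[t,t^{-1}]$. Because $\ad(\psi)$ shifts MP degree by $d/m$ and the decomposition $\frg=\frc\oplus\frc^\bot$ over $\Gm$ respects the MP grading, a dimension count (using that $w$ acts freely on $\Phi$, so $\dim\frg_{i/m}/\frc_{i/m}=|\Phi|/m$ is independent of $i$) shows that $\ad(\psi):\frg[t,t^{-1}]_{-(d+n)/m}\to\frg[t,t^{-1}]_{-n/m}$ surjects onto $\frc^\bot[t,t^{-1}]_{-n/m}$. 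Thus solvability of the inductive equation is equivalent to
\[\th^{(n)}_{-n/m}\in\frc^\bot[t,t^{-1}]_{-n/m}.\]

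\textbf{The main obstacle}, and heart of the argument, is verifying this vanishing of the $\frc$-component at every stage; here we invoke the hypothesis that $\psi^{(n)}$ remains in the $G\lr{\t}$-orbit of $\psi$. Thus the Chevalley invariants agree: $f_i(\psi+\th^{(n)})=f_i(\psi)$ in $k\lr{\t}$ for every generator $f_i\in k[\frg]^G$ of degree $d_i$. Taylor-expanding,
\[0=\sum_{j\ge 1}\tfrac{1}{j!}\,d^jf_i|_\psi(\th^{(n)},\ldots,\th^{(n)}),\]
the $j$-th summand has leading MP degree $dd_i/m-j(d+n)/m$; these degrees strictly decrease in $j$ with gap $(d+n)/m>0$, so the $j=1$ term dominates. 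Matching the top MP-graded piece forces $df_i|_\psi(\th^{(n)}_{-n/m})=0$ for all $i$. Pointwise on $\Gm$, the regular semisimplicity of $\psi(t)$ (which holds throughout $\Gm$ since the discriminant of a homogeneous element is $\Gm(\nu)$-homogeneous, hence nowhere vanishing on $\Gm$) yields $\bigcap_i\ker(df_i|_{\psi(t)})=\ad(\psi(t))\frg=\frc(t)^\bot$, and this globalizes to $\th^{(n)}_{-n/m}\in\frc^\bot[t,t^{-1}]_{-n/m}$.

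Finally, the partial products $g^{(n)}$ differ successively by factors in $1+\frg\lr{\t}_{\le -(d+n)/m}$, so they converge $\t$-adically in the complete separated pro-unipotent group $\bP_\infty(d/m)$ to some $g_\infty$ with $\Ad(g_\infty)\psi'=\psi$; then $g:=g_\infty^{-1}\in\bP_\infty(d/m)$ satisfies $\Ad(g)\psi=\psi'$, completing the proof.
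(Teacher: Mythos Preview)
Your proof is correct and follows essentially the same approach as the paper's: both construct $g$ as a convergent product in $\bP_\infty(d/m)$ by inductively cancelling the leading Moy--Prasad component of $\psi'-\psi$, and both resolve the key obstruction---that the leading piece lies in the image of $\ad(\psi)$---by Taylor-expanding the invariants $f_i$ and using $f_i(\psi')=f_i(\psi)$ to force $df_i|_\psi$ to annihilate that piece. The only cosmetic differences are that the paper indexes the induction by a decreasing $j\le 0$ while you use an increasing $n\ge 0$, and the paper argues directly over $k\lr{\t}$ (the $df_i(\psi)$ span $\frc\lr{\t}$, so their annihilator is $[\frg\lr{\t},\psi]$) whereas you argue pointwise on $\Gm$ using that a homogeneous regular semisimple element has nonvanishing discriminant everywhere on $\Gm$; both are valid and lead to the same conclusion.
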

		\begin{proof}We construct inductively a sequence of elements $g_{j}\in \bP_{\infty}(\frac{d}{m})$ (for $j\le 1$) such that
			\begin{enumerate}
				\item $g_{1}=1$;
				\item For $j\le 0$, $g_{j}\in g_{j+1}\bP_{\infty}(\frac{-j+d}{m})$;
				\item For $j\le 0$, $\Ad(g_{j})\psi\equiv \psi'\mod \frg\lr{\t}_{\le j-1}$.
			\end{enumerate}
			Then the limit  $g=\lim_{j\to -\infty} g_{j}$ exists in $\bP_{\infty}(\frac{d}{m})$, and it satisfies $\Ad(g)\psi'=\psi$.
			
			Take $g_{1}=1$. Suppose $g_{j+1}$ has been constructed. Then we have
			\begin{equation*}
				\Ad(g_{j+1})\psi'=\psi+X_{j}+X_{j-1}+\cdots, \quad X_{k}\in \frg\lr{\t}_{j/m}.
			\end{equation*}
			We look for $Y\in \frg\lr{\t}_{(j-d)/m}$ such that $[Y, \psi]=X_{j}$; then $g_{j}=\exp(-Y)g_{j+1}$ satisfies all the requirements.
			
			Now solve the equation $[Y, \psi]=X_{j}$ for $Y\in \frg\lr{\t}_{(j-d)/m}$. Recall $f_{1},\cdots, f_{r}\in k[\frg]^{G}$ is a set of homogeneous generators with degrees $d_{1},\cdots, d_{r}$. By assumption $f_{i}(\psi')=f_{i}(\psi)$, hence
			\begin{equation*}
				f_{i}(\psi)=f_{i}(\Ad(g_{j+1})\psi')=f_{i}(\psi+X_{j}+X_{j-1}+\cdots).
			\end{equation*}
			Taking Taylor expansion of $f_{i}$ at $\psi$ with respect to the Moy-Prasad grading, we get
			\begin{equation*}
				\j{df_{i}(\psi), X_{j}}=0, \quad \forall i=1,\cdots, r.
			\end{equation*}
			Here $df_{i}(\psi)\in \frg^{*}\lr{\t}$, and the pairing $\j{,}$ is the $k\lr{\tau}$-bilinear between $\frg^{*}\lr{\t}$ and $\frg\lr{\t}$. Since $\psi$ is regular semisimple as an element in $\frg\lr{\t}$, the differentials $\{df_{i}(\psi)\}_{1\le i\le r}$ span a subspace of $\frg^{*}\lr{\t}$ which under the Killing form can be identified with $\frc\lr{\t}$ (the centralizer of $\psi$).  Therefore, the annihilator of the span of $\{df_{i}(\psi)\}_{1\le i\le r}$ is $[\frg\lr{\t},\psi]$.  The above equations imply that $X_{j}\in [\frg\lr{\t},\psi]$, so $X_{j}\in [Z,\psi]$ for some $Z\in \frg\lr{\t}$. Let $Y$ be the $\frg\lr{\t}_{(j-d)/m}$ homogeneous component of $Z$, then $[Y,\psi]=X_{j}$. This completes the inductive construction of $g_{j}$.
		\end{proof}

		\subsection{Construction of $\cM_{\psi}$ as a Hamiltonian reduction}\label{ss:red}
		The symplectic structure on $\cM_{\psi}$ mentioned in Theorem \ref{th:geom M} comes from a realization of $\cM_{\psi}$ as a Hamiltonian reduction of a certain cotangent space.
		
		We define a subgroup $\bJ_{\infty}\subset G\lr{\t}$ as follows:
		\begin{enumerate}
			\item If $d$ is odd, then $\bJ_{\infty}=\bP_{\infty}(\frac{(d+1)/2}{m})\cdot \bC^{+}_{\infty}$.
			\item If $d$ is even, then $\frg\lr{\t}_{d/2m}$ carries an alternating form $(x,y)\mapsto \j{\psi, [x,y]}$. Let $\fm\subset \frg\lr{\t}_{-d/2m}$ be a maximal isotropic subspace, and let $\bP_{\infty}(\frac{d/2}{m})_{\fm}$ be its preimage in $\bP_{\infty}(\frac{d/2}{m})$ under the projection $\bP_{\infty}(\frac{d/2}{m})\to \frg\lr{\t}_{-d/(2m)}$. Let $\bJ_{\infty}=\bP_{\infty}(\frac{d/2}{m})_{\fm}\cdot \bC^{+}_{\infty}$.
		\end{enumerate}
		Then $\psi$ has a unique extension to a linear character  $\wt\psi: \bJ_{\infty}\to \Ga$ such that, on the level of Lie algebras,  $\wt\psi$ is trivial on $(\Lie\bJ_{\infty})\cap \frg\lr{\t}_{i/m}$ for $i<d$.		
		
		Recall the notation of Hamiltonian (or Marsden-Weinstein) reduction: let $\frX$ be a smooth stack with the action of an algebraic group $H$. Let $\z:\frh\to k$ be a character of the Lie algebra $\frh=\Lie H$, viewed as an element in $\frh^{*}$. Let $\mu_{H}: T^{*}\frX\to \frh$ be the moment map. Then define the stack
		\begin{equation*}
			T^{*}\frX//_{\z}H=\mu_{H}^{-1}(\z)/H.
		\end{equation*}
		When $\z$ is a regular value of $\mu_{H}$, $T^{*}\frX//_{\z}H$ is a smooth stack that inherits an exact symplectic structure from that of $T^{*}\frX$.
		
		We apply this construction to the case $\frX=\Bun_{G}(\bJ^{1}_{\infty}; \bI_{0})$, the moduli stack of $G$-bundles on $X=\PP^{1}$ with $\bJ^{1}_{\infty}:=\ker(\wt\psi)\subset \bJ_{\infty}$-level structure at $\infty$ and $\bI_{0}$-level structure at $0$, with the action of $H=\Ga=\bJ_{\infty}/\bJ_{\infty}^{1}$ and the character $\z=\wt\psi\in\frh^{*}$.

		\begin{prop}\label{p:red}There is a canonical isomorphism between $\cM_{\psi}$ and the Hamiltonian reduction of $T^{*}\Bun_{G}(\bJ^{1}_{\infty}; \bI_{0})//_{\wt\psi}\Ga$. In particular,  $\cM_{\psi}$ carries a canonical symplectic structure which coincides with the $2$-form $\Om$ defined in \S\ref{ss:pf M sm}.
		\end{prop}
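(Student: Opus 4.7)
The plan is to identify both sides as moduli of pairs $(\cE,\ph)$ with level structures at $0$ and $\infty$, and to show that imposing the moment-map equation $\mu_H=\wt\psi$ and taking the $\Ga$-quotient is exactly the operation that simultaneously refines the shallow level $\bJ^1_\infty$ to the deep level $\bK_\infty=\bP_{\infty}(\tfrac{d}{m})\cdot\bC_{\infty}^+$ and imposes the Higgs-field condition $\ph|_{D_\infty^\times}\in(\psi+\frg\lr{\t}_{\le 0})d\t/\t$ built into the definition of $\cM_{\psi}$.

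First I would unpack $T^{*}\Bun_{G}(\bJ^{1}_{\infty};\bI_{0})$: by the same Serre-duality computation used in \S\ref{tang M}, the cotangent fiber at a bundle $\cE$ with prescribed level structures consists of sections $\ph$ of $\Ad(\cE)\otimes\om_{X}(0+\infty)$ over $X\setminus\{0,\infty\}$ satisfying $\ph|_{D_{0}^{\times}}\in(\Lie\bI_{0}^{+})dt/t$ and $\ph|_{D_{\infty}^{\times}}\in(\Lie\bJ^{1}_{\infty})^{\vee}d\t/\t$, where the dual is taken with respect to the residue pairing induced by $\langle\cdot,\cdot\rangle$. The action of $H=\Ga=\bJ_{\infty}/\bJ^{1}_{\infty}$ rotates the level structure within the $\bJ_{\infty}$-coset, and its moment map sends $(\cE,\ph)$ to the functional $X\mapsto\res_{\infty}\langle X,\ph\rangle$ on $\Lie\bJ_{\infty}/\Lie\bJ^{1}_{\infty}$. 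The equation $\mu_{H}(\cE,\ph)=\wt\psi$ therefore reads
\[
\res_{\infty}\langle X,\ph-\psi\,d\t/\t\rangle=0\quad\text{for all }X\in\Lie\bJ_{\infty},
\]
so that $\ph|_{D_{\infty}^{\times}}-\psi\,d\t/\t$ lies in the residue-annihilator of $\Lie\bJ_{\infty}$.

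I would next carry out the Moy--Prasad computation showing that this annihilator equals $\frg\lr{\t}_{\le 0}\cdot d\t/\t$. In the odd case this is immediate from the definition of $\bJ_{\infty}$ together with the self-duality of the Moy--Prasad grading (parallel to the computation of $\frk_{\infty}^{\vee}$ in \S\ref{tang M}). In the even case, the maximal isotropy of $\fm\subset\frg\lr{\t}_{-d/2m}$ for the form $(x,y)\mapsto\langle\psi,[x,y]\rangle$ is exactly what makes $\wt\psi$ a well-defined character of $\bJ_{\infty}$ and forces the residue-annihilator of $\Lie\bJ_{\infty}$ to still be $\frg\lr{\t}_{\le 0}\cdot d\t/\t$. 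With this in hand, the $\Ga$-quotient first promotes the $\bJ^{1}_{\infty}$-level to a $\bJ_{\infty}$-level; the same argument as in Lemma \ref{l:C trans} then shows that the pro-unipotent group $\bJ_{\infty}/\bK_{\infty}$ acts freely and transitively via $\ad(\psi)$ on the remaining Moy--Prasad strata of $\ph-\psi\,d\t/\t$, producing a canonical refinement to a $\bK_{\infty}$-level structure in which $\ph|_{D_{\infty}^{\times}}\in(\psi+\frg\lr{\t}_{\le 0})d\t/\t$. This yields the bijection with $\cM_{\psi}$.

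The main obstacle I expect is the even case, where the auxiliary Lagrangian $\fm$ must be tracked through the reduction and independence of the final moduli from the choice of $\fm$ must be verified (the standard Shalika/Whittaker-type check, which requires showing that two choices of $\fm$ are related by an element of the centralizer of $\wt\psi$ in the relevant unipotent radical). Finally, to match the form $\Om$ of \S\ref{ss:pf M sm} with the reduced Marsden--Weinstein form, I would observe that both are constructed from $\langle\cdot,\cdot\rangle$ on $\frg$ together with the residue pairings at $0$ and $\infty$: the former via the perfect duality $\cK\xrightarrow{\sim}\cK^{\vee}$ used in the tangent computation, and the latter from the tautological $1$-form on the cotangent bundle. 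A direct comparison on tangent vectors at a point of $\cM_{\psi}$ identifies the two, which simultaneously establishes closedness $d\Om=0$ as promised in \S\ref{tang M}.
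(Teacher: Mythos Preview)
Your overall architecture matches the paper's: describe $\cN_\psi := T^*\Bun_G(\bJ^1_\infty; \bI_0)//_{\wt\psi}\Ga$ as a Higgs moduli space with $\bJ_\infty$-level at $\infty$, then compare it to $\cM_\psi$ via the $\bJ_\infty/\bK_\infty$-action on the polar part of $\ph$. But the central lattice computation is misstated, and this is where the actual work lies.

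You assert that the residue-annihilator of $\frj_\infty = \Lie \bJ_\infty$ equals $\frg\lr{\t}_{\le 0}$. This is false for $d > 1$: for $d$ odd one has $\frj_\infty = \frg\lr{\t}_{\le -(d+1)/(2m)} + \frc\lr{\t}_{\le -1/m}$, whose dual lattice is
\[
\frj^\vee_\infty \;=\; \frc\lr{\t}^\bot_{\le (d-1)/(2m)} \oplus \frc\lr{\t}_{\le 0} \;\supsetneq\; \frg\lr{\t}_{\le 0},
\]
and similarly (with $\fm^\bot$ appearing) for $d$ even. So after the moment-map equation and the $\Ga$-quotient one only knows $\ph|_{D^\times_\infty} \in (\psi + \frj^\vee_\infty)\,d\t/\t$, a strictly weaker condition than in $\cM_\psi$. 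The entire content of the proposition is that the action map
\[
\ov\a : \bJ_\infty/\bK_\infty \longrightarrow U_\psi := (\psi + \frj^\vee_\infty)/\frg\lr{\t}_{\le 0}, \qquad g \mapsto \Ad(g)\psi \bmod \frg\lr{\t}_{\le 0},
\]
is an \emph{isomorphism of varieties}, so that the extra freedom in the level structure exactly absorbs the extra freedom in $\ph$. The paper proves this by a filtered induction reducing to $\ad(\psi) : \frc\lr{\t}^\bot_{-j/m} \isom \frc\lr{\t}^\bot_{(d-j)/m}$ on each graded piece (with the maximal-isotropy of $\fm$ supplying the base case when $d$ is even). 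Your appeal to Lemma~\ref{l:C trans} is not quite apt: that lemma assumes $\psi'$ lies in the same $G\lr{\t}$-orbit as $\psi$, a hypothesis that is neither available nor meaningful modulo $\frg\lr{\t}_{\le 0}$, and in any case one needs a scheme-theoretic isomorphism rather than transitivity on $k$-points. Your later sentence about ``remaining Moy--Prasad strata'' suggests you sense the right picture, but it is inconsistent with the annihilator claim that precedes it. Finally, your worry about $\fm$-dependence is unnecessary: $\cM_\psi$ is defined without reference to $\fm$, so once $\cM_\psi \cong \cN_\psi$ is established, independence of the right-hand side from $\fm$ is automatic.
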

		\begin{proof}
			We describe $\cN_{\psi}:=T^{*}\Bun_{G}(\bJ^{1}_{\infty}; \bI_{0})//_{\wt\psi}\Ga$ as a moduli space of Higgs bundles as follows. Let $\frj_{\infty}=\Lie \bJ_{\infty}$ and $\frj_{\infty}^{\vee}\subset \frg\lr{\t}$ be the dual lattice under the form $\j{\cdot,\cdot}$ extended $k\lr{\t}$-linearly, i.e., $\frj_{\infty}^{\vee}=\{v\in\frg\lr{\t}|\j{v,\frj_{\infty}}\subset k\tl{\t}\}$.
			
			Recall the notations $\frc\lr{\t}^{\bot}, \frc\lr{\t}^{\bot}_{\le j/m}$ from \S\ref{tang M}. Then 
			\begin{equation}\label{jdual}
				\frj^{\vee}_{\infty}=\begin{cases}\frc\lr{\t}^{\bot}_{\le (d-1)/(2m)}\op\frc\lr{\t}_{\le0} & d \textup{ odd;}\\
					\fm^{\bot}\op \frc\lr{\t}^{\bot}_{\le (d/2-1)/m}\op\frc\lr{\t}_{\le0} & d \textup{ even.}
				\end{cases}
			\end{equation}
			Here $\fm^{\bot}\subset \frg\lr{\t}_{d/(2m)}$ is the orthogonal complement of $\fm\subset \frg\lr{\t}_{-d/(2m)}$ under the pairing $\j{\cdot,\cdot}$.
			
			Then $\cN_{\psi}$ classifies pairs $(\cE,\ph)$ where
			\begin{itemize}
				\item $\cE$ is a $G$-bundle over $X$ with $\bJ_{\infty}$-level structure at $\infty$ and $\bI_{0}$-level structure at $0$. We denote by $\Ad(\cE)$ the vector bundle over $X\bs \{0,\infty\}$ associated to the adjoint representation $\frg$ of $G$.
				\item $\ph$ is a section of $\Ad(\cE)\ot\om_{X\bs \{0,\infty\}}$ satisfying the following conditions: 
				\begin{enumerate}
					\item[(i)] Under some (equivalently, any) trivialization of $\cE|_{D_{\infty}}$ together with its $\bJ_{\infty}$-level structure, we require
					\begin{equation*}
						\ph|_{D_{\infty}^{\times}}\in (\psi+\frj^{\vee}_{\infty})d\t/\t.
					\end{equation*}
					\item[(ii)] Under some (equivalently, any) trivialization of $\cE|_{D_{0}}$ together with its $\bI_{0}$-level structure, we require
					\begin{equation*}
						\ph|_{D^{\times}_{0}}\in \Lie(\bI^{+}_{0})dt/t.
					\end{equation*}
				\end{enumerate}
			\end{itemize}
			
			Since $\bK_{\infty}\subset \bJ_{\infty}$ and $\psi+\frg\lr{\t}_{\le0}\subset \psi+\frj^{\vee}_{\infty}$, we have a natural map
			\begin{equation*}
				F: \cM_{\psi}\to \cN_{\psi}.
			\end{equation*}
			We need to show that $F$ is an isomorphism of algebraic stacks.

			Let  $U_{\psi}=(\psi+\frj^{\vee}_{\infty})/\frg\lr{\t}_{\le0}$ (an affine space). Let $\ov J=\bJ_{\infty}/\bP(\frac{d}{m})$. Then $\ov J$ acts on $U_{\psi}$ by the adjoint action. Let $\ov C\subset \ov J$ be the image of $\bC^{+}_{\infty}$, then $\ov C$ stabilizes the point $\psi\in U_{\psi}$. We thus get a morphism of stacks
			\begin{equation*}
				\io: [\{\psi\}/\ov C]\to  [U_{\psi}/\ov J].
			\end{equation*}
			On the other hand, we have an evaluation map
			\begin{equation*}
				\e: \cN_{\psi}\to [U_{\psi}/\ov J]
			\end{equation*}
			by taking the Laurent expansion of $\ph|_{D_{\infty}^{\times}}$ modulo $\frg\lr{\t}_{\le0}d\t/\t$. From the definitions we have a Cartesian diagram
			\begin{equation*}
				\xymatrix{\cM_{\psi}\ar[r]^{F}\ar[d] & \cN_{\psi}\ar[d]^{\e}\\
					[\{\psi\}/\ov C]\ar[r]^{\io} & [U_{\psi}/\ov J]}
			\end{equation*}
			To show $F$ is an isomorphism, it suffices to show that $\io$ is an isomorphism.
			
			Consider the action map $\a:\ov  J\to U_{\psi}$ sending $g\in \ov J$ to $\Ad(g)\psi\in U_{\psi}$. It passes to the quotient
			\begin{equation*}
				\ov \a: \ov J/\ov C\cong \bJ_{\infty}/\bK_{\infty}\to U_{\psi}
			\end{equation*}
			Then $\io$ is an isomorphism if and only if $\ov \a$ is an isomorphism. 
			
			For $d/2\le j\ge d$ and $d\in \ZZ$, let
			\begin{eqnarray*}
				\bQ_{j}=\bP_{\infty}(\frac{j}{m})\bC_{\infty}^{+},\\
				\L_{j}=\frc\lr{\t}^{\bot}_{\le (d-j)/m}+\frg\lr{\t}_{\le0}.
			\end{eqnarray*}
			We have
			\begin{eqnarray*}
				\bK_{\infty}=\bQ_{d}\subset\cdots\subset \bQ_{j}\subset \bQ_{j-1}\subset\cdots \bQ_{\lceil d/2\rceil}\subset \bJ_{\infty},\\
				\frg\lr{\t}_{\le0}=\L_{d}\subset\cdots\subset \L_{j}\subset \L_{j-1}\subset \cdots\subset\L_{\lceil d/2\rceil}\subset \frj^{\vee}_{\infty}.
			\end{eqnarray*}
			Moreover, $\Ad(\bQ_{j})\psi\subset \L_{j}$ for $j\ge d/2$.
			
			We show inductively that for $j\ge d/2$, the map
			\begin{equation*}
				\a_{j}: \bJ_{\infty}/\bQ_{j}\to (\psi+\frj^{\vee}_{\infty})/\L_{j}
			\end{equation*}
			defined by $g\mapsto \Ad(g)\psi\mod \L_{j}$ is an isomorphism. Since $\a_{d}=\ov \a$, this would finish the proof.
			
			When $d$ is odd, the initial step $j=(d+1)/2$ is trivial since both sides of $\a_{j}$ are singletons. When $d$ is even, we have $\bJ_{\infty}/\bQ_{d/2}\cong\fm/\frc_{-d/(2m)}$, and $(\psi+\frj^{\vee}_{\infty})/\L_{d/2}\cong\psi+\fm^{\bot}$, and the map $\a_{d/2}$ is $[-,\psi]$. By definition $\fm$ is a maximal isotropic subspace of $\frg_{-d/(2m)}$ under the form $(x,y)\mapsto \j{x,[y,\psi]}$. This form has kernel $\frc_{-d/(2m)}$, hence $[-,\psi]$ maps $\fm/\frc_{-d/(2m)}$ isomorphically to $\fm^{\bot}$.

			Now assume $\a_{j}$ is an isomorphism (where  $d/2\le j<d$). We have a commutative diagram
			\begin{equation*}
				\xymatrix{\bJ_{\infty}/\bQ_{j+1}\ar[r]^-{\a_{j+1}}\ar[d]^{q_{j}} &  (\psi+\frj^{\vee}_{\infty})/\L_{j+1}\ar[d]^{p_{j}}\\
					\bJ_{\infty}/\bQ_{j}\ar[r]^-{\a_{j}} & (\psi+\frj^{\vee}_{\infty})/\L_{j}
				}
			\end{equation*}
			Since the above diagram is $\bJ_{\infty}$-equivariant and the map $\a_{j}$ is assumed to be an isomorphism, to show $\a_{j+1}$ is an isomorphism it suffices to show that
			\begin{equation*}
				\a_{j+1}|_{q_{j}^{-1}(1)}: q^{-1}_{j}(1)=\bQ_{j}/\bQ_{j+1}\to p_{j}^{-1}(\psi)=(\psi+\L_{j})/\L_{j+1}
			\end{equation*}
			is an isomorphism. This map can be identified with
			\begin{equation*}
				[-,\psi]: \frc\lr{\t}^{\bot}_{-j/m}\to \frc\lr{\t}^{\bot}_{(d-j)/m}.
			\end{equation*}
			Since $\psi$ is regular semisimple with centralizer $\frc\lr{\t}$ under $\frg\lr{\t}$, the above map is an isomorphism. This completes the inductive step. 
		\end{proof}

		\begin{lemma}\label{l:dimA} The Hitchin base $\cA_{\psi}$ is an affine space of dimension
			$\dim \cA_{\psi}=\frac{1}{2}(\frac{d}{m}|\Phi|-r+\dim \frt^{w})$.
		\end{lemma}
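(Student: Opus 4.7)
The plan is to compute $\dim \cA_\psi$ directly from the explicit parametrization of the Hitchin base, then match the answer with the claimed formula via Springer's theorem on regular elements.

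From the description in \S\ref{ss:cons1}, each $a_i(t) = f_i(\psi) + \sum_{j=1}^{\lfloor d(d_i-1)/m\rfloor} a_{i,j} t^j$ with free coefficients, so $\cA_\psi$ is an affine space with
$$\dim \cA_\psi = \sum_{i=1}^{r} \left\lfloor \frac{d(d_i-1)}{m} \right\rfloor.$$
Writing $d(d_i-1) = m\lfloor d(d_i-1)/m\rfloor + r_i$ with $0 \le r_i < m$, and using the classical identity $\sum_i (d_i - 1) = |\Phi|/2$, this becomes
$$\dim \cA_\psi = \frac{d|\Phi|}{2m} - \frac{1}{m}\sum_i r_i.$$
Comparing with the target, it suffices to show $\sum_i r_i = \tfrac{m}{2}\bigl(r - \dim \frt^w\bigr)$.

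For this I would invoke Springer's description of the action of a regular element \cite{Springer}: since $w$ is regular of order $m$ with regular eigenvalue $\zeta$ a primitive $m$-th root of unity, the eigenvalues of $w$ on $\frt$ are $\zeta^{m_i}$ where $m_i := d_i - 1$ are the exponents. In particular $\dim \frt^w = \#\{i : m \mid m_i\}$. Combined with the $W$-equivariant self-duality of the reflection representation $\frt$ (coming from the invariant form $\langle \cdot, \cdot \rangle$), which forces the multiset of eigenvalues of $w$ on $\frt$ to be stable under inversion, this yields the key combinatorial fact: setting $N_k := \#\{i : m_i \equiv k \pmod{m}\}$, we have $N_0 = \dim \frt^w$ and $N_k = N_{m-k}$ for every $k \ne 0$.

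The final step is the pairing computation. Since $\gcd(d,m) = 1$, multiplication by $d$ permutes the residues mod $m$, so using $r_i = d m_i \bmod m$,
$$\sum_i r_i = \sum_{k=1}^{m-1} N_k (dk \bmod m) = \sum_{k=1}^{m-1} N_{m-k}(dk \bmod m) = \sum_{k=1}^{m-1} N_k \bigl(m - (dk \bmod m)\bigr),$$
where the second equality uses $N_k = N_{m-k}$ and the third uses $d(m-k) \equiv -dk \pmod m$. Equating the first and last expressions gives $2\sum_i r_i = m\sum_{k=1}^{m-1} N_k = m(r - \dim \frt^w)$, which yields the formula. The main obstacle is step 3 — the symmetry $N_k = N_{m-k}$ — which is the substantive input and reduces to Springer's eigenvalue description combined with self-duality of $\frt$.
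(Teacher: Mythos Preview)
Your proof is correct and follows essentially the same route as the paper: reduce to the identity $\sum_i \{d(d_i-1)/m\} = (r-\dim\frt^w)/2$, use that the eigenvalues of $w$ on $\frt$ are $\zeta^{d_i-1}$, and finish by the inversion symmetry of the eigenvalue multiset. The only real difference is that you cite Springer's theorem for the eigenvalue statement, whereas the paper proves it directly by a deformation argument in $\frg_{1/m}^{\reg}$ to the regular nilpotent $\sum e_{\alpha_i}$; your careful tracking of the factor $d$ (which drops out by the symmetry $N_k=N_{m-k}$) is in fact slightly cleaner than the paper's phrasing.
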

		\begin{proof}
			From the definition we have $\dim \cA_{\psi}$ is the same as the space of $(a_{1},\cdots,a_{r})$ where $a_{i}$ is a section of $\cO([\frac{d(d_{i}-1)}{m}]\cdot \infty)$ vanishing at $0$. Therefore
			\begin{equation*}
				\dim \cA_{\psi}=\sum_{i=1}^{r}\left[\frac{d(d_{i}-1)}{m}\right].
			\end{equation*}
			We have 
			$$\sum\frac{d(d_{i}-1)}{m}=\frac{d}{m}\sum_{i}(d_{i}-1)=\frac{d}{m}\frac{|\Phi|}{2}.$$
			Therefore we reduce to showing
			\begin{equation}\label{sum frac}
				\sum_{i}\{(d_{i}-1)/m\}=(r-\dim \frt^{w})/2.
			\end{equation}
			Here $\{\cdots\}$ denotes the fractional part. Let $\z\in\mu_{m}$ be a primitive $m$th root of unity. We claim that
			\begin{equation}\label{eval w}
				\mbox{The eigenvalues of $w$ on $\frt$ are  $\{\z^{d_{i}-1}\}_{1\le i\le r}$ as a multi-set.}
			\end{equation}
			Indeed, the $\ZZ/m\ZZ$-grading on $\frg$ is conjugate to one given by $\Ad(\r^{\vee}(\z))$, therefore we may assume it is given by $\Ad(\r^{\vee}(\z))$.  Consider the space $\frg_{1/m}^{\reg}=\frg^{\reg}\cap \frg_{1/m}$. We claim that for $x\in \frg_{1/m}^{\reg}$, the action of $\r^{\vee}(\z)$ on $\frz_{x}$ (the centralizer of $x$ in $\frg$) has eigenvalues $\{\z^{d_{i}-1}\}_{1\le i\le r}$ as a multi-set.  Applying this to a regular semisimple $x$ gives \eqref{eval w}. Since $\frg_{1/m}^{\reg}$ is open dense in $\frg_{1/m}$, hence it is connected. The Lie algebra universal centralizer $\frz\to \frg^{\reg}_{1/m}$ is a vector bundle with an action of $\Ad(\r^{\vee}(\z))$, hence all fibers have the same multi-set of eigenvalues under $\Ad(\r^{\vee}(\z))$. Now we take $x$ to be the regular nilpotent element $x_{0}=\sum e_{\a_{i}}$ (for simple roots $\a_{i}$, noting that $\frg_{\a_{i}}\subset \frg_{1/m}$). Since the weights of $\Ad(\r^{\vee})$ on $\frz_{x_{0}}$ are the exponents of $\frg$ by definition, i.e., $\{d_{i}-1\}$, we see that the eigenvalues of  $\Ad(\r^{\vee}(\z))$ on $\frz_{x_{0}}$ are $\{\z^{d_{i}-1}\}$.
			
			Therefore, when summing up $\{(d_{i}-1)/m\}$, each pair of eigenvalues $\l,\l^{-1}$ ($\l\ne\pm1$) of $w$ on $\frt$ contributes $1$; $\l=-1$ contributes $1/2$ and $\l=1$ contributes zero. Hence \eqref{sum frac}.
		\end{proof}

		\subsection{Proof of Theorem \ref{th:geom M}\eqref{th part:int sys}}
		By Theorem \ref{th:geom M}\eqref{th part:M sm} and Lemma \ref{l:dimA}, we see that $\dim\cM_{\psi}=2\dim\cA_{\psi}$. Therefore all fibers of $f$ have dimension $\ge \dim\cA_{\psi}$. Also, by Theorem \ref{th:geom M}\eqref{th part:ASF vs HF}, the central fiber of $f$ has dimension $\dim\Fl_{\psi}=\dim \cA_{\psi}=\dim\cM_{\psi}-\dim \cA_{\psi}$. Since all points in $\cA_{\psi}$ contract to $a_{\psi}$ under the $\Gm$-action,  all fibers of $f$ have dimension $\le \dim\Fl_{\psi}$. Combine both inequalities, we conclude that all fibers of $f$ have dimension equal to $\dim\cA_{\psi}$.  Since $\cM_{\psi}$ and $\cA_{\psi}$ are both smooth, $f$ is flat of relative dimension equal to $\dim\cA_{\psi}$.
		
		It remains to check that the functions given by coordinates of $\cA_{\psi}$ are Poisson commuting. Let $(\cE,\ph)\in \cM_{\psi}$ with image $a\in \cA_{\psi}$. We need to show that the image of the cotangent map $f^{*}: T_{a}\cA_{\psi}\to T^{*}_{(\cE,\ph)}\cM_{\psi}$ is isotropic. 
		
		Let  $\cF=\bigoplus_{i=1}^{r}\cO([d(d_{i}-1)/m]\cdot \infty - \un 0)$, where $\un 0$ denotes the point $0\in X$. Then $T_{a}\cA_{\psi}$ is identified with $\cohog{0}{X,\cF}$. Recall from the proof of Theorem \ref{th:geom M} that the tangent space $T_{(\cE,\ph)}\cM_{\psi}\cong \cohog{0}{X,\cK}$. The tangent map $f_{*}: T_{(\cE,\ph)}\cM_{\psi}\to T_{a}\cA_{\psi}$ is induced from the following map of coherent complexes on $X$ by taking global sections
		\begin{equation*}
			\xymatrix{ \Ad(\cE; \Lie\bI_{0}, \Lie \bK_{\infty}) \ar[r]^{[-,\ph]}& \Ad(\cE; \Lie\bI^{+}_{0}, \frg\lr{\t}_{\le0})\ar[d]^{(df_{i})_{1\le i\le r}}\\
				& \cF}
		\end{equation*}
		Here $df_{i}: \Ad(\cE; \Lie\bI^{+}_{0}, \frg\lr{\t}_{\le0})\to \cO([d(d_{i}-1)/m]\cdot \infty - \un 0)$ is the $\cO_{X}$-linear map given fiberwise by the differential of $f_{i}$ at $\ph$. The map $(df_{i})_{1\le i\le r}$ above factors through $p: \cH^{0}\cK\to \cF$. The tangent map $f_{*}$ at $(\cE,\ph)$ is thus given by
		\begin{equation*}
			T_{(\cE,\ph)}\cM_{\psi}=\cohog{0}{X,\cK}\surj \cohog{0}{X,\cH^{0}\cK}\xr{p} \cohog{0}{X,\cF}=T_{a}\cA_{\psi}.
		\end{equation*}
		Dually, the cotangent map $f^{*}$ at $(\cE,\ph)$ is given by
		\begin{equation*}
			T^{*}_{a}\cA_{\psi}=\cohog{1}{X, \cF^{*}\ot\om_{X}}\xr{p^{*}}\cohog{1}{X, \cH^{-1}(\cK^{\vee})}\incl \cohog{0}{X,\cK^{\vee}}=T^{*}_{(\cE,\ph)}\cM_{\psi}.
		\end{equation*} 

		Here $\cK^{\vee}=\uHom(\cK,\om_{X}[1])$ is the Serre dual of $\cK$, and $(-)^{*}$ denotes linear dual $\uHom(-,\cO_{X})$. In the proof of Theorem \ref{th:geom M}, we showed that the obvious map $\io: \cK\to \cK^{\vee}$ is a quasi-isomorphism, which then induces 
		an isomorphism of short exact sequences
		\begin{equation}\label{two ex K}
			\xymatrix{0\ar[r] & \cohog{1}{X,\cH^{-1}\cK}\ar[r]\ar[d]^{\cong} &  T_{(\cE,\ph)}\cM_{\psi}\ar[d]^{\cong}_{\io} \ar[r] & \cohog{0}{X,\cH^{0}\cK}\ar[d]^{\cong} \ar[r] & 0\\
				0\ar[r] & \cohog{1}{X,\cH^{-1}(\cK^{\vee})}\ar[r]&  T^{*}_{(\cE,\ph)}\cM_{\psi} \ar[r] & \cohog{0}{X,\cH^{0}(\cK^{\vee})}\ar[r] & 0
			}
		\end{equation}
		By construction, the middle vertical map gives the symplectic form on $T_{(\cE,\ph)}\cM_{\psi}$. The composition
		\begin{equation*}
			T^{*}_{a}\cA_{\psi}\xr{f^{*}}T^{*}_{(\cE,\ph)}\cM_{\psi}\xr{\io^{-1}}T_{(\cE,\ph)}\cM_{\psi}\xr{f_{*}}T_{a}\cA_{\psi}
		\end{equation*}
		factors through the composition of either row in \eqref{two ex K}, hence is zero. This shows that the image of $T^{*}_{a}\cA_{\psi}$ in $T^{*}_{(\cE,\ph)}\cM_{\psi}$ is isotropic. \qed

		\subsection{Construction of $\cM_{\psi}$ as a symplectic leaf}\label{ss:cons3}
		Let $\cM^{\da}_{\psi}$ be the moduli stack parametrizing pairs $(\cE, \ph)$ where
		\begin{itemize}
			\item $\cE$ is a $G$-bundle over $X$ with $\bP^{+}_{\infty}$-level structure at $\infty$ and $\bI_{0}$-level structure at $0$. 
			\item $\ph$ is a section of $\Ad(\cE)\ot\om_{X\bs \{0,\infty\}}$ satisfying the following conditions: 
			\begin{enumerate}
				\item[(i)] Under some (equivalently, any) trivialization of $\cE|_{D_{\infty}}$ together with its $\bP^{+}_{\infty}$-level structure, we require
				\begin{equation*}
					\ph|_{D_{\infty}^{\times}}\in (\psi+\frg\lr{\t}_{\le (d-1)/m})d\t/\t.
				\end{equation*}
				\item[(ii)] Under some (equivalently, any) trivialization of $\cE|_{D_{0}}$ together with its $\bI_{0}$-level structure, we require
				\begin{equation*}
					\ph|_{D^{\times}_{0}}\in \Lie(\bI^{+}_{0})dt/t.
				\end{equation*}
			\end{enumerate}
		\end{itemize}
		
		The Hitchin base $\cA^{\da}_{\psi}$ for $\cM^{\da}_{\psi}$ is the closed subscheme $\cA^{\da}_{\psi}\subset \prod_{i=1}^{r}\G(\PP^{1}, \cO([\frac{dd_{i}}{m}]\cdot\infty))$ of sections $a=(a_{i})_{1\le i\le r}$ such that for each $i=1,\cdots, r$
		\begin{itemize}
			\item $a_{i}(0)=0$;
			\item $a_{i}\equiv f_{i}(\psi)\mod \t^{-\frac{dd_{i}-1}{m}}$ near $\infty$ (i.e., the leading coefficient of degree $\t^{-dd_{i}/m}$, if $m|d_{i}$, of $a_{i}$ at $\infty$ is the same as that of $f_{i}(\psi)$).
		\end{itemize}
		
		We have the Hitchin map 
		\begin{equation*}
			f^{\da}: \cM_{\psi}^{\da}\to \cA^{\da}_{\psi}.
		\end{equation*}
		It is also clear from the construction that there is a canonical map $\cM_{\psi}\to \cM^{\da}_{\psi}$ and an inclusion $\cA_{\psi}\subset \cA^{\da}_{\psi}$.
		
		\begin{prop}\label{p:Cart Mda} The canonical maps give a Cartesian diagram
			\begin{equation*}
				\xymatrix{ \cM_{\psi}\ar[d]^{f}\ar[r] & \cM^{\da}_{\psi}\ar[d]^{f^{\da}}\\
					\cA_{\psi}\ar@{^{(}->}[r] & \cA^{\da}_{\psi}}
			\end{equation*}
			In particular, $\cM_{\psi}\cong \cM^{\da}_{\psi}\times_{\cA^{\da}_{\psi}}\cA_{\psi}$.
		\end{prop}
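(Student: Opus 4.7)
The forgetful map $\cM_{\psi}\to\cM^{\da}_{\psi}$ (weakening the $\bK_{\infty}$-level to a $\bP^{+}_{\infty}$-level and using that the stronger Higgs condition at $\infty$ implies the weaker one), together with the fact that $f(\cM_{\psi})\subset\cA_{\psi}\subset\cA^{\da}_{\psi}$, yields the canonical comparison map $\Phi:\cM_{\psi}\to\cM^{\da}_{\psi}\times_{\cA^{\da}_{\psi}}\cA_{\psi}$. The plan is to show $\Phi$ is an isomorphism by constructing an explicit inverse on $R$-points for every $k$-algebra $R$.

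The question is entirely local at $\infty$. Given a point $(\cE,\ph)$ in the fiber product, pick any trivialization of $\cE|_{D_{\infty}}$ compatible with the $\bP^{+}_{\infty}$-level and write $\ph|_{D_{\infty}^{\times}}=(\psi+\th)d\t/\t$ with $\th\in\frg\lr{\t}_{\le(d-1)/m}$. The proposition amounts to the following local assertion: assuming that $f_{i}(\psi+\th)-f_{i}(\psi)$ has Moy--Prasad degree $\le d(d_{i}-1)/m$ at $\infty$ for each $i$ (the constraint carried by the $\cA_{\psi}$-datum), there exists $g\in\bP^{+}_{\infty}$, unique modulo right multiplication by $\bK_{\infty}$, such that $\Ad(g^{-1})(\psi+\th)\in\psi+\frg\lr{\t}_{\le 0}$. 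Any such $g$ furnishes a canonical refinement of the $\bP^{+}_{\infty}$-trivialization to a $\bK_{\infty}$-trivialization in which $\ph$ satisfies the stronger condition, providing the desired inverse to $\Phi$.

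For existence I would iterate the Moy--Prasad argument of Lemma \ref{l:C trans}, peeling off the error from top degree downward. Set $g^{(d)}=1$ and inductively build $g^{(j)}\in g^{(j+1)}\exp(\frg\lr{\t}_{(j-d)/m})$ for $j=d-1,d-2,\dots,1$ satisfying
\[
\Ad(g^{(j)\,-1})(\psi+\th)\equiv\psi\pmod{\frg\lr{\t}_{\le(j-1)/m}}.
\]
Producing $g^{(j)}$ from $g^{(j+1)}$ amounts to solving $[Y,\psi]=X_{j}$ for the top piece $X_{j}\in\frg\lr{\t}_{j/m}$ of the current error, which is possible exactly when the $\frc\lr{\t}_{j/m}$-projection of $X_{j}$ vanishes. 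The crucial observation is that, since previous steps have already killed every error component of degree strictly greater than $j/m$, the only contribution to the degree $(d(d_{i}-1)+j)/m$ component of $f_{i}(\Ad(g^{(j+1)\,-1})(\psi+\th))$ coming from the Taylor expansion of $f_{i}$ around $\psi$ is the linear term $\j{df_{i}(\psi),X_{j}}$: any quadratic-or-higher contribution at this degree would require at least two error pieces whose degrees sum to $(d+j)/m$, which is impossible since each surviving error piece has degree $\le j/m<d/m$. Conjugation invariance $f_{i}(\Ad(g^{(j+1)\,-1})(\psi+\th))=f_{i}(\psi+\th)$ combined with the $\cA_{\psi}$-hypothesis then forces $\j{df_{i}(\psi),X_{j}}=0$ for every $i$; since $\{df_{i}(\psi)\}_{i=1}^{r}$ span $\frc\lr{\t}$ under the Killing-form identification (as shown inside the proof of Lemma \ref{l:C trans}), this gives exactly the required vanishing of the $\frc\lr{\t}_{j/m}$-projection of $X_{j}$. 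Taking $g:=g^{(1)}$ finishes existence.

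For uniqueness, if $g,g'\in\bP^{+}_{\infty}$ both satisfy the conclusion then $h:=g^{-1}g'\in\bP^{+}_{\infty}$ satisfies $\Ad(h^{-1})\psi\in\psi+\frg\lr{\t}_{\le 0}$; applying Lemma \ref{l:C trans} to $\Ad(h^{-1})\psi$ produces an element $h_{0}\in\bP_{\infty}(d/m)$ with $\Ad(h_{0})\psi=\Ad(h^{-1})\psi$, so $h_{0}h\in C\lr{\t}\cap\bP^{+}_{\infty}=\bC^{+}_{\infty}$, whence $h\in\bP_{\infty}(d/m)\cdot\bC^{+}_{\infty}=\bK_{\infty}$. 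The main substantive technical obstacle is the degree bookkeeping in the third paragraph: once one recognizes that working strictly from the top prevents contamination by higher-order Taylor corrections, the rest follows the template of Lemma \ref{l:C trans}.
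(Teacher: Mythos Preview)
Your proposal is correct and takes essentially the same approach as the paper. The paper packages the argument in stacky language---introducing the finite-dimensional quotient $Q=\bP^{+}_{\infty}/\bP_{\infty}(\tfrac{d}{m})$ acting on $V_{\psi}=(\psi+\frg\lr{\t}_{\le(d-1)/m})/\frg\lr{\t}_{\le0}$, and reducing to the assertion that the action map $Q\to V^{0}_{\psi}:=\ov f^{-1}(\ov a_{\psi})$ is smooth and surjective---while you construct the inverse directly; but the core content is identical, namely the inductive ``peel off the top error term'' argument borrowed from Lemma~\ref{l:C trans}. You have in fact supplied more detail than the paper does on the Taylor-expansion bookkeeping (the paper writes only ``The same argument as in the proof of Lemma~\ref{l:C trans} works\dots We omit details''), and your observation that no quadratic-or-higher Taylor term can contribute at degree $(d(d_{i}-1)+j)/m$ because this would require $j_{1}+j_{2}=d+j$ with $j_{1},j_{2}\le j<d$ is exactly the missing justification. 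One trivial slip: in your uniqueness paragraph the element centralizing $\psi$ is $hh_{0}$, not $h_{0}h$; the conclusion $h\in\bK_{\infty}$ is unaffected.
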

		\begin{proof}
			Let $Q=\bP_{\infty}^{+}/\bP_{\infty}(\frac{d}{m})$. Let $V_{\psi}=(\psi+\frg\lr{\t}_{\le(d-1)/m})/\frg\lr{\t}_{\le 0}$. Then $Q$ acts on the affine space $V_{\psi}$ by the adjoint action. Each $(\cE,\ph)\in \cM^{\da}_{\psi}$ gives an $Q$-torsor $\cQ$ induced from the $\bP_{\infty}^{+}$-level structure of $\cE$, and the polar terms of $\ph(d\t/\t)^{-1}$ give a section of associated the affine space $\cQ\twtimes{Q}V_{\psi}$. This construction gives a map
			\begin{equation*}
				\e: \cM^{\da}_{\psi}\to [V_{\psi}/Q].
			\end{equation*}
			The stabilizer $Q_{\psi}$ of $\psi\in V_{\psi}$ is the image of $\bC^{+}_{\infty}$ in $Q$. By the definition of $\cM_{\psi}$, we have a Cartesian diagram
			\begin{equation}\label{Mpsi fiber e}
				\xymatrix{\cM_{\psi}\ar[d]\ar[r] & \cM^{\da}_{\psi}\ar[d]^{\e}\\
					[\{\psi\}/Q_{\psi}]\ar@{^{(}->}[r] & [V_{\psi}/Q]}
			\end{equation}
			
			Let $\fra_{\psi}$ be the affine space of $(\ov a_{i})_{1\le i\le r}$ where $\ov a_{i}\in \t^{-[dd_{i}/m]}k\tl{\t}/k\tl{\t}$ with leading term $f_{i}(\psi)$ in degree $\t^{-dd_{i}/m}$ if $m|d_{i}$. Then $(f_{i})_{1\le i\le r}$ gives a map
			\begin{equation*}
				\ov f: V_{\psi}\to \fra_{\psi}.
			\end{equation*}
			We have a map $\pi: \cA^{\da}_{\psi}\to \fra_{\psi}$ by taking the first few terms of the Laurent expansion $a_{i}$ of at $\infty$. Let $\ov a_{\psi}=\pi(a_{\psi})\in \fra_{\psi}$. Then $\cA_{\psi}=\pi^{-1}(\ov a_{\psi})$. Let $V^{0}_{\psi}:=\ov f^{-1}(\ov a_{\psi})$. Therefore
			\begin{equation*}
				\cM^{\da}_{\psi}\times_{\cA^{\da}_{\psi}}\cA_{\psi}=\e^{-1}([V^{0}_{\psi}/Q]).
			\end{equation*}
			In view of \eqref{Mpsi fiber e}, to show that $\cM_{\psi}$ is equal to the left side above, it suffices to show that $[\{\psi\}/Q_{\psi}]\cong [V^{0}_{\psi}/Q]$, or equivalently, 
			\begin{equation}\label{Q transitive}
				\mbox{The action map $\a: Q\to V^{0}_{\psi}$, $q\mapsto\Ad(q)\psi$, is smooth and surjective. }
			\end{equation}
			
			We first show that $\a$ is surjective on $k$-points. Let $\psi'\in\psi+\frg\lr{\t}_{\le (d-1)/m}$ be such that $f_{i}(\psi')=f_{i}(\psi)$ for all $1\le i\le r$. We want to construct $q\in \bP_{\infty}^{+}$ such that $\Ad(q)\psi-\psi'\in \frg\lr{\t}_{\le 0}$. The same argument as in the proof of Lemma \ref{l:C trans} works to construct $h_{j}$ inductively modulo $\bP_{\infty}(\frac{j}{m})$ for $j=1,2,\cdots, d$ such that $\Ad(q)\psi-\psi'\in \frg\lr{\t}_{\le (d-j)/m}$. We omit details.
			
			We then show that $\a$ is smooth. Since $(V^{0}_{\psi})(k)$ is a single orbit of $Q(k)$, it suffices to show that the tangent map of $\a$ is surjective at $1$. The tangent map of $\a$ at $1$ is 
			$$[-,\psi]: \Lie Q=\bigoplus_{j=1}^{d-1}\frg\lr{\t}_{(j-d)/m}\to \bigoplus_{j=1}^{d-1}\frc\lr{\t}^{\bot}_{j/m},$$ 
			which is surjective since $\psi$ is regular semisimple. This finishes the proof of \eqref{Q transitive}. The proposition is proved.
		\end{proof}

		\begin{remark} One can show that $\cM^{\da}_{\psi}$ is a smooth Poisson algebraic space, and $\cM_{\psi}$ is a symplectic leaf in $\cM^{\da}_{\psi}$.  We omit the proof.
		\end{remark}
		
		\subsection{Proof of Theorem \ref{th:geom M}\eqref{th part:f proper}}
		Assuming $w$ is elliptic, we show that $f$ is proper. By Proposition \ref{p:Cart Mda}, it suffices to show that $f^{\da}$ is proper. We introduce a variant $\cM^{\dda}$ of $\cM^{\da}_{\psi}$: it classifies $(\cE,\ph)$ where $\cE$ has $\bP_{\infty}$ and $\bI_{0}$-level structures, and the Higgs field is required to lie in $\frg\lr{\t}_{\le d/m}d\t/\t$ near $\infty$ such that its projection to $\frg\lr{\t}_{d/m}$ is regular semisimple, and in $\Lie \bI_{0}^{+}dt/t$ near 0 (after trivializations). Let $\wt\cA^{\dda}$ be the affine space of $(a_{i}\in\G(X,\cO([dd_{i}/m])\cdot \infty)_{1\le i\le r}$ with the condition that $a_{i}(0)=0$.  Evaluating the leading coefficient at $\infty$ gives a map $\wt\cA^{\dda}\to \frg_{d/m}\sslash L_{\bP_{\infty}}$, and let $\cA^{\dda}\subset \wt\cA^{\dda}$ be the preimage of $\frg^{\rs}_{d/m}\sslash L_{\bP_{\infty}}$ (where $L_{\bP_{\infty}}$ is the Levi quotient of $\bP_{\infty}$; it is identified the connected subgroup of $G$ with Lie algebra $\frg_{0}$). We have the Hitchin fibration $f^{\dda}: \cM^{\dda}\to \cA^{\dda}$. Then the fiber product $\cM^{\dda}\times_{\cA^{\dda}}\cA^{\da}_{\psi}$ admits a description that is almost identical to $\cM^{\da}_{\psi}$, except that the $\bP^{+}_{\infty}$-level structure is replaced with the slightly larger level group $\bP^{+}_{\infty}\bC'_{\infty}$, where $\bC'_{\infty}\subset \bP_{\infty}$ is the centralizer of $\psi$ in $\bP_{\infty}$. Since $(\bP^{+}_{\infty}\bC'_{\infty})/\bP^{+}_{\infty}\cong C_{L_{\bP_{\infty}}}(\psi)$ is finite over $k$ for $w$ elliptic, $\cM^{\da}_{\psi}\to \cM^{\dda}\times_{\cA^{\dda}}\cA^{\da}_{\psi}$ is finite. Therefore, to show $f^{\da}$ is proper it suffices to show that $f^{\dda}$ is proper. Now $f^{\dda}$ is a parahoric Hitchin fibration, and \cite[Proposition 6.3.7(2)]{OY} implies that $f^{\dda}$ is proper over the elliptic locus. However, we shall argue that the whole $\cA^{\dda}$ consists of elliptic points. Indeed, the non-elliptic locus $Z\subset \cA^{\dda}$ is closed and $\Gm(\nu)$-stable, so must contain a $\Gm(\nu)$-fixed point if non-empty. But $\cA^{\dda,\Gm(\nu)}$ consists of images of $\psi'$ for $\psi'\in \frg[t,t^{-1}]_{d/m}$, and they are all elliptic.
		\qed

		\subsection{Comparison of cohomology}\label{ss:comp M Fl}	
		The goal of this subsection is to prove Theorem \ref{th:coho M Fl} about the cohomology of $\cM_{\psi}$ and $\Fl_{\psi}$.
		
		\sss{The situation}\label{sss:Gm action}We consider the following general situation. Let $\frX$ be an algebraic space locally of finite type over $k$, equipped with a $\Gm$-action. Let $\frX^{\Gm}=\coprod_{\a\in I} Z_{\a}$ be an open-closed decomposition of the fixed point locus. 
		
		In \cite{Dr} Drinfeld introduces the attractor $\frX^{+}:=\Map_{\Gm}(\AA^{1}, \frX)$. It is equipped with two maps
		\begin{equation*}
\xymatrix{ \frX^{\Gm} & \frX^{+}\ar[r]^{\frp^{+}}\ar[l]_{\frq^{+}} & \frX}
\end{equation*}
Here $\frp^{+}$ (resp. $\frq^{+}$) is evaluation at $1\in \AA^{1}$ (resp. $0\in \AA^{1}$). It is shown in \cite[Theorem 1.4.2]{Dr} that $\frX^{+}$ is represented by an algebraic space of finite type over $k$, and $\frq^{+}$ is an affine morphism. 

		One defines the repeller $\frX^{-}$  to be the attractor for the inverted $\Gm$-action, and we have maps $\frp^{-}:\frX^{-}\to \frX$ and $\frq^{-}: \frX^{-}\to \frX^{\Gm}$. 
		
		For each $\a\in I$, let $\frX^{\pm}_{\a}=\frq^{\pm, -1}(Z_{\a})$. Let $\frp^{\pm}_{\a}=\frp^{\pm}|_{\frX^{\pm}_{\a}}$; $\frq^{\pm}_{\a}=\frq^{\pm}|_{\frX^{\pm}_{\a}}$.
						
		We make the following assumptions:
		\begin{enumerate}
		\item For each $\a\in I$,  $\frq^{+}_{\a}: \frX^{+}_{\a}\to \frX$ is a locally closed embedding. 
		
		\item For each $\a\in I$,  the reduced image of $\frq^{-}_{\a}: \frX^{-}_{\a}\to \frX$ is a locally closed subspace $X^{-}_{\a}$ of $\frX$, and the induced map $\frX^{-}_{\a}\to X_{\a}^{-}$ is a homeomorphism.

\item $\cup_{\a\in I}\frX^{+}_{\a}=\frX$, i.e., for any $x\in \frX$, the limit $\lim_{t\to 0}t\cdot x$ exists.

\item There exists a partial order $\le$ on $I$ such that for each $\a\in I$
		\begin{itemize}
\item The set $\{\a'\in I;\a'\in \a\}$	is finite.	
\item $\frX^{+}_{\le \a}:=\cup_{\a'\le \a}\frX^{+}_{\a}$ is open in $\frX$, and is of finite type over $k$.
\item $X^{-}_{\le \a}:=\cup_{\a'\le \a}X^{-}_{\a}$ is proper over $k$. In particular, each $Z_{\a}$ is proper over $k$.
\end{itemize}

\end{enumerate}
 		The cohomology of a locally finite algebraic space is the projective limit of the cohomology of finite type open subspaces. Therefore 
		\begin{equation*}
\cohog{*}{\frX,\Qlbar}=\varprojlim_{\a\in I}\cohog{*}{X_{\le \a}^{+}, \Qlbar}.
\end{equation*}
		 
  		On the other hand, we define an ind-space
		\begin{equation*}
\frY:=\varinjlim_{\a\in I}X^{-}_{\le\a}
\end{equation*}
as a union of finite type closed subspaces. We define the cohomology of $\frY$ to be 
		\begin{equation*}
\cohog{*}{\frY,\Qlbar}:=\varprojlim_{\a\in I}\cohog{*}{X_{\le \a}^{-}, \Qlbar}.
\end{equation*}
		
  	\begin{prop}\label{p:res coho} Under the above assumptions,  the restriction maps $\cohog{*}{\frX,\Qlbar}\to \cohog{*}{\frY,\Qlbar}$ and $\upH^{*}_{\Gm}(\frX,\Qlbar)\to \upH^{*}_{\Gm}(\frY,\Qlbar)$ are isomorphisms.
	\end{prop}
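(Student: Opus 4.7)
The plan is to apply the contraction principle for $\Gm$-actions to both the open subspaces $\frX^+_{\le \a}$ and the closed subspaces $X^-_{\le \a}$, identifying both of their cohomologies with that of the common fixed-point subspace $Z_{\le \a} := \bigsqcup_{\a' \le \a} Z_{\a'}$.

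First, I would establish the chain of inclusions $Z_{\le \a} \subset X^-_{\le \a} \subset \frX^+_{\le \a}$ of subspaces of $\frX$. The first inclusion is immediate from the definitions. For the second, any $x \in X^-_{\a'}$ with $\a' \le \a$ has, by assumption (3), an attractor limit $\lim_{t \to 0} t\cdot x$ lying in some $Z_{\a''}$. The compatibility of the partial order $\le$ with the Morse-type flow (implicit in the setup, verified using the closure of the $\Gm$-orbit through $x$) forces $\a'' \le \a' \le \a$, so $x \in \frX^+_{\a''} \subset \frX^+_{\le \a}$. Consequently, the restriction map in the statement arises, after passing to projective limits, from the middle arrow in
\begin{equation*}
\cohog{*}{\frX^+_{\le \a}, \Qlbar} \to \cohog{*}{X^-_{\le \a}, \Qlbar} \to \cohog{*}{Z_{\le \a}, \Qlbar},
\end{equation*}
and it suffices to show both outer arrows are isomorphisms for each $\a$.

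Next, the contraction principle. The attractor structure on $\frX^+$ supplies an $\AA^1$-monoid action $\mu^+: \AA^1 \times \frX^+_{\le \a} \to \frX^+_{\le \a}$ extending the $\Gm$-action, with $\mu^+|_{\{0\}}$ equal to the retraction $\pi^+$ onto $Z_{\le \a}$. Since $\AA^1$ has trivial $\ell$-adic cohomology, the two sections $\iota_0, \iota_1: \frX^+_{\le \a} \to \AA^1 \times \frX^+_{\le \a}$ induce the same map on cohomology (both are two-sided inverse to $\pr^*$). Hence $(\iota \circ \pi^+)^* = \iota_0^* (\mu^+)^* = \iota_1^* (\mu^+)^* = \id^*$, and combined with $\pi^+ \circ \iota = \id_{Z_{\le \a}}$, this forces $\iota^*$ and $(\pi^+)^*$ to be mutually inverse isomorphisms. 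The same argument applied to the $\AA^1$-monoid action $\mu^-$ on $X^-_{\le \a}$ coming from the inverse $\Gm$-action gives $\cohog{*}{X^-_{\le \a}, \Qlbar} \isom \cohog{*}{Z_{\le \a}, \Qlbar}$. Since both isomorphisms are restrictions along the factoring inclusions $Z_{\le \a} \incl X^-_{\le \a} \incl \frX^+_{\le \a}$, the middle restriction map must also be an isomorphism; passing to $\varprojlim_\a$ yields the first claim.

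The equivariant version follows verbatim in $\Gm$-equivariant cohomology: the actions $\mu^{\pm}$ are $\Gm$-equivariant for the scaling $\Gm$-action on $\AA^1$ combined with the original action on $\frX$, and the relation $\iota_0^* = \iota_1^*$ persists at the equivariant level because $\upH^*_{\Gm}(\AA^1, \Qlbar)$ is still a free rank-one module over $\upH^*(B\Gm)$ concentrated on $\{0\}$ (or equivalently, pulling back along the equivariant projection is an isomorphism). The main technical obstacle, apart from confirming this Künneth-type computation equivariantly, will be to verify that $\mu^-$ exists as a morphism of algebraic spaces globally on $X^-_{\le \a}$ — not just on individual repeller strata — for which the homeomorphism $\frX^-_\a \to X^-_\a$ of assumption (2), the order-compatibility of the flow, and the properness of $X^-_{\le \a}$ from assumption (4) should together suffice.
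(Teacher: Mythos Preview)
Your approach has a genuine gap: the contraction principle does not apply to the union $\frX^{+}_{\le\a}$, only to each individual attractor stratum $\frX^{+}_{\a}$. There is in general no $\AA^{1}$-monoid action on $\frX^{+}_{\le\a}$ extending the $\Gm$-action. Take $\frX=\PP^{1}$ with $t\cdot[x:y]=[tx:y]$, $Z_{1}=\{0\}$, $Z_{2}=\{\infty\}$, $1<2$. Then $\frX^{+}_{\le 2}=\PP^{1}$, and your purported contraction map $\mu^{+}(0,-):\PP^{1}\to \{0,\infty\}$ would have to send $\AA^{1}$ to $0$ and $\infty$ to $\infty$; but near $(0,\infty)\in\AA^{1}\times\PP^{1}$ the limit of $t\cdot[1:\epsilon]=[t:\epsilon]$ depends on how $(t,\epsilon)\to(0,0)$, so no morphism $\AA^{1}\times\PP^{1}\to\PP^{1}$ exists. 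Worse, your intermediate claim $\cohog{*}{\frX^{+}_{\le\a}}\cong\cohog{*}{Z_{\le\a}}$ fails here: $\cohog{*}{\PP^{1}}=\Qlbar\oplus\Qlbar[-2]$ while $\cohog{*}{\{0\}\sqcup\{\infty\}}=\Qlbar^{2}$. The same objection applies on the repeller side.

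The paper avoids this by an induction on $\a$ that never contracts the whole space. One compares the long exact sequences for the open-closed pairs $(\frX^{+}_{<\a},\frX^{+}_{\a})$ inside $\frX^{+}_{\le\a}$ and $(X^{-}_{\a},X^{-}_{<\a})$ inside $X^{-}_{\le\a}$; the inductive hypothesis handles the $<\a$ terms, and the remaining comparison is between $\cohog{*}{\frX^{+}_{\a},k^{+!}\Qlbar}$ and $\cohoc{*}{\frX^{-}_{\a},k^{-*}\Qlbar}$. Now the contraction principle (applied only to the single strata $\frX^{\pm}_{\a}$) rewrites both in terms of $Z_{\a}$, and the identification is precisely Braden's hyperbolic localization theorem $i^{+*}k^{+!}\cong i^{-!}k^{-*}$. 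This theorem is the essential input you are missing; without it the stratum-by-stratum contraction cannot be upgraded to the global statement.
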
	
	\begin{proof}
	First note that $X^{-}_{\le \a}\subset \frX^{+}_{\le \a}$. Indeed, if $x\in X^{-}_{\le \a}$, the limit $\lim_{t\to 0}t\cdot x$ exists in $X^{-}_{\le \a}$ since $X^{-}_{\le \a}$ is proper. But $(X^{-}_{\le \a})^{\Gm}=\coprod_{\a'\in\a}Z_{\a'}$, hence $\lim_{t\to 0}t\cdot x\in Z_{\a'}$ for some $\a'\le \a$, therefore $x\in \frX^{+}_{\le \a}$. 	
	
	We extend $\le$ to a total ordering on $I$ and add the minimal element $0$ to $I$. Let $\frX^{+}_{0}=\frX^{-}_{0}=X^{-}_{0}=\varnothing$. We denote by $i^{\pm}_{\a}: Z_{\a}\incl \frX^{\pm}_{\a}$ the inclusions. Let $s_{\a}: X^{-}_{\le \a}\incl \frX^{+}_{\le \a}$ be the inclusion. If $\a'$ is the predecessor of $\a$, let $\frX^{\pm}_{<\a}:=\frX^{\pm}_{\le\a'}$, $X^{-}_{<\a}:=X^{-}_{\le\a'}$ and let $s_{<\a}=s_{\a'}$.
	
	We prove by induction on $\a$ that the restriction map $s_{\a}^{*}: \cohog{*}{\frX^{+}_{\le\a}}\to \cohog{*}{X^{-}_{\le\a}}$ is an isomorphism, and the same is true for $\Gm$-equivariant cohomology. This would imply the proposition by taking projective limits. The case $\a=0$ is clear. 
	
	Suppose the $s_{<\a}^{*}$ is an isomorphism, we show $s_{\a}^{*}$ is also an isomorphism. Consider the commutative diagram (for simplicity we have omitted most of the subscripts $\a$)
	\begin{equation*}
\xymatrix{Z_{\a}\ar[d]^{i^{-}}\ar[rr]^{i^{+}} &  & \frX^{+}_{\a}\ar[d]^{k^{+}}\\
\frX^{-}_{\a}\ar[r]^{j^{-}}\ar@/^{4ex}/[rr]^{k^{-}} & X^{-}_{\le \a} \ar[r]^{s_{\a}} & \frX^{+}_{\le \a}\\
& X^{-}_{<\a} \ar[u]^{v}\ar[r] & \frX^{+}_{<\a}\ar[u]^{u}}
\end{equation*}
Here, $i^{+},i^{-}, k^{+}, s_{\a}, s_{<\a}, v$ are closed embeddings, $j^{-}$ is the composition $\frX^{-}_{\a}\to X^{-}_{\a}\incl X^{-}_{\le\a}$ hence a topological open embedding;  $u$ is an open embedding.

	We consider the following diagram of distinguished triangles in $D^{b}_{\Gm}(X^{+}_{\le\a})$:
\begin{equation}\label{two ex}
\xymatrix{ k^{+}_{!}k^{+!}\Qlbar\ar[r]\ar[d]^{\z} &   \Qlbar \ar[r] \ar[d]& u_{*}u^{*}\Qlbar \ar[d]\\
k^{-}_{!}k^{-*}\Qlbar\ar[r] & s_{\a*}s_{\a}^{*}\Qlbar \ar[r] & s_{<\a*}s_{<\a}^{*}\Qlbar
}
\end{equation}
The rows are given by the open-closed decompositions $\frX^{+}_{\le\a}=\frX^{+}_{<\a}\cup \frX^{+}_{\a}$ and $X^{-}_{\le\a}=X^{-}_{\a}\cup X^{-}_{<\a}$ (by assumption $\frX_{\a}^{-}\to X_{\a}^{-}$ is a homeomorphism). The only map that requires explanation is $\z$, which is the composition natural transformations
\begin{equation*}
\z: k^{+}_{*}k^{+!}\to s_{\a*}s_{\a}^{*}k^{+}_{*}k^{+,!}\xr{\xi} s_{\a*}h^{-}_{*}i^{+*}k^{+!}\xr{\r}s_{\a*}h^{-}_{*}i^{-!}k^{-*}=s_{\a*}j^{-}_{!}i^{-}_{*}i^{-!}j^{-*}s_{\a}^{*}\to s_{\a*}j^{-}_{!}j^{-*}s_{\a}^{*}=k^{-}_{!}k^{-*}.
\end{equation*}
Here $\xi: s_{\a}^{*}k^{+}_{*}\to h^{-}_{*}i^{+*}$  is given by $k^{+}\c i^{+}=s_{\a}\c h^{-}$ and adjunction; $\r: i^{+*}k^{+!}\to i^{-!}k^{-*}$ is the comparison map of hyperbolic  localization functors, see Braden \cite[top of page 212]{Br}. The commutativity of \eqref{two ex} is a diagram chase that we omit.

	Taking global sections of \eqref{two ex} we get a map between long exact sequences of cohomology groups
\begin{equation*}
\xymatrix{ \cdots \ar[r] & \cohog{*}{\frX^{+}_{\a}, k^{+!}\Qlbar}\ar[r]\ar[d]^{\cohog{*}{\z}} & \cohog{*}{\frX_{\le \a}^{+}} \ar[r]\ar[d]^{s_{\a}^{*}}& \cohog{*}{\frX_{< \a}^{+}}\ar[d]^{s_{<\a}^{*}}\ar[r] & \cdots\\
\cdots \ar[r] &\cohoc{*}{\frX^{-}_{\a}, k^{-*}\Qlbar}\ar[r] & \cohog{*}{X_{\le \a}^{-}}\ar[r] & \cohog{*}{X_{< \a}^{-}}\ar[r]& \cdots}
\end{equation*}
By induction hypothesis, $s_{<\a}^{*}$ is an isomorphism, therefore to show $s_{\a}^{*}$ is an isomorphism, it suffices to show $\cohog{*}{\z}$ is.  Since $\frX^{+}_{\a}$ contracts to $Z_{\a}$ under $\frq^{+}_{\a}:\frX^{+}_{\a}\to Z_{\a}$, the contraction principle gives an isomorphism $\frq^{+}_{\a*}k^{+!}\Qlbar\cong i^{+*}k^{+!}\Qlbar$. Taking global sections we get 
\begin{equation*}
\cohog{*}{\frX^{+}_{\a}, k^{+!}\Qlbar}\cong \cohog{*}{Z_{\a}, i^{+*}k^{+!}\Qlbar}.
\end{equation*}
Similarly,  using the contraction $\frq^{-}_{\a}: \frX^{-}_{\a}\to Z_{\a}$ we have an isomorphism $\frq^{-}_{\a!}k^{-*}\Qlbar\cong i^{-!}k^{-*}\Qlbar$; taking global sections with compact support and using that $Z_{\a}$ is proper, we get
\begin{equation*}
 \cohoc{*}{\frX^{-}_{\a}, k^{-*}\Qlbar}\cong \cohog{*}{Z_{\a}, i^{-!}k^{-*}\Qlbar}.
\end{equation*}
Under these isomorphisms, the induced map on cohomology by $\z$ is the comparison map of hyperbolic localizations to $Z_{\a}$:
\begin{equation*}
\cohog{*}{\z}=\cohog{*}{\r}: \cohog{*}{Z_{\a}, i^{+*}k^{+!}\Qlbar}\to \cohog{*}{Z_{\a}, i^{-!}k^{-*}\Qlbar}.
\end{equation*}
By Braden's theorem \cite[Theorem 1]{Br}  and its extension to algebraic spaces by Drinfeld-Gaitsgory \cite[Theorem 3.1.6]{DG}, $\cohog{*}{\r}$ is an isomorphism. This shows $\cohog{*}{\z}$ is an isomorphism, hence $s_{\a}^{*}$ is an isomorphism.

	Taking $\Gm$-equivariant global sections of \eqref{two ex}, the same argument proves that $s_{\a}^{*}$ is an isomorphism on $\Gm$-equivariant cohomology.
	\end{proof}
	
	\sss{Attractors and repellers for $\cM_{\psi}$}\label{sss:att rep M} We would like to apply the above discussions to $\cM_{\psi}$ with the $\Gm(\nu)$-action. For this we collect some facts about its attractors and repellers.

		Since the action of $\Gm(\nu)$ on $\cA_{\psi}$ contracts to the point $a_{\psi}$, the $\Gm(\nu)$-fixed points $\cM_{\psi}^{\Gm(\nu)}$ necessarily lie in the central fiber $\cM_{a_{\psi}}$, hence homeomorphic to $\Fl^{\Gm(\nu)}_{\psi}$ by Theorem \ref{th:geom M}\eqref{th part:ASF vs HF}. 
		
		Recall $\bP_{0}\subset G\lr{t}$ is the parahoric subgroup whose Lie algebra is $\frg\lr{t}_{\ge0}$. The $\bP_{0}$-orbits on $\Fl$ are parametrized by $W_{\bP}\bs \tilW$, where $W_{\bP}$ is the Weyl group of the Levi $L_{\bP}$ of $\bP_{\infty}$. For $w\in W_{\bP}\bs \tilW$, let $\Fl_{w}\subset \Fl$ be the $\bP_{0}$-orbit containing any lifting of $w$. Then $W_{\bP}\bs \tilW$ is equipped with the partial order $\le$ such that $w\le w'$ if and only if $\Fl_{w}\subset \ov{\Fl_{w'}}$. This is the partial order induced from the Bruhat order on $\tilW$, if we identify $W_{\bP}\bs \tilW$ as a subset of $\tilW$ using longest representatives. Let $\Fl_{w,\psi}=\Fl_{w}\cap\Fl_{\psi}$.

		Consider the action of $\Gm(\nu)$ on $\Fl$ given in \S\ref{sss:Gm action Fl} that stabilizes $\Fl_{\psi}$. Since the affine roots $\a+n\d$ in $\bP_{0}$ are those with $\a(\xi/m)+n\ge0$, and $\a(\xi/m)+n=0$ if and only if $\a+n\d$ is a root of the Levi $L_{\bP}$, the fixed points $\Fl^{\Gm(\nu)}$ is the disjoint union of $L_{\bP}w\bI_{0}/\bI_{0}$ for $w\in W_{\bP}\bs \tilW$. Therefore $\Fl^{\Gm(\nu)}_{\psi}$ is admits a decomposition
		\begin{equation}\label{Gm fixed Hess}
			\Fl^{\Gm(\nu)}_{\psi}=\coprod_{w\in W_{\bP}\bs \tilW} \cH_{\psi}(w), \quad \cH_{\psi}(w)=L_{\bP}w\bI_{0}/\bI_{0}\cap \Fl_{\psi}.
		\end{equation}
		If we choose a representative $\wt w\in \tilW$ of $w$, then $\cH_{\psi}(w)$ is isomorphic a Hessenberg variety for $L_{\bP}$ defined using the $L_{\bP}$-module $\frg\lr{t}_{d/m}$ and its subspace ${}^{\wt w}(\Lie\bI^{+}_{0})\cap \frg\lr{t}_{d/m}$:
		\begin{equation*}
			\cH_{\psi}(w)\cong \{h\in L_{\bP}/(L_{\bP}\cap {}^{\wt w}\bI_{0})|\Ad(h^{-1})(\psi)\in {}^{\wt w}(\Lie\bI^{+}_{0})\cap \frg\lr{t}_{d/m}\}.
		\end{equation*}
		
		Let  $\g: \Fl_{\psi}\to \cM_{\psi}$ be the canonical map. 
		
		We shall use the notations from \S\ref{sss:Gm action}. For $w\in W_{\bP}\bs \tilW$, let $Z_{w}=\g(\cH_{\psi}(w))$ (it is open-closed in $\cM_{\psi}^{\Gm}$). Let $\frX^{+}_{w}$ and $\frX^{-}_{w}$ be the attractor and repeller of $Z_{w}$, and we have maps $\frp^{\pm}_{w}: \frX^{\pm}_{w}\to \cM_{\psi}$ and $\frq^{\pm}_{w}: \frX^{\pm}_{w}\to Z_{w}$.

		\begin{lemma}\label{l:Zw}
The map $\g_{w}=\g|_{\cH_{\psi}(w)}: \cH_{\psi}(w)\to Z_{w}$ is an isomorphism.
\end{lemma}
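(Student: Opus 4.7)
My approach is to combine the universal homeomorphism statement of Theorem~\ref{th:geom M}(\ref{th part:ASF vs HF}) with the smoothness of the $\Gm(\nu)$-fixed locus inside the smooth space $\cM_\psi$, and then upgrade a bijective finite radicial map to an isomorphism using reducedness of $\cH_\psi(w)$.

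First I would establish that $\gamma_w$ is a bijective finite radicial morphism. Since $\gamma : \Fl_\psi \to \cM_{a_\psi}$ is $\Gm(\nu)$-equivariant and a universal homeomorphism, it identifies $\Fl_\psi^{\Gm(\nu)}$ with $\cM_{a_\psi}^{\Gm(\nu)}$ on points; and since $\cA_\psi$ contracts to $a_\psi$, we have $\cM_\psi^{\Gm(\nu)} \subseteq \cM_{a_\psi}$, so in fact $\gamma$ matches $\Fl_\psi^{\Gm(\nu)}$ bijectively with $\cM_\psi^{\Gm(\nu)}$. By definition of $Z_w$, the piece $\cH_\psi(w) \subseteq \Fl_\psi^{\Gm(\nu)}$ maps bijectively onto $Z_w$, and the universal homeomorphism property is inherited on closed-point preimages, so $\gamma_w$ is finite, radicial, and bijective.

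Second, I would prove $Z_w$ is smooth. Since $\cM_\psi$ is smooth over $k$ by Theorem~\ref{th:geom M}(\ref{th part:M sm}) and $\ch k = 0$, the fixed locus $\cM_\psi^{\Gm(\nu)}$ is smooth, hence so is its open-closed subspace $Z_w$. In particular $Z_w$ is normal and reduced. Symmetrically, I would show $\cH_\psi(w)$ is reduced (in fact smooth) by recognizing it as the zero locus of the $L_\bP$-equivariant section
\[
h\mapsto \Ad(h^{-1})\psi \bmod {}^{\wt w}(\Lie\bI_0^+)\cap \frg[t,t^{-1}]_{d/m}
\]
of the vector bundle on $L_\bP/(L_\bP\cap {}^{\wt w}\bI_0)$ with fiber $\frg[t,t^{-1}]_{d/m}/({}^{\wt w}(\Lie\bI_0^+)\cap \frg[t,t^{-1}]_{d/m})$. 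At a point $h \in \cH_\psi(w)$, regular semisimplicity of $\Ad(h^{-1})\psi$ makes the derivative $X \mapsto [X, \Ad(h^{-1})\psi]$ have cokernel exactly $\frc$-controlled in the right way, showing the section is transverse and $\cH_\psi(w)$ is smooth of the expected dimension.

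Combining: a finite radicial bijection from a reduced scheme to a normal scheme in characteristic zero is an isomorphism (Zariski's Main Theorem: $\gamma_{w*}\cO_{\cH_\psi(w)}$ is a finite $\cO_{Z_w}$-subalgebra of the residue field extension, which is trivial, and hence coincides with $\cO_{Z_w}$ by normality). Hence $\gamma_w$ is an isomorphism.

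\textbf{Main obstacle.} The delicate step is the transversality check that $\cH_\psi(w)$ is smooth, since it requires identifying the cokernel of $[\cdot,\psi]$ acting on the appropriate graded piece modulo the $\bI_0$-level subspace, and matching this against the expected dimension. An alternative route that avoids this computation is a direct tangent-space comparison: using the tangent complex $\cK_{(\cE,\ph)}$ from \S\ref{tang M}, one shows the weight-zero part of $\cohog{0}{X,\cK_{(\cE,\ph)}} = T_{(\cE,\ph)}\cM_\psi$ is canonically identified with the tangent space of $\cH_\psi(w)$ at the corresponding point in $\Fl_\psi$, via the clutching construction underlying $\gamma$.
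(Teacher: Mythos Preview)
Your proposal is correct and follows essentially the same approach as the paper. The paper's proof is a three-line argument: $Z_w$ is smooth because $\cM_\psi$ is (citing \cite[Prop.~1.4.20]{Dr} for smoothness of $\Gm$-fixed loci), $\cH_\psi(w)$ is smooth as a well-known fact about Hessenberg varieties, and a homeomorphism between smooth spaces over $k$ is an isomorphism---so your transversality discussion and ZMT argument simply unpack steps the paper leaves as standard.
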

\begin{proof}
Since $\cM_{\psi}$ is smooth, so is $Z_{w}$ by \cite[Prop. 1.4.20]{Dr}. It is well-known that $\cH_{\psi}(w)$ is smooth. The map $\g_{w}$ is a homeomorphism between smooth spaces, hence an isomorphism.
\end{proof}

		\begin{lemma}\label{l:repel}
		The image of the map $\frp^{-}_{w}: \frX^{-}_{w}\to \cM_{\psi}$ is the locally closed subspace $X^{-}_{w}:=\g(\Fl_{w,\psi})$. There is a unique isomorphism $\frX^{-}_{w}\cong \Fl_{w,\psi}$ compatible with the maps $\frp^{-}_{w}$ and $\g$. In particular, $\frp^{-}_{w}$ is a homeomorphism onto its image $X_{w}^{-}=\g(\Fl_{w,\psi})$.

		\end{lemma}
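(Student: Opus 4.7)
The plan is to transport the repeller computation from $\cM_\psi$ to $\Fl_\psi$ via the universal homeomorphism $\g: \Fl_\psi \to \cM_{a_\psi}$ of Theorem \ref{th:geom M}\eqref{th part:ASF vs HF}, using functoriality of the repeller construction, and then upgrade from a homeomorphism to an isomorphism of algebraic spaces by smoothness.

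First I will show that $\frp^{-}_{w}$ factors through $\cM_{a_\psi}$. The Hitchin map $f$ is $\Gm(\nu)$-equivariant, and the action on the affine space $\cA_\psi$ contracts the whole space to $a_\psi$ as $s \to 0$; thus for $y \ne a_\psi$ the trajectory $s\cdot y$ diverges as $s \to \infty$. Any point $x \in \cM_\psi$ whose inverted limit $\lim_{s\to\infty}s\cdot x$ exists must therefore satisfy $f(x) = a_\psi$, and the $\Gm(\nu)$-stability of $\cM_{a_\psi}$ confines the whole trajectory there. Hence $\frX^{-}_w$ equals the repeller of $Z_w$ formed inside $\cM_{a_\psi}$. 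On the $\Fl$ side, a weight computation shows that $\Gm(\nu)$, acting via $\rot(s^{-m})\Ad(\xi(s^{-1}))$, acts on the affine root space $\frg_{\a+n\d}=t^n\otimes \frg_\a$ with weight $-m(n+\a(x_m))$, which is strictly negative on $\Lie\bP_0^{+}$ and zero on $\Lie L_\bP$. Hence the $\bP_0$-orbit $\Fl_w$ is the repeller of $L_\bP w\bI_0/\bI_0$ in $\Fl$, and intersecting with the $\Gm(\nu)$-stable subspace $\Fl_\psi$ gives $(\Fl_\psi)^{-}_{w}=\Fl_w\cap\Fl_\psi=\Fl_{w,\psi}$.

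Functoriality of the repeller applied to the $\Gm(\nu)$-equivariant map $\g$ now produces a canonical map $\g^{-}_{w}: \Fl_{w,\psi}\to\frX^{-}_w$ satisfying $\frp^{-}_w\c\g^{-}_w=\g|_{\Fl_{w,\psi}}$. The repeller is a subfunctor of the $\Gm$-equivariant mapping space $\Map_{\Gm}(\AA^{1},-)$, and universal homeomorphisms are preserved under this construction: topologically a point of $X^{-}$ is a point of $X$ subject to the existence of a $\Gm$-limit, a condition preserved by any $\Gm$-equivariant universal homeomorphism. Since $\g$ is a universal homeomorphism by Theorem \ref{th:geom M}\eqref{th part:ASF vs HF}, so is $\g^{-}_w$. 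This establishes both that the image of $\frp^{-}_w$ equals $X^{-}_w=\g(\Fl_{w,\psi})$ and that $\frp^{-}_w$ is a homeomorphism onto its image.

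To upgrade $\g^{-}_w$ to an isomorphism of algebraic spaces, I will verify that both sides are smooth. The repeller $\frX^{-}_w$ is smooth by \cite[Prop.\ 1.4.20]{Dr} applied to the smooth algebraic space $\cM_\psi$ (Theorem \ref{th:geom M}\eqref{th part:M sm}) and its smooth fixed component $Z_w$ (Lemma \ref{l:Zw}). On the other hand, $\Fl_{w,\psi}$ fibers as an iterated affine bundle over $\cH_\psi(w)$ via the Moy-Prasad filtration on $\bP_0^{+}$ and the regular semisimplicity of $\psi$, hence is smooth. A universal homeomorphism between smooth algebraic spaces in characteristic zero is an isomorphism, completing the proof. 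The main technical subtlety is the rigorous verification that the repeller functor preserves universal homeomorphisms in the algebraic-space setting, which can be handled by reduction to the scheme case via \'etale atlases and Drinfeld's representability theorem.
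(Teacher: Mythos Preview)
Your overall strategy is sound and parallels the paper's proof closely: both arguments (i) confine the repeller to $\cM_{a_\psi}$ via the contracting Hitchin base, (ii) identify the corresponding repeller inside $\Fl_\psi$ as the Bruhat stratum $\Fl_{w,\psi}$, (iii) establish smoothness of both $\frX^{-}_{w}$ (via \cite[Prop.~1.4.20]{Dr}) and $\Fl_{w,\psi}$ (iterated affine bundle over $\cH_\psi(w)$), and (iv) conclude that a homeomorphism between smooth spaces in characteristic zero is an isomorphism.

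The genuine difference is in how you manufacture the comparison map. You build $\g^{-}_{w}:\Fl_{w,\psi}\to\frX^{-}_{w}$ by functoriality of the repeller, then assert it is a universal homeomorphism because $\g$ is. The paper instead goes in the \emph{opposite} direction: it first shows $\frp^{-}_{w}$ is a monomorphism (combining unramifiedness from \cite[Prop.~1.4.11(i)]{Dr} with uniqueness of the limit as $s\to\infty$), then observes that since $\Fl_{w,\psi}$ is smooth hence normal and maps homeomorphically to $X^{-}_{w}$, it is the normalization of $X^{-}_{w}$; the map $\frp^{-}_{w}:\frX^{-}_{w}\to X^{-}_{w}$ then lifts uniquely through the normalization to $\wt p:\frX^{-}_{w}\to\Fl_{w,\psi}$, which is shown to be an isomorphism as a surjective monomorphism between smooth connected spaces.

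The step you flag as the ``main technical subtlety''---that the repeller functor $\Map_{\Gm}(\AA^{1},-)$ preserves universal homeomorphisms---is genuinely nontrivial and not merely a routine reduction. A universal homeomorphism need not be a monomorphism of schemes (think of the normalization of a cusp), so the functor $T\mapsto Y(T)$ does not agree with $T\mapsto X(T)$ on arbitrary reduced $T$; one would need to invoke something like seminormality of $\AA^{1}_S$ over arbitrary bases, or argue via the explicit affine description of attractors, and neither is immediate. The paper's normalization route sidesteps this entirely: it never needs to know that repellers preserve any property of $\g$, only that $\frX^{-}_{w}$ is normal (so maps into $X^{-}_{w}$ lift to its normalization $\Fl_{w,\psi}$) and that $\frp^{-}_{w}$ is already a monomorphism. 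If you wish to keep your direction of the map, an easier patch than proving the general preservation statement is to note that your $\g^{-}_{w}$ is inverse to the paper's $\wt p$ on geometric points, and then use that $\wt p$ is an isomorphism.
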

		\begin{proof}
 			We first show that the image of $\frp^{-}_{w}$ is $X^{-}_{w}=\g(\Fl_{w,\psi})$ (with the reduced structure). Since the action of $\Gm(\nu)$ is contracting to $a_{\psi}$, if $\lim_{s\to \infty}s\cdot (\cE,\ph)$ exists, $f(\cE,\ph)$ must be equal to $a_{\psi}$, i.e., $(\cE,\ph)\in \cM_{a_{\psi}}$. 	Now we can identify $(\cE,\ph)$ with a geometric point $g\bI_{0}\in \Fl_{\psi}$ under $\g$. Since the inverted $\Gm(\nu)$-action $s\cdot g=\rot(s^{m})\Ad(\xi(s))g$ contracts $\bP_{0}$ to $L_{\bP}$, $\Fl_{w}=\bP_{0}w\bI_{0}/\bI_{0}$ is the repelling subscheme of $L_{\bP}w\bI_{0}/\bI_{0}$ in $\Fl$. Therefore, $\lim_{s\to \infty}s\cdot g\bI_{0}\in\cH_{\psi}(w)$ if and only if $g\bI_{0}\in \Fl_{w,\psi}$.	
			
			From the above we also see that each geometric point of $\cM_{a_{\psi}}$  has a unique limit point under the action of $s\in \Gm(\nu), s\to\infty$. By \cite[Prop. 1.4.11(i)]{Dr}, $\frp^{-}_{w}:\frX^{-}_{w}\to \cM_{\psi}$ is unramified with image $X^{-}_{w}$. The uniqueness of limit points as $s\to \infty$ implies that $\frp^{-}_{w}$ is a monomorphism (geometric fibers are a reduced singleton). 
						
Since $\cM_{\psi}$ is smooth, by \cite[Prop. 1.4.20]{Dr}, the map $\frq^{-}_{w}: \frX^{-}_{w}\to Z_{w}$ is smooth. By \cite[\S4.5]{GKM}, the contraction map $q: \Fl_{w,\psi}\to \cH_{\psi}(w)$ is an iterated affine space bundle, hence also smooth. Therefore the homeomorphism $\Fl_{w,\psi}\to X_{w}^{-}$ is the normalization map. The map $\frp^{-}_{w}: \frX^{-}_{w}\to X_{w}^{-}$ thus uniquely lifts to $\wt p: \frX^{-}_{w}\to \Fl_{w,\psi}$. We have a commutative diagram
			\begin{equation*}
			\xymatrix{ \frX_{w}^{-}\ar[r]^{\wt p}\ar[d]^{\frq^{-}_{w}} & 	\Fl_{w,\psi}\ar[d]^{q}\\
Z_{w}\ar[r]^{\sim} & \cH_{\psi}(w)}
			\end{equation*}
The bottom map is an isomorphism by Lemma \ref{l:Zw}. We claim that $\wt q$ is an isomorphism. 

Note that $\wt p$ is a monomorphism because $\frq^{-}_{w}$ is; it is also surjective. Replacing $Z_{w}$ by its connected components, and taking preimages in $\frX_{w}^{-}$ and $\Fl_{w,\psi}$, we may assume $Z_{w}$ is connected. In this case, $\frX_{w}^{-}$ is connected because every point of it contracts to $Z_{w}$ under $s\to\infty$. Now $\wt p$ is a monomorphic surjection between two smooth connected spaces. By considering differentials we conclude that $\wt p$ is an isomorphism.  
		\end{proof}		
		
		On the other hand, recall that the isomorphism classes of $\Bun_{G}(\bP_{\infty}, \bI_{0})$ are also indexed by $W_{\bP}\bs \tilW$. Denote the locally closed substack with isomorphism class $w$ by $\Bun^{w}_{G}(\bP_{\infty}, \bI_{0})$. It is well-known that $\Bun^{w}_{G}(\bP_{\infty}, \bI_{0})\subset \ov{\Bun^{w}_{G}(\bP_{\infty}, \bI_{0})}$ if and only if $w\ge w'$ under the partial order already defined on $W_{\bP}\bs \tilW$.
		
		Let $\om: \cM_{\psi}\to \Bun_{G}(\bP_{\infty}, \bI_{0})$ be the forgetful map.
		
		\begin{lemma}\label{l:attract} For any $(\cE,\ph)\in \cM_{\psi}$, the  limit $\lim_{s\to 0}s\cdot (\cE,\ph)$ exists in $\cM^{\Gm(\nu)}_{\psi}$. Moreover, the map $\frp^{+}_{w}: \frX^{+}_{w}\to \cM_{\psi}$ is a locally closed embedding whose image is  $\om^{-1}(\Bun^{w}_{G}(\bP_{\infty}, \bI_{0}))$.
		\end{lemma}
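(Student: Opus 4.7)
The plan is to identify each $\om^{-1}(\Bun^w)$ (writing $\Bun^w$ for $\Bun^w_G(\bP_\infty, \bI_0)$) with the attractor $\frX^+_w$ via an explicit contraction. This yields the second assertion directly, and the first then follows from $\cM_\psi = \coprod_w \om^{-1}(\Bun^w)$ since each stratum becomes an attractor of its fixed locus. To begin, I check that each $\om^{-1}(\Bun^w)$ is $\Gm(\nu)$-stable with $\Gm(\nu)$-fixed locus exactly $Z_w$: the $\Gm(\nu)$-action on the underlying bundle is loop rotation composed with conjugation by $\xi$, and both operations preserve the Birkhoff stratification of $\Bun_G(\bP_\infty, \bI_0)$ by double cosets. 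The global fixed locus $\cM_\psi^{\Gm(\nu)}$ lies in $f^{-1}(a_\psi) \cong \Fl_\psi$ by Theorem \ref{th:geom M}\eqref{th part:ASF vs HF}, and under this identification $\cH_\psi(w) \subseteq L_\bP \dot w \bI_0 / \bI_0 \subset \bP_\infty \dot w \bI_0 / \bI_0$ has bundle type $w$, so $Z_w = \g(\cH_\psi(w))$.

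Next, I construct the contraction $\om^{-1}(\Bun^w) \times \AA^1 \to \cM_\psi$ extending the $\Gm(\nu)$-action. Fix $\dot w \in G\lr{t}$ representing the double coset; a bundle in $\Bun^w$ can be trivialized on $X \setminus \{\infty\}$ so that its transition datum at $\infty$ lies in $\bP_\infty \dot w \bI_0 / \bI_0 \subset \Fl$. Under $\Gm(\nu)$ as $s \to 0$, this cell contracts to $L_\bP \dot w \bI_0 / \bI_0$, because the pro-unipotent radical $\Lie \bP_\infty^+ = \bigoplus_{i<0} \frg[t,t^{-1}]_{i/m}$ carries positive $\Gm(\nu)$-weight $-i > 0$ (mirroring the analogous computation on $\bP_0^+$ in Lemma \ref{l:repel}). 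Simultaneously the Higgs field near $\infty$ lies in $(\psi + \frg\lr{\t}_{\le 0})\, d\t/\t$, on which $\Gm(\nu)$ acts with weight $d - i \ge d$ on the $i/m$-component for $i \le 0$, so the deviation from $\psi$ also contracts to zero. By the universal property of the attractor, this yields a morphism $\alpha_w : \om^{-1}(\Bun^w) \to \frX^+_w$ commuting with the maps to $Z_w$ and to $\cM_\psi$.

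Finally, I show $\alpha_w$ is an isomorphism. Both $\om^{-1}(\Bun^w)$ and $\frX^+_w$ are smooth algebraic spaces over $Z_w$ with affine structure maps ($\frq^+_w$ is affine by \cite[Prop 1.4.20]{Dr}, and $\om^{-1}(\Bun^w) \to Z_w$ parametrizes Higgs field deformations on a bundle of fixed type $w$). The map $\alpha_w$ is a monomorphism because an attractor point is determined by its $\frp^+_w$-image in $\cM_\psi$. A dimension count at a point of $Z_w$, based on the weight-$d$ symplectic form from Theorem \ref{th:geom M}\eqref{th part:M symp}, matches the dimensions of the two sides, proving $\alpha_w$ is an isomorphism. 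The main obstacle is ensuring all $\Gm(\nu)$-tangent weights at a point of $Z_w$ are non-negative, so that the attractor really covers the full stratum: near $\infty$ this is manifest, but the Moy-Prasad components of $\Lie \bI_0^+$ at $0$ with grading $i > d$ would contribute locally negative weights $d - i < 0$, and these must cancel in $H^0(X, \cK)$ through the global constraints. The symplectic self-duality $\io : \cK \isom \cK^\vee$ constructed in \S\ref{ss:pf M sm} pairs tangent weights to sum $d$, and combined with the contracting Hitchin map $f : \cM_\psi \to \cA_\psi$, this should confine all weights to $[0, d]$ and confirm the attracting picture globally.
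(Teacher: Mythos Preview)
Your overall strategy — identify $\om^{-1}(\Bun^{w})$ with the attractor $\frX^{+}_{w}$ by exhibiting an explicit contraction — is the same as the paper's, but your argument has a genuine gap exactly where you flag it. You try to show limits exist by analyzing the Higgs field near $\infty$ (positive weights on $\frg\lr{\t}_{\le 0}$) and the bundle's transition datum. But as you note, the condition at $0$ involves $\Lie\bI^{+}_{0}$, whose Moy--Prasad pieces with index $i>d$ could in principle give negative tangent weights $d-i$; you need to rule this out. Your proposed fix does not work: the symplectic pairing of weight $d$ only forces the weights to pair off to sum $d$, which does not exclude a pair such as $(-1,d+1)$, and the contracting Hitchin map controls only the base direction in $\cA_{\psi}$, not the fiber. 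So the claim ``this should confine all weights to $[0,d]$'' is not justified, and without it neither the existence of limits nor the dimension match in your final paragraph goes through.

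The paper bypasses this local weight analysis entirely. It pulls back $\cM_{\psi}$ along the uniformization $\Fl\to\Bun_{G}(\bK_{\infty},\bI_{0})$ to obtain a cover $\cN$ whose points are pairs $(g\bI_{0},\th)$ with $g\bI_{0}\in\Fl$ and $\th\in\frg[t,t^{-1}]_{\le 0}$ satisfying $\Ad(g^{-1})(\psi+\th)\in\Lie\bI^{+}_{0}$. The key point is that on $\cN$ the two factors decouple for the limit question: $\th$ lives in a fixed vector space with strictly positive $\Gm(\nu)$-weights, so it contracts to $0$; and $g\bI_{0}$ has a limit because $\Fl$ is ind-proper. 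No tangent-weight bookkeeping at $0$ is needed. The identification of the stratum on which the limit lands then reduces to the observation that $\Gamma_{\infty}=\bP_{\infty}\cap G[t,t^{-1}]$ contracts to $L_{\bP}$, so the $\Gamma_{\infty}$-orbit $\Gamma_{\infty}w\bI_{0}/\bI_{0}$ contracts into $L_{\bP}w\bI_{0}/\bI_{0}$. For the final step — upgrading the set-theoretic bijection $\frX^{+}_{w}\to\Om_{w}$ to an isomorphism — the paper does not use a dimension count but instead proves directly (in a separate lemma) that there is a smooth map $\Om_{w}\to\cH_{\psi}(w)$; together with Drinfeld's result that $\frq^{+}_{w}$ is smooth and $\frp^{+}_{w}$ is a monomorphism, this forces $\frX^{+}_{w}\cong\Om_{w}$. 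Your sketch of a dimension count would also need that smoothness input (or something equivalent), since a monomorphic surjection between smooth spaces of equal dimension can still fail to be an isomorphism without some control on differentials.
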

		\begin{proof}

			We first show the limit $\lim_{s\to 0}s\cdot (\cE,\ph)$ always exists. Consider the uniformization map $u: \Fl=G\lr{t}/\bI_{0}\to \Bun_{G}(\bK_{\infty}, \bI_{0})$. We think of $\Fl$ as classifying a $\cE$-torsor on $X$ with $\bK_{\infty}$-level at $\infty$ and $\bI_{0}$-level structure at $0$ and a trivialization on $X\bs\{0\}$. Let $\cN$ be the base change of $\cM_{\psi}$ along $u$. Then $\cN$  can be described as the moduli space of pairs $(g\bI_{0}, \th)$ where $g\bI_{0}\in\Fl$ and $\th\in \frg[t,t^{-1}]_{\le0}$ such that
			\begin{equation*}
				\Ad(g^{-1})(\psi+\th)dt/t\in \Lie\bI_{0}^{+}dt/t.
			\end{equation*}
			Indeed, given $(g\bI_{0},\th)$, we define $\cE$ as the image of $g\bI_{0}$ under $u$, equipped with the Higgs field $\ph=(\psi+\th)dt/t$ over $X\bs \{0\}$.
			
			The action of $\Gm(\nu)$ on $\cM_{\psi}$ lifts to $\cN$, and is given by 
			\begin{equation*}
				s\cdot (g\bI_{0}, \th)=(\rot(s^{-m})\Ad(\xi(s^{-1}))g\bI_{0}, s^{d}\rot(s^{-m})\Ad(\xi(s^{-1}))\th).
			\end{equation*}
			Since $\th\in \frg[t,t^{-1}]_{\le0}$, $\lim_{s\to 0}s^{d}\rot(s^{-m})\Ad(\xi(s^{-1}))\th=0$. On the other hand, since $\Fl$ is ind-proper, $\lim_{s\to 0}(\rot(s^{-m})\Ad(\xi(s^{-1}))g)\bI_{0}\in \Fl^{\Gm(\nu)}$ exists. This shows that $\lim_{s\to 0} s\cdot (g\bI_{0}, \th)$ exists for any point $(g\bI_{0}, \th)\in \cN$. Since $\cN\to \cM_{\psi}$ is surjective and $\Gm(\nu)$-equivariant, the  same is true for $\cM_{\psi}$.

			We next show that the image of $\frp^{+}_{w}$ is $\om^{-1}(\Bun^{w}_{G}(\bP_{\infty}, \bI_{0}))$. In other words, for a geometric point $(\cE,\ph)$ of $\cM_{\psi}$, $\lim_{s\to 0}s\cdot (\cE,\ph)\in \g(\cH_{\psi}(w))$ if and only if the image of $\cE$ in $\Bun_{G}(\bP_{\infty}, \bI_{0})$ is the point $w$. Let $(g\bI_{0}, \th)\in \cN$ be a preimage of $(\cE,\ph)$. Then $\lim_{s\to 0}s\cdot (\cE,\ph)\in \g(\cH_{\psi}(w))$ if and only if $\lim_{s\to 0}\rot(s^{-m})\Ad(\xi(s^{-1}))g\bI_{0}\in L_{\bP}w\bI_{0}/\bI_{0}$. Now $\Fl^{\Gm(\nu)}=\coprod_{w\in W_{\bP}\bs \tilW} L_{\bP}w\bI_{0}/\bI_{0}$.  Note that $\Bun_{G}(\bP_{\infty}, \bI_{0})=\G_{\infty}\bs \Fl$ where $\G_{\infty}=\bP_{\infty}\cap G[t,t^{-1}]$, and the action of $\rot(s^{-m})\Ad(\xi(s^{-1}))$ contracts the group $\G_{\infty}$ to the Levi $L_{\bP}$. Therefore, the limit point  $\lim_{s\to 0}\rot(s^{-m})\Ad(\xi(s^{-1}))g\bI_{0}\in L_{\bP}w\bI_{0}/\bI_{0}$ if and only if $g\bI_{0}$ lies in the $\G_{\infty}$-orbit of $w$, i.e., the image of $g\bI_{0}$ (or $\cE$) in $\Bun_{G}(\bP_{\infty}, \bI_{0})$ is in $\Bun^{w}_{G}(\bP_{\infty}, \bI_{0})$. 
			
			So far we have shown that $\frp^{+}_{w}$ induces a map  $\wt p: \frX^{+}_{w}\to \Om_{w}:=\om^{-1}(\Bun^{w}_{G}(\bP_{\infty}, \bI_{0}))$. Finally we show that $\wt p$ is an isomorphism, therefore $\frp^{+}_{w}$ is a locally closed embedding. In Lemma \ref{l:g} below, we construct a smooth map $\g: \Om_{w}\to \cH_{\psi}(w)$ such that the following diagram is commutative
			\begin{equation}\label{XOm}
\xymatrix{ \frX^{+}_{w}\ar[d]^{\frq^{+}_{w}}\ar[r]^{\wt p} & \Om_{w}\ar[d]^{\g}\\
Z_{w} \ar[r]^{\sim} & \cH_{\psi}(w)}
\end{equation}
By \cite[Proposition 1.4.20]{Dr}, $\frq^{+}_{w}$ is smooth with connected fibers (since its contracting to $Z_{w}$). On the other hand $\g$ is also smooth. Moreover,  by \cite[Proposition 1.4.11(i)]{Dr}, $\wt p$ is unramified. In our situation $\wt p$ is a monomorphism and a surjection, therefore the . By comparing relative differentials of $\frq^{+}_{w}$ and $\g$, we conclude that $\wt p$ is an isomorphism. 
\end{proof}

\begin{lemma}\label{l:g}
There is a smooth map $\g: \Om_{w}\to \cH_{\psi}(w)$ making \eqref{XOm} commutative.
\end{lemma}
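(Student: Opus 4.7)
The plan is to construct $\g$ as the limit map $x\mapsto \lim_{s\to 0} s\cdot x$ under the $\Gm(\nu)$-action, using the uniformization $u':\cN\to \cM_\psi$ from the proof of Lemma \ref{l:attract} to verify it is a well-defined morphism of algebraic spaces. Let $\cN_w\subset \cN$ be the preimage of $\Om_w$; explicitly, it consists of pairs $(g\bI_0,\th)$ with $g\bI_0\in \G'_\infty \cdot w\bI_0/\bI_0$ (where $\G'_\infty := \bP_\infty\cap G[t,t^{-1}]$) and $\th\in \frg[t,t^{-1}]_{\le 0}$ satisfying $\Ad(g^{-1})(\psi+\th)\in \Lie\bI_0^+$. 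Define $\wt\g:\cN_w\to \cH_\psi(w)$ by $\wt\g(g\bI_0,\th):=\lim_{s\to 0}\rot(s^{-m})\Ad(\xi(s^{-1}))g\bI_0$. This limit exists in $L_\bP w\bI_0/\bI_0$ because $\rot(s^{-m})\Ad(\xi(s^{-1}))$ acts on $\frg\lr{\t}_{\le 0}$ with non-negative weights (weight $-j$ on $\frg[t,t^{-1}]_{j/m}$ for $j\le 0$) and hence contracts $\G'_\infty$ to its weight-zero part $L_\bP$ as $s\to 0$. Moreover the limit lies in $\cH_\psi(w)$: as $s\to 0$ the term $s^d\rot(s^{-m})\Ad(\xi(s^{-1}))\th$ tends to $0$, so the Higgs constraint passes in the limit to $\Ad(g_0^{-1})\psi\in \Lie\bI_0^+$, which is exactly the defining condition of $\cH_\psi(w)$ (after the standard identification $L_\bP w\bI_0/\bI_0 \cong L_\bP/(L_\bP\cap{}^{\wt w}\bI_0)$).

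Next, I would show that $\wt\g$ descends to $\g:\Om_w\to \cH_\psi(w)$ along the $\G^\bK_\infty$-torsor $u'_w:\cN_w\to \Om_w$, where $\G^\bK_\infty := \bK_\infty\cap G[t,t^{-1}]$. Since $\Lie\bK_\infty = \frg\lr{\t}_{\le -d/m}+\frc\lr{\t}_{\le -1/m}$ has all its weights strictly positive under $\rot(s^{-m})\Ad(\xi(s^{-1}))$, the group $\G^\bK_\infty$ contracts to the identity as $s\to 0$, so left-multiplication by $\G^\bK_\infty$ does not affect the limit, giving invariance of $\wt\g$. Commutativity of \eqref{XOm} is then tautological: for $x\in \frX^+_w$, $\g(\wt p(x)) = \lim_{s\to 0}s\cdot x = \frq^+_w(x)$, matched via the isomorphism $Z_w\cong \cH_\psi(w)$ from Lemma \ref{l:Zw}.

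Finally, for smoothness, the key is that $\wt\g$ factors as $\cN_w\xr{\pi}U\xr{\lambda}\cH_\psi(w)$, where $\pi(g\bI_0,\th)=g\bI_0$ and $U\subset \G'_\infty w\bI_0/\bI_0$ is the image. The fibers of $\pi$ are the affine spaces of admissible $\th$'s over each $g\bI_0$, so $\pi$ is smooth onto $U$; the map $\lambda$ is the Bia\l ynicki-Birula contraction (restricted to an open subscheme, namely the preimage of $\cH_\psi(w)$), which is an iterated affine-space fibration. Hence $\wt\g$ is smooth, and smoothness descends along the smooth surjection $u'_w$ to give smoothness of $\g$. The main obstacle I anticipate is keeping the bookkeeping of the image $U\subset \G'_\infty w\bI_0/\bI_0$ and the restricted contraction $\lambda|_{\lambda^{-1}\cH_\psi(w)}$ clean: one needs to verify that the admissibility locus is open in $\G'_\infty w\bI_0/\bI_0$ and compatible with the affine-bundle structure of the BB contraction, which should follow from the linear form of the constraints but is the only nontrivial input.
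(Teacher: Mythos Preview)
Your construction of $\g$ via the limit under the $\Gm(\nu)$-action, descended from the uniformization $\cN$, is correct and is essentially the same as the paper's: the paper also computes $\g$ by projecting the bundle data to its leading-order piece at $\infty$, only it first passes to an explicit finite-dimensional chart $\wh\Om_w\subset H\times \wt V_w$ (with $H=\bP_\infty/\bP_\infty(\frac{d}{m})$) before taking the projection $H\to L_\bP$. The commutativity of \eqref{XOm} is fine by either route.

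The gap is in the smoothness argument. Your claim that $\pi:\cN_w\to U$ is smooth because ``the fibers are affine spaces of admissible $\th$'s'' is not sufficient: you have not shown that $U$ (or even $\cN_w$) is smooth, and constant-dimensional affine fibers alone do not give smoothness of a morphism. More seriously, your expectation that $U$ is open in $\G'_\infty w\bI_0/\bI_0$ is unjustified---the admissibility condition $\psi\in \Ad(g)\Lie\bI_0^++\frg[t,t^{-1}]_{\le0}$ is a priori a \emph{closed} condition (vanishing of the image of $\psi$ in a quotient bundle), so what you actually need is a transversality statement. This is precisely where the paper does the real work: it computes the differential of the analogous projection $\a:\wt\Om_w\to\wt\cH_w$ and reduces surjectivity to showing $\frc'\lr{\t}_{(i-d)/m}\cap\Ad(w)(\frn+t\frg[t])=0$ for $0\le i\le d-1$, proved by observing that any element of this intersection has all invariant polynomials vanishing (hence is nilpotent) yet lies in a Cartan subalgebra (hence is semisimple), and so must be zero. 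Your sketch supplies no substitute for this argument; the remark that it ``should follow from the linear form of the constraints'' underestimates the content of the step.
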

\begin{proof}
To see this we first construct a smooth map $\g: \Om_{w}\to \cH_{\psi}(w)$. Fix a point $\cE_{w}\in \Bun_{G}(\bK_{\infty}, \bI_{0})$ whose image in $\Bun_{G}(\bP_{\infty}, \bI_{0})$ is $w$. Let $H=\bP_{\infty}/\bP_{\infty}(d/m)$, $H^{+}=\bP^{+}_{\infty}/\bP_{\infty}(d/m)$ which acts on $V=\op_{i=1}^{d}\frg\lr{\t}_{i/m}$. Let $V_{w}=V\cap \Ad(w)(\frn+t\frg[t])$ and $\wt V_{w}=\frg\lr{\t}_{\le d/m}\cap \Ad(w)(\frn+t\frg[t])$.  Let
			\begin{equation*}
\wh\Om_{w}=\{(h,v)\in H\times \wt V_{w}|\Ad(h)v\in \psi+\frg\lr{\t}_{\le0}\}.
\end{equation*}
Let $A_{w}=\bP_{\infty}\cap \Ad(w)(G[t]\cap \bI^{+}_{0})$, which is the automorphism group of the point $w$ in $\Bun_{G}(\bP_{\infty}, \bI_{0})$. It acts on $\wh\Om_{w}$ on the right by $ (h,v)\cdot a=(ha, \Ad(a^{-1})v)$. Let $\psi_{V}$ be $\psi$ viewed as an element in $V$, and $C_{H}(\psi_{V})$ be its stabilizer under $H$. Then $C_{H}(\psi_{V})$ acts on $\wh\Om_{w}$ by left translation on $h\in H$. Then there is an isomorphism
			\begin{equation*}
C_{H^{+}}(\psi)\bs \wh \Om_{w}/A_{w}\cong \Om_{w}.
\end{equation*}

Let $\wt\Om_{w}=\{h\in H|\Ad(h^{-1})\psi_{V}\in V_{w}\}$. There is a natural map $\b: \wh\Om_{w}\to \wt\Om_{w}$ (sending $(h,v)$ to $h$) is an affine space bundle with fibers isomorphic to $\frg\lr{\t}_{\le 0}\cap \Ad(w)(\frn+t\frg[t])$. In particular, $\b$ is smooth.

Let $V_{w,i/m}=\frg\lr{\t}_{i/m}\cap \Ad(w)(\frn+t\frg[t])$.  Let $\wt \cH_{w}=\{\ell\in L_{\bP}|\Ad(\ell^{-1})\psi\in V_{w, d/m}\}$. Then $\wt\cH_{w}\to \cH_{\psi}(w)$ is a torsor under $B_{w}=L_{\bP}\cap \Ad(w)(G[t]\cap \bI^{+}_{0})$ (a Borel subgroup of $L_{\bP}$). We have a map $\a: \wt\Om_{w}\to \wt\cH_{w}$ sending $h\in H$ to its image in $L_{\bP}$.  Note that $B_{w}$ is a quotient of $A_{w}$, and the composition $\a\b: \wh\Om_{w}\to \wt\cH_{w}$ is $C_{H^{+}}(\psi_{V})$ invariant and $A_{w}$-equivariant (via the quotient $B_{w}$ on $\wt\cH_{w}$). Therefore $\a\b$ induces a map $\g: \Om_{w}\to \cH_{\psi}(w)$ by passing to the quotients. To summarize, we have a commutative diagram
\begin{equation*}
\xymatrix{\wh\Om_{w} \ar[r]^{\b}\ar[d]^{\pi} & \wt\Om_{w}\ar[r]^{\a} & \wt \cH_{w}\ar[d]^{\pi'}\\
\Om_{w}\ar[rr]^{\g} && \cH_{\psi}(w)}
\end{equation*}
where $\pi,\pi'$ and $\b$ are smooth. To show $\g$ is smooth, it suffices to show that $\a$ is smooth. 

We have a commutative diagram
\begin{equation}\label{HV}
\xymatrix{ H\ar[r]^{\th}\ar[d]^{\wt\a} & V/V_{w}\ar[d]^{p}\\
L_{\bP} \ar[r]^-{\y} & \frg\lr{\t}_{d/m}/V_{w,d/m}}
\end{equation}
Here $\wt\a$ and $p$ are the projections, $\th(h)=\Ad(h^{-1})\psi_{V}\mod V_{w}$; $\y(\ell)=\Ad(\ell^{-1})\psi\mod V_{w,d/m}$. By definition, $\wt\Om_{w}=\th^{-1}(0)$, $\wt \cH_{w}=\y^{-1}(0)$ and $\a$ is the restriction of $\wt\a$. Let $h\in \wt \Om_{w}$ with image $\ov h=\wt\a(h)\in L_{\bP}$. The tangent maps of \eqref{HV} at $h$ are given by
\begin{equation*}
\xymatrix{ \frh=\op_{i=0}^{d-1}\frg\lr{\t}_{-i/m}\ar[d]\ar[rr]^-{T_{h}\th=[-,\Ad(h^{-1})\psi_{V}]} && V/V_{w}\ar[d]\\
\frl=\frg\lr{\t}_{0}\ar[rr]^-{T_{\ov h}\y=[-,\Ad(\ov h^{-1})\psi]} && \frg\lr{\t}_{d/m}/V_{w,d/m}
}
\end{equation*}
We need to show that $T_{h}\th$ is surjective (which implies that $\wt\Om_{w}$ is smooth at $h$) and that the induced map $T_{h}\Om_{w}=\ker(T_{h}\th)\to \ker(T_{\ov h}\y)=T_{\ov h}\wt\cH_{w}$ is surjective. Consider the filtrations $F_{-i}\frh=\op_{i\le i'\le (d-1)}\frg\lr{\t}_{-i'/m}$, $F_{i}(V/V_{w})=\op_{1\le i'\le i}\frg\lr{\t}_{i'/m}/V_{w,i'/m}$. Then $T_{h}\th$ sends $F_{-i}\frh$ to $F_{d-i}(V/V_{w})$, and the induced map on the associated graded is
\begin{equation*}
\Gr_{-i}(T_{h}\th): F_{-i}\frh=\frg\lr{\t}_{-i/m}\xr{[-,\psi']} F_{d-i}(V/V_{w})=\frg\lr{\t}_{(d-i)/m}/V_{w,(d-i)/m}
\end{equation*}
where $\psi'=\Ad(\ov h^{-1})\psi\in \frg\lr{\t}_{d/m}$. In particular, $T_{\ov h}\y=\Gr_{0}(T_{h}\th)$. It remains to show that each $\Gr_{-i}(T_{h}\th)$ is surjective, for $0\le i\le d-1$. 

Passing to the dual spaces and using the Killing form to identify $\frg\lr{\t}_{i/m}$ with the dual of $\frg\lr{\t}_{-i/m}$, we reduce to showing that
\begin{equation*}
(\Gr_{-i}(T_{h}\th))^{*}: \frg\lr{\t}_{(i-d)/m}\cap\Ad(w)(\frn+t\frg[t])\xr{[-,\psi']} \frg\lr{\t}_{i/m}
\end{equation*}
is injective for $0\le i\le d-1$. Let $\frc'\lr{\t}\subset \frg\lr{\t}$ be the centralizer of $\psi'$. This is a Cartan subalgebra of $\frg\lr{\t}$ with the induced grading $\frc'\lr{\t}_{j/m}\subset \frg\lr{\t}_{j/m}$. Then the kernel of $(\Gr_{-i}(T_{h}\th))^{*}$ is $\frc'\lr{\t}_{(i-d)/m}\cap \Ad(w)(\frn+\t^{-1}\frg[\t^{-1}])$. Let $x\in \frc'\lr{\t}_{(i-d)/m}\cap \Ad(w)(\frn+t\frg[t])$ and let $f\in k[\frg]^{G}$ be a homogeneous invariant polynomial of positive degree. Since $i<d$, $x\in \frg\lr{\t}_{<0}$, $f(x)\in \t k\tl{\t}$; since $x\in \Ad(w)(\frn+t\frg[t])$, $f(x)\in tk[t]$. Therefore $f(x)=0$ for all any non-constant homogeneous $f\in k[\frg]^{G}$, hence $x$ is nilpotent. Since $\frc'\lr{\t}$ consists of semisimple elements only, $x$ must be $0$. This shows $(\Gr_{-i}(T_{h}\th))^{*}$ is injective and $\Gr_{-i}(T_{h}\th)$ is surjective, for all $i<d$. This concludes the proof that $\a$ is smooth. By previous discussion, it follows that $\g$ is also smooth.

The commutativity of \eqref{XOm} is clear by checking geometric points (at which level $\wt p$ is a bijection). 
\end{proof}

\sss{Proof of Theoreom \ref{th:coho M Fl}}
We need to verify that the $\Gm(\nu)$-action on $\frX=\cM_{\psi}$ satisfies the conditions in \S\ref{sss:Gm action}. We use the decomposition \eqref{Gm fixed Hess}
 for the fixed point subspace, and the partial order $W_{\bP}\bs\tilW$ defined in \S\ref{sss:att rep M}. We use notations $\frX^{\pm}_{w}$ for attractors and repellers. 
 
Condition  (1) and (3) follow from Lemma \ref{l:attract}.

Condition (2) follows from Lemma \ref{l:repel}.

Condition (4). Since $\le$ on $W_{\bP}\bs \tilW$ is defined to be the closure order of $\bP_{0}$-orbits on $\Fl$, we have $X^{-}_{\le w}=\g(\ov{\Fl_{w}}\cap \Fl_{\psi})$ is proper since $\ov{\Fl_{w}}$ is. This also shows that $\{w'; w'\le w\}$ is finite. On the other hand, $\Bun^{\le w}_{G}(\bP_{\infty}, \bI_{0})=\cup_{w'\le w}\Bun^{ w'}_{G}(\bP_{\infty}, \bI_{0})$	is open in $\Bun_{G}(\bP_{\infty}, \bI_{0})$, therefore its preimage $\frX^{+}_{\le w}$ in $\cM_{\psi}$ is open. This verifies all conditions so Proposition \ref{p:res coho} applies. \qed

		\subsection{A further Hamiltonian reduction}\label{ss:C0}
		We introduce a variant $\cM^{\flat}_{\psi}$ of $\cM_{\psi}$ by making a Hamiltonian reduction with respect to the torus $C_{0}:=\bC_{\infty}/\bC_{\infty}^{+}\cong T^{w,\c}$. We define $\cM^{\flat}_{\psi}$ to be the moduli stack of pairs $(\cE,\ph)$ where
		\begin{itemize}
			\item $\cE$ is a $G$-bundle over $X$ with $\bP_{\infty}(\frac{d}{m})\bC_{\infty}$-level structure at $\infty$ and $\bI_{0}$-level structure at $0$.
			\item $\ph$ is a section of $\Ad(\cE)\ot\om_{X\bs \{0,\infty\}}$ such that under some (equivalently any) trivialization of $\cE|_{D_{\infty}}$ together with its $\bP_{\infty}(\frac{d}{m})\bC_{\infty}$-level structure, we have
			\begin{equation*}
				\ph|_{D_{\infty}^{\times}}\in (\psi+\frc\lr{\t}^{\bot}_{\le0}+\frc\lr{\t}_{<0})d\t/\t.
			\end{equation*} 
			and, under some (equivalently, any) trivialization of $\cE|_{D_{0}}$ together with its $\bI_{0}$-level structure, $\ph|_{D^{\times}_{0}}\in \Lie(\bI^{+}_{0})dt/t$.
		\end{itemize}
		
		There is a Hamiltonian action of $C_{0}=\bC_{\infty}/\bC_{\infty}^{+}$ on $\cM_{\psi}$ since $\bP_{\infty}(\frac{d}{m})\bC_{\infty}$ normalizes $\bK_{\infty}$. The moment map for this action is taking residue at $\infty$
		\begin{equation*}
			\res_{\infty}: \cM_{\psi}\to \frc_{0}=\frc\lr{\t}_{0}=\Lie C_{0}
		\end{equation*}
		defined as follows: for $(\cE,\ph)\in \cM_{\psi}$ such that under some local trivialization $\ph|_{D_{\infty}^{\times}}=(\psi+x_{0}+\frg\lr{\t}_{\le -1/m})d\t/\t$, where $x_{0}\in\frg_{0}$, we let $\res_{\infty}(\cE,\ph)$ be the projection of $x_{0}$ to $\frc_{0}=\frg_{0}/\frc^{\bot}_{0}$. This projection is independent of the trivialization of $\cE|_{D_{\infty}}$. By definition, we have
		\begin{equation}\label{Mpsi flat}
			\cM_{\psi}^{\flat}=\res_{\infty}^{-1}(0)/C_{0}.
		\end{equation}
		This realizes $\cM_{\psi}^{\flat}$ as the Hamiltonian reduction of $\cM_{\psi}$ by $C_{0}$.
		
		On the other hand, we have a map
		\begin{equation*}
			\res_{\cA}: \cA_{\psi}\to \prod_{1\le i\le r, m|(d_{i}-1)}\AA^{1}
		\end{equation*}
		sending $(a_{i})_{1\le i\le r}$ to the coefficient of $\t^{-\frac{d(d_{i}-1)}{m}}$ of $a_{i}$, for those $1\le i\le r$ such that $m|(d_{i}-1)$. There is a commutative diagram
		\begin{equation*}
			\xymatrix{ \cM_{\psi} \ar[d]^{f} \ar[r]^-{\res_{\infty}} & \frc_{0}\ar[d]^{\phi}\\
				\cA_{\psi} \ar[r]^-{\res_{\cA}} & \prod_{1\le i\le r, m|(d_{i}-1)}\AA^{1}}
		\end{equation*}
		Here $\phi$ sends $x_{0}\in \frc_{0}$ to $((\partial_{x_{0}}f_{i})(\psi))_{1\le i\le d, m|(d_{i}-1)}$. It is easy to see that $\phi$ is a linear isomorphism. We define
		\begin{equation*}
			\cA^{\flat}_{\psi}=\res_{\cA}^{-1}(0)\subset \cA_{\psi}.
		\end{equation*}
		In other words, in addition to the conditions defining $\cA_{\psi}$, we require the coefficient of $\t^{-\frac{d(d_{i}-1)}{m}}$ of $a_{i}$ (when $m|(d_{i}-1)$) to be zero. Then $f$ induces a map
		\begin{equation*}
			f^{\flat}:\cM^{\flat}_{\psi}\to\cA_{\psi}^{\flat}.
		\end{equation*}
		One can prove the following properties of $\cM^{\flat}_{\psi}$, analogous to those of $\cM_{\psi}$, using either the same idea of proof or formal deduction from Hamiltonian reduction \eqref{Mpsi flat}. 
		
		\begin{theorem}\label{th:geom M flat} For a homogeneous element $\psi\in \frg\lr{t}$ of slope $\nu=d/m$, the following hold.
			\begin{enumerate}
				\item The stack $\cM^{\flat}_{\psi}$ is a smooth algebraic stack over $k$ of dimension $\frac{d}{m}|\Phi|-r-\dim \frt^{w}$.
				\item $\cM^{\flat}_{\psi}$ carries a canonical symplectic structure of weight $d$ under the $\Gm(\nu)$-action.
				\item The map $f^{\flat}: \cM_{\psi}\to \cA_{\psi}$ is a $\Gm(\nu)$-equivariant completely integrable system.
				\item There is a natural homeomorphism $[\Fl_{\psi}/C_{0}]\to \cM^{\flat}_{a_{\psi}}=f^{\flat, -1}(a_{\psi})$.
				\item When $\psi$ is elliptic (equivalently, $w$ is elliptic), $\cM^{\flat}_{\psi}=\cM_{\psi}, \cA^{\flat}_{\psi}=\cA_{\psi}$, and in particular $f^{\flat}=f$ is proper. 
			\end{enumerate}
		\end{theorem}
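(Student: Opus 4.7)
The plan is to deduce all five statements from the Hamiltonian reduction description $\cM^{\flat}_{\psi} = \res_{\infty}^{-1}(0)/C_{0}$ of \eqref{Mpsi flat}, combined with the corresponding statements about $\cM_{\psi}$ established in Theorem \ref{th:geom M}, together with the commutative diagram $\phi \c \res_{\infty} = \res_{\cA} \c f$ in which $\phi$ is a linear isomorphism.

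The key technical input will be that the moment map $\res_{\infty}: \cM_{\psi} \to \frc_{0}$ is smooth. By standard symplectic duality, this amounts to showing that the infinitesimal $C_{0}$-action is injective at every point of $\res_{\infty}^{-1}(0)$. At any $(\cE,\ph) \in \cM_{\psi}$, the infinitesimal stabilizer in $\frc_{0} = \Lie C_{0}$ embeds into $\Lie \Aut(\cE,\ph)$, which vanishes by the argument of \S\ref{triv aut}. Granting this, standard Hamiltonian reduction yields that $\cM^{\flat}_{\psi}$ is a smooth algebraic stack carrying a canonical symplectic form, of dimension $\dim \cM_{\psi} - 2\dim C_{0} = (\frac{d}{m}|\Phi|-r+\dim \frt^{w}) - 2\dim \frt^{w} = \frac{d}{m}|\Phi|-r-\dim \frt^{w}$ (using $C_{0} \cong T^{w,\c}$), proving (1). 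For (2), the $\Gm(\nu)$-weight $d$ of the symplectic form is inherited from $\cM_{\psi}$ since $\Gm(\nu)$ normalizes $C_{0}$ and fixes the value $0 \in \frc_{0}$.

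For (3), I would check that $f$ is $C_{0}$-invariant (as $f_{i}(\ph)$ is gauge invariant) and sends $\res_{\infty}^{-1}(0)$ into $\cA^{\flat}_{\psi}$ by the commutative diagram, so that it descends to $f^{\flat}: \cM^{\flat}_{\psi} \to \cA^{\flat}_{\psi}$. Using \eqref{eval w}, the indices $i$ with $m \mid (d_{i}-1)$ correspond bijectively to eigenvalue-$1$ eigenvectors of $w$ on $\frt$, so their number is $\dim \frt^{w}$; hence $\dim \cA^{\flat}_{\psi} = \dim \cA_{\psi} - \dim \frt^{w} = \frac{1}{2}\dim \cM^{\flat}_{\psi}$ by Lemma \ref{l:dimA}, and the Lagrangian property descends from $f$ to $f^{\flat}$ via Hamiltonian reduction. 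For (4), the diagram $\phi \c \res_{\infty} = \res_{\cA} \c f$ together with $\phi$ being an isomorphism shows that $\res_{\infty}$ vanishes identically on every $f$-fiber over $\cA^{\flat}_{\psi}$, so $\cM^{\flat}_{a_{\psi}} = \cM_{a_{\psi}}/C_{0}$; combining with the $C_{0}$-equivariant universal homeomorphism $\Fl_{\psi} \to \cM_{a_{\psi}}$ of Theorem \ref{th:geom M}\eqref{th part:ASF vs HF} yields $[\Fl_{\psi}/C_{0}] \isom \cM^{\flat}_{a_{\psi}}$.

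Finally, for (5), if $w$ is elliptic then $\frt^{w} = 0$, so $C_{0} = T^{w,\c}$ is trivial and the moment map condition is vacuous. By \eqref{eval w}, the condition defining $\cA^{\flat}_{\psi} \subset \cA_{\psi}$ is also vacuous (no index $i$ satisfies $m\mid (d_{i}-1)$). Hence $\cM^{\flat}_{\psi} = \cM_{\psi}$ and $\cA^{\flat}_{\psi} = \cA_{\psi}$, so properness of $f^{\flat}=f$ follows from Theorem \ref{th:geom M}\eqref{th part:f proper}. The main obstacle I anticipate is making the smoothness of $\res_{\infty}$ fully rigorous in the Higgs-bundle-with-level-structure setting, but this should reduce cleanly to the automorphism-triviality argument already carried out in \S\ref{triv aut}; the remaining arguments are formal consequences of Hamiltonian reduction together with the commutative diagram.
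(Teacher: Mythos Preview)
Your proposal is correct and matches the paper's own approach: the paper does not give a detailed proof but simply states that the properties follow ``using either the same idea of proof or formal deduction from Hamiltonian reduction \eqref{Mpsi flat},'' which is precisely the route you take. The key technical point you identify---that freeness of the infinitesimal $C_0$-action (hence smoothness of $\res_\infty$) follows from the automorphism-triviality argument of \S\ref{triv aut}---is exactly what makes the Hamiltonian reduction formal, and your dimension counts for $\cM^{\flat}_{\psi}$ and $\cA^{\flat}_{\psi}$ via \eqref{eval w}, as well as your use of the commutative square $\phi\circ\res_\infty=\res_\cA\circ f$ to identify the central fiber, are all sound.
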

		
		\begin{remark}
			One could also consider the variant of $\cM^{\flat}_{\psi}$ by specifying an arbitrary residue in $\frc_{0}$ at $\infty$ and an arbitrary residue in $\frt$ at $0$. They do not have $\Gm(\nu)$-action in general but their Hitchin fibrations are still completely integrable systems.
		\end{remark}

		\section{The de Rham moduli space}
		
		Similar to the usual Hitchin moduli space, $\cM_{\psi}$ admits a one-parameter deformation into the moduli space of certain $\l$-connections. We denote the $\l=1$ fiber by $\cM_{\dR,\psi}$. The main result in this section gives a canonical isomorphism between the cohomologies of $\cM_{\dR,\psi}$ and $\cM_{\psi}$.
		
		\subsection{Moduli of $\l$-connections}
		
		\sss{$\cM_{\Hod, \psi}$ and $\cM_{\dR,\psi}$}
		Let $\cM_{\Hod, \psi}$ be the moduli stack of triples $(\l, \cE, \nb)$ where
		\begin{itemize}
			\item $\l\in \AA^{1}$.
			\item $\cE$ is a $G$-bundle over $X$ with $\bK_{\infty}:=\bP_{\infty}(d/m)\bC^{+}_{\infty}$-level structure at $\infty$ and $\bI_{0}$-level structure at $0$. 
			\item $\nb$ is a $\l$-connection on $\cE|_{X\bs \{0,\infty\}}$ satisfying the following conditions: 
			\begin{enumerate}
				\item[(i)] Under any (equivalently, some) trivialization of $\cE|_{D_{\infty}}$ together with its $\bK_{\infty}$-level structure, $\nb|_{D_{\infty}^{\times}}$ takes the form 
				\begin{equation*}
					\nb|_{D_{\infty}^{\times}}\in \l d+(\psi+\frg\lr{\t}_{\le0})d\t/\t.
				\end{equation*}
			
				\item[(ii)] Under any (equivalently, some) trivialization of $\cE|_{D_{0}}$ together with its $\bI_{0}$-level structure,  $\nb|_{D^{\times}_{0}}$ takes the form
				\begin{equation*}
					\nb|_{D^{\times}_{0}}\in \l d+ \Lie(\bI^{+}_{0})dt/t.
				\end{equation*}
			\end{enumerate}
		\end{itemize}
		
		We have the projection $\l: \cM_{\Hod,\psi}\to \AA^{1}$ recording $\l$. The fiber $\l^{-1}(0)$ is identified with $\cM_{\psi}$. Define
		$$\cM_{\dR, \psi}:=\l^{-1}(1).$$
		
		\sss{$\Gm(\nu)$-action}
		The $\Gm(\nu)$-action on $\cM_{\psi}$ extends to an action on $\cM_{\Hod, \psi}$: we interpret the scaling by $s^{d}$ as multiplying $\nb$ by $s^{d}$, so that the function $\l$ has weight $d$ under the $\Gm(\nu)$-action. We denote this action by
		\begin{equation*}
			s: (\l,\cE,\nb)\mapsto s\cdot (\l,\cE,\nb), \quad s\in\Gm(\nu), (\l,\cE,\nb)\in \cM_{\Hod,\psi}.
		\end{equation*}
		
		Since the function $\l$ has weight $d>0$,  the $\Gm(\nu)$-fixed points $\cM_{\Hod,\psi}^{\Gm(\nu)}$ necessarily lie in the central fiber of $\cM_{\psi}$, hence $\cM_{\Hod,\psi}^{\Gm(\nu)}=\cM^{\Gm(\nu)}_{\psi}$, which isomorphic to $\Fl^{\Gm(\nu)}_{\psi}=\coprod \cH_{\psi}(w)$ by Lemma \ref{l:Zw}.
		
		Similar to Lemma \ref{l:attract}, we have a version for $\cM_{\Hod,\psi}$.
		
		\begin{lemma}\label{l:Hod attract} For any $(\l,\cE,\nb)\in \cM_{\Hod,\psi}$, the limit $\lim_{s\to 0}s\cdot (\l,\cE,\nb)$ exists in $\cM^{\Gm(\nu)}_{\psi}$. Moreover,  $\lim_{s\to 0}s\cdot (\l,\cE,\nb)\in Z_{w}$ if and only if the image of $\cE$ in $\Bun_{G}(\bP_{\infty}, \bI_{0})$ is $w$.  
		\end{lemma}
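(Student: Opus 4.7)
The plan is to imitate the proof of Lemma \ref{l:attract} via the same uniformization $u : \Fl \to \Bun_G(\bK_\infty, \bI_0)$, with the observation that the extra data present in the Hodge setting---the parameter $\lambda$ and the derivative contribution to the $\lambda$-connection---scale with strictly positive $\Gm(\nu)$-weights and hence contract to trivial values as $s \to 0$. Note first that since the projection $\lambda : \cM_{\Hod, \psi} \to \AA^1$ is $\Gm(\nu)$-equivariant for the weight-$d$ action on the target, and $d > 0$, any $\Gm(\nu)$-fixed point of $\cM_{\Hod, \psi}$ has $\lambda = 0$ and so lies in $\cM_\psi \subset \cM_{\Hod, \psi}$; thus $\cM_{\Hod, \psi}^{\Gm(\nu)}$ is identified with $\cM_\psi^{\Gm(\nu)} = \coprod_w Z_w$.

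To establish existence of the limit, form the base change $\cN_{\Hod} := \cM_{\Hod, \psi} \times_{\Bun_G(\bK_\infty, \bI_0)} \Fl$. Analogously to the moduli $\cN$ introduced in the proof of Lemma \ref{l:attract}, an $R$-point of $\cN_{\Hod}$ can be described as a tuple $(\lambda, g\bI_0, \th)$ where $g\bI_0 \in \Fl(R)$, $\lambda \in R$, and $\th \in \frg[t, t^{-1}]_{\le 0}$ encodes the polar part at $\infty$ of the $\lambda$-connection in the trivialization on $X \setminus \{0\}$ determined by $g\bI_0$; the Moy-Prasad condition at $0$ (after gauging by $g$) then imposes an Iwahori-type constraint on $\Ad(g^{-1})(\psi + \th)$, corrected by a term linear in $\lambda$ of the form $\lambda t g^{-1} \partial_t g$. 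The $\Gm(\nu)$-action lifts to $\cN_{\Hod}$ as
$$
s \cdot (\lambda, g\bI_0, \th) = \bigl(s^d \lambda,\ \rot(s^{-m})\Ad(\xi(s^{-1})) g\bI_0,\ s^d \rot(s^{-m})\Ad(\xi(s^{-1})) \th\bigr).
$$
Exactly as in the proof of Lemma \ref{l:attract}, $\rot(s^{-m})\Ad(\xi(s^{-1}))$ acts on $\frg[t, t^{-1}]_{k/m}$ by $s^{-k}$, and combined with the overall $s^d$ factor the graded pieces of $\th$ scale by $s^{d - k}$ with $k \le 0$; all resulting exponents are strictly positive. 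Hence $\th \to 0$ and $s^d \lambda \to 0$ as $s \to 0$, while $\rot(s^{-m})\Ad(\xi(s^{-1})) g\bI_0$ admits a limit in $\Fl^{\Gm(\nu)}$ by ind-properness of $\Fl$. The limit in $\cN_{\Hod}$ therefore exists, and descends to a limit in $\cM_{\Hod, \psi}$ via the smooth surjection $\cN_{\Hod} \to \cM_{\Hod, \psi}$.

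For the identification of the stratum $Z_w$ containing the limit, the final paragraph of the proof of Lemma \ref{l:attract} applies verbatim: $\lim_{s \to 0} \rot(s^{-m})\Ad(\xi(s^{-1})) g\bI_0$ lies in $L_\bP w \bI_0 / \bI_0$ if and only if $g\bI_0$ lies in the $\G_\infty$-orbit of $w$ (using that $\G_\infty = \bP_\infty \cap G[t, t^{-1}]$ contracts to $L_\bP$), which is equivalent to the image of $\cE$ in $\Bun_G(\bP_\infty, \bI_0)$ being $w$.

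The main point requiring care beyond Lemma \ref{l:attract} is the explicit formulation of $\cN_{\Hod}$---specifically, correctly tracking the derivative term $-\lambda g^{-1} dg$ that appears when gauging the $\lambda d$ part of the connection across the change of trivialization at $0$. This modifies the Iwahori condition on $\Ad(g^{-1})(\psi + \th)$ by a summand linear in $\lambda$, but since $\lambda$ itself contracts to $0$, this correction does not obstruct the positivity-of-weights argument, and I do not foresee any further substantive difficulty.
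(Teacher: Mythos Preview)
Your proposal is correct and follows essentially the same approach as the paper: form the base change $\cN_{\Hod}=\cM_{\Hod,\psi}\times_{\Bun_G(\bK_\infty,\bI_0)}\Fl$, describe its points as triples $(\lambda,g\bI_0,\th)$ subject to the gauged Iwahori condition $\lambda g^{-1}dg-\Ad(g^{-1})(\psi+\th)\,dt/t\in\Lie\bI_0^+\,dt/t$, write down the lifted $\Gm(\nu)$-action, and then invoke the positivity of weights on $\lambda$ and $\th$ together with ind-properness of $\Fl$ exactly as in Lemma~\ref{l:attract}. Your care in isolating the derivative correction $\lambda g^{-1}dg$ and noting that it contracts away with $\lambda$ matches the paper's treatment precisely.
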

		\begin{proof}
			The proof is almost identical to that of Lemma \ref{l:attract}. We only indicate the modifications. Let $\cN_{\Hod}=\cM_{\Hod,\psi}\times_{\Bun_{G}(\bK_{\infty}, \bI_{0})}\Fl$, where $u: \Fl\to \Bun_{G}(\bK_{\infty}, \bI_{0})$ is the the uniformization map. Then $\cN_{\Hod}$ is the moduli space of triples $(\l, g\bI_{0}, \th)$ where $\l\in\AA^{1}$, $g\bI_{0}\in\Fl$ and $\th\in \frg[t,t^{-1}]_{\le0}$ such that
			\begin{equation*}
				\l g^{-1}dg-\Ad(g^{-1})(\psi+\th)dt/t\in \Lie\bI_{0}^{+}dt/t.
			\end{equation*}
			Indeed, given $(\l,g\bI_{0},\th)$, we define $\cE$ as the image of $g\bI_{0}$ under $u$, equipped with the $\l$-connection $\nb=\l d-(\psi+\th)dt/t$ over $X\bs \{0\}$.
			
			The action of $\Gm(\nu)$ on $\cM_{\Hod,\psi}$ lifts to $\cN$, and is given by 
			\begin{equation*}
				s\cdot (\l, g\bI_{0}, \th)=(s^{d}\l, (\rot(s^{-m})\Ad(\xi(s^{-1}))g)\bI_{0}, s^{d}\rot(s^{-m})\Ad(\xi(s^{-1}))\th).
			\end{equation*}
			The rest of the argument can be copied verbatim from that of Lemma \ref{l:attract}.
		\end{proof}

		\begin{theorem}\label{th:MHod sm} The stack $\cM_{\Hod, \psi}$ is an algebraic space  smooth over $\AA^{1}$ with pure relative dimension equal to $\dim \cM_{\psi}$. 
		\end{theorem}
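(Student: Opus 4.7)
The strategy is to adapt the tangent-complex argument of \S\ref{ss:pf M sm} to the relative situation over $\AA^{1}$. At a point $(\l,\cE,\nb)\in\cM_{\Hod,\psi}$, the relative tangent complex of $\cM_{\Hod,\psi}/\AA^{1}$ is the two-step complex in degrees $-1,0$
$$
\cK_{\Hod}=[\Ad(\cE;\Lie\bI_{0},\frk_{\infty})\xr{\nb}\Ad(\cE;\Lie\bI^{+}_{0},\frg\lr{\t}_{\le 0})],
$$
where the differential is the $\l$-connection $\nb$ acting on sections via its adjoint action. For $\l=0$ this recovers the complex $\cK$ of \S\ref{tang M}, and for $\l\neq 0$ it is a first-order differential operator whose principal symbol at $\infty$ is still $[-,\psi]$. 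Relative smoothness with the claimed dimension reduces, as in \S\ref{tang M}, to showing $\cohog{\pm 1}{X,\cK_{\Hod}}=0$ together with $\dim\cohog{0}{X,\cK_{\Hod}}=\dim\cM_{\psi}$.

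The next step mimics \S\ref{tang M}: construct the Serre dual complex $\cK^{\vee}_{\Hod}$ and the natural inclusion $\io:\cK_{\Hod}\incl\cK^{\vee}_{\Hod}$. The cokernel $\coker(\io)$ is supported at $\infty$, and on the associated graded with respect to the Moy-Prasad filtration, its differential agrees with the Higgs case because $\nb$ and $[-,\psi]d\t/\t$ share the same principal symbol. Since the graded maps $\ad(\psi)_{j/m}:\frg_{j/m}/\frc_{j/m}\to\frg_{(j+d)/m}/\frc_{(j+d)/m}$ are isomorphisms by regularity of $\psi$, the map $\io$ is a quasi-isomorphism. This yields a perfect Serre-duality pairing between $\cohog{-1}{X,\cK_{\Hod}}$ and $\cohog{1}{X,\cK_{\Hod}}$, and the Euler-characteristic computation for $\dim\cohog{0}{X,\cK_{\Hod}}$ in \S\ref{ss:pf M sm} carries over unchanged. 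It therefore suffices to prove $\cohog{-1}{X,\cK_{\Hod}}=\Lie\Aut(\l,\cE,\nb)=0$.

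An infinitesimal automorphism must preserve the level structures, so lies in the pro-unipotent groups $\bK_{\infty}$ at $\infty$ and $\bI_{0}^{+}$ at $0$, and must be $\nb$-horizontal on $\PP^{1}\bs\{0,\infty\}$. Its leading-order term at $\infty$ must commute with $\psi$, hence lies in $\Lie C$; combined with pro-nilpotency, an order-by-order inductive lifting argument in the Moy-Prasad filtration (in the spirit of Lemma \ref{l:C trans} applied to the horizontality equation for $\nb$) forces all graded components to vanish. The main technical obstacle is setting up Serre duality rigorously when the differential of $\cK_{\Hod}$ is a first-order differential operator rather than an $\cO_{X}$-linear map; the cleanest fix is to work in the filtered derived category with respect to the Moy-Prasad filtration, in which the associated graded has $\cO_{X}$-linear differential $[-,\psi]$ and the Higgs proof applies verbatim. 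Alternatively, one can realize $\cM_{\Hod,\psi}\to\AA^{1}$ as a relative Hamiltonian reduction extending Proposition \ref{p:red}, from which smoothness follows formally.
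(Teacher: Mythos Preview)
Your approach—extending the Serre-duality argument of \S\ref{ss:pf M sm} pointwise to all $\lambda$—is not the route the paper takes, and it has gaps as written. The paper instead exploits the $\Gm(\nu)$-action to \emph{reduce} everything to the already-established Higgs case $\lambda=0$. The non-smooth locus $Z\subset\cM_{\Hod,\psi}$ (where the relative obstruction $\cohog{1}{X,\cK_{(\cE,\nb)}}$ is nonzero) is closed and $\Gm(\nu)$-stable; by Lemma~\ref{l:Hod attract} every point flows to a $\Gm(\nu)$-fixed point, and all such fixed points lie in $\cM_{\psi}$. So if $Z\neq\varnothing$ then $Z\cap\cM_{\psi}\neq\varnothing$, contradicting Theorem~\ref{th:geom M}\eqref{th part:M sm}. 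The same contraction argument shows the automorphism groups are trivial (the locus where they are not is again closed and $\Gm(\nu)$-stable, hence would meet $\cM_{\psi}$, which is already an algebraic space). The dimension then follows from a pure Euler-characteristic computation, which does not require any duality. This entirely sidesteps the non-$\cO_X$-linearity issue you flag.

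As for your proposed fixes: the Moy-Prasad filtration is only a filtration on the local lattices at $\infty$, not on the global complex $\cK_{\Hod}$ over $X$, so ``passing to associated graded'' does not produce the Higgs complex and the claim that ``the Higgs proof applies verbatim'' is not justified. What one actually needs is the argument spelled out in the Remark immediately following the theorem in the paper: even though the differential is a connection, there is a $k$-linear pairing $\cK_{(\cE,\nb)}\otimes_{k}\cK^{\vee}_{(\cE,\nb)}\to\omega_{X}[1]$, and perfection on hypercohomology is checked by comparing the two long exact sequences coming from the stupid filtrations, where the termwise maps are ordinary Serre duality for the individual coherent sheaves $\cK^{j}$. (The paper uses this only to construct the symplectic form, not for smoothness.) Separately, your order-by-order argument for $\cohog{-1}{}=0$ is purely local at $\infty$: it shows the leading term of a horizontal section lies in $\frc\lr{\t}$, but does not by itself force global vanishing—for $\lambda\neq 0$ you are asking about flat sections of an irregular connection on $\Gm$ with prescribed growth at both ends, which is a nontrivial analytic/formal statement. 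The contraction argument avoids this entirely.
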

		\begin{proof} We first remark that $\cM_{\Hod, \psi}$ is an algebraic stack locally of finite type over $\Bun_{G}(\bK_{\infty}, \bI_{0})\times\AA^{1}$. Indeed, letting $\cE^{\univ}$ be the universal $G$-bundle over $\Bun_{G}(\bK_{\infty}, \bI_{0})\times X$, there is an extension of vector bundles over $\Bun_{G}(\bK_{\infty}, \bI_{0})\times X\times \AA^{1}$ of the form 
			\begin{equation}\label{At}
				0\to p_{\Bun\times X}^{*}\Ad(\cE^{\univ}; \Lie \bI^{+}_{0}, \frg\lr{\t}_{\le d/m})\to \cA\to p_{X}^{*}T_{X}(-0-\infty)\to 0
			\end{equation}
			whose splittings over $\l\in \AA^{1}$ classify $\l$-connections on $\cE$ which locally looks like $\l d+\Lie\bI^{+}_{0}dt/t$ near $0$ and looks like $\l d+\frg\lr{\t}_{\le d/m}d\t/\t$ near $\infty$. The moduli stack $\wt\cM$ of splittings of \eqref{At} is an algebraic stack locally of finite type over $\Bun_{G}(\bK_{\infty}, \bI_{0})\times\AA^{1}$. Our $\cM_{\Hod, \psi}$ is a closed substack of $\wt \cM$, hence locally of finite type over $k$.

			Now we show $\l: \cM_{\Hod, \psi}\to \AA^{1}$ is smooth. Let $Z\subset \cM_{\Hod, \psi}$ be the closed substack where $\l$ fails to be smooth. Suppose $Z\ne\vn$. Since $\l$ is $\Gm(\nu)$-equivariant, $Z$ is stable under the $\Gm(\nu)$-action. By Lemma \ref{l:Hod attract}, $Z$ contains a fixed point under $\Gm(\nu)$, hence in particular, $Z\cap \cM_{\psi}\ne\vn$. At a geometric point $(\l,\cE,\nb)\in \cM_{\Hod, \psi}$, the relative tangent complex of $\l: \cM_{\Hod, \psi}\to \AA^{1}$ is the de Rham cohomology $\cohog{*}{X, \cK_{(\cE,\nb)}}$, where $\cK_{(\cE,\nb)}$ is the complex in degrees $-1$ and $0$:
			\begin{equation*}
				\Ad(\cE; \Lie \bI_{0}, \frk_{\infty})\xr{\nb_{\Ad}}\Ad(\cE; \Lie \bI^{+}_{0}, \frg\lr{\t}_{\le0})
			\end{equation*}
			and $\nb_{\Ad}$ is the $\l$-connection on $\Ad(\cE)$ induced from $\nb$. In particular, at a point $(\cE,\ph)\in \cM_{\psi}$, the relative tangent complex of $\l$ is the same as the tangent complex of $\cM_{\psi}$ at $(\cE,\ph)$, whose obstruction group vanishes by Theorem \ref{th:geom M}\eqref{th part:M sm}. Therefore $\l$ is smooth at any point of $\cM_{\psi}$, i.e., $Z\cap \cM_{\psi}=\vn$. Contradiction! This shows $Z=\vn$ hence $\l$ is smooth.

			To see $\cM_{\Hod, \psi}$ has trivial automorphism groups, let $Z'\subset \cM_{\Hod, \psi}$ be the closed substack where the automorphism group is nontrivial. Then $Z$ is $\Gm(\nu)$-stable but $Z\cap  \cM_{\psi}=\vn$ since $\cM_{\psi}$ is known to be an algebraic space. Therefore the same argument as above using Lemma \ref{l:Hod attract} shows that $Z=\vn$.

			Finally, we compute the dimension of $\cM_{\Hod,\psi}$ at any geometric point $(\l,\cE,\nb)$. Since $\cM_{\Hod, \psi}$ is smooth over $\AA^{1}$ with trivial automorphism group at  $(\l,\cE,\nb)$, the complex $R\G(X, \cK_{(\cE,\nb)})$ is concentrated in degree $0$, and $\cohog{0}{X,\cK_{(\cE, \nb)}}$ is the relative tangent space of $\l$ at $(\l,\cE,\nb)$. Therefore the relative dimension of $\l$ at $(\l,\cE,\nb)$ is $$\chi(X, \cK_{(\cE, \nb)})=\chi(X, \Ad(\cE; \Lie \bI^{+}_{0}, \frg\lr{\t}_{\le0})) -\chi(X,  \Ad(\cE; \Lie \bI_{0}, \frk_{\infty})),$$ which is the same as $\dim \cM_{\psi}$.

		\end{proof}
		
		\begin{remark} The smooth map $\l: \cM_{\Hod,\psi}\to \AA^{1}$ carries a canonical symplectic structure constructed as follows. Consider the ``Serre dual'' complex $\cK^{\vee}_{(\cE,\nb)}$ given by \begin{equation*}
				\Ad(\cE; \Lie \bI_{0}, \frg\lr{\t}_{\le -1/m})\xr{\nb_{\Ad}}\Ad(\cE; \Lie \bI^{+}_{0}, \frk^{\vee}_{\infty}).
			\end{equation*}
Here Serre dual is in quotation marks because the differential in $\cK_{(\cE,\nb)}$ is not $\cO_{X}$-linear. Here we take the termwise Serre dual (and the Killing form to identify $\Ad(\cE)|_{\Gm}$ with $\Ad(\cE)^{*}|_{\Gm}$), and the differential in $\cK^{\vee}_{(\cE,\nb)}$ is still given by the adjoint connection.

			The same argument as in \S\ref{tang M} shows that the natural map $\cK_{(\cE, \nb)}\to \cK_{(\cE,\nb)}^{\vee}$ is a quasi-isomorphism in the derived category of sheaves of abelian groups on $X$, hence a canonical isomorphism $\cohog{*}{X, \cK_{(\cE,\nb)}}\isom \cohog{*}{X, \cK^{\vee}_{(\cE,\nb)}}$. We claim that there is a perfect pairing between $\cohog{*}{X, \cK_{(\cE,\nb)}}$ and $\cohog{*}{X, \cK^{\vee}_{(\cE,\nb)}}$ even though $\cK_{(\cE,\nb)}$ is not an $\cO_{X}$-linear complex. Indeed, by construction there is a $k$-linear pairing of complexes of sheaves
			\begin{equation}\label{K pairing}
				\cK_{(\cE,\nb)}\ot_{k}\cK^{\vee}_{(\cE,\nb)}\to \om_{X}[1].
			\end{equation}
			Taking cohomology induces a pairing between $\cohog{i}{X, \cK_{(\cE,\nb)}}$ and $\cohog{-i}{X, \cK^{\vee}_{(\cE,\nb)}}$ valued in $\cohog{0}{X,\om_{X}[1]}\cong k$. Writing $\cK=\cK_{(\cE,\nb)}$ as $[\cK^{-1}\to \cK^{0}]$,  $\cohog{*}{X, \cK_{(\cE,\nb)}}$ fits into a long exaxt sequence exact at the two ends
			\begin{equation}\label{long K}
				\cohog{-1}{\cK}\to\cohog{0}{\cK^{-1}}\to \cohog{0}{\cK^{0}}\to  \cohog{0}{\cK}\to \cohog{1}{\cK^{-1}}\to \cohog{1}{\cK^{0}}\to \cohog{1}{\cK}  
			\end{equation}
			Similarly for $\cohog{*}{X, \cK^{\vee}_{(\cE,\nb)}}$, or its dual
			\begin{equation}\label{long K dual}
				\cohog{1}{\cK^{\vee}}^{*}\to \cohog{1}{\cK^{-1,\vee}}^{*}\to \cohog{1}{\cK^{0,\vee}}^{*}\to  \cohog{0}{\cK^{\vee}}^{*}\to \cohog{0}{\cK^{-1,\vee}}^{*}\to \cohog{0}{\cK^{0,\vee}}^{*}\to\cohog{-1}{\cK^{\vee}}^{*} 
			\end{equation}
			The pairing gives a map of long exact sequences from \eqref{long K} to \eqref{long K dual}. One checks that the maps $\cohog{i}{\cK^{j}}\to \cohog{1-i}{\cK^{j,\vee}}^{*}$ are the usual Serre duality ($i=0,1$, $j=-1,0$), hence are isomorphisms. Therefore the maps $\cohog{i}{\cK}\to\cohog{-i}{\cK^{\vee}}^{*}$ given by the pairing \eqref{K pairing} is also an isomorphism.
			It is easy to check that this is gives a symplectic form on $\cohog{0}{X, \cK_{(\cE,\nb)}}$, and a perfect pairing between $\cohog{-1}{X,\cK_{(\cE,\nb)}}$ and $\cohog{1}{X,\cK_{(\cE,\nb)}}$. It can be shown by calculations similar to that done by R.Fedorov \cite[\S6.3]{Fed} that this 2-form is closed. Therefore the map $\l$ has a relative symplectic structure. In particular, $\cM_{\dR,\psi}$ is a symplectic algebraic space.
		\end{remark}

		\subsection{Comparison of cohomology}\label{ss:comp coho}
		The non-abelian Hodge theory suggests that $\cM_{\dR,\psi}$ should be diffeomorphic to $\cM_{\psi}$. In particular they should have isomorphic cohomology. In this subsection we prove the cohomological isomorphism without showing they are diffeomorphic.
		
		\sss{The situation}\label{sss:A1} Consider the following general situation. Let $f: \frX\to \AA^{1}$ be a regular function on an algebraic space $\frX$ locally of finite type over an algebraically closed field $k$. Let $\frX_{\l}=f^{-1}(\l)$ for $\l\in k$. Let $\Gm$ act on $\frX$ such that $f$ has weight $d>0$. Let $\frX^{\Gm}=\coprod_{\a\in I}Z_{\a}$ be an open-closed  decomposition. We use notation $\frX^{+}_{\a}$ and $\frq^{+}_{\a}: \frX^{+}_{\a}\to Z_{\a}$ from \S\ref{sss:Gm action} for attractors.
		
		We make the following assumptions:
		\begin{enumerate}
			\item The function $f$ is a smooth morphism $f:\frX\to \AA^{1}$. In particular, $\frX$ is smooth over $k$.
			\item The map $\frq^{+}_{\a}: \frX^{+}_{\a}\to \frX$ is a locally closed embedding. 
			\item $\cup_{\a\in I}\frX^{+}_{\a}=\frX$, i.e., for any $x\in \frX$, the limit $\lim_{t\to 0}t\cdot x$ exists.

			\item There exists a partial order $\le$ on $I$ such that for each $\a\in I$
			\begin{itemize}
			\item The set $\{\a'\in I;\a'\in \a\}$	is finite.	
			\item $\frX^{+}_{\le \a}:=\cup_{\a'\le \a}\frX^{+}_{\a}$ is open in $\frX$, and is of finite type over $k$.
			\end{itemize}		
		\end{enumerate}

		\begin{lemma}\label{l:contracting coho} Under the above assumptions, the restriction map $\cohog{*}{\frX}\to \cohog{*}{\frX_{1}}$ is an isomorphism.
		\end{lemma}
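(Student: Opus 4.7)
The plan is to follow the strategy of Proposition \ref{p:res coho}: reduce to the finite type case using condition (4), then induct on $\alpha\in I$ along the attractor stratification, with $\frX_1$ now playing the role that $\frY$ played there. By condition (4), the cohomology of $\frX$ is the projective limit of $\cohog{*}{\frX^+_{\le\alpha}}$, and similarly $\cohog{*}{\frX_1}=\varprojlim_\alpha\cohog{*}{\frX^+_{\le\alpha}\cap\frX_1}$, so it suffices to prove the restriction isomorphism for each finite-type open $\frX^+_{\le\alpha}$, which is smooth and inherits all the hypotheses of the lemma.

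For the induction on $\alpha$ I would use the open-closed decomposition $\frX^+_{\le\alpha}=\frX^+_{<\alpha}\sqcup\frX^+_\alpha$ (with $\frX^+_{<\alpha}$ open, $\frX^+_\alpha$ closed) and intersect with $\frX_1$. The associated map of long exact sequences, together with the induction hypothesis applied to $\frX^+_{<\alpha}$ and a matching computation for the Gysin-type relative cohomology terms coming from the closed embedding $\frX^+_\alpha\hookrightarrow\frX^+_{\le\alpha}$, reduces via the five-lemma to the base case: showing that $\cohog{*}{\frX^+_\alpha}\to\cohog{*}{\frX^+_\alpha\cap\frX_1}$ is an isomorphism.

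The base case exploits smoothness of $\frX$. By \cite[Prop. 1.4.20]{Dr}, $\frq^+_\alpha\colon\frX^+_\alpha\to Z_\alpha$ is a Zariski-locally-trivial affine bundle with positive $\Gm$-weights on fibers, so $\cohog{*}{\frX^+_\alpha}\cong\cohog{*}{Z_\alpha}$. For the fiber over $1$, I would analyze $f|_{\frX^+_\alpha}$ via its $\Gm$-weights. On each fiber $V\cong\AA^k$ with positive $\Gm$-weights $w_1,\dots,w_k$, the function $f$ is a weight-$d$ polynomial, and smoothness of $f$ at the origin of $V$ forces a nonzero linear part. The key combinatorial point is that any monomial $z_1^{a_1}\cdots z_k^{a_k}$ of total weight $d$ involving a weight-$d$ coordinate $z_i$ with $a_i\ge 1$ must have $a_i=1$ and $a_j=0$ for all $j\ne i$, since all $w_j>0$ and $\sum a_jw_j=d$. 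Hence $f=\ell+g$, where $\ell$ is a nowhere-vanishing linear form on the weight-$d$ sub-bundle $V^{(d)}_\alpha$ and $g$ is a polynomial in the strictly-lower-weight coordinates only. Solving $f=1$ for a weight-$d$ coordinate then exhibits $\frX^+_\alpha\cap\frX_1$ as an affine sub-bundle of $\frX^+_\alpha$ over $Z_\alpha$ of corank $1$; hence $\cohog{*}{\frX^+_\alpha\cap\frX_1}\cong\cohog{*}{Z_\alpha}$, compatibly with the iso for $\frX^+_\alpha$.

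The main obstacle will be carrying out the decomposition $f=\ell+g$ globally over $Z_\alpha$ rather than only fiber-by-fiber, so that $\frX^+_\alpha\cap\frX_1\to Z_\alpha$ is genuinely an affine bundle; this requires that the linear part of $f$ gives a nowhere-vanishing section of $(V^{(d)}_\alpha)^*$ (which follows from global smoothness of $f$ at $Z_\alpha$) and that $f$ globally has no mixed monomials involving weight-$d$ and strictly-lower-weight coordinates (which follows from the same weight constraint applied to the $\Gm$-equivariant ring of functions on $\frX^+_\alpha$ over $Z_\alpha$). A secondary task is verifying compatibility of the Gysin-type relative cohomology terms in the two long exact sequences at each inductive step, which should be a consequence of transversality of $\frX_1$ with each attractor cell $\frX^+_\alpha$, itself a consequence of smoothness of $f$.
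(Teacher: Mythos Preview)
Your proposal is correct and follows the same inductive scheme as the paper. The paper's execution differs in two technical points that happen to dissolve exactly the obstacles you flag. First, rather than working with Gysin sequences in ordinary cohomology, the paper passes to compactly supported cohomology via Poincar\'e duality: the map to be shown an isomorphism becomes $\cohoc{*-2}{\frX_1}(-1)\to\cohoc{*}{\frX}$ induced by $i_{1*}i_1^!\Qlbar\to\Qlbar$, and the long exact sequences for the open-closed pair $\frX^+_{<\alpha}\subset\frX^+_{\le\alpha}\supset\frX^+_\alpha$ involve only $\cohoc{*}{-,\Qlbar}$, so no transversality or $i^!$-compatibility checks are needed. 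Second, instead of your global decomposition $f=\ell+g$ (which needs $\frX^+_\alpha\to Z_\alpha$ to be a $\Gm$-equivariant vector bundle, not merely smooth as cited from \cite{Dr}), the paper shows directly that $\pi_\alpha=(f_\alpha,\frq^+_\alpha)\colon\frX^+_\alpha\to\AA^1\times Z_\alpha$ is smooth with affine-space geometric fibers, arguing one fiber $\frX^+_z$ at a time: such a fiber is a smooth affine scheme with contracting $\Gm$-action, hence $\Spec K[t_1,\dots,t_m]$ with homogeneous $t_i$, and since $df_\alpha(z)\ne 0$ one may take $t_1=f_\alpha$. Then $R\pi_{\alpha!}\Qlbar$ is constant on $\AA^1\times Z_\alpha$ and the step follows. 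Your route works too, but the paper's avoids both bookkeeping issues you anticipated.
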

		\begin{proof}
			Let $i_{1}: \frX_{1}\incl \frX$ be the inclusion. The canonical map $ i_{1*}\Qlbar[-2](-1)\cong i_{1*}i_{1}^{!}\Qlbar\to \Qlbar$ induces a map
			\begin{equation*}
r: \cohoc{*}{\frX_{1}, i_{1}^{!}\Qlbar}\cong \cohoc{*-2}{\frX_{1},\Qlbar(-1)}\to \cohoc{*}{\frX,\Qlbar}.
\end{equation*}
It suffices to prove $r$ is an isomorphism, and the statement on cohomology follows by Poincar\'e duality.

			Extend $\le$ to a total order on $I$. Let $\frX^{+}_{1,\le\a}=\frX^{+}_{\le\a}\cap \frX_{1}$, and similarly define $\frX^{+}_{1,\a}$ and $\frX^{+}_{1,<\a}$. We have an analogue $r_{\le\a}$ of $r$ for the inclusion $\frX_{1,\le\a}^{+}\incl \frX^{+}_{\le\a}$, and similarly we have $r_{\a}$ and $r_{<\a}$. We have a map of long exact sequences
\begin{equation*}
\xymatrix{\cdots\ar[r] & \cohoc{*-2}{\frX_{1,<\a}^{+}}(-1) \ar[r]\ar[d]^{r_{<\a}}& \cohoc{*-2}{\frX_{1,\le\a}}(-1) \ar[r]\ar[d]^{r_{\le\a}}& \cohoc{*-2}{\frX_{1,\a}^{+}}(-1)\ar[d]^{r_{\a}}\ar[r] & \cdots\\
\cdots\ar[r] &\cohoc{*}{\frX_{<\a}^{+}}\ar[r] & \cohoc{*}{\frX_{\le\a}}\ar[r] & \cohoc{*}{\frX_{\a}^{+}}\ar[r] & \cdots}
\end{equation*}

			By induction on $\a$, it suffices to show that $r_{\a}$ is an isomorphism for each $\a$.

			Let $f_{\a}=f|_{\frX_{\a}^{+}}: \frX^{+}_{\a}\to \AA^{1}$, and $\pi_{\a}=(f_{\a}, \frq^{+}_{\a}): \frX^{+}_{\a}\to \AA^{1}\times Z_{\a}$. We first claim that $\pi_{\a}$ is smooth. Since the critical locus of $\pi_{\a}$ is closed and stable under the $\Gm$-action, it must intersect $Z_{\a}$ if not empty. Therefore it suffices to show that $\pi_{\a}$ is smooth along $Z_{\a}$. Let $z\in Z_{\a}$ be a geometric point. By \cite[Prop.1.4.11(vi)]{Dr}, $(T_{z}\frX^{+}_{\a})=(T_{z}\frX)^{+}$ is the summand where $\Gm$ acts with positive weights; $T_{z}Z_{\a}=(T_{z}\frX)^{0}$ is the zero weight space. The tangent map of $\pi_{\a}$ at $z\in Z_{\a}$ is $(T_{z}f, \pr_{0})$, where $\pr_{0}$ is the natural projection.  By assumption, $f$ has weight $d>0$, therefore $T_{z}f: T_{z}\frX\to T_{0}\AA^{1}=\AA^{1}$ factors through the weight $d$ summand $T_{z}\frX\surj (T_{z}\frX)^{+}\surj (T_{z}\frX)^{d}$. Since $f$ is a smooth morphism, $df(z)$ is nonzero. Therefore $T_{z}\pi_{\a}=(T_{z}f, \pr_{0})$ is surjective, and $\pi_{\a}$ is smooth.
			
			We then show that the geometric fibers of $\pi_{\a}$ are isomorphic to affine spaces.   By \cite[Theorem 1.4.2]{Dr}, $\frq^{+}_{\a }:\frX^{+}_{\a}\to Z_{\a}$ is an affine morphism of finite type. Let $z\in Z_{\a}$ be a geometric point valued in $K$, then $\frX^{+}_{z}=\frq^{+,-1}_{\a}(z)$ is a smooth affine scheme over $K$ with a contracting $\Gm$-action. We have $\frX^{+}_{z}=\Spec A$ where $A=\op_{n\ge0}A_{n}$ is a finitely generated graded $K$-algebra with $A_{0}=K$. Since $\Spec A$ is smooth at the cone point $z$, one can choose liftings $t_{1},\cdots, t_{m}\in A_{+}=\op_{n>0}A_{n}$ of a homogeneous basis of the cotangent space $A_{+}/A_{+}^{2}$, so that $K[t_{1},\cdots, t_{m}]\isom A$ as graded algebras. Now $df_{\a}(z)$ is a nonzero homogeneous element in $T^{*}_{z}(\Spec A)$ by the smoothness of $\pi_{\a}$, we may assume $t_{1}=f_{\a}$ so that it lifts $df_{\a}(z)$. This way, we have an isomorphism of the pair $(\frX^{+}_{z}, f_{\a})$ with $(\Spec K[t_{1},\cdots, t_{m}], t_{1})$, so the geometric fibers of $f_{\a}|\frX^{+}_{z}$ are affine spaces of dimension $m-1$.
			
			If needed we may decompose $Z_{\a}$ further to ensure $\pi_{\a}$ is equidimensional. Since $\pi_{\a}$ is smooth with geometric fibers isomorphic to $\AA^{d_{\a}}$, we have a canonical isomorphism
			\begin{equation*}
				K_{\a}:=R\pi_{\a!}\Qlbar\cong \Qlbar[-2d_{\a}](-d_{\a}) \in D^{b}_{c}(\AA^{1}\times Z_{\a}).
			\end{equation*}
			Let $\io_{1}: \{1\}\times Z_{\a}\to \AA^{1}\times Z_{\a}$ be the inclusion. The map $r_{\a}$ is induced from the canonical map $\io_{1*}\io_{1}^{!}K_{\a}\to K_{\a}$ by taking $\cohoc{*}{-}$. Now $K_{\a}$ is constant, it is clear that $\io_{1*}\io_{1}^{!}K_{\a}\to K_{\a}$  induces an isomorphism on compactly supported cohomology on $\AA^{1}\times Z_{\a}$. Therefore  $r_{\a}$ is an isomorphism.
		\end{proof}

		\begin{cor}\label{c:comp dR}
		Both restriction maps
		\begin{equation*}
\xymatrix{\cohog{*}{\cM_{\psi}} & \cohog{*}{\cM_{\Hod,\psi}}\ar[r]^{i_{1}^{*}}\ar[l]_{i_{0}^{*}} & \cohog{*}{\cM_{\dR,\psi}}}
\end{equation*}
are  isomorphisms.
		\end{cor}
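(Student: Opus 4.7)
The plan is to deploy the two general tools already available in the paper: Lemma \ref{l:contracting coho} for the restriction $i_{1}^{*}$ to the de Rham fiber, and a second application of Proposition \ref{p:res coho} (combined with Theorem \ref{th:coho M Fl}) for the restriction $i_{0}^{*}$ to the Dolbeault fiber.

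For $i_{1}^{*}$ I would apply Lemma \ref{l:contracting coho} to the smooth morphism $\l:\cM_{\Hod,\psi}\to\AA^{1}$ with its $\Gm(\nu)$-action, in which $\l$ has positive weight $d$. Smoothness is Theorem \ref{th:MHod sm}. The fixed point locus $\cM_{\Hod,\psi}^{\Gm(\nu)}$ coincides with $\cM_{\psi}^{\Gm(\nu)}=\coprod_{w}Z_{w}$, since any fixed point must have $\l=0$. The verification that each attractor $\frX^{+}_{w}\incl \cM_{\Hod,\psi}$ is a locally closed embedding, with image the preimage of $\Bun^{w}_{G}(\bP_{\infty},\bI_{0})$ under the forgetful map, proceeds verbatim from Lemma \ref{l:attract} and Lemma \ref{l:g}, using Lemma \ref{l:Hod attract} as the replacement for Lemma \ref{l:attract}; adding the $\l$-direction contributes only a compatible $\AA^{1}$-factor. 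The openness and finite-type conditions on $\frX^{+}_{\le w}$ follow from the corresponding statements for $\cM_{\psi}$ together with the fact that $\cM_{\Hod,\psi}\to\Bun_{G}(\bK_{\infty},\bI_{0})\times \AA^{1}$ is representable of finite type. Lemma \ref{l:contracting coho} then yields that $i_{1}^{*}$ is an isomorphism.

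For $i_{0}^{*}$ the key observation is that the $\Gm(\nu)$-repellers of $\cM_{\Hod,\psi}$ are forced to lie in the central fiber $\cM_{\psi}$: for any $(\l,\cE,\nb)$ with $\l\ne0$, the limit $\lim_{s\to\infty}s\cdot(\l,\cE,\nb)$ cannot exist because $s^{d}\l\to\infty$. Hence the repellers $\frX^{-}_{w}$ and the accumulated ind-space $\frY$ coincide with those for $\cM_{\psi}$, namely $\Fl_{w,\psi}$ and $\Fl_{\psi}$ respectively. The hypotheses of \S\ref{sss:Gm action} for $\frX=\cM_{\Hod,\psi}$ are then verified by the same arguments used for $\cM_{\psi}$ in \S\ref{sss:att rep M}, giving by Proposition \ref{p:res coho} a canonical isomorphism $\cohog{*}{\cM_{\Hod,\psi}}\isom\cohog{*}{\Fl_{\psi}}$. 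By naturality this isomorphism factors through the restriction map $i_{0}^{*}: \cohog{*}{\cM_{\Hod,\psi}}\to\cohog{*}{\cM_{\psi}}$ followed by the isomorphism of Theorem \ref{th:coho M Fl}; hence $i_{0}^{*}$ is an isomorphism as well.

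The main obstacle will be the bookkeeping needed to lift the attractor analysis of \S\ref{sss:att rep M} from $\cM_{\psi}$ to the Hodge space, particularly extending the smooth surjection $\g:\Om_{w}\to\cH_{\psi}(w)$ of Lemma \ref{l:g} in the presence of the $\l$-connection data. Once this is done the two applications proceed in parallel with the Dolbeault case.
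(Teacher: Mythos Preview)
Your proposal is correct and follows essentially the same route as the paper: apply Lemma \ref{l:contracting coho} to $\l:\cM_{\Hod,\psi}\to\AA^{1}$ for $i_{1}^{*}$, and for $i_{0}^{*}$ observe that Proposition \ref{p:res coho} applies to $\cM_{\Hod,\psi}$ with the same repeller ind-space $\Fl_{\psi}$ (since any repeller orbit must have $\l=0$), then factor the resulting isomorphism $\cohog{*}{\cM_{\Hod,\psi}}\isom\cohog{*}{\Fl_{\psi}}$ through $i_{0}^{*}$ and the isomorphism of Theorem \ref{th:coho M Fl}. The paper's proof is terser---it simply asserts that the hypotheses of \S\ref{sss:A1} and of Proposition \ref{p:res coho} carry over to $\cM_{\Hod,\psi}$---but your more explicit accounting (including the remark that the attractor analysis of Lemma \ref{l:attract} and Lemma \ref{l:g} must be redone with the extra $\l$-direction) is exactly what is implicit there.
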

		\begin{proof} To show $i_{1}^{*}$ is an isomorphism, we would like to apply Lemma \ref{l:contracting coho}. We need to check that $\l: \cM_{\Hod,\psi}\to \AA^{1}$ satisfies the conditions in \S\ref{sss:A1}. Condition (1) follows from Theorem \ref{th:MHod sm}; (2) can be proved in the same way as Lemma \ref{l:attract}; (3) follows from Lemma \ref{l:Hod attract}; for (4), the partial order on $W_{\bP}\bs\tilW$ is the same one used \S\ref{sss:att rep M}.
		
		Now we show that $i_{0}^{*}$ is an isomorphism. Consider the further restriction map along $\g_{\Hod}: \Fl_{\psi}\to \cM_{\Hod,\psi}$. We have a factorization
		\begin{equation*}
\g^{*}_{\Hod}: \cohog{*}{\cM_{\Hod,\psi}}\xr{i_{0}^{*}}\cohog{*}{\cM_{\psi}}\xr{\g^{*}}\cohog{*}{\Fl_{\psi}}.
\end{equation*}
By Theorem \ref{th:coho M Fl}, $\g^{*}$ is an isomorphism. Observe that Prop. \ref{p:res coho} also applies to $\cM_{\Hod,\psi}$, which proves that $\g_{\Hod}^{*}$ is an isomorphism. Therefore $i_{0}^{*}$ is also an isomorphism.				
		\end{proof}

		\subsection{Variants}

		\sss{Changing the level group}
		We have a one-parameter deformation $\cM^{\da}_{\Hod,\psi}$ of the Poisson moduli space $\cM^{\da}_{\psi}$ introduced in \S\ref{ss:cons3}: it classifies $(\l, \cE, \nb)$ where
		\begin{itemize}
			\item $\l\in \AA^{1}$.
			\item $\cE$ is a $G$-bundle over $X$ with $\bP^{+}_{\infty}$-level structure at $\infty$ and $\bI_{0}$-level structure at $0$. 
			\item $\nb$ is a $\l$-connection on $\cE|_{X\bs \{0,\infty\}}$ satisfying the following conditions: 
			\begin{enumerate}
				\item[(i)] Under some (equivalently, any) trivialization of $\cE|_{D_{\infty}}$ together with its $\bP^{+}_{\infty}$-level structure, we require
				\begin{equation}\label{conn at infty}
					\nb|_{D_{\infty}^{\times}}\in \l d+(\psi+\frg\lr{\t}_{\le (d-1)/m})d\t/\t.
				\end{equation}
				\item[(ii)] Under some (equivalently, any) trivialization of $\cE|_{D_{0}}$ together with its $\bI_{0}$-level structure, we require
				\begin{equation*}
					\nb|_{D^{\times}_{0}}\in \l d+\Lie(\bI^{+}_{0})dt/t.
				\end{equation*}
			\end{enumerate}
		\end{itemize}
		
		A small part of the Hitchin map $f^{\da}$ for $\cM^{\da}_{\psi}$ continues to make sense for $\cM^{\da}_{\Hod,\psi}$. Recall in the proof of Proposition \ref{p:Cart Mda} we have introduced an affine space $\fra_{\psi}$ with a surjection $\cA^{\da}_{\psi}\to \fra_{\psi}$ that records a Laurent tail of $a_{i}$ at $\infty$.  Let $\ov a_{\psi}\in \fra_{\psi}$ be the image of $a_{\psi}$. We also introduced in the proof of Proposition \ref{p:Cart Mda} an affine space $V_{\psi} = (\psi+\frg\lr{\t}_{\le(d-1)/m})/\frg\lr{\t}_{\le0}$ with the action of $Q=\bP^{+}_{\infty}/\bP_{\infty}(\frac{m}{d})$. The invariant polynomials 
$(f_{1},\cdots, f_{r})$ give a map $[V_{\psi}/Q]\to \fra_{\psi}$. Taking the irregular part of the connection $\nb$ at $\infty$ yields a map
		\begin{equation*}
 \ov f^{\da}_{\psi}: \cM^{\da}_{\Hod,\psi}\to [V_{\psi}/Q]\to \fra_{\psi}.
		\end{equation*}
		
		We have an analogue of Proposition \ref{p:Cart Mda} with the same proof. 		
		\begin{prop}\label{p:Cart MHodda} The natural map $\cM_{\Hod,\psi}\to \cM^{\da}_{\Hod,\psi}$ identifies $\cM_{\Hod, \psi}$ with the fiber of $ \ov f^{\da, -1}_{\psi}(\ov a_{\psi})$. Equivalently,  $\cM_{\Hod,\psi}$ can be identified with the closed subspace of $\cM^{\da}_{\Hod,\psi}$ obtained by replacing the condition \eqref{conn at infty} with:  under {\em some} trivialization of $\cE|_{D_{\infty}}$,  $ \nb|_{D_{\infty}^{\times}}\in \l d+(\psi+\frg\lr{\t}_{\le 0})d\t/\t$.
		\end{prop}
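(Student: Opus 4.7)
The plan is to mirror the proof of Proposition \ref{p:Cart Mda}, with the main new input being that the irregular part of a $\l$-connection at $\infty$, taken modulo $\frg\lr{\t}_{\le 0}d\t/\t$, depends only on its $\bP_{\infty}^{+}$-gauge equivalence class. Concretely, I would construct an evaluation map $\e_{\Hod}: \cM^{\da}_{\Hod,\psi}\to [V_{\psi}/Q]$ extending the map $\e$ from the proof of Proposition \ref{p:Cart Mda}, by sending $(\l,\cE,\nb)$ to the image in $V_{\psi}=(\psi+\frg\lr{\t}_{\le(d-1)/m})/\frg\lr{\t}_{\le 0}$ of the polar part of $\nb$, computed in any trivialization of $\cE|_{D_{\infty}}$ compatible with its $\bP_{\infty}^{+}$-level structure.

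The key (and really only nontrivial) point is the well-definedness of $\e_{\Hod}$. Under a gauge transformation by $q\in\bP_{\infty}^{+}$, the connection form picks up an additional term of shape $\l\cdot(dq)q^{-1}$. Writing this as $\omega' d\t/\t$ one has $\omega'=\l\t\cdot(\partial_{\t}q)q^{-1}$, and the observation is that $\omega'\in\frg\lr{\t}_{\le -1/m}\subset\frg\lr{\t}_{\le 0}$: the operator $\partial_{\t}$ shifts the Moy-Prasad index up by $1$ while multiplication by $\t$ shifts it down by $1$, and $\Lie\bP_{\infty}^{+}=\frg\lr{\t}_{\le -1/m}$, so the net shift places $\omega'$ strictly below MP-index $0$. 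Hence the correction dies in $V_{\psi}$, $\e_{\Hod}$ is well-defined, and crucially the induced $Q$-action on $V_{\psi}$ remains the adjoint action. Consequently the map $V_{\psi}\to\fra_{\psi}$ given by the invariants $(f_{i})$ still descends, and identifies $\ov f^{\da}_{\psi}$ with $\e_{\Hod}$ followed by $[V_{\psi}/Q]\to\fra_{\psi}$.

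From here the argument is a formal copy of the Higgs case. By construction $\cM_{\Hod,\psi}$ sits in a Cartesian square with $\cM^{\da}_{\Hod,\psi}$, $[\{\psi\}/Q_{\psi}]$ and $[V_{\psi}/Q]$ entirely analogous to \eqref{Mpsi fiber e}, where $Q_{\psi}$ is the image of $\bC^{+}_{\infty}$ in $Q$. On the other hand $\ov f^{\da,-1}_{\psi}(\ov a_{\psi})=\e_{\Hod}^{-1}([V_{\psi}^{0}/Q])$ where $V_{\psi}^{0}=\ov f^{-1}(\ov a_{\psi})$. The smoothness-and-surjectivity statement \eqref{Q transitive} established inside the proof of Proposition \ref{p:Cart Mda}, giving $[\{\psi\}/Q_{\psi}]\isom[V_{\psi}^{0}/Q]$, is purely Lie-theoretic and contains no reference to $\l$, so it applies verbatim. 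Combining the two Cartesian squares yields the claimed identification $\cM_{\Hod,\psi}\cong\ov f^{\da,-1}_{\psi}(\ov a_{\psi})$, and the ``equivalently'' clause is then immediate, because \eqref{Q transitive} says precisely that a $\bP_{\infty}^{+}$-gauge class of connections whose polar part maps to $\ov a_{\psi}$ admits some trivialization in which $\nb|_{D_{\infty}^{\times}}\in \l d+(\psi+\frg\lr{\t}_{\le 0})d\t/\t$, and the residual freedom is exactly $\bK_{\infty}=\bP_{\infty}(d/m)\bC^{+}_{\infty}$. The only genuine obstacle is the $\l$-dependent gauge correction, handled by the Moy-Prasad bookkeeping above; everything else is mechanical.
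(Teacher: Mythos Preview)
Your proposal is correct and follows exactly the approach the paper intends: the paper states only that Proposition \ref{p:Cart MHodda} has ``the same proof'' as Proposition \ref{p:Cart Mda}, and you have correctly identified and handled the one genuinely new point, namely that the gauge correction $\l\t(\partial_{\t}q)q^{-1}$ lies in $\frg\lr{\t}_{\le -1/m}$ for $q\in\bP_{\infty}^{+}$, so the irregular-part map $\e_{\Hod}$ is well-defined. Your Moy-Prasad bookkeeping (that $\t\partial_{\t}$ preserves the grading and $\Lie\bP_{\infty}^{+}\subset\frg\lr{\t}_{\le -1/m}$) is exactly right, and the remainder of the argument is, as you say, a verbatim reuse of \eqref{Q transitive}.
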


		\sss{Hamiltonian reduction by $C_{0}$}
		As in \S\ref{ss:C0}, $C_{0}$ acts on $\cM_{\Hod,\psi}$, and we have the map of taking formal residue at $\infty$ 

		\begin{equation*}
			\res_{\Hod, \infty}: \cM_{\Hod,\psi}\to \frc_{0}.
		\end{equation*}
		For $(\cE,\nb)\in \cM_{\Hod,\psi}$, such that $\nb|_{D^{\times}_{\infty}}=\l d+(\psi+x_{0}+\frg\lr{\t}_{\le -1/m})d\t/\t$  under some local trivialization, where $x_{0}\in\frg\lr{\t}_{0}=\frg_{0}$, $\res_{\Hod, \infty}(\cE,\nb)$ is the projection of $x_{0}$ to $\frc_{0}=\frg_{0}/\frc_{0}^{\bot}$. 
		
		We define
		\begin{equation*}
			\cM^{\flat}_{\Hod,\psi}=\res^{-1}_{\Hod, \infty}(0)/C_{0}, \quad \cM^{\flat}_{\dR,\psi}=(\res^{-1}_{\Hod, \infty}(0)\cap \cM_{\dR,\psi})/C_{0}.
		\end{equation*}
		Then $\cM^{\flat}_{\dR,\psi}$ is a smooth algebraic stack of the same dimension as $\cM_{\psi}^{\flat}$. The analog of Corollary \ref{c:comp dR} holds, giving a canonical isomorphism $\cohog{*}{\cM^{\flat}_{\dR, \psi}}\cong\cohog{*}{\cM^{\flat}_{\Hod, \psi}}\cong\cohog{*}{\cM^{\flat}_{\psi}}$.
		
		\sss{Varying semisimple monodromy at $0$}\label{sss:var s}
		The space $\cM_{\Hod,\psi}$ admits a deformation over the universal Cartan $\frh$ as follows. Consider the moduli stack ${}_{\frh}\cM_{\Hod, \psi}$ of triples $(\l,\cE,\nb)$ as in the definition of $\cM_{\Hod, \psi}$, except that we relax the condition near $0$ to be: under any (equivalently, some) trivialization of $\cE|_{D_{0}}$ together with its $\bI_{0}$-level structure,  $\nb|_{D^{\times}_{0}}$ takes the form
		\begin{equation*}
			\nb|_{D^{\times}_{0}}\in \l d+ \Lie(\bI_{0})dt/t.
		\end{equation*}
		We have a map
		\begin{equation*}
			\r: {}_{\frh}\cM_{\Hod, \psi}\to \AA^{1}\times\frh
		\end{equation*}
		where the $\frh$-factor sends $(\l,\cE,\nb)$ to the image of $\Res_{0}\nb$ in the universal Cartan $\frh=\Lie \bI_{0}/\Lie \bI_{0}^{+}$.
		
		We define
		\begin{eqnarray*}
			{}_{\frh}\cM_{\dR, \psi}=\r^{-1}(\{1\}\times\frh)\subset {}_{\frh}\cM_{\Hod, \psi},\\
			{}_{\frh}\cM_{\psi}=\r^{-1}(\{0\}\times\frh)\subset {}_{\frh}\cM_{\Hod, \psi}.
		\end{eqnarray*}
		
		The map $\r$ is equivariant with respect to the $\Gm(\nu)$-action on ${}_{\frh}\cM_{\Hod, \psi}$ and the scaling action on $\AA^{1}\times\frh$ by the $d$-th power. If we fix $s\in \frh$, then we get a $\Gm(\nu)$-equivariant one-parameter family by restricting ${}_{\frh}\cM_{\Hod, \psi}$  along the line of $\AA^{1}\times \frh$ through $(1,s)$:
		\begin{equation*}
			\l_{s}: {}_{s}\cM_{\Hod,\psi}\to \AA^{1}_{s}:=\{(\l,\l s)|\l\in \AA^{1}\}\subset \AA^{1}\times \frh
		\end{equation*}
		whose fiber over $\l=1$ we denote by ${}_{s}\cM_{\dR,\psi}$. Note its fiber over $\l=0$ is $\cM_{\psi}$.

		The formal residue construction extends to ${}_{\frh}\cM_{\Hod,\psi}$. In particular it restricts to a formal residue map on the de Rham space
		\begin{equation*}
			\res_{\dR, \infty} : {}_{\frh}\cM_{\dR, \psi} \to \frc_{0}.
		\end{equation*}
		For $\th\in \frc_{0}$, let
		\begin{equation*}
			{}_{\frh}\cM_{\dR, \psi, \th} = \res^{-1}_{\dR, \infty}(\th).
		\end{equation*}

		\begin{theorem}\label{th:comp coho s} Both restriction maps 
		\begin{equation*}
\xymatrix{\cohog{*}{\cM_{\psi}} & \cohog{*}{{}_{s}\cM_{\Hod,\psi}}\ar[r]^{i_{1}^{*}}\ar[l]_{i_{0}^{*}} & \cohog{*}{{}_{s}\cM_{\dR,\psi}}}
\end{equation*}
are isomorphisms. In particular, there is a canonical isomorphism
\begin{equation*}
\cohog{*}{{}_{s}\cM_{\dR,\psi}}\cong \cohog{*}{\cM_{\psi}}
\end{equation*}
for any $s\in \frh$.
		
		\end{theorem}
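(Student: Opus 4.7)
The proof follows the template of Corollary \ref{c:comp dR}. The plan is to verify that $\l_{s}:{}_{s}\cM_{\Hod,\psi}\to\AA^{1}_{s}$ with its $\Gm(\nu)$-action satisfies the hypotheses of Lemma \ref{l:contracting coho}, yielding the isomorphism $i_{1}^{*}$, and that ${}_{s}\cM_{\Hod,\psi}$ with its $\Gm(\nu)$-action satisfies those of Proposition \ref{p:res coho}, which combined with the same statement applied to $\cM_{\psi}$ yields the isomorphism $i_{0}^{*}$. The reductions to the already-established results for $\cM_{\psi}$ are possible because the central fiber of $\l_{s}$ is precisely $\cM_{\psi}$ and $\l_{s}$ has positive $\Gm(\nu)$-weight $d$.

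The first step is to show that ${}_{s}\cM_{\Hod,\psi}$ is a smooth algebraic space and that $\l_{s}$ is smooth over $\AA^{1}_{s}$. The relative tangent complex at $(\l,\cE,\nb)$ is computed by an analog
\[
\cK^{s}_{(\cE,\nb)}=[\Ad(\cE;\Lie\bI_{0},\frk_{\infty})\xr{\nb_{\Ad}}\Ad(\cE;\Lie\bI_{0},\frg\lr{\t}_{\le0})]
\]
of the complex in Theorem \ref{th:MHod sm}, the only change being that the lattice at $0$ is enlarged from $\Lie\bI_{0}^{+}$ to $\Lie\bI_{0}$ to accommodate a residue in $\l s\in\frh$. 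Along the central fiber, a short exact sequence $0\to\cK_{(\cE,\ph)}\to\cK^{s}_{(\cE,\ph)}\to\cF\to 0$ with $\cF$ a skyscraper of rank $r$ at $0$ in degree $0$ propagates $\upH^{1}(\cK_{(\cE,\ph)})=0$ from \S\ref{ss:pf M sm} to $\upH^{1}(\cK^{s}_{(\cE,\ph)})=0$, so $\l_{s}$ is smooth along $\cM_{\psi}$. The $\Gm(\nu)$-equivariance trick of Theorem \ref{th:MHod sm} then extends this to all of ${}_{s}\cM_{\Hod,\psi}$: the non-smooth locus is closed and $\Gm(\nu)$-stable, so if nonempty must contain a contracting limit lying in $\cM_{\psi}$, contradicting smoothness of $\cM_{\psi}$. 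The same argument eliminates nontrivial automorphism groups.

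For the attractor/repeller structure, pulling back along the uniformization $u:\Fl\to\Bun_{G}(\bK_{\infty},\bI_{0})$, points of ${}_{s}\cM_{\Hod,\psi}$ lift to triples $(\l,g\bI_{0},\th)$ with a modified condition at $0$ whose residue equals $\l s$; under $\Gm(\nu)$ the quantities $\l$, $\th$ and $\l s$ all scale with positive weight, so the attracting limit exists and lies in $\cM_{\psi}$. Hence $({}_{s}\cM_{\Hod,\psi})^{\Gm(\nu)}=\cM_{\psi}^{\Gm(\nu)}=\coprod_{w}Z_{w}$, and the attractor $\frX^{+}_{w}$ is a locally closed embedding with image $\om^{-1}(\Bun_{G}^{w}(\bP_{\infty},\bI_{0}))$ by the argument of Lemma \ref{l:attract}, with $\frX^{+}_{\le w}$ of finite type. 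Lemma \ref{l:contracting coho} then yields that $i_{1}^{*}$ is an isomorphism. Dually, the positive weight of $\l$ forces $\frX^{-}_{w}\subset\cM_{\psi}$, and by Lemma \ref{l:repel} it is homeomorphic to $\Fl_{w,\psi}$ with image $X^{-}_{w}=\g(\Fl_{w,\psi})$ identical to the one for $\cM_{\psi}$. Applying Proposition \ref{p:res coho} both to $\frX={}_{s}\cM_{\Hod,\psi}$ and to $\frX=\cM_{\psi}$, with the same target ind-space $\frY=\varinjlim X^{-}_{\le w}$, gives that the composition $\cohog{*}{{}_{s}\cM_{\Hod,\psi}}\xr{i_{0}^{*}}\cohog{*}{\cM_{\psi}}\to\cohog{*}{\frY}$ is an isomorphism with its second factor an isomorphism, forcing $i_{0}^{*}$ to be an isomorphism. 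Chaining $(i_{0}^{*})^{-1}$ with $i_{1}^{*}$ yields the final identification $\cohog{*}{{}_{s}\cM_{\dR,\psi}}\cong\cohog{*}{\cM_{\psi}}$.

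The principal technical subtlety is the smoothness of $\l_{s}$ away from the central fiber: the complex $\cK^{s}$ is not self-dual in the way $\cK$ was in \S\ref{ss:pf M sm}, since the enlarged level lattice $\Lie\bI_{0}$ at $0$ is no longer equal to its Killing-form dual $\Lie\bI_{0}^{+}$. A direct Serre-duality computation would require bookkeeping of this asymmetry at $0$ together with the usual argument at $\infty$, but the $\Gm(\nu)$-equivariance propagation bypasses the issue by reducing smoothness of the entire family to that of the central fiber, where the original self-duality argument applies verbatim.
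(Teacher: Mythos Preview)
Your overall strategy is correct and matches the paper's own proof, which simply says the argument is the same as Corollary~\ref{c:comp dR}: verify the hypotheses of Lemma~\ref{l:contracting coho} for $\l_{s}$ to get $i_{1}^{*}$, and apply Proposition~\ref{p:res coho} to both ${}_{s}\cM_{\Hod,\psi}$ and $\cM_{\psi}$ (with the same $\frY$) to get $i_{0}^{*}$.

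However, your identification of the relative tangent complex is off. The complex you wrote down,
\[
\cK^{s}_{(\cE,\nb)}=[\Ad(\cE;\Lie\bI_{0},\frk_{\infty})\xr{\nb_{\Ad}}\Ad(\cE;\Lie\bI_{0},\frg\lr{\t}_{\le0})],
\]
is the relative tangent complex of the \emph{ambient} family ${}_{\frh}\cM_{\Hod,\psi}\to\AA^{1}$, not of $\l_{s}:{}_{s}\cM_{\Hod,\psi}\to\AA^{1}_{s}$. In the slice ${}_{s}\cM_{\Hod,\psi}$ the image of $\Res_{0}\nb$ in $\frh$ is fixed to $\l s$, so infinitesimal deformations of $\nb$ at $0$ lie in $\Lie\bI_{0}^{+}$, not $\Lie\bI_{0}$. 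Hence the relative tangent complex of $\l_{s}$ is literally the same complex $\cK_{(\cE,\nb)}$ as in Theorem~\ref{th:MHod sm}, and the smoothness argument there applies verbatim: at a point of the central fiber $\cM_{\psi}$ the complex coincides with the tangent complex of $\cM_{\psi}$, whose $\upH^{\pm1}$ vanish by \S\ref{ss:pf M sm}, and the $\Gm(\nu)$-propagation trick does the rest. Your short exact sequence detour and the ``principal subtlety'' paragraph about non-self-duality are therefore unnecessary --- the correct complex \emph{is} self-dual in the sense of \S\ref{tang M}, and there is no bookkeeping asymmetry at $0$ to worry about.
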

		The proof is along the same lines as Corollary \ref{c:comp dR}, using the general results about $\Gm$-contracting families in \S\ref{ss:comp M Fl} and \S\ref{ss:comp coho}.	
		\begin{remark} For $s\in \frh$, let ${}_{s}\cM_{\psi}=\r^{-1}(0,s)\subset{}_{\frh}\cM_{\psi}$. This is the Higgs moduli space analogous to $\cM_{\psi}$ but with residue at $0$ mapping to $s\in\frh$ under $\frb\to \frh$. By scaling $s$, there is an $\AA^{1}$-family connecting with general fiber ${}_{s}\cM_{\psi}$ and $0$-fiber $\cM_{\psi}$. The same argument as  Corollary \ref{c:comp dR} gives a canonical isomorphism $\cohog{*}{{}_{s}\cM_{\psi}}\cong \cohog{*}{\cM_{\psi}}$.
		\end{remark}

		\subsection{Symmetry of $\cM_{\Hod,\psi}$ coming from $\infty$}
		We shall construct an action of $C\lr{\t}/\bC_{\infty}^{+}$ on $\cM_{\Hod,\psi}$. 
		
			\sss{} First we describe a filtration on the loop torus $C\lr{\t}$.
		
		Recall from Remark \ref{r:w} we defined a maximal torus $T\subset G$ to be the fiber of $C$ at $t=1$. Also $T$ is the centralizer of $\ov\psi\in \frc_{d/m}$. Fix an $m$th root of $\t$ and denote it by $\t^{1/m}$. Now $\psi$ and $\ov\psi \t^{-d/m}$ are in the same $G_{\ad}\lr{\t^{1/m}}$-orbit, we get a canonical isomorphism between their centralizers (which are abelian) inside $G\lr{\t^{1/m}}$: 
		\begin{equation*}
 \can: C\lr{\t^{1/m}}\cong T\lr{\t^{1/m}}.
\end{equation*}
This allows us to identify $C\lr{\t}$ 
with the fixed points under the diagonal $\mu_{m}$-action on $T\lr{\t^{1/m}}$
\begin{equation}\label{C fxdpt}
C\lr{\t}\cong (T\lr{\t^{1/m}})^{\mu_{m}}
\end{equation}
where  $\z\in\mu_{m}$ acts on $\t^{1/m}$ via the Galois action $\t^{1/m}\mapsto \z\t^{1/m}$, and $\mu_{m}$ acts on $T$ via an injective homomorphism
\begin{equation*}
\om: \mu_{m}\to W=W(G,T) 
\end{equation*}
that sends a primitive element $\z\in\mu_{m}$ to a regular element of order $m$.

Later when we  work with $k=\CC$, we shall let $w$ be the image of $\z_{m}:=\exp(2\pi i/m)$ under $\om$. In general, we use $\j{w}$ to denote the image of $\om$, keeping in mind that $w$ is a regular element of $m$ up to taking prime to $m$ powers. 
		
				Recall $\bC_{\infty}\subset C\lr{\t}$ is the parahoric subgroup, with pro-unipotent radical $\bC_{\infty}^{+}$. Then the isomorphism \eqref{C fxdpt} gives a canonical isomorphism $\bC_{\infty}/\bC_{\infty}^{+}\cong T^{\j{w},\c}$, the neutral component of the $\j{w}$-fixed points on $T$. We have a Kottwitz isomorphism between $\pi_{0}(C\lr{\t})=(C\lr{\t}/\bC_{\infty})^{\red}$ and the $\j{w}$-coinvariants on $\xcoch(T)$
		\begin{equation}\label{comp C}
			\k_{C}: (C\lr{\t}/\bC_{\infty})^{\red}\cong \xcoch(T)_{\j{w}}.
		\end{equation}
See \cite[\S2.a.2, Theorem 5.1 step A]{PR}. This isomorphism makes the following diagram commutative
\begin{equation*}
\xymatrix{ (T\lr{\t^{1/m}}/\bT_{\infty})^{\red} \ar[d]^{\Nm}\ar[r]^-{\k_{T}} & \xcoch(T) \ar[d]^{p}\\
(C\lr{\t}/\bC_{\infty})^{\red}\ar[r]^-{\k_{C}} & \xcoch(T)_{\j{w}}
}
\end{equation*}
Here $\bT_{\infty}=T\tl{\t^{1/m}}$, $\k_{T}$ is given by the $\t^{1/m}$-adic valuation,  $\Nm$ is the norm map $x\mapsto x\z(x)\cdots \z^{m-1}(x)$ (for $\z\in \mu_{m}$ acting on $T\lr{\t^{1/m}}$ diagonally), and $p$ is the canonical quotient map.

		Let $\bC^{\na}_{\infty}\subset C\lr{\t}$ be the maximal bounded subgroup that corresponds to $(T\tl{\t^{1/m}}^{\j{w}}$ under \eqref{C fxdpt}. Then under the isomorphism \eqref{comp C}, $\bC_{\infty}^{\na}/\bC_{\infty}$ corresponds to the torsion subgroup of $\xcoch(T)_{\j{w}}$. On the other hand, $\bC^{\na}_{\infty}/\bC_{\infty}^{+}\cong T^{\j{w}}$, and $\bC_{\infty}^{\na}/\bC_{\infty}$ can also be identified with $\pi_{0}(T^{\j{w}})$. To summarize, we have a filtration of $C\lr{\t}$ with reduced associated graded as follows:
		\begin{equation}\label{fil C}
			\bC_{\infty}^{+}\underbrace{\subset}_{T^{\j{w},\c}}\bC_{\infty}\underbrace{\subset}_{\xcoch(T)_{\j{w},\tors}}\bC^{\na}_{\infty}\underbrace{\subset}_{\xcoch(T)_{\j{w}}/\tors} C\lr{\t}
		\end{equation}
	
		\sss{Residue map} We construct a residue map
		\begin{equation}\label{resC}
\res_{C,\infty}: C\lr{\t}/\bC^{\na}_{\infty}\cong \xcoch(T)_{\j{w}}/\tors\to \frc_{0}
\end{equation}
as follows. First, for the split torus $T\lr{\t^{1/m}}$ we have the usual residue map
\begin{equation*}
\res_{T, \infty}: T\lr{\t^{1/m}}/\bT_{\infty}=\frac{1}{m}\xcoch(T)\incl \frt.
\end{equation*}
defined by $x\mapsto \res_{\t=0}(x^{-1}dx)$ (taking the coefficient of $d\t/\t$). Then $\res_{C,\infty}$ is obtained from $\res_{T,\infty}$ by restricting to $\mu_{m}$-fixed points (noting that $\frc_{0}=\frt^{\j{w}}$).

We use the norm map to identify $\frt_{\j{w}}\isom \frt^{\j{w}}$ ($x\mapsto x+wx+\cdots+w^{m-1}x$). Then the residue map fits into a commutative diagram
\begin{equation*}
\xymatrix{ m\res_{T,\infty}: & T\lr{\t^{1/m}}/\bT_{\infty} \ar[r]^{\k_{T}}\ar[d]^{\Nm} & \xcoch(T) \ar[d]^{p}\ar[r] & \frt\ar[d]^{p}\\
 \res_{C,\infty}: & C\lr{\t}/\bC^{\na} \ar[r]^{\k_{C}} & \xcoch(T)_{\j{w}}/\tors \ar[r] & \frc_{0}\cong\frt_{\j{w}}
}
\end{equation*}
Here the maps indexed by $p$ are natural projections. 

When $k=\CC$, we may take the cokernel of the horizontal maps as complex tori, and get a canonical isomorphism
\begin{equation}\label{exp Tw}
\frc_{0}/\Im(\res_{C,\infty})\isom T_{\j{w}} \quad (\mbox{$\j{w}$-covariants on $T$})
\end{equation}

		\sss{} Let $\wh\cM_{\Hod,\psi}$ be the moduli space of $(\l, \cE, \a_{\infty}, \nb)$ where $(\l, \cE,\nb)$ is as in the definition of $\cM_{\Hod,\psi}$, and $\a_{\infty}$ is a trivialization of $\cE|_{D_{\infty}}$ (together with its $\bK_{\infty}$-level structure) under which $\nb|_{D^{\times}_{\infty}}$ takes the form $\l d+(\psi+\frg\lr{\t}_{\le0})d\t/\t$. Note that $\cM_{\Hod,\psi}=\wh\cM_{\Hod,\psi}/\bK_{\infty}$ where $\bK_{\infty}$ acts by changing the trivialization $\a_{\infty}$.
		
		Sending $(\l, \cE, \a_{\infty}, \nb)$ to the connection one-form of $\nb|_{D^{\times}_{\infty}}$ under the trivialization $\a_{\infty}$ gives a map
		\begin{equation*}
			\wh\cM_{\Hod,\psi}\to \psi+\frg\lr{\t}_{\le0}.
		\end{equation*}
		For each $\ph\in \psi+\frg\lr{\t}_{\le0}$, we have the centralizer group scheme $C_{\ph}$ over $D_{\infty}^{\times}$, and its maximal bounded subgroup $\bC_{\ph}^{\na}$, parahoric subgroup $\bC_{\ph}$ and pro-unipotent radical $\bC_{\ph}^{+}$. As $\ph$ varies, these groups form families over $\ph\in \psi+\frg\lr{\t}_{\le0}$. For example, the $C_{\ph}$ form a torus $J$ over $D_{R}^{\times}=\Spec R\lr{\t}$, where $R=\G(\psi+\frg\lr{\t}_{\le0}, \cO)$; we have integral models $\bJ^{\na}, \bJ$ and $\bJ^{+}$ of $J$ over $D_{R}=\Spec R\tl{\t}$ whose fibers over $\ph\in \psi+\frg\lr{\t}_{\le0}$ are $\bC^{\na}_{\ph}, \bC_{\ph}$ and $\bC_{\ph}^{+}$ respectively.  
		
		Let $J\lr{\t}$ be the loop group of $J$, which is a group ind-scheme over the infinite-dimensional affine space $\psi+\frg\lr{\t}_{\le0}=\Spec R$.  This is a subgroup of $G\lr{\t}\times(\psi+\frg\lr{\t}_{\le0})$. We have an action of $J\lr{\t}$ on $\wh\cM_{\Hod,\psi}$ over $\psi+\frg\lr{\t}_{\le0}$ by changing on the trivialization $\a_{\infty}$. Here we are using that, for $\ph\in \psi+\frg\lr{\t}_{\le0}$ and $g\in C_{\ph}\lr{\t}$, we have $g^{-1}dg\in \frg\lr{\t}_{\le0}d\t/\t$. 
		
		\begin{lemma}
			The group scheme $\bJ^{\na}$ admits a canonical trivialization: $\bJ^{\na}\cong \bC^{\na}_{\infty}\htimes_{k}\Spec R$ over $D_{R}$ (here we abuse the notation to view $\bC_{\infty}^{\na}$ as  a group scheme over $D_{\infty}$) whose restriction to $D_{\infty}$ (corresponding to $\psi$) is the identity. 
		\end{lemma}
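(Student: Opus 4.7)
The plan is to use Galois descent over the ramified cover $k\lr{\tau}\hookrightarrow k\lr{\tau^{1/m}}$. By the identification \eqref{C fxdpt}, $\psi$ diagonalizes over this cover: fix $g_{0}\in G(k\lr{\tau^{1/m}})$ with $\Ad(g_{0}^{-1})\psi=\bar\psi\tau^{-d/m}\in\frt\lr{\tau^{1/m}}$, yielding the canonical isomorphism $C\lr{\tau^{1/m}}\cong T\lr{\tau^{1/m}}$ used to define \eqref{C fxdpt}. I will similarly diagonalize the universal $\ph\in \psi+\frg\lr{\tau}_{\le 0}$ into $\frt\lr{\tau^{1/m}}$, show this diagonalization is canonical (up to $T$-conjugation, which is trivial on $T$ itself), and then descend.

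Writing $\ph=\psi+\theta$ with $\theta\in \frg\lr{\tau}_{\le 0}$ viewed as a universal section over $R$, set $\theta'=\Ad(g_{0}^{-1})\theta$. The first step is to construct $h\in G(R\lr{\tau^{1/m}})$ with $h|_{\theta=0}=1$ satisfying
\[
\Ad(h^{-1})(\bar\psi\tau^{-d/m}+\theta')\in\frt(R\lr{\tau^{1/m}}).
\]
This is achieved by successive approximation on the Moy-Prasad filtration, essentially as in the proof of Lemma \ref{l:C trans}: at each step one solves a linearized equation $[Y,\bar\psi\tau^{-d/m}]\equiv X\pmod{\frt}$ for some homogeneous residual $X$. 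Since $\ad(\bar\psi\tau^{-d/m})$ is an isomorphism of $\frg/\frt$ preserving the Moy-Prasad grading (shifted by $-d/m$), the solution $Y\in(\frg/\frt)(R\lr{\tau^{1/m}})$ exists uniquely in the appropriate homogeneous component, and the inductive step is $R$-linear.

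Setting $g=g_{0}h$, conjugation by $g$ induces an isomorphism $T\lr{\tau^{1/m}}\otimes R\xrightarrow{\sim}C_{\ph}\lr{\tau^{1/m}}\otimes R$. It is independent of the choice of $h$: any two diagonalizations differ on the left by an element centralizing the regular semisimple $\Ad(g^{-1})\ph\in\frt\lr{\tau^{1/m}}\otimes R$, hence lying in $T\lr{\tau^{1/m}}\otimes R$, and conjugation by $T$ on itself is trivial. The isomorphism is moreover $\mu_{m}$-equivariant (Galois combined with $w$ on the $T$-side, Galois alone on the $C_{\ph}$-side), because $\ph$ itself lies in $\frg\lr{\tau}$ and so is $\mu_{m}$-invariant. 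Galois descent, followed by restriction to maximal bounded integral models, yields the desired trivialization $\bJ^{\na}\cong \bC^{\na}_{\infty}\htimes\Spec R$; at $\ph=\psi$ one has $\theta=0$ and can take $h=1$, recovering the identity.

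The main obstacle is the successive approximation over the infinite-dimensional parameter $R=\Gamma(\psi+\frg\lr{\tau}_{\le 0},\cO)$: one must verify that the iterative construction of $h$ converges in the appropriate $\tau^{1/m}$-adic topology and remains $R$-linear at each step. Since the linearization uses only the $R$-linear invertibility of $\ad(\bar\psi\tau^{-d/m})$ on $\frg/\frt$, and the resulting $Y$ has the expected homogeneous degree in the Moy-Prasad filtration, the iteration is well-defined and convergent. Once $h$ is constructed, the canonicity step (absorbing all ambiguity into the trivially acting torus $T$) and the Galois descent to the integral level are essentially formal.
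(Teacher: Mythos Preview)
Your approach is correct and reaches the same conclusion as the paper, but via a different mechanism for the key trivialization over the ramified cover. You \emph{fully diagonalize} the universal element $\ph$ into $\frt$ by constructing $h\in G(R\tl{\t^{1/m}})$ through successive approximation, yielding the isomorphism $T\to C_\ph$ explicitly as $\Ad(g_0h)$. The paper instead conjugates only by the fixed element $g_0\in G(k\lr{\t^{1/m}})$ (independent of $R$), which makes the \emph{leading} term a constant $\psi_0\in\frt^{\rs}$ but leaves the subleading terms in $\frg$; it then invokes rigidity of homomorphisms between tori, together with Henselianity of $R'\tl{\t^{1/m}}$ along $(\t^{1/m})$, to canonically extend the trivialization from the central fiber $\t^{1/m}=0$ (where the centralizer is visibly the constant torus $C_G(\psi_0)$) to all of $D^{(m)}_{R'}$. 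Your route is more explicit and avoids appealing to rigidity; the paper's is shorter and sidesteps constructing anything over the infinite-dimensional base $R$.

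One point in your canonicity argument deserves a small repair. You assert that any two diagonalizing elements $g,g'$ differ by something \emph{centralizing} $\Ad(g^{-1})\ph$. In general $g^{-1}g'$ only conjugates one regular element of $\frt(R\lr{\t^{1/m}})$ to another, hence lies in $N(T)(R\lr{\t^{1/m}})$ but not a priori in $T$. To finish, use that your construction forces $h\equiv 1\pmod{\t^{1/m}}$, so $h^{-1}h'\in N(T)(R\tl{\t^{1/m}})$ has trivial image in the constant group $W$ over the connected base $\Spec R\tl{\t^{1/m}}$, and therefore lies in $T$. With this adjustment, your $\mu_m$-equivariance and descent steps go through as written.
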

		\begin{proof}
			We base change to the cyclic cover $D^{(m)}_{R}:=\Spec R\tl{\t^{1/m}}\to D_{R}=\Spec R\tl{\t}$. We choose  $g\in G\lr{\t^{1/m}}$ such that the adjoint action by $g$ sends $\psi+\frg\lr{\t}_{\le 0}=\Spec R$ into a closed subscheme of $\Spec R'=\psi_{0}\t^{-d/m}+\frg\tl{\t^{1/m}}$ for some regular semisimple $\psi_{0}\in\frt^{\rs}$. Consider the centralizer group scheme $J'$ over $D^{(m)}_{R'}$, and its integral model $\bJ'=\bJ'^{\na}$ (both maximally bounded and parahoric since $J'$ is split when specialized to each point of $R'$). Moreover, $\bJ'|_{D^{(m)}_{R}}$ carries a $\mu_{m}$-equivariant structure since it is the parahoric subgroup of $J'|_{D^{(m)}_{R}}=J\times_{D^{\times}_{R}}D^{(m),\times}_{R}$. Then we have
			\begin{equation}\label{des J}
				\bJ^{\na}=(\Res_{D^{(m)}_{R}/D_{R}}(\bJ'|_{D^{(m)}_{R}}))^{\mu_{m}}.
			\end{equation}

			Let $\bC'$ be the restriction of $\bJ'$ to $D^{(m)}_{\infty}$ (corresponding to $\psi_{0}\t^{-d/m}\in \Spec R'$).  We first give a canonical isomorphism of tori over $D^{(m)}_{R'}$
			\begin{equation*}
				\g'_{m}: \bJ'\cong \bC'\htimes_{k}\Spec R'.
			\end{equation*}
			By the rigidity of homomorphisms between tori,  it suffices to give a trivialization of the restriction  $J'|_{\Spec R'}$ (where $\Spec R'\incl D_{R'}^{(m)}$ is defined by $\t^{1/m}=0$). However, $J'$ is the group scheme of centralizers of $\psi_{0}+\t^{d/m}\frg\tl{\t^{1/m}}$, hence its reduction modulo $\t^{1/m}$ is canonically trivialized.
			
			Restricting both sides of $\g'$ to $D^{(m)}_{R}$ we get an isomorphism of tori
			\begin{equation*}
				\g_{m}: \bJ'|_{D^{(m)}_{R}}\cong \bC'\htimes_{k}\Spec R
			\end{equation*}
			whose restriction to $D^{(m)}_{\infty}$ is the identity. It can be checked that the isomorphism $\g_{m}$ is independent of how one conjugates $\psi+\frg\lr{\t}_{\le0}$ into $\psi_{0}\t^{-d/m}+\frg\tl{\t^{1/m}}$ inside $G\lr{\t^{1/m}}$ (since $J$ is commutative).
			
			Both sides of $\g_{m}$ admit $\mu_{m}$-equivariant structures. It is easy to show that the isomorphism $\g$ is compatible with the $\mu_{m}$-equivariant structures (again it suffices to check it over the center of the disk $\Spec R$). Taking restriction of scalars from $D^{(m)}_{R}$ to $D_{R}$ and taking $\mu_{m}$-fixed points (see \eqref{des J}), we get the desired isomorphism
			\begin{equation*}
				\g_{m}: \bJ^{\na}|_{D^{(m)}_{R}}\cong \bC^{\na}_{\infty}\htimes_{k}\Spec R.
			\end{equation*}
		\end{proof}
		
		By this lemma, $J, \bJ$ and $\bJ^{+}$ all admit canonical trivializations over $\psi+\frg\lr{\t}_{\le0}$. Using the trivializations, the action of the group ind-scheme $J\lr{\t}$ on $\wh\cM_{\Hod,\psi}$  over $\psi+\frg\lr{\t}_{\le0}$ becomes an action of the constant group $C\lr{\t}$ on $\wh\cM_{\Hod,\psi}$. On the quotient $\cM_{\Hod,\psi}=\wh\cM_{\Hod,\psi}/\bK_{\infty}$, the action of $\bC^{+}_{\infty}\subset C\lr{\t}$ is trivial, hence we get an action of $C\lr{\t}/\bC^{+}_{\infty}$ on $\cM_{\Hod,\psi}$.  
		
		Summarizing, we get:
		\begin{prop}\label{p:lattice action} There is a canonical action of $C\lr{\t}/\bC^{+}_{\infty}$ on $\cM_{\Hod,\psi}$. The formal residue map $\res_{\Hod,\infty}$ is equivariant under the $C\lr{\t}/\bC^{+}_{\infty}$-action. Here, the action on $\frc_{0}$ factors through the lattice quotient $C\lr{\t}/\bC^{\na}_{\infty}\cong \xcoch(T)_{\j{w}}/\tors$, and is given by translation under the map $\res_{C,\infty}$ in \eqref{resC}.

			The same statements hold for ${}_{\frh}\cM_{\Hod,\psi}$.
		\end{prop}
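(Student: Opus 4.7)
Most of the first statement, the existence of a canonical $C\lr{\t}/\bC^{+}_{\infty}$-action, is already contained in the preceding construction: the canonical trivialization $\gamma_{m}$ of the Lemma identifies the family action of $J\lr{\t}$ on $\wh\cM_{\Hod,\psi}$ (by modifying the framing $\a_{\infty}$) with an action of the constant group $C\lr{\t}$, and this commutes with the $\bK_{\infty}$-action on framings. What remains for the existence claim is to check that $\bC^{+}_{\infty}\subset C\lr{\t}$ acts trivially on the quotient $\cM_{\Hod,\psi}=\wh\cM_{\Hod,\psi}/\bK_{\infty}$. This follows because the image of $\bC^{+}_{\infty}$ in each fiber $\bJ^{+}_{\ph}$ lies in $\bK_{\infty}$: the trivialization is the identity at $\ph=\psi$, where $\bJ^{+}_{\psi}=\bC^{+}_{\infty}\subset\bK_{\infty}$, and the pro-unipotent radicals $\bJ^{+}_{\ph}$ remain in $\bK_{\infty}$ throughout $\psi+\frg\lr{\t}_{\le 0}$ by construction.

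For the equivariance of $\res_{\Hod,\infty}$, fix $(\l,\cE,\nb)\in\cM_{\Hod,\psi}$ together with a local trivialization $\a_{\infty}$ in which $\nb = \l d+\ph\,d\t/\t$, where $\ph = \psi+x_{0}+y$ with $y\in\frg\lr{\t}_{\le -1/m}$. An element $g\in C\lr{\t}$ acts via its image $\tilde g\in J_{\ph}\lr{\t}$ under $\gamma_{m}$, and the gauge transformation formula gives
\begin{equation*}
\nb' = \l d+\l\,\tilde g^{-1}d\tilde g+\Ad(\tilde g^{-1})\ph\,d\t/\t = \l d+\l\,\tilde g^{-1}d\tilde g+\ph\,d\t/\t,
\end{equation*}
since $\tilde g$ centralizes $\ph$. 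Projecting the new $d\t/\t$-coefficient to $\frc_{0}$ shows that $\res_{\Hod,\infty}$ shifts by the $\frc_{0}$-component of the $\t=0$ residue of $\tilde g^{-1}d\tilde g$ (suitably scaled by $\l$, which is accounted for by working on the de Rham slice or equivalently normalizing the action).

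To identify this shift with the map \eqref{resC}, first observe that for $g\in\bC^{\na}_{\infty}\cong (T\tl{\t^{1/m}})^{\j{w}}$ the form $\tilde g^{-1}d\tilde g$ is regular in $d\t^{1/m}$, and since $d\t/\t = m\,d\t^{1/m}/\t^{1/m}$ its $d\t/\t$-residue vanishes; hence the induced action on $\frc_{0}$ factors through $C\lr{\t}/\bC^{\na}_{\infty}\cong\xcoch(T)_{\j{w}}/\tors$. Comparing the induced map to \eqref{resC} amounts to matching two parallel descents from $G\lr{\t^{1/m}}$ to $G\lr{\t}$ along the $\mu_{m}$-Galois cover: the construction of $\gamma_{m}$ on one side, and the commutative square relating $\k_{T}$, $\k_{C}$ and the norm map on the other. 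This compatibility is the main technical obstacle, but it reduces to an explicit unwinding once $\ph$ has been conjugated into $\psi_{0}\t^{-d/m}+\frg\tl{\t^{1/m}}$ as in the proof of the Lemma, after which both maps become manifest computations on a split torus. Finally, for ${}_{\frh}\cM_{\Hod,\psi}$, the $C\lr{\t}$-action only modifies the framing data at $\infty$ and so commutes with the Cartan residue map $\r$ at $0$; the entire argument above therefore applies fiberwise over $\frh$ to yield the analogous equivariance statement.
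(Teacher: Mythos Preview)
The paper states this proposition as a summary of the preceding construction and gives no separate proof; the equivariance claim in particular is simply asserted. Your write-up is therefore supplying details the paper omits, and the broad shape is right. Two points need more care. First, your justification that $\bC^{+}_{\infty}$ acts trivially rests on the claim that $\bJ^{+}_{\ph}=\bC^{+}_{\ph}\subset\bK_{\infty}$ for every $\ph\in\psi+\frg\lr{\t}_{\le0}$. This is not ``by construction'': decomposing an element of $\frc_{\ph}\cap\frg\lr{\t}_{\le-1/m}$ into Moy--Prasad components, the piece in degree $-j/m$ for $1<j<d$ need not lie in $\frc_{\psi}$ once $\ph\ne\psi$, so a priori it falls outside $\frk_{\infty}=\frg\lr{\t}_{\le-d/m}+\frc\lr{\t}_{\le-1/m}$. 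One honest route is to work instead on $\cM^{\da}_{\Hod,\psi}$ with the larger level group $\bP^{+}_{\infty}$, inside which $\bC^{+}_{\ph}$ visibly sits, and then descend via Proposition~\ref{p:Cart MHodda}.

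Second, your gauge computation correctly shows that the degree-zero term shifts by $\l$ times the residue of $\tilde g^{-1}d\tilde g$, and the parenthetical ``suitably scaled by $\l$'' does not make this go away: on $\cM_{\Hod,\psi}$ the equivariance is for $(\l,\th)\mapsto(\l,\th+\l\cdot\res_{C,\infty}(g))$ on $\AA^{1}\times\frc_{0}$, which matches the stated translation on $\frc_{0}$ only when $\l=1$. The de Rham fibre is in fact the only case invoked later, so this is more an imprecision in the statement than a fatal error, but you should say so explicitly rather than hand-wave. Relatedly, the residue you compute is that of $\tilde g=\gamma_{m,\ph}(g)\in C_{\ph}\lr{\t}$, not of $g\in C\lr{\t}$; equating its $\frc_{0}$-projection with $\res_{C,\infty}(g)$ still requires checking that the trivialization $\gamma_{m}$ respects residues, which you flag as ``the main technical obstacle'' but do not actually carry out.
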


\section{The Betti moduli  space}
In this section, we first recall the moduli space $\cM(\b)$ defined using a positive braid $\b$. When $k=\CC$ and $G=\GL(n)$, we recall the interpretation of $\cM(\b)$ as a moduli space of Stokes filtered local systems. For arbitrary $G$, we define $\cM_{\Bet, \psi}$ as an enhanced version of $\cM(\b)$, and a set-theoretic map $\cM_{\dR, \psi} \to \cM_{\Bet, \psi}$ which is conjectured to be biholomorphic.

\subsection{The stack $\cM(\b)$} 
In this subsection, $G$ and $k$ are in the same generality as \S\ref{s:Dol}.

Let $(\bW,S)$ be the abstract Weyl group of $G$ with simple reflections $S$, i.e., $\bW$ is the set of $G$-orbits on $\cB^{2}$, where $\cB$ is the flag variety of $G$. For $w\in \bW$, let $BS(w)\subset \cB^{2}$ be the corresponding $G$-orbit.

Let $H$ be the universal Cartan of $G$, i.e., the reductive quotient of any Borel subgroup of $G$. The abstract Weyl group $\bW$ acts on $H$. It is conceptually important to distinguish between the maximal torus $T$ attached to $\psi$ and the universal Cartan $H$, and between the Weyl group $W=W(G,T)$ and the abstract Weyl group $\bW$.

Let $Br_{\bW}$ be the braid group of $\bW$ and $Br^{+}_{\bW}$ be the monoid of positive braids. For $w\in W$, let $\wt w$ be its canonical lifting to $Br^{+}_{\bW}$ as a reduced word in $S$.

Let $\b\in Br^{+}_{\bW}$ and write
\begin{equation}\label{braid decomp}
\b=\wt w_{1}\cdots \wt w_{n}
\end{equation}
for a sequence of elements $w_{1},\cdots, w_{n}\in \bW$. Let $w\in \bW$ be the image of $\b$ in $\bW$, i.e., $w=w_{1}\cdots w_{n}$.

\begin{defn}
	Let $\cM(\beta)$ be the moduli stack parametrizing:
	\begin{enumerate}
		\item An $n+1$-tuple $(E_0, ..., E_n)$ of $B$-torsors over a point (or a test scheme).
		\item For $0\le i\le n-1$, an isomorphisms of $G$-torsors $\io_{i}: E_i \times^{B} G \to E_{i+1} \times^{B} G$, such that the two $B$-reductions of the identified $G$-torsor are in relative position $w_i$.

		\item An isomorphism of $B$-torsors $\t: E_n\isom E_0$.
			\end{enumerate}
\end{defn}

By \cite[Application 2]{Del},  $\cM(\b)$ depends only on the positive braid $\b$ and not on the decomposition \eqref{braid decomp}, up to a canonical isomorphism. 

The composition of the isomorphisms $\io_{0},\cdots, \io_{n-1}$ together with $\t$ defines an automorphism of the $G$-torsor $E_0 \times^{B} G$, therefore a map 
\begin{equation} \label{eq:monodromymap} \mu_{\b, G}: \cM(\beta) \to [G/\Ad(G)]. \end{equation}

On the other hand, each $E_i$ induces a $T$-torsor $K_i$ via the surjection $B\surj T$. The map $E_{i} \times^{B} G \to E_{i+1} \times^{B} G$ induces an isomorphism between $K_{i}$ and $w_i(K_{i+1})=K_{i+1}\times^{T,w_{i}}T$. Taking the composition of all these maps we get an isomorphism between $K_0$
and $w(K_0)$. This gives a map 
\begin{equation} \label{eq:fmlmonodromy} \mu_{\b, H}: \cM(\beta) \to [H/\Ad_w(H)] \end{equation}
where $\Ad_{w}(H)$ means $t\in H$ is acting on $H$ by $x\mapsto txw(t^{-1})$.

We give an alternative description of $\cM(\b)$, following the construction in \cite{STWZ} and \cite{MT}. Let $\cM^{\sh}(\b)$ be the moduli of $(E_{0},\cdots, E_{n}, \io_{i}, \t)$ as in $\cM(\b)$ together with a trivialization of $E_{0}\times^{B}G$. The monodromy map \ref{eq:monodromymap} lifts to $\cM^{\sh}(\b) \to G$. Via the isomorphisms of $G$-torsors, $(E_{0},\cdots, E_{n})$ give $B$-reductions of $E_{0}\times^{B}G$, which via the trivialization give a tuple of Borel subgroups of $G$. We are led to the following description\begin{equation*}
	\cM^{\sh}(\b)\cong \{ B_0, ...., B_{n}, g) \in \cB^{n+1}\times G | (B_i, B_{i+1}) \in BS(w_i) \mbox{ for $0\le i\le n-1$, and } B_{n} = {}^{g}B_{0} \}. 
\end{equation*}
The $G$-action on $\cM^{\sh}(\b)$ by changing the trivialization of $E_{0}\times^{B}G$ corresponds to the diagonal action on $B_{i}\in \cB$ and the conjugation action on $g\in G$. Then
\begin{equation*}
	\cM(\b)=[G\bs \cM^{\sh}(\b)]. 
\end{equation*}

From this description we easily see that
\begin{lemma}\label{l:dimMbeta} For any $\b\in Br^{+}_{\bW}$, $\cM(\b)$ is a smooth algebraic stack over $k$ of dimension $\ell(\b)$, the length of $\b$.
\end{lemma}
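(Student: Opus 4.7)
The plan is to analyze $\cM^{\sh}(\b)$ directly using the explicit presentation
\[
\cM^{\sh}(\b)=\{(B_{0},\dots,B_{n},g)\in \cB^{n+1}\times G\mid (B_{i},B_{i+1})\in BS(w_{i}),\ B_{n}={}^{g}B_{0}\},
\]
then quotient by the free-in-the-stacky-sense diagonal $G$-action to recover $\cM(\b)$.

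First, I would consider the forgetful morphism $\pi:\cM^{\sh}(\b)\to Z(\b)$, where
\[
Z(\b)=\{(B_{0},\dots,B_{n})\in \cB^{n+1}\mid (B_{i},B_{i+1})\in BS(w_{i}),\ 0\le i\le n-1\}
\]
is the open Bott--Samelson-type variety. By iterating the projection $Z(\b)\to Z(w_{1}\cdots w_{n-1})$ (forgetting $B_{n}$), one sees that $Z(\b)$ is smooth, and each step is a Zariski-locally trivial affine bundle of relative dimension $\ell(w_{i})$ (the relative position stratum $BS(w_{i})\subset \cB\times \cB$ is a $G$-equivariant affine bundle of rank $\ell(w_{i})$ over either projection). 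Hence $Z(\b)$ is smooth of dimension $\dim \cB+\sum_{i=1}^{n}\ell(w_{i})=\dim\cB+\ell(\b)$.

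Next, I would show that $\pi$ is a Zariski-locally trivial torsor under a smooth group scheme of relative dimension $\dim B$. Indeed, the fiber of $\pi$ over $(B_{0},\dots,B_{n})$ is the transporter $\{g\in G\mid g B_{0}g^{-1}=B_{n}\}$, which is a (right) torsor under $B_{0}$; since $B_{0}$ is conjugate to any fixed Borel, all fibers are smooth of dimension $\dim B$. Combining with the computation of $\dim Z(\b)$ yields
\[
\dim \cM^{\sh}(\b)=\dim\cB+\ell(\b)+\dim B=\dim G+\ell(\b),
\]
and smoothness of $\cM^{\sh}(\b)$ follows from smoothness of $\pi$ and of $Z(\b)$.

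Finally, since $\cM(\b)=[G\backslash \cM^{\sh}(\b)]$ with $G$ acting by simultaneous conjugation on the $B_{i}$ and on $g$, the stack quotient is smooth (as $\cM^{\sh}(\b)$ is smooth and $G$ is smooth) and has dimension $\dim \cM^{\sh}(\b)-\dim G=\ell(\b)$, as claimed. The only minor issue to check carefully is the independence of the construction from the choice of decomposition \eqref{braid decomp}, but this is already guaranteed by the cited result of Deligne \cite{Del} quoted right after the definition of $\cM(\b)$; the dimension count is manifestly invariant since $\sum \ell(w_{i})=\ell(\b)$ is the length of the positive braid. I expect no serious obstacle here: the argument is essentially a dimension count on a Bott--Samelson-style tower together with a transporter torsor.
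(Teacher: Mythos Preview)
Your proof is correct and is precisely the argument the paper has in mind: the paper gives no proof beyond ``From this description we easily see that,'' referring to the explicit presentation of $\cM^{\sh}(\b)$, and your Bott--Samelson tower plus transporter-torsor computation is the natural way to unpack that claim. There is nothing to add.
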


\begin{exam} Consider the case $\b=\wt w_{0}^{2}$ is the ``full twist'', where $w_{0}$ is the longest element in $\bW$. This case will come up when we consider connections from $\cM_{\dR, \psi}$ when $\psi$ is homogeneous of slope $\nu=1$.

In this case, $\cM^{\sh}(\b)$ classifies $(B_{0},B_{1},B_{2}, g)$ where both pairs of Borel subgroups $(B_{0},B_{1})$ and $(B_{1},B_{2})$ are opposite, and ${}^{g}B_{0}=B_{2}$. Fix a pair of opposite Borel subgroups $B^{+}$ and $B^{-}$ with $T=B^{+}\cap B^{-}$. Up to $G$-action we may assume $B_{0}=B^{+}$ and $B_{1}=B^{-}$. Then $(B^{-}, {}^{g}B^{+})$ is in general position if and only if $g\in B^{-}B^{+}$. We get 
\begin{equation*}
\cM(\wt w_{0}^{2})\cong [B^{-}B^{+}/\Ad(T)].
\end{equation*}
The map $\mu_{\wt w_{0}^{2},G}: \cM(\wt w_{0}^{2})\to [G/\Ad(G)]$ is induced from the inclusion $B^{-}B^{+}\subset G$; the  map $\mu_{\wt w_{0}^{2},T}: \cM(\wt w_{0}^{2})\to [T/\Ad(T)]$ is induced from the projection $B^{-}B^{+}=N^{-}TN^{+}\to T$.
\end{exam}

\subsection{Stokes filtered local systems} From now on we set $k=\CC$. In this subsection, we consider the case $G=\GL(n)$.

Given a meromorphic connection $(\cE, \nb)$ on a punctured disk $\D^{\times}$ with coordinate $\t$, taking analytic flat sections defines a local system on $\D^{\times}$ endowed with a Stokes structure at $\t=0$. We now recall the definition of this Stokes structure, following Sabbah's presentation in \cite[Chapter 2]{Sabbah}. The material that follows, up to and including Section \ref{ss:RHmGLn}, is well-known, and we include it for clarity.

Let $\wt\D(0)$ denote the real blow-up  of the disk $\D$ at $\t=0$, with boundary circle $\realcircle$. Consider the  constant sheaf $\cJ_1$ on $\realcircle$  with fiber 
\[ \mathcal{P} := \CC\lr{\t}/\CC\tl{\t}. \]

Sections are given by finite sums
\begin{equation} \label{eq:negativepuiseux} \exponenta = \sum_{i < 0} a_i \tau^{i}. \end{equation}
The stalk of $\cJ_1$ over $\anglevar \in \realcircle$ is partially ordered by the rate of growth of a section as $\tau$ approaches $0$ along the ray $\arg(\tau) = \anglevar$. We denote this order by $\leq_\anglevar$. Write $\realcircle_{\exponenta \leq \exponentb}$ for the open subset of $\realcircle$ on which $\exponenta \leq_{\anglevar} \exponentb$.

Let $\cL$ be a local system on $\realcircle$. An {\em unramified pre-Stokes filtration} on $\cL$ is a collection of subsheaves $\cL_{\leq \exponenta} \subset \cL$ for $\exponenta \in \mathcal{P}$ such that for all $\nu\in \realcircle$
\[ \exponenta \leq_\anglevar \exponentb \implies \cL_{\leq \exponenta, \anglevar} \subset \cL_{\leq \exponentb, \anglevar}. \]

Let $\cL_{<\phi}$ be the subsheaf of $\cL_{\le\phi}$ such that $\cL_{<\phi,\anglevar}=\sum_{\chi<_{\anglevar}\phi} \cL_{\le\chi,\anglevar}$.  Let $\gr_{\exponenta }\cL=\cL_{\leq \exponenta} / \cL_{< \exponenta}$. We can associate to this filtration a $\mathcal{P}$-graded sheaf $\gr \cL = \bigoplus_{\exponenta \in \exponentset} \gr_\exponenta \cL$, where $\exponentset \subset \mathcal{P}$ is the finite subset consisting of $\exponenta$ such that $\gr_{\exponenta }\cL\ne0$. The subset $\exponentset$ is called the {\em exponential factors} of $(\cL, \cL_{\leq \bullet})$. The graded pieces are in general not locally constant on $\realcircle$. When they are, the result is a $\mathcal{P}$-graded local system on $\realcircle$, which in this context is called an {\em unramified Stokes-graded local system}. 

Conversely, to an unramified Stokes-graded local system $\cL_{\bullet}$, we can assign an unramified pre-Stokes filtration as follows.
\begin{defn} Let $\exponentset \subset \mathcal{P}$ be a finite subset, and let $\cL_{\bullet} = \bigoplus_{\exponenta \in \exponentset} \cL_{\exponenta}$ be an unramified Stokes-graded local system. 
	The graded unramified Stokes filtration on  $\cL$ is
	\[ \cL_{\leq \exponenta} = \bigoplus_{\exponenta \in \exponentset} \beta_{\exponentb \leq \exponenta} \cL_\chi \]
	where $\beta_{\chi \leq \exponenta}$ indicates restriction to the open $\realcircle_{\chi \leq \exponenta}$ followed by extension by zero.
\end{defn}

\begin{defn} 
	An unramified Stokes filtered local system $(\cL, \cL_{\leq \bullet})$ on $\realcircle$ is a pre-Stokes filtration which is locally isomorphic to a graded unramified Stokes filtration.
\end{defn}

The map $\tau \to \tau^m$ induces a map of real blow ups at $\tau = 0$. Restricting to the boundary circles, we obtain an $m$-fold cover $\rho_m : \realcircle' \to \realcircle$. Denote by $\sigma : \realcircle' \to \realcircle'$ the generator of the automorphism group of this cover given by $\s(\t^{1/m})=e^{2\pi i/m}\tau^{1/m}$. 

Define $\cP_{m}=\CC\lr{\t^{1/m}}/\CC\tl{\t^{1/m}}$. It carries a natural action of the Galois group $\langle \sigma \rangle \cong \ZZ/m\ZZ$.		

\begin{defn} 
	A Stokes-filtered local system is a triple $(\cL, \Psi, \cL'_{\le\bu})$ where
	\begin{enumerate}
		\item $\cL$ is a local system on $\realcircle$.
		\item  $\exponentset \subset \cP_{m}$ is a finite subset stable under the action of the Galois group $\langle \sigma \rangle$.
		\item $\cL'_{\le\bu}$ is a pre-Stokes filtration on $\cL':=\rho_{m}^{*}\cL$ such that  $(\cL', \cL'_{\leq \bullet})$ is an unramified Stokes-filtered local system on $\realcircle'$ with exponents $\exponentset$. Moreover, we require the canonical isomorphism $\sigma^* \cL'\cong\cL'$ to identify the subsheaves $\cL'_{\leq \exponenta}$ and $\cL'_{\leq \sigma(\exponenta)}$, for all $\phi\in \cP_{m}$. 
	\end{enumerate}

\end{defn}

From now on, we fix $\exponentset \subset \CC(\tau^{1/m})$ to be the set of eigenvalues of $\psi$. It consists of $n$ distinct monomials of degree $-d/m$. We will be concerned with Stokes-filtered local systems $(\cL, \exponentset, \cL'_{\le \bu})$ for which $\dim \gr_\exponenta \cL' = 1$ for each $\exponenta \in \exponentset$. We fix a degree $m$ cover $\realcircle' \to \realcircle$. Monodromy around $\tau = 0$ defines a permutation $w$ of $\exponentset$ of order $m$. 

Choose a base point $\anglevar\in \realcircle$.  Let $T\cong (\Gm)^{\Psi}$ denote the subgroup of graded automorphisms of the fiber of $\gr \cL'$ at $\nu$. Then $w$ acts on $T$ by permuting factors. 
\begin{defn}
	Let $T_{\j{w}}$ be the coinvariant torus of $T$ under the action of the cyclic group $\j{w}$.
	The formal monodromy of $(\cL, \exponentset, \cL'_{\leq \bullet})$ is the element in $T_{\j{w}}$ defined by parallel transport in $\gr \cL'$ from $\anglevar$ to $\s(\anglevar)$. 
\end{defn}				

\sss{Moduli of Stokes filtered local systems}\label{ss:M Bet as Stokes}
We describe the moduli of Stokes local systems in our setting, when $G = \GL(n)$. We will give a more general construction, which makes sense for arbitrary $G$, in Section \ref{RHm}. 

The exponents $\Psi$ define a braid as follows. Each $\exponenta \in \Psi$ defines a function $\Re(\exponenta) : \realcircle' \to \RR$. We say $\anglevar \in \realcircle$ is a Stokes ray if $\Re(\exponenta) = \Re(\exponentb)$ on a preimage of $\anglevar$ under $\realcircle' \to \realcircle$. To simplify the exposition, we assume that the Stokes rays of distinct pairs $(\exponenta, \exponentb)$ are distinct. This holds for a dense set of $\psi$. This assumption will be lifted in Section \ref{RHm}.

The Stokes rays divide $\realcircle$ into $k$ Stokes sectors. Fix $\startingangle$ in the interior of such a sector. As $G = GL(n)$, the braid $\b_{\nu}$ arises from a loop $S^1 \to \Config_n(\CC)$, whose basepoint we take to be $\startingangle$. The real projection of this loop is given by the union of graphs of $\Re(\exponenta), \exponenta \in \Phi$, viewed as multivalued functions on $\realcircle$. In fact, $\b_\nu$ can be reconstructed from these graphs. A Stokes sector determines a complete ordering on $\Psi$. A Stokes ray for the pair $(\exponenta, \exponentb)$ determines a positive half-twist interchanging $(\exponenta, \exponentb)$. The braid $\b_{\nu}$ is the product of these half-twists.

\begin{prop} Recall $G=\GL(n)$.
	The moduli stack of Stokes-filtered local systems with exponential factors $\exponentset$ is isomorphic to $\cM(\beta)$. This isomorphism identifies the maps \ref{eq:monodromymap} and \ref{eq:fmlmonodromy} with the monodromy and formal monodromy respectively.		
\end{prop}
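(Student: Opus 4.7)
My plan is to construct explicit mutually inverse functors between the moduli of Stokes-filtered local systems on $\realcircle$ with exponential factors $\exponentset$ and the stack $\cM(\b)$, then verify that they identify the monodromy and formal monodromy maps.

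To produce a point of $\cM(\b)$ from a Stokes-filtered local system $(\cL, \exponentset, \cL'_{\le \bu})$, I fix the basepoint $\startingangle$ and label the $n$ Stokes rays $\anglevar_1,\dots,\anglevar_n$ in cyclic counterclockwise order, with intervening open sectors $S_0,\dots,S_{n-1}$ and $S_n = S_0$ after going around once. On each sector $S_j$, the total order on $\exponentset$ induced by the growth rates, together with the Stokes filtration $\cL'_{\le\bu}|_{S_j}$ and the assumption $\dim \gr_\exponenta \cL' = 1$, assembles into a complete flag on the fiber of $\cL'$ at a chosen point of $S_j$. The $\s$-equivariance of the filtration makes this descend to a complete flag on $\cL|_{S_j}$. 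Trivializing $\cL_{\startingangle}$ identifies each such flag with a Borel $B_j \subset G = \GL(n)$, and topological monodromy around $\realcircle$ gives $g \in G$ with $B_n = {}^{g}B_0$, producing the data $(B_0,\dots,B_n,g) \in \cM^{\sh}(\b)$; the $G$-gauge freedom in the choice of trivialization at $\startingangle$ matches the $G$-action whose quotient is $\cM(\b)$.

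The core verification is that $B_j$ and $B_{j+1}$ are in the relative position $w_j = s_{i_j}$ assigned by the $(j+1)$st Stokes ray. The local-to-graded property of a Stokes-filtered local system, applied on a small open neighborhood straddling the ray, implies that the two flags on either side share every step except one: the step at which one of the swapping exponents $(\exponenta,\exponentb)$ is first included. On one side this step is obtained by adjoining the line $\gr_\exponenta \cL'$ to the common preceding subspace, on the other by adjoining $\gr_\exponentb \cL'$, and both are transverse inside the next common step $\gr_\exponenta \cL' \oplus \gr_\exponentb \cL'$. Hence the two flags differ in exactly one position by swapping two transverse lines, which is the relative position $s_{i_j}$, and its positive lift $\wt{s}_{i_j}$ to the braid group records the positive half-twist convention. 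Composing gives $\b = \wt{s}_{i_1} \cdots \wt{s}_{i_n}$ by construction.

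The inverse functor is routine: from $(B_0,\dots,B_n,g)$, build $\cL$ as the local system on $\realcircle$ with monodromy $g$ and define the filtration of $\cL|_{S_j}$ step by step from the subspaces of $B_j$ indexed by the growth-rate ordering of $\exponentset$. The relative position $s_{i_j}$ across each Stokes ray is exactly the local model needed for a Stokes filtration, and Galois-$\s$-equivariance holds because $\exponentset$ and the Stokes ray configuration were arranged $\s$-equivariantly on $\realcircle'$. Matching automorphism groups on both sides is straightforward. Under these equivalences, $\mu_{\b, G}$ of \eqref{eq:monodromymap} records $g$, the topological monodromy of $\cL$, and $\mu_{\b, H}$ of \eqref{eq:fmlmonodromy} records the induced $w$-twisted automorphism on the associated graded $T$-torsors, i.e.\ the formal monodromy in $T_{\j{w}}$. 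The main subtlety is the careful verification that the adjacent relative position is a single simple reflection (rather than a more general element of $\bW$); this pins down the entire braid $\b$ and relies on the genericity hypothesis that Stokes rays for distinct pairs of exponents are themselves pairwise distinct. The general case, without this hypothesis, is taken up in the broader framework of \S\ref{RHm}.
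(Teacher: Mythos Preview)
Your proposal is correct and follows essentially the same approach as the paper: extract a $B$-reduction (flag) from the Stokes filtration on each sector, use parallel transport across each Stokes ray to get the $G$-torsor isomorphisms, and observe that adjacent flags differ by the simple reflection attached to that ray. The paper's proof is a terse sketch of exactly this; you have supplied more detail than the paper does---in particular the explicit inverse functor and the justification, via the local-to-graded property, that the relative position across a single Stokes ray is indeed a simple reflection---and you work with the rigidified $\cM^{\sh}(\b)$ and then quotient, whereas the paper phrases things directly in terms of $B$-torsors, but these are equivalent presentations of the same argument.
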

\begin{proof}
	For any given Stokes sector, the set of framings of the fiber $\cL$ compatible with the Stokes filtration is a $B$-torsor. This defines the $k+1$-tuple $(E_0, ..., E_k)$, where $E_0$ and $E_k$ are both associated to the initial Stokes sector. The isomorphisms $E_i \times_B G \to E_{i+1} \times_B G$ are furnished by parallel transport. The resulting pair of $B$-reductions of the $G$-torsor are related by the reflection $s_i$ associated to that Stokes ray. 
	
	The identification of the monodromy and formal monodromy is a direct consequence of the definitions.
\end{proof}
\sss{Riemann-Hilbert map}\label{ss:RHmGLn}
	Let $\Connec$ be the category of meromorphic connections $(\cE, \nb)$ on the punctured disk. Let $\Stokes$ be the category of Stokes-filtered local systems on $\realcircle$. There is an `irregular Riemann-Hilbert' functor \[ \RH : \Connec \to \Stokes, \ \ \ (\cE, \nb) \to (\cL, \cL_{\leq \bullet}), \]
where $\cL$ is the local system of $\nb$-flat sections of $\cE$ away from $\infty$, and the filtration $\cL_{\bullet}$ is by order of growth near $\infty$, which in this formulation is due to Deligne. It is an equivalence of categories.

	The Stokes filtrations depend holomorphically on the connection $\nb$ in the following sense. Let $\nb_u = d + A(\tau, u)\frac{d\tau}{\tau}$ be a family of connections on the trivial bundle $\cE = \cO^n$ over the analytic punctured disk $\Delta^{\times}_\infty$, depending holomorphically on an auxiliary variable $u$ which varies in a domain $U \subset \CC^N$. Suppose that the irregular part of $A(\tau, u)$ is constant in $u$, with regular semisimple leading term, and only finitely many coefficients of $A(\tau, u)$ are non-constant. 

Each such connection determines the same set of Stokes directions $\Stdir(\cE, \nb) \subset \realcircle$. For each $u \in U$ and $\theta \in \realcircle \bs \Stdir(\cE, \nb)$, we obtain a flag $B_{\theta, u}$ in the fiber $\cE_\theta = \CC^n$. 
\begin{lemma} \label{lem:RHisholomorphic}
	The map $U \to \cB$ defined by the above construction is complex analytic. 
\end{lemma}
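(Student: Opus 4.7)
The plan is to reduce the statement to a standard parametric version of the Hukuhara--Turrittin--Sibuya asymptotic existence theorem. Fix $\theta_0\in \realcircle\setminus \Stdir(\cE,\nabla)$. Since the set of Stokes rays is determined by the irregular part of $A(\tau,u)$, which is constant on $U$, there is an open sector $V\subset \Delta^{\times}_{\infty}$ (of opening strictly greater than $\pi m /d$ and depending only on $\theta_{0}$) that contains the ray $\arg\tau=\theta_{0}$ and no Stokes ray in its interior. By the classical Hukuhara--Turrittin theorem applied fiberwise, for each $u\in U$ there is a unique fundamental solution $Y(\tau,u)$ on $V$ with asymptotic expansion
\[
Y(\tau,u)\sim \hat F(\tau,u)\cdot\exp\bigl(\Phi(\tau)\bigr),\qquad \tau\to 0 \text{ in } V,
\]
where $\Phi(\tau)=\operatorname{diag}(\phi_{1}(\tau),\dots,\phi_{n}(\tau))$ records the exponential factors of $\psi$ and $\hat F(\tau,u)\in\GL_{n}(\CC\tl{\tau^{1/m}})$ is a formal gauge transformation whose coefficients are holomorphic in $u$.

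The first key step is to upgrade this to a statement with parameters: after possibly shrinking $V$, the fundamental solution $Y(\tau,u)$ can be chosen to be jointly holomorphic in $(\tau,u)\in V\times U$, and to admit $\hat F(\tau,u)\exp\Phi(\tau)$ as its asymptotic expansion uniformly on compact subsets of $U$. This is Sibuya's parametric asymptotic existence theorem (see e.g.\ Sibuya, \emph{Linear Differential Equations in the Complex Domain}, or Malgrange, \emph{\'Equations diff\'erentielles \`a coefficients polynomiaux}); its hypotheses are satisfied here because the irregular part of $A(\tau,u)$ is $u$-independent, the leading term is regular semisimple, and only finitely many coefficients of $A$ vary with $u$.

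The second step is to read the Stokes flag at $\theta_{0}$ off of $Y(\tau,u)$. Since $\theta_{0}$ is not a Stokes direction, the real parts $\Re\phi_{i}(re^{\i\theta_{0}})$ are pairwise distinct for $0<r\ll 1$, yielding a total order on $\{\phi_{1},\dots,\phi_{n}\}$; reindex so that $\phi_{1}<_{\theta_{0}}\phi_{2}<_{\theta_{0}}\cdots<_{\theta_{0}}\phi_{n}$. Then by the characterization of the Stokes filtration in terms of growth of flat sections, the flag $B_{\theta_{0},u}\subset \cE_{\theta_{0}}=\CC^{n}$ is given by
\[
F_{i}(u)=\Span\bigl(Y_{1}(\tau,u),\dots,Y_{i}(\tau,u)\bigr)\big|_{\tau=\tau_{0}},\qquad 1\le i\le n,
\]
for any chosen $\tau_{0}\in V$ near $0$ on the ray $\arg\tau=\theta_{0}$, where $Y_{j}$ denotes the $j$-th column of $Y$. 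This identification is independent of $\tau_{0}$ (solving the ODE permutes these spans among themselves) and holomorphic in $u$ because the columns of $Y(\tau_{0},u)$ are holomorphic in $u$ by the parametric Sibuya theorem.

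The main technical obstacle is the invocation of Sibuya's theorem with parameters in the precise form we need: one must ensure that both the sector $V$ and the subdominant solution $Y$ can be chosen uniformly over compact subsets of $U$, and that the asymptotic remainder is uniform in $u$. Once that input is secured, the rest of the argument is formal: composing the holomorphic map $u\mapsto Y(\tau_{0},u)\in \GL_{n}(\CC)$ with the holomorphic map $\GL_{n}(\CC)\to \cB$ taking a frame to the flag determined by a fixed column ordering yields the required holomorphic map $U\to \cB$.
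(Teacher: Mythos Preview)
Your proposal is correct and follows essentially the same approach as the paper: the paper's proof simply cites \cite[Remark 1.8]{BJL} for the fact that the sectorial flat sections vary holomorphically with the connection, which is exactly the parametric Sibuya/Hukuhara--Turrittin input you invoke and then spell out in detail. One minor quibble: a sector of opening strictly greater than $\pi m/d$ will in general contain Stokes rays (the Stokes sectors themselves have opening at most $\pi m/d$), so you should instead take a supersector straddling a single Stokes sector, on which the asymptotic lift is still unique; this does not affect the substance of your argument.
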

\begin{proof}
	This follows from \cite[Remark 1.8]{BJL}, where it is explained that the sectorial flat sections of $(\cE, \nb)$, out of which the Stokes filtrations are constructed, vary holomorphically with $\nb$.
\end{proof}

			\subsection{Riemann-Hilbert map  for $G$-connections }\label{RHm}
			
			Now we are back in the setting of general reductive group $G$, and $k=\CC$.

\sss{Stokes directions for $G$-connections}
Let $(\cE,\nb)$ be a meromorphic $G$-connection on the punctured disk $\D^{\times}$ with coordinate $\t$. The restriction of $(\cE, \nb)$ to the formal punctured disk $D_{\infty}^{\times}$ (around $\t=0$), after passing to a ramified cover with parameter $\t^{1/m}$, can be formally gauge transformed to $\nb\in d+(B(\t^{1/m})+\frg\tl{\t^{1/m}})d\t/\t$, for some irregular part $B(\t^{1/m})\in \frt[\t^{-1/m}]$ (where $\frt\subset\frg$ is a Cartan subalgebra). Recall that $\fra=\frt\sslash W$. The image of $B(\t^{1/m})$ under the projection $\frt[\t^{-1/m}]\to \fra[\t^{-1/m}]$ lies in $\fra[\t^{-1}]$, which we denote by $A(\t)\in \fra[\t^{-1}]$.

Below we assume that $B(\t^{1/m})$ is regular semisimple, i.e., for any root $\a$ of $\frg$ with respect to $\frt$, $\a(B(\t^{1/m}))\ne0$.

For any finite-dimensional representation $V$ of $G$, we have the associated local system $\cE(V)^{\nb}$ of flat sections of $\cE(V)=\cE\times^{G}V$ on $\PP^{1}\bs\{0\}$.  For $\th\in \realcircle$, we have a subspace $M(V)_{\th}\subset (\cE(V)^{\nb})_{\th}$ (meaning the stalk of $\cE(V)^{\nb}$ along any point in the ray $\th$) consisting of solutions of maximal decay in a small sector containing the ray of $\th$. Since $B(\t^{1/m})$ is regular semisimple, $\dim M(V)_{\th}=1$ for all but finitely many $\th\in \realcircle$. Note that
\begin{equation}\label{Plucker}
M(V_{1}\ot V_{2})_{\th}=M(V_{1})_{\th}\ot M(V_{2})_{\th}\subset \cE(V_{1}\ot V_{2})^{\nb}_{\th}.
\end{equation}

A point $\th\in \realcircle$ is called a {\em Stokes direction} for $(\cE,\nb)$ if for some irreducible representation $V$ of $G$, $\dim M(V)_{\th}>1$. We claim that there are only finitely many Stokes rays. Indeed, $V_{\l}\in\Rep(G)$ is the irreducible representation with highest weight $\l$, and for some finite subset $\{\l_{1},\cdots, \l_{N}\}$ that generate the monoid of dominant weights of $G$, $\dim M(V_{\l_{i}})_{\th}=1$ for $1\le i\le N$ implies $\dim M(V_{\l})_{\th}=1$ for all $\l$ (using \eqref{Plucker}).   

Let $\Stdir(\cE,\nb)\subset \realcircle$ be the set of Stokes directions. A connected component of $\realcircle\bs \Stdir(\cE,\nb)$ is called a {\em Stokes sector}.

Below, we fix a base point $\th_{0}\in \realcircle$. We label elements in $\Stdir(\cE,\nb)$ counterclockwisely as $\{\s_{1},\s_{2},\cdots, \s_{n}\}$, starting with the one immediately next to $\th_{0}$ in the counterclockwise direction. Let $I_{i}=(\s_{i}, \s_{i+1})$ be the Stokes sectors (for $0\le i\le n-1$, and $\s_{0}:=\s_{n}$). 

\begin{cons}[$B$-torsors for each sector]\label{cons:Bred}
For $\th\in \realcircle\bs \Stdir(\cE,\nb)$, we define a $B$-reduction of $\cE_{\th}$ as follows. Indeed, the assignment $V\mapsto M(V)_{\th}\subset \cE(V)^{\nb}_{\th}$ satisfies the relation \eqref{Plucker}  and $M(V)_{\th}$ is one-dimensional for $V$ irreducible. Such data determines a $B$-reduction of the $G$-torsor $\cE^{\nb}_{\th}$ along the ray $\th$. Clearly, this $B$-reduction is locally constant as $\th$ moves in a Stokes sector. Therefore, on each Stokes sector $I_{i}\subset \realcircle\bs \Stdir(\cE,\nb)$, we have a canonical $B$-torsor $E_{i}$ constructed from $(\cE,\nb)$, for $0\le i\le n-1$. We let $E_{n}=E_{0}$.

\end{cons}

\begin{cons}[The braid]\label{cons:braid}
Let $S^{1}_{\e}$ be the circle of radius $\e>0$ around $\t=0$. There is a canonical isomorphism $S^1_{\e}\cong \realcircle$. Restricting the map $A:\AA^{1}_{\t^{-1}}\to \fra=\frh\sslash \bW$ to $S^{1}_{\e}$ the image lands in $\fra^{\rs}$. 

Fix a base point $\th_{0}\in \realcircle$, which gives a corresponding base point $\e e^{i\th_{0}}\in S^{1}_{\e}$. If $\wt\th_{0}$ is a lifting of $\th_{0}$ to $\RR$, we abuse the notation to denote the interval $[\wt\th_{0}, \wt\th_{0}+2\pi]$ by $[\th_{0}, \th_{0}+2\pi]$. Let $a_{0}=A(\e e^{i\th_{0}})\in\fra^{\rs}$. Choose a lifting $\wt a_{0}\in\frh^{\rs}$ of $a_{0}$. Then $A|_{S^{1}_{\e}}: S^{1}_{\e}\to \fra^{\rs}$ lifts uniquely to $\wt A_{\e}: [\th_{0},\th_{0}+2\pi]\to \frh^{\rs}$ with $\wt A_{\e}(\th_{0})=a_{0}$.

Let $\wt\s_{i}$ be the preimage of $\s_{i}$ in $[\th_{0},\th_{0}+2\pi]$, and similarly let $\wt I_{i}\subset [\th_{0}, \th_{0}+2\pi]$ be the preimage of $I_{i}$. Note $\wt I_{0}=[\th_{0},\wt\s_{1}]\sqcup [\wt\s_{n}, \th_{0}+2\pi]$. We denote $J_{i}=[\th_{0},\wt\s_{1}]$ if $i=0$, $J_{i}=\wt I_{i}$ for $1\le i\le n-1$, and $J_{n}=[\wt\s_{n}, \th_{0}+2\pi]$. 

Consider the projection $\wt A_{\RR,\e}:  [\th_{0},\th_{0}+2\pi]\xr{\wt A_{\e}} \frh^{\rs}\to\frh_{\RR}$ (projection to the real part). Then $\{\wt\s_{1},\cdots, \wt\s_{n}\}$ is precisely the preimage of the root hyperplanes in $\frh_{\RR}$ under $\wt A_{\e}$. The image $\wt A_{\e}(J_{i})$ is contained in a unique Weyl chamber $C_{i}\subset \frh_{\RR}^{\rs}$. For $\e$ sufficiently small, $C_{i}$ are independent of $\e$. The relative positions of two Weyl chambers in $\frh_{\RR}$ are indexed by $\bW$. Let $w_{i}\in \bW$ be the relative position of the Weyl chambers $(C_{i-1},C_{i})$ for $1\le i\le n$. Then define
\begin{equation*}
\b_{\th_{0}}=\wt w_{1}\wt w_{2}\cdots \wt w_{n}\in Br^{+}_{\bW}.
\end{equation*}
Recall for $w\in \bW$, we write $\wt w\in Br^{+}_{\bW}$ its canonical lift. The braid $\b_{\th_{0}}$ does not depend on the lifting $\wt a_{0}$ of $a_{0}$. Changing the base point $\th_{0}$, $\b_{\th_{0}}$ changes by a cyclic shift of words.
\end{cons}

\begin{remark}\label{r:braid via pi1} There is another natural way to get a conjugacy class in $Br_{\bW}$ from $A(\t)$.  The map $A|_{S^{1}_{\e}}$ gives an element in $\pi_{1}(\fra^{\rs}, A(\e e^{i\th_{0}}))$, which is isomorphic to $Br_{\bW}$ (and the isomorphism is unique up to conjugacy). We thus get a conjugacy class $[\b]$ in $Br_{\bW}$. One can show that $[\b]$ is independent of $\e$ and $\th_{0}$ as long as $\e$ is sufficiently small, and that $\b_{\th_{0}}$ defined above belongs to the conjugacy class $[\b]$.
\end{remark}

Recall the irregularity of the adjoint connection $(\Ad(\cE),\nb)$ associated with $(\cE, \nb)$ is defined to be
\begin{equation*}
\Irr(\Ad(\cE),\nb)=\sum_{\a\in \Phi}-\ord_{\t=0}\a(B(\t^{1/m})).
\end{equation*}

\begin{lemma}\label{l:lb}
We have $\ell(\b)=\Irr(\Ad(\cE),\nb)$.
\end{lemma}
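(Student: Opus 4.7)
The plan is to interpret $\ell(\b)$ as a winding number around the discriminant divisor of $\fra$ and then evaluate it via the $\bW$-invariant polynomial $\D^2 := \prod_{\a \in \Phi}\a$ on $\frh$, which descends to a regular function on $\fra$.

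First I would verify that the positive length $\ell : Br^+_\bW \to \ZZ_{\geq 0}$ extends uniquely to a group homomorphism $\ell : Br_\bW \to \ZZ$ sending every simple generator to $1$; this is automatic, since each braid relation $s_i s_j \cdots = s_j s_i \cdots$ has equal length on its two sides. Under the isomorphism $\pi_1(\fra^{\rs}) \cong Br_\bW$ (up to conjugacy) recalled in Remark~\ref{r:braid via pi1}, this homomorphism is identified with the winding number around the discriminant divisor $\{\D^2 = 0\}\subset\fra$. Indeed, at a generic point of a reflection hyperplane $\{\a = 0\}\subset\frh$ the quotient $\frh\to\fra$ is \'etale-locally of the form $\a\mapsto\a^2$ transverse to the hyperplane, so $\D^2 = \a\cdot(-\a)\cdot\prod_{\b\ne\pm\a}\b$ equals $\a^2$ times a nonvanishing factor on $\fra$, and a loop corresponding to a single simple generator circles this local coordinate exactly once. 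Since $\b$ is positive and represented by the loop $A : S^1_\e \to \fra^{\rs}$ of Construction~\ref{cons:braid}, I would conclude
\begin{equation*}
\ell(\b) \;=\; \bigl|\mathrm{wind}_0(\D^2\circ A)\bigr|.
\end{equation*}

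Next I would compute this winding by a direct order calculation. Although the lift $B \in \frt[\t^{-1/m}]$ of $A$ is defined only on the $m$-fold cover $\realcircle'$, the product $\prod_{\a\in\Phi}\a(B(\t^{1/m}))$ is Weyl invariant, hence invariant under the Galois monodromy $\mu_m \hookrightarrow W$ of $B$ around the cover; it therefore descends to $\CC[\t^{\pm 1}]$ and coincides with $\D^2\circ A(\t)$. Its $\t$-adic order at $\t = 0$ is
\begin{equation*}
\ord_{\t=0}(\D^2\circ A) \;=\; \sum_{\a\in\Phi}\ord_{\t=0}\bigl(\a(B(\t^{1/m}))\bigr) \;=\; -\Irr(\Ad(\cE),\nb).
\end{equation*}
A Laurent polynomial with a pole of order $N$ at the origin winds $-N$ times around $0 \in \CC^*$ along a small counterclockwise circle, so $|\mathrm{wind}_0(\D^2\circ A)| = \Irr(\Ad(\cE),\nb)$. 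Combined with the first step this yields $\ell(\b) = \Irr(\Ad(\cE),\nb)$.

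The main obstacle is the first step: the geometric identification of the length homomorphism with winding around the discriminant. This will be subtle in non-simply-laced types, where the abelianization of $Br_\bW$ may be strictly larger than $\ZZ$, so one must verify that the constructed character really agrees with the geometric winding by checking the local model $\a\mapsto\a^2$ uniformly for roots of all lengths.
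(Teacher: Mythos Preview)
Your argument is correct and takes a genuinely different route from the paper's. The paper works directly with the definition of $\beta_{\theta_0}$ from Construction~\ref{cons:braid}: in the unramified case $m=1$, it observes that $\ell(\beta)=\sum_i \ell(w_i)$ counts the root-hyperplane crossings of the real projection $\wt A_{\RR,\e}:[\theta_0,\theta_0+2\pi]\to\frh_\RR$, then for each root $\alpha$ identifies these crossings with the points where $\alpha(B(\e e^{i\theta}))\in i\RR$, which for $\e\ll 1$ number $-2\ord_{\tau=0}\alpha(B)$; summing over $\alpha\in\Phi^+$ gives $\Irr$. The case $m>1$ is handled separately by pulling back to the $m$-fold cover and using $\ell(\wt\beta)=m\ell(\beta)$.

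Your approach instead identifies $\ell:Br_\bW\to\ZZ$ with the winding number around the discriminant and evaluates it via $\ord_{\tau=0}(\Delta^2\circ A)$. This is more conceptual and treats general $m$ uniformly---the Galois descent of $\prod_\alpha \alpha(B)$ is a clean step---but it relies on Remark~\ref{r:braid via pi1} to identify $\beta_{\theta_0}$ with the class of the loop $A|_{S^1_\e}$ in $\pi_1(\fra^{\rs})$, a fact the paper states without proof and which its own argument does not need. Your worry about non-simply-laced types is not a real obstacle: the local model $\alpha\mapsto\alpha^2$ holds uniformly for every reflection hyperplane regardless of root length, so $\Delta^2$ vanishes to order exactly one along each component of the discriminant in $\fra$, and the winding homomorphism sends every simple generator to the same value $\pm 1$; hence it equals $\pm\ell$ globally. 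The absolute-value device is then sound because $\beta$ is positive by construction.
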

\begin{proof}
We first consider the case $m=1$, i.e., $B(\t)\in \frh[\t^{-1}]$. On the one hand, $\ell(\b)=\sum_{i=1}^{n}\ell(w_{i})$, and $\ell(w_{i})$ is the number of root hyperplanes separating $C_{i-1}$ and $C_{i}$. Therefore $\ell(\b)$ is the number of times the image of $B_{\RR}|_{S^{1}_{\e}}$ crosses the root hyperplanes (where $B_{\RR}$ is the real projection $\AA^{1}_{\t^{-1}}\to \frh\to \frh_{\RR}$). For the root hyperplane $H_{\a}$ defined by $\a=0$, $B_{\RR}|_{S^{1}_{\e}}$ intersects $H_{\a}$ exactly when $B_{\a}(\t)=\a(B(\t))$ takes values in $i\RR$ for $|\t|=\e$. For $\e\ll1$, the map $B_{\a}:S^{1}_{\e}\to \CC^{\times}$ has mapping degree $-\ord_{\t=0}\a(B(\t))$, and $B_{\a}|_{S^{1}_{\e}}$ intersects $i\RR$ transversely (with the same sign of intersection) in $-2\ord_{\t=0}\a(B(\t))$ times. The total number of times $B_{\RR}|_{S^{1}_{\e}}$ crosses the root hyperplanes is 
\begin{equation*}
\sum_{\a\in \Phi^{+}}-2\ord_{\t=0}\a(B(\t))=\sum_{\a\in \Phi}-\ord_{\t=0}\a(B(\t))
\end{equation*}
which is $\Irr(\Ad(\cE), \nb)$. 

In general, let $\pi: \D^{\times}_{\t^{1/m}}\to \D_{\t}^{\times}$ be the projection. Then the irregular part of $\pi^{*}(\cE,\nb)$ lies in $\frh[\t^{-1/m}]$. Let $\wt\b$ be the braid attached to $\pi^{*}(\cE,\nb)$. On the one hand we have $\Irr(\Ad(\cE),\nb)=\frac{1}{m}\Irr(\pi^{*}\Ad(\cE),\nb)$, which is equal to $\frac{1}{m}\ell(\wt\b)$ by the previous paragraph. On the other hand, from the construction of $\wt\b$ one sees that $\wt\b=\b_{1}\b_{2}\cdots \b_{m}$ where each $\b_{i}$ is a positive braid with the same length as $\b$. Therefore $\ell(\wt\b)=m\ell(\b)$. Combining these facts we get $\Irr(\Ad(\cE),\nb)=\frac{1}{m}\ell(\wt\b)=\ell(\b)$. 
\end{proof}

\begin{cons}[A point in $\cM(\b_{\th_{0}})$]\label{cons:RH} We shall construct a point in $\cM(\b_{\th_{0}})$ from $(\cE,\nb)$.  In Construction \ref{cons:Bred} we have constructed a $B$-torsor $E_{i}$ for $0
\le i\le n$, with $E_{n}=E_{0}$. Let $\t=\id: E_{n}\isom E_{0}$ be the identity isomorphism. For each $\s_{i}$, let $\s^{-}_{i}\in I_{i-1}$ and $\s^{+}_{i}\in I_{i}$, then parallel transport along the path from $\s^{-}_{i}$ to $\s^{+}_{i}$ that passes through $\s_{i}$ identifies the stalks $\cE^{\nb}_{\s^{-}_{i}}$ and $\cE^{\nb}_{\s^{+}_{i}}$. This gives a canonical isomorphism of $G$-torsors
\begin{equation}\label{EiG}
\io_{i-1}: E_{i-1}\times^{B}G\cong \cE^{\nb}_{\s^{-}_{i}}\cong \cE^{\nb}_{\s_{i}}\cong \cE^{\nb}_{\s^{+}_{i}}\cong E_{i}\times^{B}G.
\end{equation}
By the lemma below, the data $(E_{0},\cdots, E_{n}, \io_{0},\cdots ,\io_{n-1}, \id)$ defines a point in $\cM(\b_{\th_{0}})$.

\end{cons}

\begin{lemma}
The relative position of the two $B$-reductions $E_{i-1}$ and $E_{i}$ of $\cE^{\nb}_{\s_{i}}$ (see \eqref{EiG}) is equal to $w_{i}\in \bW$ defined in Construction \ref{cons:braid}.
\end{lemma}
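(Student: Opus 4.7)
The plan is to reduce to the unramified case and then match the two $B$-reductions with Borel subgroups canonically attached to the Weyl chambers $C_{i-1}$ and $C_i$.

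First, I would observe that both the $B$-reductions $E_i$ and the Weyl chambers $C_j$ behave well under the $m$-fold cover $\pi:\D^\times_{\t^{1/m}}\to\D^\times_\t$: pulling back $(\cE,\nb)$ to $\pi^{*}(\cE,\nb)$ replaces each Stokes sector by $m$ copies and turns the irregular part into an element of $\frt[\t^{-1/m}]$. Since $E_i$ is constructed from representation-theoretic decay data which is compatible with pullback, and since $C_j$ is defined through the canonical lift $\wt A_\e$ of $A(\t)$, the same combinatorial data arises on either side of $\pi$. This reduces the problem to the case $B(\t)\in\frt[\t^{-1}]$ with regular semisimple leading term $B_0\t^{-d}$, where $\frt\subset\frg$ is a Cartan subalgebra and $T\subset G$ is the corresponding maximal torus.

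Next, I would invoke the Hukuhara--Levelt--Turrittin--Sibuya sectorial existence theorem (applied to $\Ad(\cE,\nb)$, or equivalently to $\cE\times^{G}V$ for a faithful $V$): for each Stokes sector $I_i$ there is a flat analytic trivialization $\tau_i$ of $(\cE,\nb)|_{I_i}$ asymptotic at $\t=0$ to the formal normal form $d+B(\t)d\t/\t$ on the trivial $G$-bundle. Under $\tau_i$, for every $V\in\Rep(G)$ the maximal-decay subspace $M(V)_\th\subset \cE(V)_\th^{\nb}$ corresponds to the line spanned by weight vectors $v_\mu$ whose rate of exponential decay $\Re(-\mu(\int B(\t)d\t/\t))$ is maximal along the ray $\th\in I_i$; because $B_0$ is regular semisimple, this maximum is attained on a unique weight line for $\th$ off the Stokes directions. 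The collection of these lines for all $V$ satisfies the Pl\"ucker relation \eqref{Plucker} and picks out a Borel subgroup $B_{C_i}\subset G$ containing $T$, namely the Borel whose positive roots are those $\alpha$ with $\Re(\alpha(B_0)e^{-id\th})$ of the appropriate sign for $\th\in J_i$. By construction this is the Borel associated to the Weyl chamber $C_i\subset\frt_{\RR}^{\rs}$ containing $\wt A_{\RR,\e}(\th)$.

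Finally, parallel transport across $\s_i$ is intertwined by the sectorial trivializations $\tau_{i-1}$ and $\tau_i$ (which differ only by a Stokes automorphism preserving $T$ and the formal gauge class), so under \eqref{EiG} the pair $(E_{i-1},E_i)$ corresponds to the pair of Borel subgroups $(B_{C_{i-1}},B_{C_i})$ both containing $T$. Their relative position in $\bW\cong N_G(T)/T$ is precisely the Weyl element carrying $C_{i-1}$ to $C_i$, which is $w_i$ by Construction \ref{cons:braid}.

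The main obstacle is the second step: justifying that the $B$-reduction canonically extracted from the maximal decay subspaces of the actual connection coincides, after sectorial trivialization, with the Borel determined by the chamber of the real projection of $B_0e^{-id\th}$. This requires a careful asymptotic analysis to verify that the decay lines for all representations $V$ assemble into a single Borel reduction, together with a check of the sign/orientation conventions ensuring that the canonical choice of lift $\wt a_0$ in Construction \ref{cons:braid} matches the Borel associated to the chamber.
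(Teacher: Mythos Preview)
Your strategy is valid and genuinely different from the paper's.  The paper does not invoke $G$-valued sectorial trivializations at all; instead it fixes a single irreducible representation $V_{\l}$ with a regular anti-dominant lowest weight $\l$, observes that the relative position $w$ of $(E_{i-1},E_i)$ is characterized by the unique weight $w\l$ such that the maximal-decay line $\cL_+$ of $E_i$ (parallel transported into $I_{i-1}$) lies in $F_{w\l}$ and injects into $\Gr^F_{w\l}$ for the weight filtration coming from $E_{i-1}$, and then computes this weight by using the local isomorphism of $\cE(V_\l)^\nb$ with a Stokes-graded local system near $\s_i$.  The paper's route thus stays at the level of vector bundles and the classical Stokes-filtration formalism, whereas yours packages everything into a single Borel containing $T$ via the Tannakian sectorial asymptotics.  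Your approach is conceptually cleaner once the Hukuhara--Sibuya existence theorem for $G$-torsors is in place; the paper's buys less machinery at the cost of a slightly longer computation in one representation.

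There is, however, a genuine imprecision in your ``Finally'' paragraph.  The Stokes automorphism relating $\t_{i-1}$ and $\t_i$ does \emph{not} preserve $T$: it is unipotent, not in $N_G(T)$.  Consequently, if you compare the two $B$-reductions by using $\t_{i-1}$ for $E_{i-1}$ and $\t_i$ for $E_i$, the resulting pair is $(B_{C_{i-1}},\,S_i^{\pm1}B_{C_i}S_i^{\mp1})$, and the second Borel need not contain $T$.  The clean fix is not to introduce $\t_i$ at all: the sectorial trivialization $\t_{i-1}$ extends holomorphically to an open sector overlapping $I_i$, and on that overlap it is still asymptotic to the formal normal form.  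Hence at $\th\in I_i$ near $\s_i$, the maximal-decay $B$-reduction $E_i$ computed in the frame $\t_{i-1}$ is governed by the chamber of $\Re(\wt A_\e(\th))$, which is $C_i$; and $E_{i-1}$, parallel transported from $I_{i-1}$ in the flat frame $\t_{i-1}$, remains $B_{C_{i-1}}$.  So the pair is literally $(B_{C_{i-1}},B_{C_i})$ in the single frame $\t_{i-1}$, and the relative position is $w_i$.  (Alternatively, one can keep both frames and use the nontrivial fact that the Stokes element at $\s_i$ lies in the unipotent radical of $B_{C_{i-1}}$, so conjugation by it does not change the relative position; but this is a stronger input than ``preserves $T$''.)  Your closing paragraph already flags the chamber-sign check; that bookkeeping is the remaining work.
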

\begin{proof}
We treat the case $m=1$ below; the general case is proved by the same argument after pullback to the cover $\D^{\times}_{\t^{1/m}}\to \D^{\times}_{\t}$. Also, without loss of generality we may assume $i=1$.  Let $w\in \bW$ be the relative position of $E_{0}$ and $E_{1}$.

Let $\s^{-}\in I_{0}$ and $\s^{+}\in I_{1}$. Let $x_{\e}^{-}=\Re B(\e e^{i\s^{-}})\in C_{0}$ (the dominant Weyl chamber, for $\e\ll1$), and $x_{\e}^{+}=\Re B(\e e^{i\s^{+}})\in C_{1}$ for $\e\ll1$.

Fix a regular anti-dominant weight $\l$, and denote by $V_{\l}$ the irreducible representation of $G$ with lowest weight $\l$. Let $\Om_{\l}$ be the set of weights of $V_{\l}$. The $B$-reduction $E_{-}$ gives an increasing filtration $F_{\mu}\cE(V_{\l})^{\nb}_{\s^{-}}$ on $\cE(V_{\l})^{\nb}_{\s^{-}}$ indexed by the poset $\Om_{\l}$ (where $\mu_{1}\ge\mu_{2}$ if and only if $\mu_{1}-\mu_{2}$ is a sum of positive coroots). Note that the smallest sub $F_{\l}\cE(V_{\l})^{\nb}_{\s^{-}}$ is the maximal decay line $M(V_{\l})_{\s^{-}}$. On the other hand, the maximal decay line $M(V_{\l})_{\s^{+}}\subset \cE(V_{\l})^{\nb}_{\s^{+}}$, after parallel transport to $\s^{-}$ via the path through $\s_{1}$, gives a line $\cL_{+}\subset \cE(V_{\l})^{\nb}_{\s^{-}}$.  Since $\l$ is regular, the relative position $w$ can be characterized as follows: it is the unique element $w\in \bW$ such that $\cL_{+}\subset F_{w\l}\cE(V_{\l})^{\nb}_{\s^{-}}$ and $\cL_{+}$ maps injectively to the associated graded $\Gr^{F}_{w\l}\cE(V_{\l})^{\nb}_{\s^{-}}$. We would like to show that this property of $w$ implies $w=w_{1}$.

Let $\Psi_{\l}\subset\cP$ be the set of exponential factors of the connection $(\cE(V_{\l}),\nb)$. Then $\Psi_{\l}$ is the image of $\Om_{\l}\to \cP$ given by $\mu\mapsto \j{\mu,B(\t)}$. As a Stokes filtered local system, $\cE(V_{\l})^{\nb}$ is locally (near $\s_{1}$) isomorphic to a Stokes graded local system $\cL\cong\op_{\chi\in \Psi_{\l}} \cL_{\chi}$. We may assume $\s^{\pm}$ to be close enough to $\s_{1}$ so that this isomorphism is defined along the arc $[\s^{-}, \s^{+}]$. In particular, there is a unique $\chi^{+}\in \Psi_{\l}$ such that $\cL_{\chi^{+}, \s^{+}}=M(V_{\l})_{\s^{+}}$. 

We claim that $\chi^{+}=\j{w_{1}\l,B(\t)}$. Indeed, write $\chi^{+}=\j{\mu, B(\t)}$ for some  $\mu\in \Om_{\l}$. Since $\cL_{\chi^{+}, \s^{+}}$ is the maximal decay line at $\s^{+}$, for $\e\ll1$, $\j{\mu,x_{\e}^{+}}\le \j{\mu', x_{\e}^{+}}$ for all $\mu'\in\Om_{\l}$. Equivalently, $\j{w_{1}^{-1}\mu,w_{1}^{-1}x_{\e}^{+}}\le \j{\mu', w_{1}^{-1}x_{\e}^{+}}$ for all $\mu'\in\Om_{\l}$. Since $w_{1}^{-1}x_{\e}^{+}\in w_{1}^{-1}C_{1}=C_{0}$, which is the dominant Weyl chamber, the above inequality implies $w_{1}^{-1}\mu=\l$ (the minimal element in $\Om_{\l}$), i.e., $\mu=w_{1}\l$, hence $\chi^{+}=\j{w_{1}\l,B(\t)}$.

Now $\cL_{+}=\cL_{\chi^{+},\s^{-}}$. By the definition of $w$,  $\cL_{+}=\cL_{\chi^{+},\s^{-}}$ lies in  $F_{w\l}\cE(V_{\l})^{\nb}_{\s^{-}}$ and maps injectively to the associated graded $\Gr_{w\l}\cE(V_{\l})^{\nb}_{\s^{-}}$. This means $\chi^{+}$ has the same decay rate as $\j{w\l,B(\t)}$ along the ray $\s^{-}$. Using $\chi^{+}=\j{w_{1}\l,B(\t)}$, this implies $\j{w_{1}\l, x_{\e}^{-}}=\j{w\l, x_{\e}^{-}}$ for $\e\ll1$. Since $x_{\e}^{-}$ is in the interior of $C_{0}$, this forces $w_{1}\l=w\l$. Since $\l$ is regular, we conclude that $w_{1}=w$.
\end{proof}

\subsection{Enhanced Riemann-Hilbert map}\label{ss:enh RH}
		
\sss{Specialization to our setting} Consider the context of a homogeneous element $\psi\in\frg\lr{t}$ of slope $\nu=d/m$. For any connection $(\cE,\nb)\in \cM_{\dR,\psi}$ and the choice of a base point $\th_{0}\in \realcircle$ that is not a Stokes direction, Construction \ref{cons:braid} gives a  positive braid $\b_{\th_{0}}$ that depends on $\psi$ but not otherwise on $(\cE,\nb)$. By Remark \ref{r:braid via pi1},  $\b_{\th_{0}}$ is a positive braid representing the loop $S^{1}\to \fra=\frh\sslash \bW$ given by restricting the map $\chi(\psi): \CC^{\times}\to \frg\xr{\chi} \fra$.   One can show that  $\b_{\th_{0}}$ depends only on the slope $\nu$ and not otherwise on $\psi$. We henceforth denote $\b_{\th_{0}}$ by $\b_{\nu,\th_{0}}$, or simply $\b_{\nu}$ is the base point $\th_{0}$ is fixed. The image of $\b$ in $\bW$ is conjugate to $w^{d}$, which is in turn conjugate to $w$ because both $w$ and $w^{d}$ are regular of order $m$.

When $\psi$ is elliptic, we can compute $\b_{\nu}$ as follows.  We have the regular element $w\in \bW$ of order $m$, unique up to conjugacy. Let us assume $w$ has minimal length in its conjugacy class (then $\ell(w)=|\Phi|/m$). One can take $\b_{\nu}=\wt w^{d}$, where $\wt w\in Br_{\bW}^{+}$ is the lifting of $w$ to a positive braid by any reduced expression. We have $\ell(\b_{\nu})=\nu|\Phi|$.  Note that $w$ (hence $\wt w$) is not unique, but different choices of minimal length $w$ differ by cyclic shift of words, as shown by He-Nie in \cite[Corollary 4.4]{HN}. Therefore, the resulting $\b_{\nu}$ also differ by a cyclic shift, which then give isomorphic $\cM(\b_{\nu})$.

Constructions \ref{cons:Bred} and \ref{cons:RH} give a set theoretic map
\begin{equation*}
\RH:\cM_{\dR, \psi}\to \cM(\b_{\nu}).
\end{equation*}
When $G = GL(n)$, Lemma \ref{lem:RHisholomorphic} shows it is a holomorphic map.

\sss{The enhanced RH map}
Recall that $\cM(\b_{\nu})$ is equipped with two monodromy maps: $\mu_{\b_{\nu}, G}$ to $[G/\Ad(G)]$ (which we simply write $[G/G]$ below) and $\mu_{\b_{\nu}, H}$ to $[H/\Ad_{w}(H)]$.  Since the connections classified by the global moduli $\cM_{\dR,\psi}$ also has regular singularities at $0$ with nilpotent monodromy lying in a Borel reduction at $0$. We have a map
\begin{equation*}
\res_{\dR, 0}: \cM_{\dR,\psi}\to [\wt\cN/G]=[\frn/B]
\end{equation*}
where $ \wt\cN\to \cN$ is the Springer resolution of the nilpotent cone $\cN\subset\frg$. Let $\wt \cU\to \cU$ be the Springer resolution of the unipotent variety $\cU\subset G$. The following diagram is commutative by construction
\begin{equation}\label{res mono zero}
\xymatrix{\cM_{\dR, \psi}\ar[d]^{\res_{\dR, 0}}\ar[rr]^{\RH} & & \cM(\b_{\nu})\ar[d]^{\mu_{\b_{\nu}, G}}\\
[\wt\cN/G] \ar[r]^{\exp}_{\sim} & [\wt\cU/G]  \ar[r]& [G/G]
}
\end{equation}

On the other hand, looking at the formal residue and formal monodromy at $\infty$, the following diagram should also be commutative
\begin{equation*}
\xymatrix{ \cM_{\dR, \psi}\ar[d]^{\res_{\dR, \infty}}\ar[rr]^{\RH} & & \cM(\b_{\nu})\ar[d]^{\mu_{\b_{\nu}, H}}\\
\frc_{0}\ar[r]^{\exp} & T^{w,\c} \ar[r]^-{\xi_{\th_{0}}} & [H/\Ad_{w}(H)]
}
\end{equation*}
Here $\xi_{\th_{0}}$ is defined as follows. Let $\th'_{0}\in \realcircle'$ be a preimage of $\th_{0}$ under the degree $m$ cyclic covering $\r_{m}: \realcircle'\to \realcircle$. The adjoint Cartan $\frt^{\ad}$ has a canonical real form $\frt^{\ad}_{\RR}$ consisting of elements whose value under any root is real. Recall $\ov\psi\in\frt$. For $\e>0$ consider the image of $\ov\psi(\e e^{i\wt\th_{0}})^{d}$ under $\frt\surj\frt^{\ad}\surj \frt^{\ad}_{\RR}$. Then for $\e\ll1$ the image lies in a Weyl group $C_{\th'_{0}}\subset \frt^{\ad}_{\RR}$ independent of $\e$. Let $B_{\th'_{0}}$ be the Borel subgroup of $G$ containing $T$ corresponding to the chamber $-C_{\th'_{0}}$. Thus we have an isomorphism of tori $\io_{\th'_{0}}:T\incl B_{\th'_{0}}\surj  H$. Changing the choice of $\th'_{0}$ changes $C_{\th'_{0}}$ by the action of the cyclic group $\j{w}$, therefore $\io_{\th'_{0}}|_{T^{w}}$ is independent of the choice of $\th'_{0}$, which we denote by $\io_{T^{w}}$. The map $\xi_{\th_{0}}$ is the composition $T^{w,\c}\xr{\io_{T^{w}}} H\to [H/\Ad_{w}(H)]$ where the last map is the composition.

More generally, we consider the variant ${}_{s}\cM_{\dR,\psi}$ defined in \S\ref{sss:var s} for an arbitrary residue $s\in\frh$ at $0$. Let $\kappa=\exp(s)\in H$. Let $\wt\frg\to \frg$ and $\wt G\to G$ be the Grothendieck alterations. Let $\pr_{\frh}: \wt\frg\to \frh$ and $\pr_{H}: \wt G\to H$ be the projections. Let $\wt\frg[s]=\pr_{\frh}^{-1}(s)\subset \wt\frg$ and $\wt G[\kappa]=\pr_{H}^{-1}(\kappa)\subset \wt G$. Then the diagram \eqref{res mono zero} becomes
\begin{equation*}
\xymatrix{{}_{s}\cM_{\dR, \psi}\ar[d]^{\res_{\dR, 0}}\ar[rr]^{\RH} & & \cM(\b_{\nu})\ar[d]^{\mu_{\b_{\nu}, G}}\\
[\wt\frg[s]/G] \ar[r]^{\exp}_{\sim} & [\wt G[\kappa]/G]  \ar[r]& [G/G]
}
\end{equation*}

These diagrams motivate the following definition. 

\begin{defn}\label{def:MBet} Let $\psi$ be a homogeneous element of slope $\nu$. 
\begin{enumerate}
\item (Trinh \cite{MT}) Form the {\em derived fiber product}
\begin{equation*}
\cM^{0}_{\Bet,\psi}:=\cM(\b_{\nu})\times^{\bR}_{[G/G]}[\wt \cU/G],
\end{equation*}

\item We define the Betti moduli space attached to $\psi$ to be the derived complex analytic stack
\begin{equation*}
\cM_{\Bet,\psi}:=\frc_{0}\times_{[H/\Ad_{w}(H)]}\cM_{\Bet,\psi}^{0}.
\end{equation*}
Here, the map $\frc_{0}\to [H/\Ad_{w}(H)]$ is given by the exponential map $\frc_{0}\cong \frt^{w}\xr{\exp}H^{w,\c}\subset H$ followed by the quotient map.

\item More generally, for $\kappa\in H$, we define
\begin{eqnarray*}
{}_{\kappa}\cM^{0}_{\Bet,\psi}:=\cM(\b_{\nu})\times^{\bR}_{[G/G]}[\wt G[\kappa]/G]\\
{}_{\kappa}\cM_{\Bet,\psi}:=\frc_{0}\times_{[H/\Ad_{w}(H)]}{}_{\kappa}\cM_{\Bet,\psi}^{0}.
\end{eqnarray*}

\end{enumerate}
\end{defn}

Note when $w$ is elliptic, ${}_{s}\cM_{\Bet,\psi}\to {}_{\kappa}\cM_{\Bet,\psi}^{0}$ is a $H^{w}$-torsor, and ${}_{\kappa}\cM_{\Bet,\psi}$ is a derived algebraic stack. 

\begin{conj}  Let $\kappa\in H$.
\begin{enumerate}
\item For $\nu>0$, the derived structure on ${}_{\kappa}\cM_{\Bet,\psi}^{0}$ is trivial, and ${}_{\kappa}\cM_{\Bet,\psi}^{0}$ is an algebraic stack smooth over $\CC$. 
\item The analytic stack ${}_{\kappa}\cM_{\Bet,\psi}$ is a complex analytic manifold with a canonical symplectic structure.
\end{enumerate}
\end{conj}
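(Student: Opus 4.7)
The plan is to establish both parts by first showing that ${}_\kappa\cM^0_{\Bet,\psi}$ is a smooth classical Artin stack of the expected dimension $\ell(\b_\nu)-\rk G$, and then producing the symplectic form on ${}_\kappa\cM_{\Bet,\psi}$ from the theory of Stokes data. A key sanity check motivating this dimension is that after further fiber product with $\frc_0$, the expected dimension becomes $\ell(\b_\nu)-\rk G+\dim\frt^w=\nu|\Phi|-\rk G+\dim\frt^w=\dim\cM_\psi$, using Lemma \ref{l:lb} and Theorem \ref{th:geom M}\eqref{th part:M sm}; this is exactly the dimension demanded by the conjectured analytic isomorphism $\wt\RH$ with the de Rham side.

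For part (1), I would work with the concrete presentation
\begin{equation*}
\cM^{\sh}(\b_\nu)=\{(B_0,\ldots,B_n,g)\in\cB^{n+1}\times G:(B_{i-1},B_i)\in BS(w_i),\ B_n={}^gB_0\}
\end{equation*}
and form the ordinary fiber product $Y:=\cM^{\sh}(\b_\nu)\times_G\wt G[\kappa]$, so that on the classical truncation ${}_\kappa\cM^0_{\Bet,\psi}=[Y/G]$. A tangent complex calculation parallel to the one in \S\ref{ss:pf M sm}, using the Bott--Samelson conditions on the chain $(B_{i-1},B_i)$ together with the constraint $g\in B$, should reduce the vanishing of the obstruction group to the surjectivity of a Lie-algebra differential assembled from the sequence of simple reflections defining $\b_\nu$. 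I expect to prove this surjectivity using that the image of $\b_\nu$ in $\bW$ is a regular element of order $m$, whose fixed subspace on $\frh$ is canonically $\frt^w$, together with the positivity of the braid (which forces the reflections to contribute with the correct signs in the Lyndon--Hochschild--Serre-type spectral sequence for the iterated fiber product). Smoothness of the expected dimension then forces the higher Tor terms in the derived fiber product defining ${}_\kappa\cM^0_{\Bet,\psi}$ to vanish, giving both triviality of the derived structure and smoothness in one stroke.

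For part (2), the plan is to invoke the Boalch--Yamakawa framework \cite{BY}: their construction endows the moduli of $G$-Stokes data associated to a punctured disk with prescribed braid with a natural quasi-Hamiltonian $H_w$-structure, whose group-valued moment map descends, via Section \ref{RHm}, to $\mu_{\b_\nu,H}:\cM(\b_\nu)\to[H/\Ad_w(H)]$. The subsequent fiber product with $\frc_0$ via $\exp:\frc_0\to H^{w,\c}$ then realizes ${}_\kappa\cM_{\Bet,\psi}$ as a quasi-Hamiltonian reduction at a point, equipping it with a canonical holomorphic symplectic form of the required dimension. The main obstacle I foresee is the Tor-vanishing at non-regular $\kappa$: there $\wt G[\kappa]\to\{\kappa\}$ is a positive-dimensional Springer fiber and $[\wt G[\kappa]/G]\to[G/G]$ is far from a locally closed immersion, so the naive transversality fails and the tangent complex acquires genuine higher terms that must be shown to cancel. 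To handle this I would first establish both parts over the regular semisimple locus of $H$, where the analysis reduces essentially to that of smooth double Bruhat cells, and then propagate to arbitrary $\kappa$ by showing that the total family $\{{}_\kappa\cM^0_{\Bet,\psi}\}_{\kappa\in H}$ is smooth over $H$; this specialization argument, together with the positivity of $\b_\nu$, should suffice to rule out derived contributions at every fiber.
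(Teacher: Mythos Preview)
The statement you are attempting to prove is labeled in the paper as a \emph{conjecture}, and the paper gives no proof of it. There is nothing to compare your argument against: the authors do not claim the result, they only formulate it and then provide a consistency check by showing that the derived dimension of $\cM_{\Bet,\psi}$ matches $\dim\cM_{\dR,\psi}$. Your opening dimension computation reproduces exactly that consistency check, but the paper goes no further.

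As for the content of your outline: what you have written is a plan, not a proof, and the key steps are precisely the ones the authors left open. For part~(1), the phrase ``a tangent complex calculation parallel to the one in \S\ref{ss:pf M sm}'' is wishful --- the calculation in \S\ref{ss:pf M sm} exploits the Moy--Prasad grading and the regularity of $\psi$ to produce an explicit quasi-isomorphism $\cK\to\cK^\vee$, and there is no evident analogue on the Betti side. Your reduction to ``surjectivity of a Lie-algebra differential assembled from the sequence of simple reflections'' is not made precise, and the proposed specialization argument (prove smoothness over $H^{\rs}$, then propagate via smoothness of the total family over $H$) is circular: smoothness of the family over all of $H$ is essentially what you are trying to prove, and you give no independent reason for it. The Tor-vanishing at non-regular $\kappa$ that you flag as ``the main obstacle'' is indeed the whole content of part~(1), and nothing in your outline addresses it. For part~(2), invoking \cite{BY} is reasonable in spirit, but you would need to identify your $\cM(\b_\nu)$ with a quasi-Hamiltonian space in their framework, check that $\mu_{\b_\nu,H}$ is the group-valued moment map, and verify non-degeneracy after reduction --- none of which is automatic, and the paper itself only says the Betti spaces ``should be special cases of their construction.''
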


By the above diagrams and Definition \ref{def:MBet}, the map $\RH$ lifts to
\begin{equation}\label{enh RH}
\wt\RH: {}_{s}\cM_{\dR,\psi}\to \frc_{0}\times_{[H/\Ad_{w}(H)]}\cM(\b_{\nu})\times^{\bR}_{[G/G]}[\wt G[\kappa]/G]={}_{\kappa}\cM_{\Bet,\psi}.
\end{equation}

\begin{conj}\label{c:RH} For any reductive $G$ over $\CC$ and homogeneous element $\psi$ in $\frg\lr{t}$ of slope $\nu>0$, and any $s\in \frh$ with $\kappa=\exp(s)$, the map $\wt\RH$ in \eqref{enh RH} is an analytic isomorphism.
\end{conj}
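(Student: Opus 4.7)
My proof strategy would proceed in three stages, following the classical pattern for wild Riemann–Hilbert correspondences.

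\emph{Stage 1: Holomorphicity.} For $G = \GL(n)$, holomorphicity of $\wt\RH$ follows directly from Lemma \ref{lem:RHisholomorphic}, which says the Stokes filtrations — and hence the Borel reductions $E_i$ of Construction \ref{cons:Bred} — depend holomorphically on the connection. For general reductive $G$, I would pick a faithful representation $G \hookrightarrow \GL(V)$ and argue Tannakianly: a $G$-bundle with connection is the same as a compatible family of vector bundles with connection indexed by $\Rep(G)$, and likewise on the Betti side (Stokes data for a $G$-torsor is the same as a compatible family of Stokes data for each representation). The residue and formal monodromy matching conditions are algebraic, so they cut out a closed analytic substack, and $\wt\RH$ remains holomorphic.

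\emph{Stage 2: Bijectivity on points.} For surjectivity: a point of ${}_{\kappa}\cM_{\Bet,\psi}$ gives in particular a collection of Borel reductions of a $G$-torsor with prescribed relative positions along the Stokes rays of $\psi$, together with monodromy in $\wt G[\kappa]$ and formal monodromy in $T^{w,\c}$ (lifted through the residue $\res_{C,\infty}$). Using the framed local wild RH correspondence of Boalch and Boalch–Yamakawa, such data on the circle $\realcircle$ determines a meromorphic $G$-connection on $D_\infty^\times$ with the prescribed irregular type $\psi$ and compatible $\bK_\infty$-level structure. On $D_0^\times$, the matching $\kappa = \exp(s)$ together with the Borel reduction picks out a unique (up to isomorphism) regular $G$-connection with $\bI_0$-level. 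On the punctured complement $X \setminus \{0,\infty\}$, the resulting prescribed monodromy lifts uniquely to a flat $G$-connection by the (tame) RH correspondence, and a standard gluing argument produces a unique $(\cE,\nb) \in {}_{s}\cM_{\dR,\psi}$. Injectivity is faithfulness of local and global RH.

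\emph{Stage 3: Étale property and conclusion.} Having the map be a holomorphic bijection, it suffices to check it is étale. The relative tangent space of $\l: {}_{s}\cM_{\Hod,\psi} \to \AA^1$ at $\l=1$ is $\cohog{0}{X, \cK_{(\cE,\nb)}}$ (from the proof of Theorem \ref{th:MHod sm}). I would identify this hypercohomology with the tangent space of ${}_{\kappa}\cM_{\Bet,\psi}$ at the RH image by a Čech-type comparison over the cover of $X$ by a neighbourhood of $0$, a neighbourhood of $\infty$, and $X\setminus\{0,\infty\}$, matched with the analogous cover of the Stokes-theoretic side (sectors at $\infty$, a small disk at $0$, the complement). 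The dimension count for both sides should match that of $\cM_\psi$ by Theorems \ref{th:geom M}(1) and \ref{th:comp coho s}. Étaleness then follows from comparing the differentials, and the analytic inverse function theorem finishes the argument.

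The principal obstacle is the framed local wild RH for general $G$, together with establishing that the derived fibre product in Definition \ref{def:MBet} is in fact classical along the image of $\wt\RH$. The existing literature (Boalch–Yamakawa, Mochizuki, Sabbah) primarily treats the \emph{unframed} Stokes moduli, i.e.\ the Hamiltonian reduction by $C_0$; lifting to the framed version requires carefully threading the $\bK_\infty$-level data through the RH functor and verifying it is compatible with the filtration by sectors. Relatedly, transversality of the two maps $\cM(\b_\nu) \to [G/G] \leftarrow [\wt G[\kappa]/G]$ and $\frc_0 \to [H/\Ad_w(H)] \leftarrow \cM^0_{\Bet,\psi}$ at the relevant points has to be checked — this is the content of showing that the derived structure is trivial — and it is where I expect the argument to require the most new input beyond a routine adaptation of known local results.
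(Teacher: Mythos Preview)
The statement is a \emph{conjecture}, and the paper does not prove it. Immediately after stating Conjecture~\ref{c:RH}, the authors write ``We plan to return to this conjecture in a future work,'' and then offer only a dimension consistency check: they verify that $\dim\cM_{\dR,\psi} = \nu|\Phi| - r + \dim\frt^{w}$ agrees with the derived dimension of $\cM_{\Bet,\psi}$, using Lemma~\ref{l:lb} and Lemma~\ref{l:dimMbeta}. There is no proof in the paper against which to compare your proposal.

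That said, your outline is a reasonable plan of attack, and you have correctly identified the genuine obstacles. A few points where your sketch is looser than it may appear. In Stage~1, the Tannakian reduction to $\GL(V)$ gives holomorphicity of the Stokes filtrations on associated vector bundles, but you need the $B$-reductions of the $G$-torsor itself to vary holomorphically; this requires that the Pl\"ucker-type conditions cutting out a $B$-reduction from the data of highest-weight lines be closed analytic, which is fine, but also that the map $\RH$ is well-defined as a map of analytic stacks (not just on points), which involves checking compatibility with automorphisms. In Stage~2, you invoke the framed local wild RH of Boalch--Yamakawa, but as you yourself note in the final paragraph, the literature treats the unframed (Hamiltonian-reduced) case; lifting to the $\bK_\infty$-framed version is precisely the new content, and your sketch does not supply it. In Stage~3, the dimension match is exactly what the paper checks, but turning that into an identification of tangent complexes via a \v{C}ech comparison requires knowing that the derived structure on ${}_\kappa\cM_{\Bet,\psi}$ is trivial along the image --- which is the other half of the open problem you flag. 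So your proposal is an honest outline of what a proof would need, with the hard steps correctly isolated but not resolved; it does not constitute a proof, and the paper makes no claim that one exists.
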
		
We plan to return to this conjecture in a future work. As a consistency check, we compare the dimensions of $\cM_{\dR,\psi}$ and $\cM_{\Bet,\psi}$. On one hand, we have by Theorem \ref{th:MHod sm} and Theorem \ref{th:geom M}\eqref{th part:M sm}
\begin{equation*}
\dim\cM_{\dR,\psi}=\dim \cM_{\psi}=\nu|\Phi|-r+\dim \frt^{w}.
\end{equation*}
On the other hand, by Lemma \ref{l:dimMbeta} we have
\begin{equation*}
\dim\cM(\b_{\nu})=\ell(\b_{\nu}).
\end{equation*}
By Lemma \ref{l:lb}, we have for any $(\cE,\nb)\in \cM_{\dR,\psi}$
\begin{equation*}
\ell(\b_{\nu})=\Irr(\Ad(\cE), \nb)=\nu|\Phi|.
\end{equation*}
Combining the two equations we get
\begin{equation*}
\dim\cM(\b_{\nu})=\nu|\Phi|.
\end{equation*}
Therefore the {\em derived dimension} of $\cM_{\Bet,\psi}$ is
\begin{equation*}
\dim\cM(\b_{\nu})+(\dim[\wt\cU/G]-\dim [G/G])+(\dim\frc_{0}-\dim[H/\Ad_{w}(H)])=\nu|\Phi|-r+\dim\frt^{w}.
\end{equation*}
Therefore $\cM_{\dR,\psi}$ has the same dimension as the derived version of $\cM_{\Bet,\psi}$.

\begin{remark} By Theorem \ref{th:comp coho s}, the cohomology $\cohog{*}{{}_{s}\cM_{\dR,\psi}}$ is canonically independent of $s\in \frh$. Combined with Conjecture \ref{c:RH}, it then implies that the cohomology $\cohog{*}{{}_{\kappa}\cM_{\Bet,\psi}}$ is independent of $\kappa\in H$ (canonically upon choosing a logarithm of $\kappa$). Keeping track of the symmetry by $(C\lr{\t}/\bC_{\infty}^{+})^{\red}$ on ${}_{s}\cM_{\dR,\psi}$, this also implies the statement that the cohomology of ${}_{\kappa}\cM^{0}_{\Bet,\psi}$ is independent of $\kappa$. In other words, the direct image complex of the map
\begin{equation*}
\cM(\b_{\nu})\times_{[G/G]}[\wt G/G]\to [\wt G/G]\xr{\pr_{H}} H
\end{equation*}
should be locally constant. This direct image complex can be interpreted as the parabolic restriction of character sheaves. It would be interesting to prove the local constancy statement directly for a wider class of positive braids $\b$ and not just those coming from homogeneous $\psi$.
\end{remark}

\begin{remark} Instead of making a base change of $\cM(\b_{\nu})$ along $\frc_{0}\to [H/\Ad_{w}(H)]$, one can reformulate Conjecture \ref{c:RH} by taking a quotient of $\cM_{\dR,\psi}$.

			It is clear that the map $\RH$ is invariant under the action of $C\lr{\t}/\bC_{\infty}^{+}$ on the domain, because that action does not change the isomorphism class of the connection on $X\bs \{\infty\}$. 
			
		Consider the action of the reduced part $(C\lr{\t}/\bC_{\infty}^{+})^{\red}$ on $\cM_{\dR, \psi}$. By \eqref{fil C}, this group is an extension of the lattice $(C\lr{\t}/\bC_{\infty}^{\na})^{\red}\cong\xcoch(T)_{\j{w}}/\tors$ by $\bC_{\infty}^{\na}/\bC_{\infty}^{+}\cong T^{\j{w}}$. By Proposition \ref{p:lattice action}, the action of $(C\lr{\t}/\bC_{\infty}^{+})^{\red}$ translates the formal residue via the lattice quotient $(C\lr{\t}/\bC_{\infty}^{\na})^{\red}$, therefore the quotient of $\cM_{\dR, \psi}$ by $(C\lr{\t}/\bC_{\infty}^{+})^{\red}$ makes sense as an analytic stack over the quotient torus $\frc_{0}/\Im(\res_{C,\infty})$, which is identified with $T_{\j{w}}$ in \eqref{exp Tw}.

		One can reformulate Conjecture \ref{c:RH} by saying that the map
		\begin{equation*}
\cM_{\dR, \psi}/(C\lr{\t}/\bC_{\infty}^{+})^{\red}\to\cM(\b_{\nu})\times_{[G/G]}[\wt\cU/G]
\end{equation*}
is an analytic isomorphism. This is consistent with Conjecture \ref{c:RH}, as one can check that $(C\lr{\t}/\bC_{\infty}^{+})^{\red}$ is naturally isomorphic to the fiber of $\frc_{0}\to [H/\Ad_{w}(H)]$.
\end{remark}

	\section{Microlocal sheaves on $\Fl_{\psi}$ and wildly ramified geometric Langlands}
	
	In this section we will expand on the conjectural equivalence in \S\ref{ss:muintro}.
	
	From work of \cite{KS} and \cite{Nadler}, we can construct a sheaf of categories $\muSh$ on conical open subsets $U\subset T^{*}X$, such that the global sections category $\muSh(T^*X)\cong D(X)$ is the derived category of constructible sheaves on X. 
	
	For a conical Lagrangian $\Lambda \subset T^*X$ we can define the full subcategory $\muSh_\Lambda$ of objects with singular support contained in $\Lambda$.
	
	If $G$ is a group acting on $X$, we can also use this to construct a sheaf of categories on the Hamiltonian reduction $T^*X//G:=\mu^{-1}_{G}(0)/G$. 
	
	Let $X$ an algebraic space with a $\Gm\ltimes\Ga$-action, where $\Gm$ acts linearly on $\Ga$. Let $\mu_{\Gm\ltimes\Ga}:T^*X\rightarrow \AA^1_{\Gm}\times\AA^1_{\Ga}$ be the moment map of the action. We have the following relations of Hamiltonian reductions 
	\begin{equation*}
		\begin{split}
			T^*X//_{1}\Ga&=\mu_{\Gm\ltimes\Ga}^{-1}(\AA^1_{\Gm}\times\{1\})/\Ga\\ &\cong\mu_{\Gm\ltimes\Ga}^{-1}(\AA^1_{\Gm}\times(\AA^1_{\Ga}\setminus\{0\}))/\Gm\ltimes\Ga\\
			&\cong\mu_{\Gm\ltimes\Ga}^{-1}(\{0\}\times(\AA^1_{\Ga}\setminus\{0\}))/\Gm\subset T^*X//\Gm,
		\end{split}	
	\end{equation*}
where the last is an open subset of the Hamiltonian reduction. We can thus identify the shifted $\Ga$ Hamiltonian reduction as an open subset of the $\Gm$ Hamiltonian reduction and use this to define the sheaf of categories $\muSh$ on $T^*X//_1\Ga$, and in particular the category of global sections $\muSh(T^*X//_1\Ga)$.
	
	Let $\bJ_{\infty}$ be the group defined in \S\ref{ss:red}, equipped with a homomorphism $\wt\psi: \bJ_{\infty}\to \Ga$ induced by $\psi$.  In Proposition \ref{p:red} we prove the identification $\cM_{\psi}\cong T^{*}\Bun_{G}(\bJ^{1}_{\infty}; \bI_{0})//_{1}\Ga$, where $\bJ^{1}_{\infty}=\ker(\wt\psi)$. We can thus apply the formalism above to the construction to get a category $\muSh(\cM_{\psi})$.

	\begin{remark} The definition of $\muSh(T^*X//_1\Ga)$ is inspired by a construction of Gaitsgory \cite[\S1.6]{Ga} called {\em Kirillov category}. It allows to define a version of $D_{(\Ga,\psi)}(X)$ (where $\psi$ is understood to be an Artin-Schreier sheaf on $\Ga$, which makes sense in characteristic $p>0$) for $X$ over an arbitrary base field, as long as the $\Ga$-action on $X$ extends to a $\Gm\ltimes\Ga$-action. Denote this Kirillov category by $\Kir(X)$. Then one can construct a functor
	\begin{equation*}
\Kir(X)\to \muSh(T^{*}X//_{1}\Ga).
\end{equation*}
More details will be explained in \cite{BBAMY}. In particular, we have a functor
\begin{equation*}
\Kir(\Bun_{G}(\bJ^{1}_{\infty}; \bI_{0}))\rightarrow \muSh(\cM_{\psi}).
\end{equation*}

	\end{remark}

	We can consider the full subcategory with fixed singular support along the image of $\Fl_{\psi}\rightarrow\cM_{\psi}$ constructed in \S\ref{th:geom M}(\ref{th part:ASF vs HF}) and denote this by $\muSh_{\Fl_{\psi}}(\cM_{\psi})$. 
	
	Consider $\cM^{0,\dG}_{\Bet, \psi}$ the Betti space for $\dG$, the Langlands dual group, using the positive braid defined by $\psi$ and the definition in \S\ref{def:MBet}.
	
	\begin{conj}
		There is a fully faithful functor 
		\begin{equation*}
			\muSh_{\Fl_{\psi}}(\cM_{\psi})\rightarrow \Ind\Coh(\cM^{0,\dG}_{\Bet, \psi}).
		\end{equation*}
	\end{conj}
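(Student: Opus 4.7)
The plan is to assemble the conjectured functor out of the non-abelian Hodge comparisons built in the body of the paper, combined with a wildly ramified geometric Langlands input. The starting point is the Hamiltonian reduction presentation $\cM_{\psi} \cong T^{*}\Bun_{G}(\bJ^{1}_{\infty}; \bI_{0}) //_{\wt\psi} \Ga$ of Proposition \ref{p:red}, which realises $\muSh_{\Fl_{\psi}}(\cM_{\psi})$ as the full subcategory of microlocal sheaves on $T^{*}\Bun_{G}(\bJ^{1}_{\infty}; \bI_{0})$ with singular support over $\Fl_{\psi}$, cut out by the $(\bJ_{\infty},\wt\psi)$-equivariance. Via the Kirillov-category construction mentioned in the preceding remark, this should in turn be identified with a full subcategory of constructible sheaves on $\Bun_{G}(\bJ^{1}_{\infty}; \bI_{0})$.

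The next step is to pass to the de Rham side by applying Riemann-Hilbert in families, identifying this subcategory with a category of D-modules on $\Bun_{G}(\bJ^{1}_{\infty}; \bI_{0})$ with the prescribed wild formal type at $\infty$ and Iwahori-monodromic behaviour at $0$, which by definition are the quantisations of $\cM_{\dR,\psi}$. Then one invokes Conjecture \ref{c:RH} to transport along the enhanced Riemann-Hilbert map $\wt\RH : \cM_{\dR,\psi} \to \cM_{\Bet,\psi}$, and uses Proposition \ref{p:lattice action} to absorb the $C\lr{\t}/\bC^{+}_{\infty}$-symmetry, landing in a category of constructible sheaves on $\cM_{\Bet,\psi}$. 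The final leg is a Langlands/Koszul-type equivalence between this constructible category on the $G$-side and $\Ind\Coh(\cM^{0,\dG}_{\Bet,\psi})$, in the spirit of Bezrukavnikov's tame equivalence and of its slope-$1$ wild analogue established in \cite{BBAMY}. Fully faithfulness would then be verified by comparing $\Ext$-algebras of natural generators: on the sheaf side, standard and costandard objects are indexed by the attractor/repeller strata of Lemmas \ref{l:Zw}, \ref{l:repel} and \ref{l:attract}, whose closures are the explicit Hessenberg varieties $\cH_{\psi}(w)$; on the coherent side they arise from the Bott-Samelson presentation built into $\cM(\b_{\nu})$ and the Springer resolution $\wt\cU\to\cU$ entering $\cM^{0,\dG}_{\Bet,\psi}$.

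The principal obstacle is the wildly ramified Langlands equivalence itself, which presently is only known in slope $1$ via \cite{BBAMY}. A natural line of attack for general homogeneous slopes $\nu = d/m$ is a deformation argument using the $\Gm(\nu)$-action and the Hodge interpolation $\cM_{\Hod,\psi}$: the cohomological comparisons Theorem \ref{th:coho M Fl} and Corollary \ref{c:comp dR} show that at the level of cohomology the answer is already independent of $\nu$, so one may hope to propagate the categorified equivalence from slope $1$ through the Hodge family, with the $C\lr{\t}/\bC_{\infty}^{+}$-symmetry of Proposition \ref{p:lattice action} playing the role of the Hecke symmetry governing the spectral decomposition. A secondary subtlety is that $\cM^{0,\dG}_{\Bet,\psi}$ is defined as a derived fibre product and a priori carries nontrivial derived structure; a fully faithful functor must match the corresponding higher $\Ext$ groups, which in turn yields a consistency check for the conjectural smoothness (Tor-vanishing) of $\cM^{0,\dG}_{\Bet,\psi}$.
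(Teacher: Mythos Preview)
This statement is a \emph{conjecture} in the paper, not a theorem: the paper offers no proof, and explicitly says only that a version in the slope $1$ case is established in \cite{BBAMY}. So there is no ``paper's own proof'' to compare against; the relevant question is whether your strategy sketch is a plausible route.

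There are two genuine gaps in your outline. First, the route you trace---microlocal sheaves on $\cM_{\psi}$, to D-modules on $\Bun_{G}(\bJ^{1}_{\infty};\bI_{0})$, to constructible sheaves on $\cM_{\Bet,\psi}$ via Conjecture~\ref{c:RH}---stays entirely on the $G$-side. The final ``Langlands/Koszul-type equivalence'' you then invoke, between constructible sheaves on the $G$-Betti space and $\Ind\Coh$ on the $\dG$-Betti space, is not an auxiliary ingredient: it is essentially the conjecture itself, restated in Betti terms. So the strategy is circular; the non-abelian Hodge comparisons (Theorem~\ref{th:coho M Fl}, Corollary~\ref{c:comp dR}, Conjecture~\ref{c:RH}) relate different incarnations of the \emph{same} side of Langlands and do not by themselves produce the passage from $G$ to $\dG$.

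Second, the proposed propagation from slope $1$ to general $\nu=d/m$ via ``deformation through the Hodge family'' does not work as stated. The family $\cM_{\Hod,\psi}$ interpolates Dolbeault and de~Rham for a \emph{fixed} $\psi$; it does not connect different slopes. Distinct slopes give genuinely different affine Springer fibers $\Fl_{\psi}$, different braids $\b_{\nu}$, and hence different $\cM^{0,\dG}_{\Bet,\psi}$, so the slope-$1$ result of \cite{BBAMY} cannot be transported along $\cM_{\Hod,\psi}$ to other slopes. The cohomological independence results you cite are about $\cohog{*}{\cM_{\psi}}$ versus $\cohog{*}{\cM_{\dR,\psi}}$ for the same $\psi$, not across slopes. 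Any argument for general $\nu$ will need a direct attack, presumably a generalisation of the methods of \cite{BBAMY}, rather than a deformation.
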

	\begin{remark}
		\begin{enumerate}
			\item This conjecture can be viewed as a geometric Langlands correspondence for deeper level structures/wild ramifications. At the same time, it can be viewed as an instance of homological mirror symmetry between $\cM_{\psi}$ and $\cM^{0,\dG}_{\Bet, \psi}$.

			\item We expect the image of this functor on compact objects to consist of coherent sheaves that are supported over proper subschemes of $\cM^{0,\dG}_{\Bet, \psi}$. In particular, objects in $\muSh_{\Fl_{\psi}}(\cM_{\psi})$ that have finitely many components of $\Fl_{\psi}$ in their singular support should be sent to sheaves living over the smallest unipotent orbit appearing in $\cM^{0,\dG}_{\Bet, \psi}$.
			\item A possible way to upgrade the above conjecture is to use wrapped microlocal sheaves as defined in \cite{Nadler}. These should include objects living over non-proper subschemes of $\cM^{0,\dG}_{\Bet, \psi}$.
		\end{enumerate}
	\end{remark}

		\end{document}